\colorlet{grey}{gray!45}
\numberwithin{equation}{section}
\newcommand{\Pv}{\mathbb{P}}
\newcommand{\Ev}{\mathbb{E}}
\newcommand*{\be}{\begin{equation}}
\newcommand*{\ee}{\end{equation}}
\newcommand*{\ba}{\begin{aligned}}
	\newcommand*{\ea}{\end{aligned}}
\newcommand*{\barr}{\begin{array}{c}}
	\newcommand*{\earr}{\end{array}}
\newcommand{\BRW}{\mathrm{BRW}_{f,\la}}
\newcommand{\CPf}{\mathrm{CP}_{f,\la}}
\newcommand{\CPfd}{\mathrm{CP}_{f,\la}^\downarrow}
\newcommand*{\ind}{\mathbbm{1}}
\newcommand{\sss}{\scriptscriptstyle}
\newcommand{\Bin}{\mathrm{Bin}}
\newcommand{\CA}{\mathcal {A}}
\newcommand{\CB}{\mathcal {B}}
\newcommand{\CC}{\mathcal {C}}
\newcommand{\CE}{\mathcal {E}}
\newcommand{\CF}{\mathcal {F}}
\newcommand{\CG}{\mathcal {G}}
\newcommand{\CI}{\mathcal {I}}
\newcommand{\CL}{\mathcal {L}}
\newcommand{\CN}{\mathcal {N}}
\newcommand{\CP}{\mathcal {P}}
\newcommand{\CT}{\mathcal {T}}
\newcommand{\CV}{\mathcal {V}}
\newcommand{\CX}{\mathcal {X}}
\newcommand{\CH}{\mathcal {H}}
\newcommand{\N}{\mathbb{N}}
\newcommand*{\wt}{\widetilde}
\newcommand*{\la}{\lambda}
	\newcommand{\Z}{\mathbb{Z}}
\newcommand{\calT}{\mathcal{T}}
\newcommand{\E}{\mathbb{E}}
\newcommand{\e}{\mathrm{e}}
\renewcommand{\P}{\mathbb{P}}
\newcommand{\R}{\mathbb{R}}
\newcommand{\calA}{\mathcal{A}}
\newcommand{\calB}{\mathcal{B}}
\newcommand{\eps}{\varepsilon}
\renewcommand{\epsilon}{\eps}
\newcommand{\Core}{\mathrm{Core}}
\newtheorem{theorem}{Theorem}[section]
\newtheorem{lemma}[theorem]{Lemma}
\newtheorem{corollary}[theorem]{Corollary}
\newtheorem{definition}[theorem]{Definition}
\newtheorem{observation}[theorem]{Observation}
\newtheorem{remark}[theorem]{Remark}
\newtheorem{assumption}[theorem]{Assumption}
\newtheorem{proposition}[theorem]{Proposition}
\newtheorem{claim}[theorem]{Claim}
\newtheorem{example}[theorem]{Example}
\newtheorem{construction}[theorem]{Construction}
\newcommand{\PPP}{\mathrm{PPP}}
\begin{document}

\fancyhead[LO]{Degree-penalized contact processes}
\fancyhead[LE]{Bartha, Komj{\'a}thy and Valesin}
\fancyhead[RO]{}
\fancyhead[RE]{}
\title{Degree-penalized contact processes}
\date{\today}
\subjclass[2020]{82C22 (Primary) 60K35, 05C80, 60J85 (Secondary)}
\keywords{contact process, interacting particle systems, random graphs}
\author{Zsolt Bartha$^\star$, J\'ulia Komj\'athy$^\dagger$, Daniel Valesin$^\ddag$}
\address{$^\star$ Alfr\'ed R\'enyi Institute of Mathematics\\
$^\dagger$ Delft Institute of Applied Mathematics, Delft University of Technology\\
$^\ddag$ Department of Statistics,
University of Warwick}
\email{bartha@renyi.hu, j.komjathy@tudelft.nl, daniel.valesin@warwick.ac.uk}

\maketitle

\begin{abstract}
In this paper we study degree-penalized contact processes on Galton-Watson trees (GW) and the configuration model. 
The model we consider is a modification of the usual contact process on a graph. In particular, each vertex can be either infected or healthy. When infected, each vertex heals at rate one. Also, when infected, a vertex $v$ with degree $d_v$ infects its neighboring vertex $u$ with degree $d_u$ 
with rate $\la/ f(d_u, d_v)$ for some positive function $f$. In the case $f(d_u, d_v)=\max(d_u, d_v)^\mu$ for some $\mu>0$, the infection is slowed down to and from high degree vertices. This is in line with arguments used in social network science: people with many contacts do not have the time to infect their neighbors at the same rate as people with fewer contacts. 

We show that new phase transitions occur in terms of the parameter $\mu$ (at $1/2$) and the degree distribution $D$ of the GW tree. 
\begin{itemize}
\item When $\mu\ge 1$, the process goes extinct for all distributions $D$ for all sufficiently small $\lambda>0$;
\item When $\mu\in(1/2, 1)$, and the tail of $D$ weakly follows a power law with tail-exponent less than $1-\mu$, the process survives globally but not locally for all $\la$ small enough; 
\item When $\mu\in(1/2, 1)$, and $\Ev[D^{1-\mu}]<\infty$, the process goes extinct almost surely, for all $\la$ small enough;
\item When $\mu<1/2$, and $D$ is 
heavier then stretched exponential with  stretch-exponent $1-2\mu$, the process survives (locally) with  positive probability for all $\la>0$.
\end{itemize}
We also study the product case $f(x,y)=(xy)^\mu$. In that case, the situation for~$\mu < 1/2$ is the same as the one described above, but $\mu\ge 1/2$ always leads to a subcritical contact process for small enough $\lambda>0$ on all graphs. Furthermore, for finite random graphs with prescribed degree sequences, we establish the corresponding phase transitions in terms of the length of survival.
\end{abstract}
      
\section{Introduction}\label{sec:introduction}
The contact process (CP) is a model for epidemics on graphs, described by a continuous-time Markovian dynamics, in which each vertex is in one of two states: infected or healthy. Infected vertices infect each of their healthy neighbors with a constant rate $\lambda$, while also healing at a constant rate $1$.
The model was first introduced by Harris in 1974 \cite{harris1974contact}, who studied it on the integer lattice. Since then, much work has been done to characterize the behavior of the process also on infinite trees and locally tree-like finite graphs. 
The focus of this line of research has been to establish phase transitions in the long-term behavior of the process, as the spreading rate $\lambda$ varies. A series of works \cite{liggett1996multiple, Pem92,stacey1996existence} showed that the process on the infinite $d$-ary tree ($d\ge 2$), with an initial infection at the root, has three possible phases separated by two critical values $0<\lambda_{c,1}<\lambda_{c,2}$: 
when  $\lambda<\lambda_{c,1}$ the process undergoes eventual extinction, when $\lambda\in (\lambda_{c,1}, \lambda_{c,2})$ there is 'global but not local' survival, and when $\lambda>\lambda_{c,2}$ there is 'local' survival of the infection (see Definition \ref{def:survival}).
More recently, studying the process on Galton-Watson trees, the combination of the results in \cite{huang2020contact} and \cite{Sly19} showed that models with exponentially decaying offspring distributions always have an extinction phase ($\lambda_{c,1}>0$), whereas subexponentially decaying offspring distributions lead to local survival for any positive value of $\lambda$ due to the persistence of the infection around high-degree vertices, i.e., $\lambda_{c,1}=\lambda_{c,2}=0$ in this case.

Motivated by the latter results, we introduce a variant of the original contact process, where we slow down the spread of the infection around high-degree vertices in a degree-dependent way, in order not to let 'superspreaders' scale up the infection rate linearly in their degree. 
This choice is inspired by degree-dependent bond percolation~\cite{hooyberghs2010biased}, by topology-biased random walks~\cite{bonaventura2014characteristic, ding2018centrality, lee2009centrality,pu2015epidemic,zlatic2010topologically}, in which the transition probabilities from a vertex depend on the degrees of its neighbors. 
Those works all assume a polynomial dependence on the degrees.  Related is also the recent degree-dependent first passage percolation \cite{komjathy2021penalising, komjathy2023four1, komjathy2023four2}, which uses the same `degree-penalization' that we shall assume, combined with the first passage percolation dynamics where reinfections to a vertex are not possible.

In the degree-dependent contact process, the total infection rate from a high-degree infected vertex shall only grow polynomially with its degree, with an exponent less than one. Gradually increasing the penalty on the infection rate, we prove that the new process qualitatively differs from the classical version.  
In particular, we obtain new phase diagrams for Galton-Watson trees: as soon as the total infection rate from a high-degree vertex scales less than the square root of its degree, high-degree vertices no longer maintain the infection, but their local surroundings heal quickly, and the process shows local extinction for small $\lambda$, yielding $\lambda_{c,2}>0$, on \emph{any} tree in fact (not just Galton-Watson trees).  On Galton-Watson trees, if the offspring distribution is sufficiently heavy tailed (i.e., heavier than $x^{-\alpha_c}$ for some critical $\alpha_c$ depending on the degree-dependent penalty on the infection rate), then the degree-penalized CP survives globally but not locally (i.e., $\lambda_{c,1}=0$ but $\lambda_{c,2}>0$), while if the tail is lighter, i.e., the offspring distribution has finite $\alpha_c$-th moment (with $\alpha_c<1$), then CP has an extinction phase (i.e., $\lambda_{c,1}>0$). Here we find it surprising that subexponential distributions as heavy as infinite mean power laws can also show extinction.
We also establish the corresponding phase diagrams for large finite random graphs with prescribed degree distributions (the configuration model), in terms of the length of time the infection survives on them.
Here, tree-based recursion techniques break down, and we develop new methods to treat the extinction phase when $\lambda_{c,1}>0$, which work as soon as the offspring distribution has finite variance. In the phase when high-degree vertices no longer maintain the infection for a long time, but the Galton-Watson tree show global survival for small $\lambda>0$, we find new structures -- $k$-cores existing on constant degree vertices only --  that maintain the infection globally on the graph for a long time.
All our results are also valid for the corresponding branching random walks as well. See a summary of our main results in Table \ref{table:summary} where we briefly explain the main parameters. We defer mentioning more related work to Section \ref{sec:discussion}.
\begin{table}[t!]
\begin{tabular}{l||l|l}
\cellcolor{grey}\textbf{Product penalty}& \cellcolor{grey}\textbf{Galton-Watson tree} $\CT_D$ & \cellcolor{grey}\textbf{Configuration model} $\mathrm{CM}(\underline d_n)$
\tabularnewline\hline
$\mu<1/2$&\textbf{Local survival} &\textbf{Survival} until $\Theta_{\P}(\exp(Cn))$ time
\tabularnewline
&for any $\lambda>0$&for any $\lambda>0$
\tabularnewline
&for tail heavier than&for tail heavier than
\tabularnewline
&stretched-exponential with $\zeta=1-2\mu$&stretched-exponential with $\zeta=1-2\mu$
\tabularnewline\hline
$\mu\ge 1/2$&\textbf{Extinction}&\textbf{Extinction} in $O_{\P}(\mathrm{poly}(n))$ time
\tabularnewline
&for $\lambda<1$&for $\lambda< 1$
\tabularnewline
&for any graph&whenever $\sum_{i=1}^n d_i^{1-\mu} =O_{\P}(\mathrm{poly}(n))$
\tabularnewline\hline\hline
\cellcolor{grey}\textbf{Max penalty}& \cellcolor{grey}\textbf{Galton-Watson tree} $\CT_D$ &\cellcolor{grey}\textbf{Configuration model} $\mathrm{CM}(\underline d_n)$
\tabularnewline\hline
$\mu<1/2$&\textbf{Local survival} &\textbf{Survival} until $\Theta_{\P}(\exp(Cn))$ time
\tabularnewline
&for any $\lambda>0$&for any $\lambda>0$
\tabularnewline
&for tail heavier than&for tail heavier than
\tabularnewline
&stretched-exponential with $\zeta=1-2\mu$&stretched-exponential with $\zeta=1-2\mu$
\tabularnewline\hline
$\mu\in(1/2,1)$&\textbf{Only global survival} &\textbf{Survival} until $\Theta_{\P}(\exp(Cn))$ time
\tabularnewline
&for $\lambda<1/2$&for any $\lambda>0$
\tabularnewline
&for weak power law&for power-law empirical degrees
\tabularnewline
&with tail-exponent $\alpha<1-\mu$&with $\mu<3-\tau$
\tabularnewline\cline{2-3}
&\textbf{Extinction} &\textbf{Extinction} in $\Theta_{\P}(\log(n))$ time
\tabularnewline
&for small $\lambda$&for small $\lambda$
\tabularnewline
&when $\E[D^{1-\mu}]<\infty$&for power-law empirical degrees
\tabularnewline
&&with $\tau>3$ (or lighter)
\tabularnewline\hline
$\mu\ge 1$&\textbf{Extinction} &\textbf{Extinction} in $O_{\P}(\mathrm{poly}(n))$ time
\tabularnewline
 &for $\lambda<1$&for $\lambda< 1$
\tabularnewline
 &for any graph&whenever $\sum_{i=1}^n d_i^{1-\mu} =O_{\P}(\mathrm{poly}(n))$
 \tabularnewline\hline 
\end{tabular}\\[.3cm]
\caption{ Summary of our main results: phases of degree-dependent contact process. Here, the infection rate across an edge is $\lambda /f(x,y)=\lambda/ (xy)^\mu$ in the case of the product penalty, and $\lambda/f(x,y)=\lambda/\max\{x,y\}^\mu$ in the case of the max penalty. The second column shows the phases when the underlying graph is a Galton-Watson tree with offspring distribution $D$, and initially only the root is infected. Here, $\alpha$ denotes the power-law tail-exponent, i.e., $\P(D\ge z)\asymp z^{-\alpha}$. The third column shows the phases when the underlying graph is a configuration model with degree sequence $\underline{d}_n$, and initially all the vertices are infected. Here, $\tau$ denotes the exponent of the limiting mass function, i.e., $\P(D\ge z)\asymp z^{-(\tau-1)}$. We allow not just pure power laws, see Definitions \ref{def:power-heavy}--\ref{def:stretched-heavy} and Assumptions \ref{assu:regularity}--\ref{assu:empirical-power-law-2} for weaker assumptions. Some technical conditions are omitted in the table. For $\mu\in(1/2, 1) $ on the configuration model, fast extinction occurs when $\tau>3$, including any other lighter tails, not just power laws. }\label{table:summary}
\end{table}
\subsection{Degree-penalized infection processes: main definitions}\label{sec:DefProcess}
We now define the processes considered in this paper. These processes take place on an underlying graph, which is undirected, but not necessarily simple, i.e., we allow multiple edges and loops, see  Section \ref{sec:DefGraphs} for the underlying graphs we use. We use the convention that the \emph{degree} of a vertex is the number of non-loop edges incident to it (counted with multiplicity) plus twice the number of loops incident to it.
More formally, for a graph $G=(V,E)$ we denote by $e(u,v)$ the number of edges between vertices $u,v\in V$, and by $N(v)$ the \emph{neighborhood} of $v\in V$, the set of vertices $u$ for which $e(u,v)\ge 1$. For a vector $\underline x\in \N^V$, we let $|\underline x|:=\sum_{v\in V} x(v)$ be its $1$-norm.
\begin{definition}[Degree-penalized contact process]\label{def:CP}
Consider a graph $G=(V,E)$, with $d_v$ denoting the degree of vertex $v\in V$. Let $f(x,y)>1$ be a function of two variables, $\la>0$, and $\underline \xi_0\in \{0,1\}^V$. For $u,v\in V$ let $r(u,v)=\la \cdot e(u,v)/f(d_u, d_v)$. We define  $\CPf(G,  \underline \xi_0)=(\underline{\xi}_t)_{t\ge 0}=(\xi_t(v))_{v\in V, t\ge0}$ to be the following continuous-time Markov process on the state space $\{0,1\}^V$. The process starts from the state $\underline \xi_0$ at time $t=0$, and evolves according to the following transition rates:
\begin{align}
\underline\xi\longrightarrow \underline\xi-\ind_v&\quad\text{with rate 1;\  for all $v$ with $\xi(v)=1$},\label{eq:healing-CP}\\
\underline\xi\longrightarrow \underline\xi+\ind_v&\quad\text{with rate $\sum_{u\in N(v)}\xi(u) r(u,v)$; for all $v$ with $\xi(v)=0$},\label{eq:infection-rate-CP}
\end{align}
where $\ind_v\in\{0,1\}^V$ denotes the vector with entry $1$ at position $v$, and zero entries at all 
other positions. 
\end{definition}
We refer to vertices $v$ with $\xi_t(v)=1$ as \emph{infected} at time $t$, and to all other vertices as \emph{healthy} at time $t$, and consequently $|\xi_t|$ is the number of infected vertices at time $t$. Describing the process less formally, each infected vertex $u$ heals at rate $1$, and during the time it is infected, it infects each of its healthy neighbors $v$ at rate $r(u,v)=\la \cdot e(u,v)/f(d_u, d_v)$, where $e(u,v)$ is the number of edges between $u$ and $v$. A common choice for $\underline \xi_0$ we take is $\underline{1}_G$, the all-$1$ vector on the vertex set $V$ of $G$.

A process related to the contact process is the branching random walk on the same graph. Branching random walks are known to stochastically dominate the contact process, since they consider the vertices of the graph as locations that infected particles can occupy, and they allow more than one infected particles per vertex. In comparison, in the contact process only one particle per vertex is allowed. In our setting, the 
\emph{degree-penalized branching random walk} turns out to be useful for upper bounds when proving extinction.
 
\begin{definition}[Degree-penalized branching random walk]\label{def:BRW}
Consider a graph $G=(V,E)$, with $d_v$ denoting the degree of vertex $v\in V$ and $e(u,v)$ the number of edges between $u$ and $v$. Let $f(x,y)>1$ be a function of two variables, $\la>0$, and $\underline x_0\in \N^V$. For $u,v\in V$ let $r(u,v)=\la \cdot e(u,v)/f(d_u, d_v)$. We define  $\BRW(G,  \underline x_0)=(\underline x_t)_{t\ge 0}=(x_t(v))_{v\in V, t\ge0}$ to be the following continuous-time Markov process on the state space $\N^V$. The process starts from the state $\underline x_0$ at time $t=0$, and evolves according to the following transition rates:
\begin{align}
\underline x\longrightarrow \underline x-\ind_v&\quad\text{with rate $x(v)$ for all $v\in V$},\label{eq:healing-BRW}\\
\underline x\longrightarrow \underline x+\ind_v&\quad\text{with rate $\sum_{u\in N(v)}x(u) r(u,v)$ for all $v\in V$}.\label{eq:infection-rate-BRW}
\end{align}
\end{definition}

Informally, we think of $x_t(v)$ as the number of particles at location $v$ at time $t$. Then each particle dies at rate $1$, independently of everything else, and each particle located at $u$ reproduces to every neighboring vertex $v$ at rate $r(u,v)=\la \cdot e(u,v)/f(d_u, d_v)$.

In what follows we study the qualitative long-term behavior of the above processes, for small $\la>0$ infection parameters. The following definition summarizes the possible phases that can occur  on graphs, first with (countably) infinitely many vertices, and then on graphs with finitely many vertices. Here, and in the following, $\underline 0$ denotes the all-zero vector (on the relevant index set).
\begin{definition}[Modes of survival]\label{def:survival}
Given a graph $G=(V,E)$, a penalty function $f(x,y)>0$ and some $\lambda>0$, consider either the process $(\underline\xi_t)_{t\ge 0}=\CPf(G,\underline\xi_0)$ or the process $(\underline x_t)_{t\ge 0}=\BRW(G,\underline x_0)$ with respective fixed starting states $\underline\xi_0\in\{0,1\}^V$ and $\underline x_0\in\N^V$. If $|V|=\infty$, we say that the process exhibits
\begin{itemize}
\item[(i)]
\emph{almost sure extinction} if, with probability 1, there exists some $T>0$ such that $\underline\xi_t=\underline 0$ \,(respectively, $\underline x_t=\underline 0$) for all $t\ge T$,
\item[(ii)]
 \emph{global survival} if, with positive probability, $\underline\xi_t\ne\underline 0$\, (respectively $\underline x_t\ne\underline 0$),  for all $t\ge 0$.
\item[(iii)]
 \emph{local survival} if, with positive probability, there exists $v\in V$ such that for any $t\ge 0$ there exists some $s>t$ such that $\xi_s(v)=1$ (respectively, $x_s(v)=1$).
\end{itemize}
For any underlying graph $G$ and respective initial states $\underline \xi_0 \in \{0,1\}^V$ and $\underline x_0 \in \N^V$ of $\CPf$ and $\BRW$, let us define the (possibly infinite) \emph{extinction time}, and for a vertex $v\in G$ the \emph{local extinction time at $v$}
\begin{equation*}
\begin{aligned}
   T_{\mathrm{ext}}^{\mathrm{cp}}(G, \underline \xi_0)&=\inf\{t\ge 0:\ \underline\xi_t=\underline 0\},\quad  T_{\mathrm{ext}}^{\mathrm{cp}}(G, \underline \xi_0,v)&=\inf\{t\ge 0:\ \xi_{t'}(v)=0\  \forall t'\ge t\}, \\
   T_{\mathrm{ext}}^{\mathrm{brw}}(G, \underline x_0)&=\inf\{t\ge 0:\ \underline x_t=\underline 0\},\quad  T_{\mathrm{ext}}^{\mathrm{brw}}(G, \underline x_0,v)&=\inf\{t\ge 0:\ x_{t'}(v)=0\  \forall t'\ge t\}. 
   \end{aligned}
\end{equation*}
\end{definition}

We note some remarks: First, local survival in (iii) implies global survival in (ii). Second, \emph{only global} (but not local) survival means that (ii) holds, whereas for any choice $v\in V$ almost surely there exists some $T_v>0$ such that $\xi_t(v)=0$ (resp., $x_t(v)=0$) for all $t>T_v$. Finally, provided that $0< |\underline \xi_0|<\infty$ (resp., $0< |\underline x_0|<\infty$), and that the graph $G$ is connected,  the phase that occurs among (i)--(iii) does not depend on the initial state $\underline \xi_0$ (resp., $\underline x_0$).

\subsection{Definition of the underlying graphs}\label{sec:DefGraphs}
Next, we define the graph models that we focus on.
\begin{definition}[Galton-Watson tree]
Given a non-negative integer-valued random variable $D$, we define the \emph{Galton-Watson (GW) tree with offspring distribution} $D$ as follows. Let $\varnothing$ be a distinguished vertex, called the \emph{root} of the tree. $\{\varnothing\}$ is generation 0 of the tree, and its cardinality is $Z_0=1$. Let $(D_{i,j})_{i=0,j=1}^\infty$ be an array of iid copies of $D$. Then we recursively define generation $i+1$ of the tree for $i=0,1\ldots$ in the following way. For each vertex $j$ ($j=1,\ldots,Z_i$) of generation $i$ we assign $D_{i,j}$ many offspring, connect them to vertex $j$, forming together generation $i+1$, i.e.,  generation $i+1$ has cardinality $Z_{i+1}=\sum_{j=1}^{Z_i}D_{i,j}$. We call the resulting finite or infinite tree a ralization of the Galton-Watson tree.
\end{definition}
Our results, in an important regime, extend to any random or deterministic tree as well, as long as it grows at most exponentially almost surely, a concept which we define now. 
\begin{definition}[Branching number of a tree]\label{def:branching-number} Let $\calT$ be an infinite tree, and let $Z_N(\calT):=|\mathrm{Gen}_N(\calT)|$ be the size of generation $N$. Then we define the (possibly infinite) `upper' branching number of $T$ as 
\begin{equation}\label{eq:branching-number}
\overline{\mathrm{br}}(T):=\limsup_{N\to\infty} Z_N(\calT)^{1/N}.
\end{equation}
\end{definition}
\begin{definition}[Spherically symmetric tree]\label{def:SST}
Given a positive integer-valued sequence $\underline d:=(d_0, d_1, d_2, \dots)$, we define the \emph{Spherically Symmetric Tree (SST) with degree sequence} $\underline d$, $\mathrm{SST}(\underline d)$ as follows. Let $\varnothing$ be the \emph{root} of the tree having $d_\varnothing:=d_0$ many offspring. Then $\mathrm{SST}(\underline d)$ is the tree where each vertex in generation $i$ has $d_i$ many offspring. 
\end{definition}

The following two definitions describe two important classes of degree distributions that we use for Galton-Watson trees.

\begin{definition}[Weak power-law tails]\label{def:power-heavy}
Consider a  distribution $D$ on $\{0,1, \dots\}$. We say that the tail of $D$ weakly follows a power law with tail-exponent $\alpha>0$ if for all fixed $\varepsilon>0$ there exists a constant $z_0(\varepsilon)>1$, such that whenever $z>z_0(\varepsilon)$,
\begin{equation}\label{eq:heavier-power-law}
\frac{1}{z^{\alpha+\varepsilon}}\le   \P(D \ge z ) \le \frac{1}{z^{\alpha-\varepsilon}}.  
\end{equation}
 \end{definition}
In the numerators in   \eqref{eq:heavier-power-law} we could have allowed a slowly varying function as well, but those can be ignored by adjusting $z_0(\varepsilon)$, due to Potter's theorem \cite{bingham1989regular}, since any slowly varying function $\ell(x)$ satisfies $x^{-\varepsilon}\ll\ell(x)\ll x^{\varepsilon}$ for all $\varepsilon>0$ as $x\to \infty$. Pure power-law distributions satisfy \eqref{eq:heavier-power-law} with $\varepsilon=0$, in this case the constant $1$ in the numerators of the upper and lower bounds may change.
 The next definition considers a similar domination, but now with stretched exponential tails: 
\begin{definition}[Heavier than stretched exponential tails]\label{def:stretched-heavy}
Consider a  distribution $D$ on $\{0,1, \dots\}$. We say that $D$ is heavier than stretched exponential with stretch-exponent $\zeta>0$ if there exists a function $g: \N\to [0, \infty)$ and an infinite sequence 
of nonnegative numbers $z_1<z_2<\dots$ such that for $i\ge 1$,
\begin{equation}\label{eq:stretched-heavy}
    \P\big(D =z_i \big) \ge  \exp( -  g(z_i)z_i^\zeta) \mbox{ such that } g(x)\to 0 \mbox{ as } x\to \infty. 
\end{equation}
 \end{definition}
An equivalent statement to  \eqref{eq:stretched-heavy} is 
\[\liminf_{z\to \infty}  \frac{-\log (\P(D =z ))}{z^\zeta}=0.  \]
We comment that in case of stretched exponential distributions, the tail $\P(D\ge K)$ and the mass function $\P(D=K)$ are a polynomial prefactor away, which can be incorporated in the function $g$.

The next definition gives the \emph{finite} random graph model that we consider in this paper: the configuration model with a given degree sequence \cite{Boll80, MolRee95}.
 \begin{definition}[Configuration model]\label{def:CM}
Given a positive integer $n$, and a sequence $\underline d_n:=(d_1, \dots, d_n)$ of nonnegative integers with $h_n:=\sum_{i=1}^n d_n$ even, we define the \emph{configuration model} $\mathrm{CM}(\underline d_n)$ as a distribution on (multi)graphs constructed as follows. We take $n$ vertices, and assign $d_1,d_2,\ldots,d_n$ `half-edges' to them, respectively.  Then we take a uniformly random pairing of the set of half-edges, and to each such pair we associate an edge in $\mathrm{CM}(\underline d_n)$ between the respective vertices.
 \end{definition}

In Definition \ref{def:CM}, in the degree sequence   $\underline d_n=(d_1^{\sss{(n)}}, d_2^{\sss{(n)}}, \dots, d_n^{\sss{(n)}})$ we allow that the degrees depend on $n$. If it is not confusing we drop the superscript $(n)$ from the degree sequence.
When the degree sequence is random, (e.g. coming from an iid sequence $D_1, D_2, \dots$), then one may add an extra half-edge to $D_n$ when $\sum_{i=1}^nD_i$ is odd. This will not affect the `regularity' assumptions on the degree sequence below.
The configuration model is a locally tree-like graph: its \emph{local weak limit} is a Galton-Watson tree \cite{aldous2004objective, benjamini2011recurrence}. We expect that our results extend to other non-geometric graph models with branching processes as their local weak limit, e.g. the Erd{\H o}s-R\'enyi random graph, the Chung-Lu or Norros-Reitu model, rank-$1$ inhomogeneous random graphs \cite{ER60, Chung_Lu, NorRei04, BolJAnRio07}, and so on. 

We define  the \emph{empirical mass function} $\nu_n$ of the degrees and the corresponding cumulative distribution function (cdf) for all $z\ge 0$ as
\begin{equation}\label{eq:empirical-degree}
  \nu_n(z):=\frac{n_z}{n}=\frac{\sum_{i=1}^n \ind_{\{d_i=z\}}}{n} \quad \mbox{ and }\quad F_n(z)=\nu_n([0,z])=\frac{1}{n}\sum_{i=1}^n \ind_{\{d_i\le z\}}. 
\end{equation}
Let $D_n$ be a random variable with distribution $\nu_n$. To be able to relate different elements of the sequence $\mathrm{CM}(\underline d_n)$ to each other, we pose the following regularity assumption, common in the literature \cite{MolRee95, MolRee98, janson2007simple}.

\begin{assumption}[Regularity assumptions on the degrees] \label{assu:regularity}
Consider the configuration model in Definition \ref{def:CM}. We assume that the sequence $(\underline d_n)_{n\ge 1}=((d_1, d_2, \dots, d_n))_{n\ge 1}$ satisfies the following:
\begin{enumerate}
\setlength\itemsep{0em}
\item[a)] $D_n$ with cdf $F_n(z)$ in \eqref{eq:empirical-degree} converges in distribution to some a.s.\ finite random variable $D$ with $\E[D]\in(0,\infty)$. We denote the cdf of $D$ by $F_D$.
\item[b)] $\lim_{n\to \infty}\E[D_n]=\E[D]$. In particular, for any constant $M\ge 0$,
\[\lim_{n\to \infty}\E[D_n\ind_{\{D_n\ge M\}}]=\E[D\ind_{\{D\ge M\}}].\]
\end{enumerate}
\end{assumption}
Formulating power-law assumptions about a sequence of empirical distributions is slightly different than about a single distribution, since the minimal mass in the model with $n$ vertices is $1/n$ and the maximal degree is $n$-dependent and finite. Hence, we formulate the next assumption, which ensures that the empirical distribution $F_n$ follows a (possibly truncated) weak power law. 
\begin{assumption}[Power-law empirical degrees]\label{assu:empirical-power-law}
We say that the empirical distribution of $(\underline d_n)_{n\ge 1}$ follows a weak (possibly truncated) power law with exponent $\tau> 1$ with exponent-error $\varepsilon\ge0$, if there exist constants $c_\ell, c_u, z_0=z_0(\varepsilon), n_0(\varepsilon)>0$ and a function $z^{\sss{(\ell)}}_{\max}(\varepsilon,n)\to \infty$ as $n\to \infty$ such that for all $n\ge n_0(\varepsilon)$, $F_n(z)$ in \eqref{eq:empirical-degree} satisfies 
\begin{equation}\label{eq:empirical-power-law}
\frac{c_\ell}{z^{(\tau-1)(1+\varepsilon)}}\le 1-F_n(z) \le \frac{c_u}{z^{(\tau-1)(1-\varepsilon)}},
\end{equation}
for all $z\in [z_0,z^{\sss{(\ell)}}_{\max}(\varepsilon,n)]$, while the upper bound holds for all $z\ge z_0$. In this case we call $\tau-1$ the tail-exponent, consistent with Definition \ref{def:power-heavy}.
\end{assumption}
When the degrees are coming from an iid sample of a distribution $D$ that satisfies \eqref{eq:heavier-power-law} with some $\tau,\eps$, then one can use Chernoff bounds to show that  Assumption \ref{assu:empirical-power-law} is also satisfied with a slightly larger $\varepsilon$  and $z_{\max}(\varepsilon,n)$ can be chosen slightly below the typical maximum degree among iid degrees, which is $n^{(1- \varepsilon)/(\tau-1)}$ with high probability. However,  in Assumption \ref{assu:empirical-power-law} we also allow for much lower $z_{\max}^{\sss{(\ell)}}(\varepsilon,n)$. In such cases we talk about truncated power-law degrees. Since the truncation value $z^{\sss{(\ell)}}_{\max}(\varepsilon,n)\to \infty$ as $n\to \infty$, the limiting distribution $D$ satisfies \eqref{eq:empirical-power-law} for all (fixed) $z\ge z_0$. We also comment that if $\varepsilon>0$, by slightly increasing $\varepsilon$ and $z_0$ if necessary, one may choose $c_\ell=c_u=1$. Further, if instead of \eqref{eq:empirical-power-law}, one has the bounds
\begin{equation}\label{eq:slowly-varying}
\ell_1(z) z^{-(\tau-1)} \le 1-F_n(z)\le \ell_2(z) z^{-(\tau-1)}
\end{equation}
for some slowly varying functions $\ell_1, \ell_2$, then \eqref{eq:empirical-power-law} holds for any $\varepsilon>0$, since $z^{-\varepsilon}\ll \ell_1(z)\le \ell_2(z)\ll z^{\varepsilon}$ by Potter's theorem \cite{bingham1989regular}. Then $z_0$ may depend on $\varepsilon$. 
In one of our results below, 
we additionally require the following assumption on the maximum degree and the empirical mass function.
\begin{assumption}\label{assu:empirical-power-law-2}
We assume that there is an $\eps>0$ such that there exists constants $n_0(\eps), z_0(\eps), C_u>0$, such the empirical measure $\nu_n$ in \eqref{eq:empirical-degree} satisfies, for all $n>n_0(\eps)$,
\begin{align}
&\nu_n(z) \le \frac{C_u}{z^{\tau(1-\varepsilon)}} \quad\mbox{ for all } z\ge z_0(\eps),\label{eq:point-mass}\\
&\max_{i\le n} d_i \le C_u n^{1/(\tau(1-\eps)-1)}.\label{eq:max-degree}
\end{align}
\end{assumption}
The first condition implies the upper bound in Assumption \ref{assu:empirical-power-law}, since \eqref{eq:point-mass} implies that $\nu_n((z,\infty))\le \sum_{i\ge z} c_u i^{-\tau(1-\eps)}=c_u' z^{-(\tau-1)+\tau\eps}= c_u' z^{-(\tau-1)(1-\eps')}$ with $\eps':=\eps \tau/(\tau-1)$.  
The second condition is also quite natural, and both conditions hold  for the empirical measure of iid degrees whp, as the following example shows. The proof can be found on page \pageref{s:proof-ex-iid-degrees} in the Appendix.
\begin{example}[Iid degrees]\label{ex:iid-degrees} Suppose $\underline d_n=(D_{n,1}, \dots, D_{n,n}+\ind\{\sum_{i\le n}D_{n,i} \mbox{ odd} \})$ where $(D_{n,i})_{i\le n}$ are iid from a distribution $D$ satisfying Definition \ref{def:power-heavy} with some $\alpha$. Then $(\underline d_n)_{n\ge 1}$ with high probability satisfies Assumptions \ref{assu:regularity}, \ref{assu:empirical-power-law} with $\tau=\alpha+1$ and any $\varepsilon> 0$, and $z_{\max}^{(\ell)}(\eps, n)=n^{1/(\alpha(1+\eps))}$ in Assumption \ref{assu:empirical-power-law}, i.e., with $z_0(\eps/2)$ from Definition \ref{def:power-heavy},
\begin{equation}\label{eq:assu-11-holds}
\begin{aligned}
\P\left(\begin{array}{c}\forall z\ge z_0(\eps/2): 1-F_n(z) \le z^{-\alpha(1-\eps)} \mbox{ and }\\[0.1cm] \forall z\in[z_0(\eps/2), n^{1/(\alpha(1+\eps))}]: 1-F_n(z) \ge z^{-\alpha(1+\eps)} \end{array} \right) \to 1. 
\end{aligned}
\end{equation}
Further, $D$ satisfying Definition \ref{def:power-heavy} for some $\alpha$ implies that \eqref{eq:max-degree} holds whp with $\tau=\alpha+1$ and any $\eps>0$, i.e., $\P(\max D_{n,i} \le n^{1/(\alpha(1-\eps))}) \to 1$.
If $D$ satisfies also that for all $\eps>0$ there exists $z_0(\eps)$, such that for all $z\ge z_0(\eps)$,
\begin{equation}\label{eq:powerlaw-mass}
 \P(D=z) \le z^{-\tau(1-\eps)},
\end{equation}
then the empirical measure $\nu_n(z)$ of $\underline d_n$ also satisfies \eqref{eq:point-mass} with any $\eps>1/\tau$. That is, for all $\eps'>0$,
\begin{equation}\label{eq:iid-with-1/tau}
    \P\Big( \forall z\ge z_0(\eps): \nu_n(z) \le z^{-\tau(1-1/\tau +\eps)} = z^{-(\tau-1+\eps')}  \Big) \to 1.
\end{equation}
Finally, if one considers \emph{truncated} power-law distributions with $\max_{n,i} D_{n,i}=o(n^{1/\tau})$, then for all $\eps>0$ 
\begin{equation}\label{eq:iid-truncated}
    \P\Big( \forall z\ge z_0(\eps): \nu_n(z) \le z^{-\tau(1-\eps)} \Big) \to 1.
\end{equation}
\end{example}
While \eqref{eq:iid-with-1/tau} seems rather weak, it is essentially best possible. Namely, using the lower bound one can show that the vertices with maximal degree are of order $n^{(1+o(1))/(\tau-1)}$, and when there is a single vertex with degree in this range, then the upper bound in \eqref{eq:iid-with-1/tau} can be sharp. Examples on \emph{truncated} power-law degree distributions can be found in \cite[Example 1.20, 1.21]{van2017scale} where graph distances are discussed under truncation.  Here, as soon as the maximal degree is $o(n^{1/\tau})$, the true $\tau$ can be recovered also for point-masses with any $\eps>0$ in \eqref{eq:iid-truncated}.

\section{Results}

We focus on the behavior of degree-penalized CP and BRW for small values of $\lambda>0$. 
Table \ref{table:summary} contains a simplified summary of our results. We first state our results on the \emph{product penalty}, i.e., when $f(x,y)=(xy)^\mu$ for some $\mu>0$ in Definitions \ref{def:CP} and \ref{def:BRW}. We based this choice on a slightly related model, degree-dependent first passage percolation \cite{komjathy2021penalising}, 
where this penalty function is proven to show rich phenomena for first passage percolation.
Some of our results extend to \emph{polynomial} penalty functions as well, see Remark \ref{rem:polynomial-penalties} below. We start with results on Galton-Watson trees. On a Galton-Watson tree, the degree of a non-root vertex $v$ equals its number of offspring plus $1$.
\begin{theorem}[Product penalty with $\mu<1/2$ on Galton-Watson trees]\label{thm:prod_GW} Let $\mathcal{T}$ be an infinite Galton-Watson tree with offspring distribution $D$, so that $p_0=\P(D=0)=0$.
Consider the degree-penalized contact process $\CPf$ and branching random walk $\BRW$ with penalty function $f(x,y)=(xy)^\mu$ in Definitions \ref{def:CP} and \ref{def:BRW} for some $\mu\in(0,1/2)$.

 When the tail of $D$ is heavier than stretched-exponential with stretch-exponent $1-2\mu$ (in the sense of Definition \ref{def:stretched-heavy}), then for all $\la>0$, $\CPf(\CT, \ind_\varnothing)$ and $\BRW(\CT, \ind_\varnothing)$ both show \emph{local survival}, for almost  all realizations  $\CT$ of the Galton-Watson tree.
\end{theorem}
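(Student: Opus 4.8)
The plan is to exhibit, almost surely on the Galton--Watson tree $\CT$, an infinite sequence of vertices $v_1, v_2, \dots$ at generations $N_1 < N_2 < \cdots$ whose degrees $d_{v_i}$ are large enough that a suitable \emph{star graph} centered at $v_i$ already sustains the infection (or the branching random walk) for a very long time, and then to link these stars together so that the infection is relayed from one to the next before dying out. The key calculation for a single star is the following: for a vertex $v$ of degree $d$, with penalty $f(x,y)=(xy)^\mu$, an infected leaf-neighbor $u$ of $v$ (with degree roughly $2$, say, or at worst some bounded number on the tree if we attach short paths) infects $v$ at rate $\asymp \la/d^\mu$, and $v$ infects each of its $d$ neighbors at rate $\asymp \la/d^\mu$. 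So the star centered at $v$ behaves like a contact process on a star with $d$ leaves and edge-infection rate $\la/d^\mu$; the effective rate at which $v$, once infected, re-infects \emph{some} leaf and thereby keeps itself alive is of order $d \cdot (\la/d^\mu)^2 = \la^2 d^{1-2\mu}$. Since $\mu < 1/2$, this is $\gg 1$ once $d$ is large, and a standard comparison (as in the classical analysis of CP on stars, e.g.\ the lower bounds underlying \cite{huang2020contact, Sly19}) shows that the infection persists on the star centered at $v$ for a time that is at least $\exp(c\, \la^2 d^{1-2\mu})$ with probability bounded below, uniformly, for $d$ large depending on $\la$.

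Second, I would use Definition~\ref{def:stretched-heavy}: the tail of $D$ being heavier than stretched-exponential with stretch-exponent $\zeta = 1-2\mu$ means there is a sequence $z_1 < z_2 < \cdots$ with $\P(D = z_i) \ge \exp(-g(z_i) z_i^{1-2\mu})$ and $g(x)\to 0$. The point of this exact exponent is a Borel--Cantelli / first-moment matching: a given vertex at generation $N$ has a child of degree (about) $z_i$ with probability $\ge \exp(-g(z_i) z_i^{1-2\mu})$, the survival time of the resulting star is $\ge \exp(c\la^2 z_i^{1-2\mu})$, and since $g(z_i)\to 0$ we can choose, for each large $i$, the generation gap $N_{i+1}-N_i$ to be of order $\exp(\tfrac{c}{2}\la^2 z_i^{1-2\mu})$ — short enough that the star at $v_i$ survives long enough to reach generation $N_{i+1}$, yet long enough that, along a single line of descent of that length, \emph{some} vertex has a child of degree $\ge z_{i}$ with probability tending to $1$ (because the expected number of such vertices, $\asymp \exp(\tfrac{c}{2}\la^2 z_i^{1-2\mu})\cdot \exp(-g(z_i)z_i^{1-2\mu})$, blows up). Iterating and using a Borel--Cantelli argument over $i$ (the events can be made independent by working on disjoint blocks of generations and disjoint subtrees), we get that almost surely infinitely many such high-degree relay vertices exist along a single ray, and each is reached and re-ignited before the previous star dies.

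Third, I would assemble these pieces into a renewal-type survival argument: condition on the (almost sure) existence of the good structure — a ray $\varnothing = w_0, w_1, w_2, \dots$ together with the relay vertices $v_i$ hanging off it and their large-degree children acting as the "leaves" that sustain each star — and then run the contact process on this deterministic subgraph. On this subgraph the process stochastically dominates a 1-dependent oriented percolation in which "block $i$ is open" means "the star at $v_i$ catches the infection from $v_{i-1}$'s star within the allotted time and survives past when it must pass it on"; each block is open with probability bounded below (and close to $1$ for large $i$), so percolation survives, giving global survival. For \emph{local} survival one notes further that each relay star, while alive, re-infects its own center $v_i$ infinitely often on the timescale of its survival, and — by taking a sparse subsequence of relays and a routine "infection travels back down the ray" coupling — one deduces that a fixed vertex (say $\varnothing$, or the first relay $v_1$) is re-infected infinitely often with positive probability. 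Since branching random walk dominates the contact process, the same construction yields the claim for $\BRW$ a fortiori (indeed the star estimate is only easier there).

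The main obstacle I anticipate is the quantitative single-star survival estimate with the \emph{right} constant in the exponent and uniformity in the leaf degrees: on a Galton--Watson tree the neighbors of $v_i$ are not genuine degree-$1$ leaves but themselves random subtrees, and one must show this only helps (monotonicity) or at worst can be reduced to a star by pruning, while still getting survival time $\ge \exp(c\la^2 d^{1-2\mu})$ with a constant $c$ that does not degrade. Coupled to this is the bookkeeping needed to make the per-block success probabilities genuinely summable-complement / bounded-below while keeping the blocks independent — i.e.\ choosing the generation gaps $N_{i+1}-N_i$ and the disjoint subtrees carefully so that the star-survival times, the relay-ignition times, and the "some descendant has a huge child" events all line up. Everything else (the comparison with oriented percolation, the deduction of local from the relay structure, the Borel--Cantelli over $i$) is standard once the star estimate and the gap-choice are in hand.
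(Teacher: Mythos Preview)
Your big picture---stars surviving for time $\exp(c\lambda^2 d^{1-2\mu})$, relays between them, and an oriented-percolation comparison---matches the paper's strategy (Section~\ref{sec:prod_GW_strong}). But the scheme you propose breaks at the choice of generation gaps.

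You set $N_{i+1}-N_i\asymp \exp(\tfrac{c}{2}\lambda^2 z_i^{1-2\mu})$ and claim this is ``short enough that the star at $v_i$ survives long enough to reach generation $N_{i+1}$''. That conflates survival \emph{time} with the \emph{distance} the infection can be pushed along a path. On the connecting path the degrees are bounded, so for small $\lambda$ the process there is subcritical: one traversal attempt over a path of length $\ell$ succeeds with probability $\approx (c\lambda)^{\ell}$. The star at $v_i$ makes $\approx T_K=\exp(c\lambda^2 z_i^{1-2\mu})$ attempts, so the relay succeeds only if $T_K\cdot (c\lambda)^{\ell}\gg 1$, i.e.\ $\ell=o(z_i^{1-2\mu})$ (this is exactly Claim~\ref{claim:spread_between_stars}). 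Your $\ell$ is exponential in $z_i^{1-2\mu}$, so your per-block success probability is not close to $1$ but doubly-exponentially small, and the oriented-percolation step fails.

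The reason you were forced into exponentially long gaps is that you search for the next star along a \emph{single line of descent}: on one ray the waiting distance for a degree-$z_i$ child is indeed $\asymp\exp(g(z_i)z_i^{1-2\mu})$. The paper avoids this by exploiting the full \emph{branching} of $\CT$: from a vertex of degree $2K$, restrict to children of degree $<M$ (a fixed constant), grow the truncated supercritical subtree $\CT_M$ for $\ell$ generations so that generation~$\ell$ has $\approx (q_M-\eps)^{\ell}$ vertices, and then look for a degree-$2K$ child among \emph{all} of them. With $\ell=\sqrt{g(2K)}\,(2K)^{1-2\mu}=o(K^{1-2\mu})$ the expected number of hits is $(q_M-\eps)^\ell\exp(-g(2K)(2K)^{1-2\mu})\to\infty$, because $\sqrt{g}\gg g$. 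This is Lemma~\ref{lem:embedded_stars}, and it is what makes the gap polynomial rather than exponential. A related point you gloss over: the connecting paths must consist of vertices of \emph{bounded} degree (the $M$-embedding of Definition~\ref{def:M-embedding}), otherwise the product penalty along the path kills the traversal probability; your aside ``degree roughly~2, say'' does not address how to guarantee this inside $\CT$. Finally, the paper uses a single fixed $K$ (large depending on $\lambda$) rather than an increasing sequence $z_i$, which simplifies the percolation comparison (Lemma~\ref{lem:survival_on_the_line}).
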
 
The counterpart of this theorem for $\mu\ge 1/2$, holds more generally on \emph{any} graph $G$.
\begin{theorem}[Product penalty with $\mu\ge 1/2$]\label{thm:prod-general}
Consider the degree-penalized contact process $\CPf$ and branching random walk $\BRW$ with penalty function $f(x,y)=(xy)^\mu$ 
 in Definitions \ref{def:CP} and \ref{def:BRW} for some $\mu\ge1/2$. Then for all $\la< 1$, $\CPf(G, \underline{\xi}_0)$ and $\BRW(G, \underline{\xi}_0)$ both \emph{go extinct} almost surely on \emph{any} (finite or infinite) graph $G$ whenever $|\underline\xi_0|<\infty$ (respectively, $|\underline x_0|<\infty$) almost surely. Further, 
\begin{equation}\label{eq:mean-ext-time-product}
\mathbb E[T_{\mathrm{ext}}^{\mathrm{cp}}(G, \underline \xi_0)\mid G, \underline \xi_0]\le \mathbb E[T_{\mathrm{ext}}^{\mathrm{brw}}(G, \underline \xi_0)\mid G, \underline \xi_0]\le \sum_{v\in V} \xi_0(v) d_v^{1-\mu}/(1-\lambda)
\end{equation}
and $\P(T_{\mathrm{ext}}^{\mathrm{cp}}(G, \underline \xi_0)>t)$ and $\P(T_{\mathrm{ext}}^{\mathrm{brw}}(G, \underline \xi_0)>t)$ both decay (at least) exponentially in $t$ at rate at least $\la-1$. 
\end{theorem}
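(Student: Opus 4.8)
The plan is to prove every assertion for the degree-penalized branching random walk and then transfer to the contact process, using that $\BRW$ is just $\CPf$ without the one-particle-per-site constraint: under the standard coupling (recalled in Section~\ref{sec:DefProcess}), with $\underline x_0:=\underline\xi_0$ now read in $\N^V$, the infected set of $\CPf(G,\underline\xi_0)$ is contained in the support of $\BRW(G,\underline x_0)$ at all times, hence $T_{\mathrm{ext}}^{\mathrm{cp}}(G,\underline\xi_0)\le T_{\mathrm{ext}}^{\mathrm{brw}}(G,\underline x_0)$ pathwise. This yields the first inequality of \eqref{eq:mean-ext-time-product} and reduces everything else to $\BRW$. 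Since the statements are conditional on $(G,\underline\xi_0)$ we may fix a locally finite graph $G$ and a state $\underline x_0$ with $|\underline x_0|<\infty$.

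For the Lyapunov function I take the weight $w(v):=d_v^{1-\mu}$ — treat first the case $\mu\in[1/2,1]$, where $w(v)\ge1$ for all $v$ since $d_v\ge1$ — and set $M_t:=\sum_{v\in V}x_t(v)\,w(v)$, so $M_0=\sum_v x_0(v)\,d_v^{1-\mu}$ is exactly the right-hand side of \eqref{eq:mean-ext-time-product} (up to the factor $1/(1-\lambda)$). Applying the generator from Definition~\ref{def:BRW} to $\underline x\mapsto\sum_v x(v)w(v)$, the death transitions contribute $-M$ and the branching transitions contribute $\sum_u x(u)\sum_{v\in N(u)}w(v)\,r(u,v)$, and the crux is the pointwise bound, valid for every $u$,
\be\label{eq:keybound-plan}
\sum_{v\in N(u)}w(v)\,r(u,v)=\frac{\lambda}{d_u^{\mu}}\sum_{v\in N(u)}e(u,v)\,d_v^{1-2\mu}\le\frac{\lambda}{d_u^{\mu}}\sum_{v\in N(u)}e(u,v)\le\lambda\,d_u^{1-\mu}=\lambda\,w(u),
\ee
where the first inequality is precisely where $\mu\ge1/2$ enters: it makes the exponent $1-2\mu\le0$, so $d_v^{1-2\mu}\le1$ for $d_v\ge1$. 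Hence the generator applied to $M$ is at most $-(1-\lambda)M$, i.e.\ $e^{(1-\lambda)t}M_t$ is a local supermartingale.

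Localizing at $\tau_k:=\inf\{t:M_t\ge k\}$ turns $e^{(1-\lambda)(t\wedge\tau_k)}M_{t\wedge\tau_k}$ into a bounded supermartingale; letting $k\to\infty$ first rules out explosion (if $\tau_\infty:=\lim_k\tau_k\le t$ with positive probability then $M_0\ge\mathbb E[e^{(1-\lambda)\tau_k}M_{\tau_k}\ind_{\{\tau_\infty\le t\}}]\ge k\,\P(\tau_\infty\le t)$, which is absurd), and then Fatou gives $\mathbb E[M_t\mid G,\underline x_0]\le M_0\,e^{-(1-\lambda)t}$. Since $w\ge1$ and $\underline 0$ is absorbing, on $\{T_{\mathrm{ext}}^{\mathrm{brw}}>t\}=\{\underline x_t\ne\underline 0\}$ we have $M_t\ge1$, so Markov's inequality yields $\P(T_{\mathrm{ext}}^{\mathrm{brw}}(G,\underline x_0)>t)\le\mathbb E[M_t]\le M_0\,e^{-(1-\lambda)t}$, which is the asserted exponential decay (at rate $\lambda-1$) and, upon $t\to\infty$, almost sure extinction. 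Integrating over $t>0$ then gives $\mathbb E[T_{\mathrm{ext}}^{\mathrm{brw}}(G,\underline x_0)\mid G,\underline x_0]=\int_0^\infty\P(T_{\mathrm{ext}}^{\mathrm{brw}}>t)\,dt\le M_0/(1-\lambda)$, the second inequality of \eqref{eq:mean-ext-time-product}. For $\mu\ge1$ one repeats the identical computation with $w\equiv1$ — legitimate because then $d_u^{1-\mu}\le1=w(u)$ already in the last step of \eqref{eq:keybound-plan} — obtaining the same three conclusions with $|\underline x_0|$ in place of the weighted sum.

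The algebra in \eqref{eq:keybound-plan} is immediate, so the only genuine work is the probabilistic bookkeeping on infinite graphs: justifying the Dynkin/generator identity up to the explosion time, excluding explosion, and passing to the limit in the localization. This must be argued by hand rather than by truncation, because the model is \emph{not} monotone under passing to subgraphs — deleting edges lowers degrees and thereby \emph{increases} the transmission rates at the incident vertices — so one cannot approximate $G$ by finite subgraphs and take limits.
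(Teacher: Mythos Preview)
Your proof is correct and follows exactly the paper's route: the same Lyapunov function $M_t=\sum_v x_t(v)\,d_v^{1-\mu}$ (this is the paper's Claim~\ref{claim:supermg1} with $\alpha=1-\mu$), the same generator bound $\mathrm{d}\E[M_t]\le-(1-\lambda)\E[M_t]\,\mathrm{d}t$, and the same Markov-inequality-then-integrate finish. Your extra care with localization and non-explosion, and your remark that the model is not monotone under passage to subgraphs, are sound additions that the paper handles only informally; for $\mu>1$ your weaker bound $|\underline x_0|/(1-\lambda)$ is in fact all that the paper's own argument delivers as written (it sets $\alpha=1-\mu$ and then invokes ``$\alpha>0$'', which fails for $\mu>1$).
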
 

Our next theorem is about the same processes on the configuration model. For the sake of simplicity, we assume that $\min_{i\le n}d_i\ge 3$, ensuring that for all sufficiently large $n$, $\mathrm{CM}(\underline d_n)$ on $n$ vertices has a giant component $\CC_n^{\sss{(1)}}$ containing $n(1-o(1))$ many vertices with probability that tends to $1$ as $n\to \infty$, see \cite{MolRee95, MolRee98}. We use the $O_{\P}, \Theta_{\P}$-notation in the standard way, see notation on page \pageref{sec:notation}. By $\mathrm{poly}(n)$ we denote polynomial functions of $n$ (with an arbitrary but finite exponent).

\begin{theorem}[Product penalty on CM] \label{thm:prod_CM}
Let $G_n:=\mathrm{CM}(\underline d_n)$ be the configuration model in Definition \ref{def:CM}  on the degree sequence 
$\underline d_n=(d_1, \dots, d_n)$. 
Consider the degree-penalized contact process $\CPf$ and branching random walk $\BRW$ with penalty function $f(x,y)=(xy)^\mu$ for some $\mu>0$ from Definition \ref{def:CP} and \ref{def:BRW}.
\begin{itemize}
\item [(a)] Let $\mu<1/2$, and $\underline d_n$ satisfy the regularity assumptions in Assumption \ref{assu:regularity} with $\min_{i\le n}d_i\ge 3$, so that $D$ has heavier tails than stretched-exponential with stretch-exponent $1-2\mu$ (in the sense of Definition \ref{def:stretched-heavy}). Then both $\CPf(G_n, \underline 1_{G_n} )$ and  $\BRW(G_n, \underline 1_{G_n})$  survive until $\Theta_{\P}(\exp(Cn))$ long time. 
\item[(b)] Let $\mu\ge 1/2$. Then for all $\la< 1$, both $\CPf(G_n, \underline 1_{G_n})$ and $\BRW(G_n, \underline 1_{G_n})$ go extinct in $O_{\P}(\mathrm{poly}(n))$ time, whenever it holds for $(\underline d_n)$ that $\sum_{i=1}^n d_i^{1-\mu} =O_{\P}(\mathrm{poly}(n))$.
\end{itemize}
\end{theorem}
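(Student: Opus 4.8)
\textbf{Proof proposal for Theorem \ref{thm:prod_CM}.}

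The plan is to deduce both parts from the already-established results on Galton--Watson trees (Theorem \ref{thm:prod_GW}) and on arbitrary graphs (Theorem \ref{thm:prod-general}), together with the local-weak-convergence picture relating $\mathrm{CM}(\underline d_n)$ to its limiting GW tree. For part (b) the upper bound in Theorem \ref{thm:prod-general} applies verbatim to the finite graph $G_n$: conditionally on $G_n$ and on the all-infected start, \eqref{eq:mean-ext-time-product} gives $\E[T_{\mathrm{ext}}^{\mathrm{cp}}(G_n,\underline 1_{G_n})\mid G_n]\le (1-\lambda)^{-1}\sum_{i=1}^n d_i^{1-\mu}$, and by hypothesis the right-hand side is $O_{\P}(\mathrm{poly}(n))$. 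A Markov inequality together with the uniform exponential tail bound from Theorem \ref{thm:prod-general} (decay rate at least $\lambda-1<0$, independent of $G_n$) then upgrades this expectation bound to $T_{\mathrm{ext}}^{\mathrm{cp}}(G_n,\underline 1_{G_n})=O_{\P}(\mathrm{poly}(n))$, and the same for the dominating $\BRW$. This part is essentially immediate.

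The substance is in part (a). Here the strategy is: (i) show survival for a long time by exhibiting, inside $G_n$, a subgraph on which a restricted contact process survives; (ii) get the matching $O_{\P}(\exp(Cn))$ upper bound from a total-rate or expected-extinction-time estimate. For (i), I would use the fact that under Assumption \ref{assu:regularity} with $\min_i d_i\ge 3$ the graph $G_n$ contains, with high probability, a large connected piece whose local structure around (and below) a positive fraction of vertices looks like the GW tree $\CT_D$ down to depth of order $\log n$; since $D$ is heavier than stretched-exponential with stretch-exponent $1-2\mu$, Theorem \ref{thm:prod_GW} guarantees that on the infinite tree the process survives \emph{locally} for every $\lambda>0$, and in fact does so via a mechanism (a high-degree vertex together with a bounded surrounding neighborhood) that is essentially finite-range. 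The key point is that this local survival mechanism is ``self-sustaining on a finite gadget'': there is a finite (random, but tight) rooted neighborhood $H$ such that the contact process confined to $H$ re-infects its root at a uniformly positive rate and survives on $H$ for an exponentially long (in $|H|$) time with probability bounded away from $0$. Because the stretched-exponential tail produces, whp, order $n^{c}$ disjoint copies of such gadgets spread through $G_n$ (a first-moment/second-moment count on how many vertices of degree $\approx k$ appear, combined with the fact that each gadget occupies only polylog-many vertices), a block/renormalization argument --- comparing the collection of gadgets and their mutual infection to a supercritical process, exactly as in the classical CP-on-CM proofs à la Mountford--Valesin--Yao --- yields survival for $\Theta_{\P}(\exp(Cn))$ time. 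The matching upper bound $T_{\mathrm{ext}}\le \exp(Cn)$ whp follows since on $n$ vertices the whole configuration has at most $2^n$ states and the process is an irreducible finite Markov chain absorbed at $\underline 0$, so its extinction time is at most exponential in $n$ in probability (alternatively, bound the expected extinction time by the graph's total ``capacity'' $\sum_i d_i^{1-\mu}\le n\cdot n^{1-\mu}$ only gives polynomial --- so one really does invoke the finite-state-space bound, or a direct coupling argument, to cap it at $\exp(Cn)$).

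The main obstacle is step (i), and specifically making the ``finite gadget survives exponentially long and talks to its neighbors'' comparison rigorous in the degree-penalized setting: the infection rates now decay like $(d_ud_v)^{-\mu}$, so one must check that the surviving structure identified on the \emph{infinite} tree in Theorem \ref{thm:prod_GW} --- presumably a star around a degree-$k$ vertex with $k$ large but \emph{finite}, whose self-reinfection rate is $\asymp k\cdot k^{-\mu}\cdot (\text{leaf degree})^{-\mu}$ and must beat the healing rate $1$ --- can be localized to a bounded-depth subtree and then embedded disjointly, many times, inside $G_n$ with control on the pairing randomness (so that distinct gadgets are vertex-disjoint and the edges between gadget-neighborhoods are present with enough probability to drive the renormalized supercritical process). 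A secondary technical point is transferring the \emph{almost sure} tree statement of Theorem \ref{thm:prod_GW} into a \emph{with high probability} statement about exponentially many near-independent tree-like neighborhoods in $G_n$; this is standard via local weak convergence plus a concentration (e.g.\ Azuma on the pairing) argument, but needs the stretched-exponential tail to ensure the requisite number of high-degree gadgets actually occurs whp rather than merely in expectation. I expect the bookkeeping --- rather than any single hard idea --- to be the bulk of the work here, and the proof to parallel closely the corresponding arguments for the classical contact process on the configuration model, with the penalty factors tracked through the rate estimates.
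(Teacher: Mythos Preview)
Your treatment of part (b) is correct and matches the paper exactly: apply the bound \eqref{eq:mean-ext-time-product} from Theorem~\ref{thm:prod-general} to $G_n$ with the all-infected start, obtaining $\E[T_{\mathrm{ext}}\mid \underline d_n]\le (1-\lambda)^{-1}\sum_i d_i^{1-\mu}=O_{\P}(\mathrm{poly}(n))$, and Markov's inequality finishes.

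For part (a) your outline (stars as the surviving gadgets; connect them by short paths; renormalize to a supercritical block process) is the right strategy and is what the paper carries out, but the implementation differs from your local-weak-convergence framing. The paper does not transfer Theorem~\ref{thm:prod_GW} to $G_n$ via local limits at all; instead it directly adapts the embedded-expander construction of Bhamidi--Nam--Nguyen--Sly~\cite{Sly19} (rather than \cite{MVY2013}): one exhibits inside $G_n$ an $(\alpha,R,m,j)$-embedded expander (Definition~\ref{def:embdedded-expander}) whose vertices are stars of degree in $[j,2j]$ and whose edges are paths in $G_n$ of length $R=o(j^{1-2\mu})$; the stretched-exponential hypothesis with stretch-exponent $1-2\mu$ is exactly what forces $R=o(j^{1-2\mu})$ via~\eqref{eq:stars_CM6}, and then Claim~\ref{claim:spread_between_stars} plus the expander property give exponentially-long survival. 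The one genuinely new ingredient relative to~\cite{Sly19}---which you anticipate as a concern but do not pin down---is that the connecting paths must be restricted to run through vertices of \emph{bounded} degree $\le m$ (condition~\eqref{eq:low-deg-path}); without this, the product penalty $(d_ud_v)^{-\mu}$ along the path can be as bad as $j^{-2\mu}$ per edge, and one loses a $\log j$ factor in the tail requirement (see the Remark closing Section~\ref{sec:embedded_stars_GW}). Your approach via local weak convergence would in principle also work but would still have to produce exactly this structure, so the paper's direct route is more economical.
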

Starting from the all-infected state on $G_n$ is not a serious restriction. In part (a), when started from a single vertex, i.e., $\underline \xi_0=\ind_v$, the process has a positive probability of reaching a large pandemic, and the same result -- long survival -- is valid with positive probability. See \cite{Sly19} on how to move between a single vertex and all vertices as starting states. 

\begin{remark}[Polynomial penalties]\label{rem:polynomial-penalties}\normalfont
The proof of Theorems \ref{thm:prod-general} and \ref{thm:prod_CM} (b) also work more generally for any penalty function $f_1(x,y)=x^{\mu}y^{\nu}$ with $\mu+\nu\ge 1$ under the same conditions, i.e., for all graphs $G$, whenever $\lambda<1$ and initial infected set $\xi_0$ is finite. 
It is also straightforward to extend the result from monomials to polynomials of the form 
\[ f_2(x,y)=\sum_{i\in \N} a_i x^{\mu_i}y^{\nu_i}\] with at least one term, say the first one,  satisfying $\mu_1+\nu_1\ge 1$, and all $a_i\ge 0$. In this case we can guarantee extinction whenever $\la<a_1$, using the stochastic domination of $\mathrm{CP}_{f_2, \la}$ by $\mathrm{CP}_{a_1 f_1, \la}=\mathrm{CP}_{f_1, \la/a_1}$, since the penalty is higher in process with $f_2$, leading to smaller infection rates, see  \eqref{eq:domination-between-two-f} below. The proof of Theorem \ref{thm:prod-general} also extends to processes with penalty function
\[ f_3(x,y) := 1\Big/\sum_{i\in \N} a_i x^{-\mu_i}y^{-\nu_i}, \quad \mbox{with}\quad \sum_{i\in \N} a_i<\infty\] whenever $(\mu_i,\nu_i)_{i\in \N}$ are such that and there is a \emph{unique dominant term} (say the first one) in the following sense: $\mu_1\le \mu_i$ and $\nu_1\le \nu_i$ for every $i\in \N$ and $\mu_1+\nu_1\ge 1$.  We then bound the infection rates from above as follows:
\[\la/f_3(d_u,d_v)=\lambda\sum_i a_i d_u^{-\mu_i}d_v^{-\nu_i}\le\lambda\Big(\sum_i a_i\Big) d_u^{-\mu_1}d_v^{-\nu_1}=\lambda\Big(\sum_i a_i\Big)\Big/f_1(d_u, d_v),\]
with $f_1(x, y)=x^{\mu_1}y^{\mu_1}$. So, using stochastic domination,  whenever $\lambda< \left(\sum_i a_i\right)^{-1}$, Theorem \ref{thm:prod-general} is still valid by the first part of the remark.
 \end{remark}

It turns out that --  instead of the product penalty -- switching to a class of penalty functions $f$ that are monomials of  $\max(x,y)$ shows a richer behavior, and we see an extra phase when $\mu$ crosses $1$.
\begin{theorem}[Max penalty on GW trees]
\label{thm:max_GW}
Let $\mathcal{T}$ be an infinite Galton-Watson tree with offspring distribution $D$, so that $\P(D=0)=0$.
Consider the degree-penalized contact process $\CPf$ and branching random walk $\BRW$ with penalty function $f(x,y)=\max(x,y)^\mu$ for some $\mu>0$ in Definitions \ref{def:CP} and \ref{def:BRW}.
\begin{itemize}
\item[(a)] Let $\mu<1/2$, and the tail of  $D$ be heavier than stretched-exponential with stretch-exponent $1-2\mu$, (in the sense of Definition \ref{def:stretched-heavy}). Then for all $\la>0$, the contact process $\CPf(\CT, \ind_\varnothing)$ and $\BRW(\CT, \ind_\varnothing)$ both show \emph{local survival}, for almost  all realizations  $\CT$ of the Galton-Watson tree.
\item[(b)] Let $\mu\in(1/2, 1)$, and for some $\alpha\in (0,1-\mu)$, the tail of $D$ weakly follow a power law with tail-exponent $\alpha$ (in the sense of Definition \ref{def:power-heavy}).  Then for all $\la>0$ small enough, $\CPf(\CT, \ind_\varnothing)$ and $\BRW(\CT, \ind_\varnothing)$ both show \emph{global survival}, for almost all realizations $\CT$ of the Galton-Watson tree. 
\item[(c)] Let $\mu\in(1/2, 1)$, and $\Ev[D^{1-\mu}]<\infty$.  Then for all $\la$ small enough, the processes $\CPf(\CT, \ind_\varnothing)$ and $\BRW(\CT, \ind_\varnothing)$ both \emph{go extinct} almost surely, for almost all realizations $\CT$ of the Galton-Watson tree. 
\end{itemize}
\end{theorem}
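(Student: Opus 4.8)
The plan is to deduce a.s.\ extinction of both processes from a first-moment bound on the branching random walk. Since $\CPf(\CT,\ind_\varnothing)\preceq\BRW(\CT,\ind_\varnothing)$ and a.s.\ extinction of the BRW is implied by finiteness of the total number of particles ever born, it suffices to prove that for all sufficiently small $\lambda>0$
\[
\E_{\CT}\!\left[\,\E_{\mathrm{BRW}}[\#\{\text{particles ever born}\}\mid \CT]\,\right]<\infty;
\]
by Fubini this forces the inner conditional expectation to be finite for a.e.\ realisation $\CT$, hence a.s.\ extinction. Reading the BRW as a multitype branching process in which a particle carries its location as its type, a particle at $u$ produces in expectation $r(u,v):=\lambda/\max(d_u,d_v)^\mu$ offspring of type $v$ for each neighbour $v$, so the conditional expected total progeny equals $\sum_{k\ge0}(A^k\mathbf{1})_\varnothing$, where $A=(r(u,v)\mathbf{1}_{\{u\sim v\}})_{u,v}$.

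The two estimates that drive the bound are: (i) the total outgoing rate $\sum_{v\sim u}r(u,v)\le\lambda d_u^{1-\mu}$, since $\max(d_u,d_v)\ge d_u$; and (ii) the ``backtracking'' estimate $\sum_{v\sim u}r(u,v)^2\le\lambda^2 d_u^{1-2\mu}\le\lambda^2$, which is where $\mu>1/2$ enters. Using $\mathbf{1}\le h$ with $h(v):=d_v^{1-\mu}\ge1$, I would expand $(A^kh)_\varnothing$ as a sum over length-$k$ walks from the root weighted by $\prod_i r(w_i,w_{i+1})\cdot h(w_k)$ and compute $\E_{\CT}$ by exploring $\CT$ along the walk, drawing a fresh copy of $D$ for the offspring number of each newly visited vertex. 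Writing each walk as its geodesic backbone from $\varnothing$ to the endpoint together with the excursions branching off it: a backbone step down from $v$ to a fresh child $c$ contributes $\le\lambda d_v^{-\mu}$, and summing over the at most $d_v$ children of $v$ and averaging over their fresh offspring numbers gives a factor $\le\lambda d_v^{1-\mu}\,\E[(D+1)^{1-\mu}]$ --- the only place $\E[D^{1-\mu}]<\infty$ is used, and exactly the shape $\mathrm{const}\cdot h(v)$ needed to telescope. Each completed excursion out of $v$ into a child $c$ carries an extra factor $r(v,c)^2$, and by (ii) their sum over the children of $v$ is $\le\lambda^2$, so excursions contribute only a bounded multiplicative correction; (ii) likewise damps any part of a walk that repeatedly recrosses an edge. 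Organising this as an induction on $k$ --- equivalently, bounding $\sup_d d^{-(1-\mu)}\,\E[\text{walk-sums from a degree-}d\text{ vertex into its own subtree}]$ uniformly in the walk length --- one obtains a geometric-type bound $\E_{\CT}[(A^kh)_\varnothing]\le\E[D^{1-\mu}]\,(C\lambda)^k$ (possibly up to a polynomial factor in $k$), with $C=C(\mu,\E[D^{1-\mu}])$, so that $\sum_k\E_{\CT}[(A^k\mathbf{1})_\varnothing]<\infty$ once $\lambda<1/C$.

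The step I expect to be the main obstacle is the dependency bookkeeping just described. Since the BRW allows reinfection, a lineage-walk can re-enter a subtree it has already explored, so the ``fresh copy of $D$'' is available only on the first visit and successive excursions into the same subtree are positively correlated --- one cannot simply re-randomise. The remedy is to arrange the excursion decomposition so that the annealed bound for a subtree is applied before the walk is ever allowed to return to it, and to absorb every re-entry into an already-explored part through the factor $r(\cdot,\cdot)^2\le\lambda^2\max(\cdot,\cdot)^{-2\mu}$ supplied by (ii); verifying that the resulting (multiply indexed) series still converges for small $\lambda$ is the technical core. The remaining ingredients --- the domination $\CPf\preceq\BRW$, the multitype-branching identity for the mean total progeny, the rate estimates (i)--(ii), and the identification of $\E[D^{1-\mu}]$ as the governing moment --- are routine.
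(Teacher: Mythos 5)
There is a genuine gap: the statement you were asked to prove is the full theorem with three parts, and your proposal only addresses the extinction regime, part (c). Parts (a) and (b) are survival statements and require entirely different arguments — no first-moment/progeny bound can prove them. In the paper, (a) is proved by embedding an infinite row of high-degree star-graphs connected by short low-degree paths into the GW tree (this is where the stretched-exponential condition with exponent $1-2\mu$ enters, since a degree-$K$ star stays infested for time $\exp(\Theta(\lambda^2K^{1-2\mu}))$ and must reach the next star within distance $o(K^{1-2\mu})$), followed by a renormalisation to oriented percolation; (b) is proved by constructing, with positive probability, an infinite infection ray along which degrees grow like $K^{s_1^m}$, using the power-law lower bound with $\alpha<1-\mu$ to guarantee each vertex has many children of suitably larger degree and that the infection jumps to one of them before healing. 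None of this is touched by your proposal, so as a proof of the theorem it is incomplete in an essential way.

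Concerning part (c) itself, your approach is in spirit the same as the paper's (count expected particles ever born along genealogical paths, split a path into a non-backtracking backbone plus backtracking excursions, use $\mu\ge 1/2$ to make each backtrack cost $O(\lambda^2)$, and use $\E[D^{1-\mu}]<\infty$ for the backbone), but the step you yourself flag as the technical core — the correlation bookkeeping when a lineage re-enters an already-explored subtree — is left unresolved, and as formulated (annealing subtree by subtree during the exploration) it is genuinely delicate. The paper avoids this problem entirely by ordering the two bounds the other way: the backtracking erasure is performed \emph{deterministically on a fixed tree}, since a backtracking step $u\to c\to u$ contributes $r(u,c)^2$ and summing over the children of $u$ gives at most $\lambda^2 d_u^{1-2\mu}\le\lambda^2$ pathwise (only $d_u\ge 1$ and $\mu\ge 1/2$ are used), and the combinatorics of where the erasures sit contributes at most $2^{N+2k}$; this reduces everything to non-backtracking rays, whose weight $\prod_i(d_{\pi_i}-1)^{-\mu}$ telescopes under the GW expectation to $(\E[D^{1-\mu}])^N$ because offspring along a ray are independent. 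If you restructure your argument this way — quenched bound for the excursions first, annealed bound only for the backbone — the dependency issue you worry about disappears, and you recover the paper's condition $\lambda<(2\E[D^{1-\mu}])^{-1}$ rather than an unquantified $\lambda<1/C$.
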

Note that $\alpha<1-\mu$ in part (b) means that $\E[D^{1-\mu}]=\infty$, and for power-law degrees with $\alpha>1-\mu$, we have $\E[D^{1-\mu}]<\infty$. In this sense part (b) and (c) are almost matching and we leave out only the case $\alpha=1-\mu$, where the (potentially present) slowly varying function multiplying the power-law decay shall play a decisive role in survival vs extinction (see below \eqref{eq:heavier-power-law}). To avoid technical difficulties of tail-estimates, we decided to leave out this boundary case. Part (c) above is also valid more generally, see Corollary \ref{cor:SST} below. To prove extinction, we develop a new technique that we call \emph{loop erasure of infection paths}, see Section \ref{sec:discussion}. Now we state the missing phases in Theorem \ref{thm:max_GW} above: When $\mu\ge 1/2$, we can show local extinction, and when $\mu\ge 1$, also global extinction much more generally, hence we state them separately as follows.
\begin{theorem}[Max penalty on trees and graphs]\label{thm:max_trees}
Let $\CT$ be any (possibly infinite) rooted tree with root $\varnothing$.
Consider the degree-penalized contact process $\CPf$ and branching random walk $\BRW$ with penalty function $f(x,y)=\max(x,y)^\mu$ for some $\mu>0$.
\begin{itemize}
\item[(a)] Let $\mu\ge 1/2$. Then for all $\la<1/2$, the processes $\CPf(\CT, \underline \xi_0)$ and $\BRW(\CT, \underline x_0)$ both show \emph{local extinction} almost surely, whenever $|\underline \xi_0|<\infty$ (resp., $|\underline x_0|<\infty$) almost surely. In this case we further have that for any $v\in \CT$, the tail-distributions of the local extinction times $T_{\mathrm{ext}}^{\mathrm{cp}}(\CT, \underline \xi_0,v)$, $T_{\mathrm{ext}}^{\mathrm{brw}}(\CT, \underline x_0,v)$  decay exponentially in $t$.
\item[(b)] Let $\mu\ge 1$. Then for all $\la< 1$, the contact process $\CPf(G, \underline \xi_0)$ and $\BRW(G, \underline x_0)$ both \emph{go extinct} almost surely on \emph{any} (finite or infinite) graph $G$  whenever $|\underline \xi_0|<\infty$ (resp., $|\underline x_0|<\infty$) almost surely, hence also on any tree $\CT$.  
Further, the bound \eqref{eq:mean-ext-time-product} is also valid here on the extinction times, which decay at least exponentially in $t$ with rate at least $\la-1$.
\end{itemize}
\end{theorem}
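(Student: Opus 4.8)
The plan is to reduce both parts to the already-established product-penalty statements by a stochastic-domination argument, exploiting the elementary inequality $\max(x,y)\ge \sqrt{xy}$ and, for part (b), $\max(x,y)\ge (xy)^{1/2}$ combined with the trivial bound in a different regime. For part (b), note that $\max(d_u,d_v)^\mu \ge (d_u d_v)^{\mu/2}$ always, so since $\mu\ge 1$ gives $\mu/2\ge 1/2$, the infection rate $\la/\max(d_u,d_v)^\mu$ is pointwise at most $\la/(d_u d_v)^{\mu/2}$, i.e.\ $\CPfm(G,\underline\xi_0)$ is stochastically dominated by $\CPfp(G,\underline\xi_0)$ with the product exponent $\mu/2\ge 1/2$ (and likewise for $\BRW$). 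Now Theorem \ref{thm:prod-general} applied with exponent $\mu/2$ gives almost sure extinction on any graph $G$ for all $\la<1$, together with the mean-extinction-time bound $\sum_v \xi_0(v) d_v^{1-\mu/2}/(1-\la)$ and the exponential tail at rate $\la-1$. Since $d_v^{1-\mu/2}\le d_v^{1-\mu}\cdot d_v^{\mu/2}$ is not quite \eqref{eq:mean-ext-time-product} as literally stated, one should instead re-run the Lyapunov/comparison computation of Theorem \ref{thm:prod-general} directly for the max penalty: the key point is that $f(x,y)=\max(x,y)^\mu \ge x^{\mu}$ and $\ge y^\mu$ separately, and $\mu\ge 1$, so the weight function $V(\underline x)=\sum_v x(v) d_v^{1-\mu}$ has non-positive drift under $\BRW$ when $\la<1$, yielding \eqref{eq:mean-ext-time-product} verbatim. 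I expect this to be a short computation essentially identical to the one proving Theorem \ref{thm:prod-general}.

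For part (a), the target is local (not global) extinction under the weaker hypothesis $\mu\ge 1/2$ and $\la<1/2$. Here the product-penalty comparison is too lossy (it would only handle $\mu\ge 1$), so I would argue directly on the tree. The plan is to track, for a fixed vertex $v$, the infection as it must repeatedly pass through $v$ or its incident edges; the relevant observation is that on a tree every infection path from the initially infected set that reinfects $v$ after a long absence must traverse one of the $d_v$ edges at $v$, and across such an edge $\{v,w\}$ the rate in \emph{either} direction is at most $\la/\max(d_v,d_w)^\mu \le \la/d_v^{\mu} \le \la/d_v^{1/2}$. A clean way to make this precise is the comparison with a single-vertex (or one-edge) sub-process: consider the edge boundary of $v$ and dominate the process of "infection attempts crossing into the subtree past $v$ from outside, and back" by a suitable birth-death chain. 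More systematically, I would use the standard trick of comparing the contact/BRW restricted to the ball $B_r(\varnothing)$: the total rate at which infection enters $v$ from its neighbours is at most $\la d_v \cdot d_v^{-\mu}\le \la d_v^{1-\mu}\le \la$ when $\mu\ge 1$ — but for $1/2\le\mu<1$ this is $\la d_v^{1-\mu}$, which is \emph{not} bounded, so a cruder global bound fails and one genuinely needs a local/self-avoiding-path estimate.

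Therefore the heart of part (a) is a self-avoiding infection-path count of the type used for subcriticality of the classical contact process below $\la_{c,1}$ on bounded-degree graphs, adapted to the degree-penalized rates. The plan: enumerate "infection paths" from $\underline\xi_0$ back to $v$ at a late time as sequences $v=v_0,v_1,\dots,v_k$ of vertices in $\CT$ with consecutive ones adjacent; the probability/expected contribution of such a path is bounded by a product of factors $r(v_{i-1},v_i)/(\text{competing healing rate})$, and crucially each factor carries $\max(d_{v_{i-1}},d_{v_i})^{-\mu}\le d_{v_{i-1}}^{-\mu/2} d_{v_i}^{-\mu/2}$, so each internal vertex $v_i$ of degree $d$ contributes a penalty $d^{-\mu}$ that beats its branching factor $d$ precisely when $\mu\ge 1$... and again only gives $d^{1-\mu}$ for $\mu<1$. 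The resolution, matching the threshold $1/2$, is that a vertex of degree $d$ can be \emph{entered and exited}, contributing $d^{-\mu}\cdot d^{-\mu}=d^{-2\mu}$ against the two choices of the outgoing edge among $\le d$ options, i.e.\ net $d^{1-2\mu}\le 1$ exactly when $\mu\ge 1/2$; combined with the factor $\la<1/2$ absorbing the constant-order overcounting (two directions per step, geometric healing), the expected number of such returning paths of length $\ge k$ decays geometrically, giving exponential tails for $T_{\mathrm{ext}}^{\mathrm{cp}}(\CT,\underline\xi_0,v)$ and, a fortiori, local extinction a.s. The BRW case is identical and in fact cleaner, since there the path expansion is exact (no competition between multiple particles), so one computes $\Ev[x_t(v)]$ by summing over walks of the underlying graph weighted by $\prod_i r(v_{i-1},v_i)\,\e^{-(\cdot)}$ and shows it is summable in $t$.

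The main obstacle I anticipate is making the "entered-and-exited, net $d^{1-2\mu}$" bookkeeping fully rigorous for the contact process — i.e.\ justifying that when an infection path revisits a high-degree vertex it pays the penalty \emph{twice} and that the number of choices is bounded by the degree rather than something larger — since naive path-counting on a tree with unbounded degrees is delicate, and one must be careful that the dominating birth-death / branching-walk expansion is genuinely an upper bound for the contact process (this is where Definition \ref{def:BRW} and the stochastic domination of $\CPf$ by $\BRW$ are invoked, reducing everything to the BRW computation). The stretched-exponential vs.\ power-law distinction plays no role in this theorem since the claims hold for \emph{any} tree/graph; the hypotheses are purely $\mu\ge1/2$, $\la<1/2$ (part (a)) and $\mu\ge 1$, $\la<1$ (part (b)).
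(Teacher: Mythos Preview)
Your treatment of part~(b) is essentially the paper's: dominate $\max(x,y)^\mu$ from below by $(xy)^{\mu/2}$ and invoke Theorem~\ref{thm:prod-general} with exponent $\mu/2\ge 1/2$. Your caveat about the exponent $1-\mu$ versus $1-\mu/2$ in~\eqref{eq:mean-ext-time-product} is well-observed; the paper's short proof does not address it either, and the product-domination route only literally delivers $d_v^{1-\mu/2}$.

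For part~(a) your heuristic is the right one and matches the paper's key observation: a backtracking step $u\to v\to u$ contributes $r(u,v)^2\le \lambda^2 d_u^{-2\mu}$, and summing over the $d_u$ choices of $v$ gives $\lambda^2 d_u^{1-2\mu}\le\lambda^2$ once $\mu\ge 1/2$. The gap you yourself flag --- making the ``entered-and-exited'' bookkeeping rigorous --- is genuine, and your sketch does not close it. What is missing is a systematic decomposition of every walk from the initial set to $u$ into its non-backtracking skeleton (on a tree this is the geodesic $\pi_{\downarrow u}$) plus a sequence of backtracking excursions, together with a combinatorial bound on how many walks reduce to a given skeleton after peeling off $k$ such excursions. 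The paper does this via a \emph{loop-erasure} map $g$ that deletes the first backtracking step, and proves (Corollary~\ref{cor_rem_mult}) that $\sum_{\pi'\in(g^{(k)})^{-1}(\pi)} z(\pi')\le 2^{\mathfrak{l}(\pi)}(4\lambda^2)^k z(\pi)$; the factor $2^{\mathfrak{l}(\pi)}4^k$ counts the possible positions at which the $k$ backtracks could have been inserted, and controlling it requires a monotonicity property of the first-backtrack index under $g$ (Claim~\ref{claim_mon_tau}). Only then does summing the geometric series in $k$ converge, precisely when $4\lambda^2<1$, i.e.\ $\lambda<1/2$; the exponential tail for the local extinction time is obtained by splitting paths by length and combining the loop-erasure bound with the Gamma-tail estimate of Corollary~\ref{cor:localexttime}. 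Your phrase ``$\lambda<1/2$ absorbing the constant-order overcounting'' is where the argument is thin: the overcounting is not constant-order but grows like $2^{\mathfrak{l}(\pi)+2k}$, and pinning this down is exactly the work the paper's loop-erasure machinery performs.
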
   

Here, we prove Theorem \ref{thm:max_trees}(a) using again the \emph{loop erasure of infection paths} technique of Theorem \ref{thm:max_GW}(c).
It follows from the \emph{proof} of Theorems  \ref{thm:max_GW}(c) and Theorem \ref{thm:max_trees}(a) that (local-global) extinction for small $\lambda>0$ happens on any tree with at most exponential growth. Recall the upper branching number $\overline{\mathrm{br}}(\calT)$ from Definition \ref{def:branching-number}.
\begin{corollary}[Trees with finite branching number]\label{cor:SST}
Let $\CT$ be a rooted tree with  $\overline{\mathrm{br}}(\calT):=b<\infty$, and consider $\CPf$ and $\BRW$ on $\CT$ with penalty function $f(x,y)=\max(x,y)^\mu$ with $\mu \ge 1/2$. 
Then for all $\la<b^{-1}/2$, the processes $\CPf(\CT, \ind_\varnothing)$ and $\BRW(\CT, \ind_\varnothing)$ both go extinct almost surely.

Let $\CT$ be a spherically symmetric tree with with degree sequence $\underline d=(d_0, d_1, d_2, \dots)$ satisfying $\overline{\mathrm{br}}(\CT):=b<\infty$. Then 
	 for all~$\lambda < b^{-(1-\mu)}/2$, the processes $\CPf(\CT, \ind_\varnothing)$ and $\BRW(\CT, \ind_\varnothing)$ both go extinct almost surely.
\end{corollary}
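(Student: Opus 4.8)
The plan is to reduce to the branching random walk and then to a Green's‑function finiteness bound, following the scheme of the proofs of Theorems~\ref{thm:max_GW}(c) and~\ref{thm:max_trees}(a). Since $\BRW$ dominates $\CPf$ from the same finite initial state, it suffices to prove that $\BRW(\CT,\ind_\varnothing)$ dies out a.s. For the linear process $\BRW$ the empty configuration is absorbing, so if the process survives then $\int_0^\infty|\underline x_t|\,\diff t=\infty$; hence it is enough to show $\E\big[\int_0^\infty|\underline x_t|\,\diff t\big]<\infty$. Writing $m_t(v)=\E[x_t(v)]$, linearity gives $\tfrac{\diff}{\diff t}m_t=(R-I)m_t$ with $R$ the symmetric matrix $R_{u,v}=r(u,v)=\la\,e(u,v)/\max(d_u,d_v)^\mu$, and $\int_0^\infty \mathrm{e}^{-t}t^k/k!\,\diff t=1$ yields
\[
\E\Big[\int_0^\infty|\underline x_t|\,\diff t\Big]=\sum_{k\ge0}\big(R^k\mathbf 1\big)_\varnothing=\sum_{W}\ \prod_{e\in W}r(e)\ =:\ \mathcal G(\varnothing),
\]
the sum over all finite walks $W$ on $\CT$ started at $\varnothing$ of the product of the traversed edge‑rates. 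So the goal is $\mathcal G(\varnothing)<\infty$.

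Next I would loop‑erase: a walk from $\varnothing$ decomposes into its loop‑erasure — on a tree this is the geodesic $\varnothing=u_0,u_1,\dots,u_m=v$ to the walk's endpoint $v$ — together with closed excursions hanging off each $u_i$ into the subtree $\CT_{u_i}$ below it. With $q(u):=\sum\prod r$ over closed walks at $u$ that stay in $\CT_u$, one gets the fixed‑point relation $q(u)=\big(1-\sum_{c\ \text{child of}\ u}r(u,c)r(c,u)q(c)\big)^{-1}$ and the identity
\[
\mathcal G(\varnothing)=\sum_{v\in\CT}\Big(\prod_{i=0}^{m}q(u_i)\Big)\prod_{i=1}^{m}r(u_{i-1},u_i),\qquad m=|v|.
\]
The hypothesis $\mu\ge1/2$ enters exactly in bounding the loop corrections: for every $u$,
\[
\sum_{c\ \text{child of}\ u}r(u,c)r(c,u)\ \le\ \la^2\sum_{c}\max(d_u,d_c)^{-2\mu}\ \le\ \la^2\,d_u\cdot d_u^{-2\mu}=\la^2\,d_u^{1-2\mu}\ \le\ \la^2 ,
\]
so a monotone iteration from the leaves gives $q(u)\le(1-2\la^2)^{-1}<2$ for all $u$ provided $\la<1/2$ — which is guaranteed by $\la<b^{-1}/2$ since $b\ge1$. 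This uniform control of the loop contributions is the content of the loop‑erasure‑of‑infection‑paths estimate already used for Theorems~\ref{thm:max_GW}(c) and~\ref{thm:max_trees}(a).

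With $q(u)\le2$ and $\max(d_{u_{i-1}},d_{u_i})^\mu\ge1$ one then has $\mathcal G(\varnothing)\le 2\sum_{v\in\CT}(2\la)^{|v|}\le 2\sum_{N\ge0}Z_N(\CT)\,(2\la)^N$; since $\overline{\mathrm{br}}(\CT)=b$ we have $Z_N(\CT)\le(b+\eps)^N$ for all large $N$, so the series converges as soon as $2\la(b+\eps)<1$, and letting $\eps\downarrow0$ gives $\mathcal G(\varnothing)<\infty$ for all $\la<b^{-1}/2$ — the first assertion. In the spherically symmetric case the only change is that $d_{u_i}$ depends on the generation $i$ alone, so the geodesic product is the same for all $v$ with $|v|=N$ and, using $\max(d_{u_{i-1}},d_{u_i})^\mu\ge d_{i-1}^\mu$, is at most $\la^N Z_N^{-\mu}$; hence $\mathcal G(\varnothing)\le 2\sum_{N}(2\la)^N Z_N^{1-\mu}$, which with $Z_N\le(b+\eps)^N$ is finite whenever $2\la(b+\eps)^{1-\mu}<1$, i.e.\ for all $\la<b^{-(1-\mu)}/2$ (for $\mu>1$ one instead retains the gain $d^{1-2\mu}$ in the excursion estimate so that $q(u)$ stays uniformly bounded and the geometric series still closes at this threshold), giving the second assertion.

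The main obstacle is precisely the uniform bound $q(u)<2$ on the loop corrections — i.e.\ the fact that returning excursions, in particular into high‑degree vertices, cost a uniformly bounded amount. This is where $\mu\ge1/2$ is indispensable (through $d^{1-2\mu}\le1$), where the argument breaks down for $\mu<1/2$, and where the universal factor $\tfrac12$ in the thresholds originates; everything above the step of inserting $Z_N\le(b+\eps)^N$ is the bookkeeping already carried out in the proofs of Theorems~\ref{thm:max_GW}(c) and~\ref{thm:max_trees}(a).
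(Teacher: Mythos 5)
Your argument is correct (for the range where the stated thresholds are at most $1/2$, i.e.\ $\mu\in[1/2,1]$, which is where the corollary has content beyond Theorem~\ref{thm:max_trees}(b)), and it is strategically the same proof as the paper's, packaged differently. The quantity you bound, $\mathcal G(\varnothing)=\sum_k(R^k\mathbf 1)_\varnothing$, is exactly the expected total progeny $\sum_{\pi\in\mathscr{T}_0}\E[Z(\pi)]=\sum_\pi z(\pi)$ of the paper, and your ODE/Gamma-integral identity is precisely Lemma~\ref{lem_products}; the finiteness-implies-extinction step is also the same. The one genuinely different ingredient is how you control walks with loops: you use the last-exit decomposition of a walk into its geodesic plus subtree-confined closed excursions, with the Green's-function recursion $q(u)=\bigl(1-\sum_c r(u,c)r(c,u)q(c)\bigr)^{-1}$ and the fixed-point bound $q(u)\le 2$ (valid for $\lambda<1/2$, using $d_u^{1-2\mu}\le 1$, with the monotone truncation argument needed since an infinite tree may have no leaves), whereas the paper erases backtracking steps one at a time and counts erasure sequences by an injection into increasing sequences (Claim~\ref{claim_remove_one}--Corollary~\ref{cor_rem_mult}), arriving at the same cost of a factor $2$ per geodesic step up to the constant $(1-4\lambda^2)^{-1}$. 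After that, both proofs reduce to the identical series comparisons $\sum_N(2\lambda)^N Z_N$ (general tree, $\zeta\le 1$) and $\sum_N(2\lambda)^N Z_N^{1-\mu}$ (SST, using the parent degree along the geodesic), closed by $Z_N\le(b+\eps)^N$. Your excursion recursion is arguably a cleaner route on trees, but it is less robust than the paper's backtracking-removal bound, which is designed to survive bounded numbers of surplus and parallel edges and is reused for the configuration model in Section~\ref{sec:proofsCM-extinction}. One caveat: your parenthetical for $\mu>1$ in the SST statement is not a proof as written — once $\lambda\ge 1/2$, the excursion cost $\lambda^2 d_u^{1-2\mu}$ at low-degree vertices on the geodesic need not be small, so $q(u)\le 2$ does not follow — but the paper's own proof carries the same implicit restriction $\lambda<1/2$ through the geometric series $\sum_k(4\lambda^2)^k$, so on this point you match rather than fall short of the paper.
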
 
For spherically symmetric trees, finiteness of the upper branching number $\overline{\mathrm{br}}(\CT)$ is equivalent to requiring that $\log \overline{\mathrm{br}}(\CT)$ $=\limsup_{N \to \infty} \frac{1}{N} \sum_{i=1}^N \log(d_i) < \infty$. The requirement on $\lambda$ in Corollary \ref{cor:SST} for SST's is slightly milder than for arbitrary trees with finite upper branching number.
Our last theorems describes the  behavior of degree-penalized processes with maximum penalty on the configuration model. 
\begin{theorem}[Max penalty on CM, long survival regimes] \label{thm:max_CM_survival}
Let $G_n:=\mathrm{CM}(\underline d_n)$ be the configuration model in Definition \ref{def:CM}  on the degree sequence 
$\underline d_n=(d_1, \dots, d_n)$ that satisfies the regularity assumptions in Assumption \ref{assu:regularity}. Consider the degree-penalized contact process $\CPf$ and branching random walk $\BRW$ with penalty function $f(x,y)=\max(x,y)^\mu$.
\begin{itemize}
\item [(a)] Let $\mu<1/2$, and the tail of $D$ be heavier than stretched-exponential with stretch-exponent $1-2\mu$ (in the sense of Definition \ref{def:stretched-heavy}), and $\min_{i\le n}d_i\ge 3$. Then for all $\lambda>0$ the process $\CPf(G_n, \underline 1_{G_n} )$  survives until $\Theta_{\P}(\exp(Cn))$ long time. 
\item[(b)] Let $\mu\in(1/2, 1)$, and $(\underline d_n)_{n\ge 1}$ satisfy the power-law empirical degree Assumption \ref{assu:empirical-power-law} with exponent $\tau$ and exponent-error $\varepsilon\ge 0$, with
\begin{equation}\label{eq:max_CM_survival_error}
\mu<\big(3-\tau -\varepsilon(\tau-1)\big)\cdot\frac{1-\varepsilon}{1+\varepsilon}.
\end{equation}
Then for all $\la>0$ the process $\CPf(G_n, \underline 1_{G_n})$ survives until $\Theta_{\P}(\exp(Cn))$ long time. 
\end{itemize}
\end{theorem}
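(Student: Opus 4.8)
\emph{Proof plan.} The upper bound $T_{\mathrm{ext}}^{\mathrm{cp}}(G_n,\underline 1_{G_n})=O_{\P}(\exp(Cn))$ is soft and holds already for the ordinary contact process: on the $(1-o(1))$-probability event that $G_n$ has $O(n)$ edges (Assumption~\ref{assu:regularity}), the total infection rate is $O_{\P}(n)$ in every configuration, so with probability at least $\exp(-C'n)$ no infection fires during a given unit interval while all currently infected vertices heal; chaining such intervals dominates $T_{\mathrm{ext}}^{\mathrm{cp}}$ by a geometric number of them. Since $\BRW$ stochastically dominates $\CPf$, only the survival (lower) bounds require work, and we prove them for $\CPf$. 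For those, the plan is to use monotonicity to restrict $\CPf$ to a well-chosen subgraph $H_n\subseteq\CC_n^{\sss{(1)}}$ and to show that already on $H_n$ the infection survives for time $\exp(\Omega(n))$ with high probability, via the standard route of comparing $\CPf$ on $H_n$ with a supercritical oriented percolation on a skeleton of $\Theta(n)$ ``sites'', each of which, once seeded, stays infected much longer than it takes to re-seed a neighbouring site; the number of sites is the only place $n$ enters.

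\emph{Part (a): $\mu<1/2$.} Here the sites are stars. By monotonicity $\CPf$ restricted to the star at a vertex $v$ of degree $d$ dominates the contact process on $S_d$ with every edge at rate $\la d^{-\mu}$ (both endpoints have degree $\le d$ and the max penalty applies). The classical analysis of the contact process on star graphs shows that, once $\rho^2 d=\la^2 d^{\,1-2\mu}$ is large --- which happens for $d$ large exactly because $\mu<1/2$ --- this process keeps $\Theta(\la d^{\,1-\mu})$ leaves infected for a time that is $\exp(\Omega(\la^2 d^{\,1-2\mu}))$ with probability close to $1$. We fix a constant $K=K(\la)$ large enough that this survival time exceeds by a large factor the $O_{\P}(1)$ time needed for an infection path of bounded length to travel to and reignite another such star; the hypothesis that $D$ has heavier tails than stretched-exponential with stretch-exponent $1-2\mu$ guarantees $\P(D\ge K)>0$ and hence, via Assumption~\ref{assu:regularity}, a density $\Theta(n)$ of vertices of degree $\ge K$ in $G_n$. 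Pruning these to pairwise-disjoint stars (using $\min_i d_i\ge 3$ to keep enough genuine leaves), and using that in $\CC_n^{\sss{(1)}}$ every vertex has many others within bounded graph distance, produces a skeleton of $\Theta(n)$ star sites joined with positive edge-density; the oriented-percolation comparison then gives survival for $\exp(\Omega(n))$ time.

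\emph{Part (b): $\mu\in(1/2,1)$.} Now $\la^2 d^{\,1-2\mu}\to 0$, so individual vertices are not long-lived reservoirs; instead we use the $k$-core-on-bounded-degree-vertices structure announced in the introduction. Fix a large constant $K$ and an exponent $a=a(\tau,\varepsilon)>1$, and let $\mathcal K_n$ be the $k$-core (for a large constant $k$) of the subgraph of $G_n$ spanned by the vertices of degree in $[K,K^{a}]$. From the power-law Assumption~\ref{assu:empirical-power-law}: there are $\Theta(n)$ vertices in this window (the count being dominated by the lower endpoint $K$ once $a$ is large relative to $\varepsilon$), a uniformly chosen half-edge attaches to one of them with probability $\asymp K^{-(\tau-2)-O(\varepsilon)}$, and hence a vertex of degree $\ge K$ has $\gtrsim K\cdot K^{-(\tau-2)-O(\varepsilon)}=K^{\,3-\tau-O(\varepsilon)}$ neighbours inside the window. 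Since $\tau<3$ this exceeds $k$ once $K$ is large, so $\mathcal K_n$ has $\Theta(n)$ vertices, is an expander (being a $k$-core with large $k$), and has internal degrees $\delta_{\mathrm{int}}\gtrsim K^{\,3-\tau-O(\varepsilon)}$. Every edge of $\mathcal K_n$ carries infection rate $\rho\ge\la K^{-\mu a}$ (both endpoints have degree $\le K^{a}$), so $\CPf$ restricted to $\mathcal K_n$ dominates the contact process on an $n$-vertex expander with $\rho\,\delta_{\mathrm{int}}\gtrsim\la\,K^{\,3-\tau-O(\varepsilon)-\mu a}$. Choosing $a$ optimally subject to the count constraint, this quantity tends to infinity as $K\to\infty$ precisely under condition~\eqref{eq:max_CM_survival_error}, and it is a known fact that the contact process on an $n$-vertex expander with $\rho\,\delta_{\mathrm{int}}\gg 1$ survives for time $\exp(\Omega(n))$, which is the claimed bound.

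The step I expect to be hardest is the global bookkeeping in each case: in part~(a), rigorously running the oriented-percolation comparison on the genuinely expander-like (rather than Euclidean) geometry of $\CC_n^{\sss{(1)}}$ and controlling the near-independence of the $\Theta(n)$ reignition attempts; in part~(b), establishing that the degree-windowed $k$-core $\mathcal K_n$ exists with $\Theta(n)$ vertices together with the required expansion and internal-degree bounds with high probability, and pushing the two-sided weak-power-law errors of Assumption~\ref{assu:empirical-power-law} through all the exponents of $K$ so as to land exactly on condition~\eqref{eq:max_CM_survival_error}.
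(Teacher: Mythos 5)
Your overall route coincides with the paper's (stars as long-lived reservoirs joined into an expander skeleton for $\mu<1/2$; a $k$-core on degree-truncated vertices for $\mu\in(1/2,1)$), but there is a genuine gap in part (a) and two invalid justifications in part (b). In part (a) you use the heavier-than-stretched-exponential hypothesis only to conclude $\P(D\ge K)>0$, and you treat reignition of a neighbouring star as ``$O_{\P}(1)$ time along a path of bounded length''. This misses the quantitative tension that the hypothesis exists to resolve: with star density $\P(D\ge K)$ the connecting paths in any $\Theta(n)$-site skeleton have length $R\asymp-\log\P(D\ge K)$ (up to a constant set by the branching factor), the chance of pushing the infection through such a path in one attempt is of order $(c\lambda)^{R}$, so transmission within the star's lifetime $T_K=\exp(c_1\lambda^2K^{1-2\mu})$ requires $R\log(1/\lambda)\ll\lambda^2K^{1-2\mu}$, i.e.\ $-\log\P(D\ge K)=o(K^{1-2\mu})$ along a sequence of admissible $K$'s; since the claim is for every fixed $\lambda>0$ (hence with an arbitrarily small $\lambda$-dependent constant), this is exactly Definition \ref{def:stretched-heavy} with stretch-exponent $1-2\mu$. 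With only $\P(D\ge K)>0$ (say exponential or Poisson tails) your argument would yield the same conclusion, which it cannot: there $-\log\P(D\ge K)$ grows at least linearly in $K$ and the stars die long before they can reignite one another. You also never arrange the connecting paths to run over bounded-degree vertices (condition \eqref{eq:low-deg-path} of Definition \ref{def:embdedded-expander}); since the penalty acts along the path, this is needed to get the transmission cost $(c\lambda)^{R}$ at all, and it is precisely the new ingredient compared to the unpenalized process in the Bhamidi--Nam--Nguyen--Sly argument the paper adapts.

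In part (b) your structure (window $[K,K^{a}]$, then $k$-core) is essentially a reparametrization of the paper's construction (truncate at $M=k^{(1+\eta)/(3-\tau)}$, take the $k$-core; the resulting core indeed lives on degrees between roughly $k^{1/(3-\tau)}$ and $M$), but two steps are not valid as written. First, ``is an expander (being a $k$-core with large $k$)'' is false: minimum internal degree $k$ gives no expansion whatsoever (a $k$-core may have arbitrary bottlenecks). The paper obtains expansion not from the core property but from the observation that the core, conditioned on its degree sequence, is again a configuration model, and then proves the $(\delta,k)$-good property by a union bound over vertex subsets (Lemma \ref{lem:DeltaGood}); you would need the analogous argument for your windowed core. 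Second, the ``known fact'' that the contact process on an $n$-vertex expander with $\rho\,\delta_{\mathrm{int}}\gg1$ survives for $\exp(\Omega(n))$ is not citable off the shelf in the form you need (constant per-edge rate $\lambda K^{-\mu a}$, only a minimum-internal-degree bound); the paper proves exactly this step directly (Lemma \ref{lem:SurvivalStepOnCore}) as a nontrivial adaptation of Mourrat--Valesin. Finally, the existence of a linear-size core with internal degrees $\gtrsim K^{3-\tau-O(\varepsilon)}$ is a first-moment heuristic in your write-up; making it rigorous is the Janson--{\L}uczak verification occupying Theorem \ref{prop:CMcore}, and tracking the two-sided $\varepsilon$-errors there is also where condition \eqref{eq:max_CM_survival_error} actually comes from — you correctly flag this as hard, but it is the core of the proof rather than bookkeeping.
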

As the error in the power-law exponent $\varepsilon\downarrow 0$, the condition in \eqref{eq:max_CM_survival_error} simplifies to $\mu<3-\tau$, which is equivalent to the condition that $\alpha:=\tau-2<1-\mu$. Here $\alpha=\tau-2$ is the tail-exponent of the size-biased version of $D$, say $\widetilde D$, which can be shown to weakly follow a power law with $\alpha=\tau-2>0$ in the sense of Definition \ref{def:power-heavy}.  The local weak limit of the configuration model is a Galton-Watson tree with a version of the size-biased degree distribution $\widetilde D$. Theorem \ref{thm:max_GW}(b) describes that when $\mu\in(1/2, 1)$, on a weak power-law GW tree the processes both survive globally exactly when $\alpha<1-\mu$. Hence,    
this theorem reflects  the analogous  Theorem \ref{thm:max_GW}(b) on Galton-Watson trees, showing that global survival (but local extinction) there implies long survival for the corresponding configuration model.
\begin{theorem}[Max penalty on CM, fast extinction regimes]\label{thm:max_CM_extinction}
Consider the configuration model $G_n:=\mathrm{CM}(\underline d_n)$ in Definition \ref{def:CM}  on the degree sequence 
$\underline d_n=(d_1, \dots, d_n)$. Consider the degree-penalized contact process $\CPf$ and branching random walk $\BRW$ with penalty function $f(x,y)=\max(x,y)^\mu$.
\begin{itemize}
\item[(a)] Let $\mu\in(1/2, 1)$, and  $(\underline d_n)_{n\ge 1}$ satisfy the regularity assumptions in Assumption \ref{assu:regularity}, and the power-law empirical degrees of Assumption \ref{assu:empirical-power-law}--\ref{assu:empirical-power-law-2} with exponent $\tau$ and exponent-error $\varepsilon\ge 0$ with $\tau(1-\eps)>3$. Then for all $\la$ small enough the processes $\CPf(G_n, \underline 1_{G_n})$ and $\BRW(G_n, \underline 1_{G_n})$ both go extinct in $\Theta_{\P}(\log n)$ time. 
\item[(b)] Let $\mu\ge 1$. Then for all $\la< 1$, the processes $\CPf(G_n, \underline 1_{G_n})$ and $\BRW(G_n, \underline 1_{G_n})$ both go extinct in $O_{\P}(\mathrm{poly}(n))$ time, whenever it holds for $(\underline d_n)$ that $\sum_{i=1}^n d_i^{1-\mu}=O_\P(\mathrm{poly}(n))$. 
\end{itemize}
\end{theorem}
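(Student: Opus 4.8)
The argument splits according to the two parts, which are of quite different difficulty.

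\smallskip\noindent\emph{Part (b).} This is essentially immediate from Theorem~\ref{thm:max_trees}(b). Apply that theorem with $G=G_n$ and $\underline\xi_0=\underline x_0=\underline 1_{G_n}$: since $f(x,y)=\max(x,y)^\mu$ with $\mu\ge1$, both processes go extinct almost surely and, conditionally on $G_n$,
\[
\mathbb E\big[T_{\mathrm{ext}}^{\mathrm{brw}}(G_n,\underline 1_{G_n})\mid G_n\big]\ \le\ \frac{1}{1-\la}\sum_{i=1}^n d_i^{1-\mu}.
\]
On the event $\{\sum_i d_i^{1-\mu}\le n^{K}\}$, which has probability $1-o(1)$ by hypothesis, a conditional Markov inequality gives $\P\big(T_{\mathrm{ext}}^{\mathrm{brw}}(G_n,\underline 1_{G_n})>n^{K+1}/(1-\la)\mid G_n\big)\le n^{-1}$; combined with $T_{\mathrm{ext}}^{\mathrm{cp}}\le T_{\mathrm{ext}}^{\mathrm{brw}}$ (domination of the CP by the BRW), this yields $T_{\mathrm{ext}}^{\mathrm{cp}}(G_n,\underline 1_{G_n}),\,T_{\mathrm{ext}}^{\mathrm{brw}}(G_n,\underline 1_{G_n})=O_\P(\mathrm{poly}(n))$.

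\smallskip\noindent\emph{Part (a), upper bound $O_\P(\log n)$.} Since the BRW dominates the CP it suffices to bound $T_{\mathrm{ext}}^{\mathrm{brw}}(G_n,\underline 1_{G_n})$. Work conditionally on $G_n$ and let $m_v(t):=\E[x_t(v)\mid G_n]$, so $m_v'(t)=-m_v(t)+\sum_u r(u,v)m_u(t)$ with $r(u,v)=\la\,e(u,v)/\max(d_u,d_v)^\mu$. Introduce the weight $w_v:=d_v^{1-\mu}$ (well defined and $\ge1$ for $d_v\ge1$, since $\mu<1$) and set $M(t):=\sum_v w_v m_v(t)$. The whole upper bound reduces to the claim that, with high probability over $G_n$, there is a constant $C_0=C_0(\mu,\tau)<\infty$ with
\[
\sum_{v\in N(u)}\frac{e(u,v)\,d_v^{1-\mu}}{\max(d_u,d_v)^\mu}\ \le\ C_0\,d_u^{1-\mu}\qquad\text{for every vertex }u\text{ of }G_n.
\]
Granting this inequality, $M'(t)\le-(1-C_0\la)M(t)$, so for $\la<1/C_0$ we get $M(t)\le M(0)e^{-(1-C_0\la)t}$ with $M(0)=\sum_v d_v^{1-\mu}\le n+h_n=O_\P(n)$ by Assumption~\ref{assu:regularity}; since $w_v\ge1$ and isolated vertices contribute only $n_0e^{-t}$ to $\E[|\underline x_t|\mid G_n]$, choosing $t=2(1-C_0\la)^{-1}\log n$ makes $\P(\underline x_t\ne\underline 0)\to0$, hence $T_{\mathrm{ext}}^{\mathrm{brw}}(G_n,\underline 1_{G_n})=O_\P(\log n)$ and likewise for the CP.

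The power-law hypotheses enter exactly in proving the displayed inequality, and this is the main obstacle. The crude bound $d_v^{1-\mu}/\max(d_u,d_v)^\mu\le d_u^{1-2\mu}$ together with $\sum_{v\in N(u)}e(u,v)=d_u$ only gives $d_u^{2-2\mu}$ on the left-hand side, which exceeds $d_u^{1-\mu}$ once $\mu<1$; the point is that a vertex $u$ of large degree has \emph{most} of its neighbours of bounded degree. Splitting the sum according to whether $d_v\le d_u$ or $d_v>d_u$: the first part equals $d_u^{-\mu}\sum_{v\in N(u):d_v\le d_u}d_v^{1-\mu}$, which concentrates around $d_u^{-\mu}\cdot d_u\,\E[\widetilde D_n^{\,1-\mu}]=O(d_u^{1-\mu})$ since $\E[\widetilde D_n^{\,1-\mu}]=\E[D_n^{\,2-\mu}]/\E[D_n]=O(1)$ by~\eqref{eq:point-mass} and $\tau(1-\varepsilon)>3$; the second part is at most $\#\{v\in N(u):d_v>d_u\}\cdot d_u^{1-2\mu}\lesssim d_u^{3-\tau(1-\varepsilon)}\cdot d_u^{1-2\mu}=O(d_u^{1-\mu})$, again because $\tau(1-\varepsilon)>3\ge3-\mu$ and (using $\mu>1/2$) the map $x\mapsto x^{1-2\mu}$ is decreasing. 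Turning this into a bound valid \emph{simultaneously for all} $n$ vertices of $G_n$ with high probability is the delicate step: for vertices of degree at least (say) $\log^2 n$ one uses concentration of the neighbourhood degree profile under the uniform half-edge pairing of the configuration model, with the maximum-degree control~\eqref{eq:max-degree} keeping the relevant sums sub-polynomial and the fluctuations exponentially small, followed by a union bound; for vertices of bounded degree the inequality holds with an absolute constant by a direct count (again using $\mu>1/2$). This is where Assumptions~\ref{assu:empirical-power-law}--\ref{assu:empirical-power-law-2} are really used.

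\smallskip\noindent\emph{Part (a), lower bound $\Omega_\P(\log n)$.} This needs only the regularity Assumption~\ref{assu:regularity}. By monotonicity of the CP in the edge set (restricting the graphical construction to a sub-collection of edges, while keeping the rates $\la/\max(d_u,d_v)^\mu$, only slows the infection), it suffices to exhibit a sub-structure on which the infection persists for $\Omega(\log n)$. Since $\E[D]<\infty$, for $K$ large the fraction of half-edges incident to vertices of degree $\le K$ is $\ge\tfrac34$ with high probability; a standard estimate for the uniform pairing then gives $\Theta(n)$ edges of $G_n$ both of whose endpoints have degree $\le K$, and since these edges span a graph of maximum degree $\le K$ they contain a matching $\mathcal M_n$ of size $\Theta(n)$. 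Along each $e=\{u,v\}\in\mathcal M_n$, the CP on $G_n$ restricted to the single edge $e$ (i.e.\ to $K_2$) has infection rate $\ge\la/K^\mu=:\beta>0$ and is started from both endpoints infected; its extinction time $T_e$ is phase-type with $\P(T_e>t)\ge c_0e^{-\rho t}$ for constants $c_0,\rho>0$, and the $(T_e)_{e\in\mathcal M_n}$ are independent because the matched edges are vertex-disjoint (disjoint families of Poisson clocks). Hence $\P\big(\max_{e\in\mathcal M_n}T_e\le\tfrac1{2\rho}\log n\big)\le(1-c_0n^{-1/2})^{\Theta(n)}\to0$, so $T_{\mathrm{ext}}^{\mathrm{cp}}(G_n,\underline 1_{G_n})\ge\tfrac1{2\rho}\log n$ with high probability; the BRW dominates the CP, so its extinction time is bounded below the same way. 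Together with the upper bound this gives $\Theta_\P(\log n)$ for both processes, completing part (a).
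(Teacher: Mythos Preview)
Your part (b) is correct and is exactly the paper's argument (reduction to the product penalty via $\max(x,y)^\mu\ge(xy)^{\mu/2}$, then Theorem~\ref{thm:prod_CM}(b)). Your lower bound in part (a) is also fine; in fact the paper does not spell it out, and the trivial observation that even pure healing of $n$ initially infected vertices takes $\max$ of $n$ i.i.d.\ $\mathrm{Exp}(1)$ times, hence $\Theta_\P(\log n)$, already suffices.

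The genuine gap is in your upper bound for part (a). The weighted supermartingale with $w_v=d_v^{1-\mu}$ would work if the displayed inequality held with a \emph{fixed} constant $C_0$ simultaneously for all $u$, but it cannot. For a vertex $u$ of degree $d$, the contribution $R_u^{(1)}:=d^{-1}\sum_{v:d_v\le d}e(u,v)\,d_v^{1-\mu}$ is an average of $d$ terms bounded by $d^{1-\mu}$ with mean and variance $O(1)$ (this is where $\tau(1-\eps)>3$ enters). Bernstein's inequality therefore gives $\P(R_u^{(1)}>C_0)\le\exp(-c_{C_0}\,d^{\mu})$, not $\exp(-c\,d)$: the bound $b=d^{1-\mu}$ in the denominator is what limits you. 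A union bound over $\Theta(n)$ vertices only succeeds once $d\gtrsim(\log n)^{1/\mu}$; for smaller $d$ you are forced back to the deterministic estimate $R_u^{(1)}\le d^{1-\mu}$. Optimising the crossover yields $C_0=\Omega\big((\log n)^{1-\mu}\big)$, so the requirement $\lambda<1/C_0$ cannot be met by any fixed $\lambda>0$. Your sentence ``for vertices of bounded degree the inequality holds with an absolute constant by a direct count'' is true only for $d_u\le K$ with constant $C_0\approx K$; it does nothing for the entire intermediate range $K\le d_u\le (\log n)^{1/\mu}$ where neither concentration nor the crude bound yields a uniform constant.

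The paper's route is local rather than global and does not attempt a supermartingale. It first proves a structural fact about the configuration model (Proposition~\ref{prop:surplus}, via the moment bound Lemma~\ref{lemma:moments} for truncated power-law branching processes): with probability $1-o(1/n)$ the ball $B_{\delta\log n}(u_n)$ around a uniform vertex has at most a bounded number $\ell$ of surplus edges. On such an almost-tree, a path-counting lemma (Lemma~\ref{lemma:suprplus-paths}) controls the number of non-backtracking walks by $(2\ell+1)^N\cdot|B_N|$. Feeding this into Lemma~\ref{lem_overall_fast}, which packages the loop-erasure machinery of Section~\ref{sec:max_GW_localext} (Claim~\ref{claim_remove_one} through Corollary~\ref{cor_rem_mult}), shows that the BRW started from a \emph{single} particle at $u_n$ dies inside $B_{\delta\log n}(u_n)$ before time $C\delta\log n$, without reaching the boundary, with probability $1-o(1/n)$. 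A union bound over the $n$ starting vertices then gives $T_{\mathrm{ext}}^{\mathrm{brw}}(G_n,\underline 1_{G_n})=O_\P(\log n)$.
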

Theorem \ref{thm:max_CM_extinction}(a) is the counterpart of Theorem \ref{thm:max_CM_survival}(b), i.e., it shows fast extinction on the configuration model with power-law degrees with sufficiently light tail. For long survival, Theorem \ref{thm:max_CM_survival}(b) essentially requires $\mu<3-\tau$, equivalently, $\tau>3-\mu$.  Here in Theorem \ref{thm:max_CM_extinction}(a) to prove extinction we need essentially $\tau>3$, i.e., we leave the cases when $\tau\in( 3-\mu, 3)$ open. The reason for this is a structural difference between configuration models with $\tau\in(2,3)$ vs $\tau>3$: when $\tau\in(2,3)$, the Galton-Watson tree forming the local weak limit of the configuration model grows doubly-exponentially, and can be embedded into the configuration model only until $\Theta(\log \log n)$ generations, and with many surplus edges (i.e., edges beyond the number of vertices$-1$ that form the tree). However, we show that when $\tau>3$, the local weak limit GW tree can be embedded until $\Theta(\log n)$ generations and with only a \emph{bounded number of surplus edges} for all $n$ vertices all-at-once, see Proposition \ref{prop:surplus}, which might be interesting in its own right. We can then relate extinction of the CP/BRW on this new structure using our methodology of loop erasure (see below in Section \ref{sec:discussion}) so that CP/BRW never reaches the last generation. However, for $\tau\in (3-\mu, 3)$, on the one hand the $\Theta(\log \log n)$ generations of the embedding are too short and leave a good probability for CP/BRW to escape the embedded tree, and on the other hand there are too many additional cycles on the embedded tree that might boost the performance of CP/BRW.

\subsection{Background, discussion and overview of proof techniques}\label{sec:discussion}
In the following we highlight our novel proof techniques and their relation to the literature. The overview follows the structure of the rest of the paper.

\emph{Novel methodology: loop erasure in the space of infection paths (Sections \ref{sec:proofsGW} and \ref{sec:proofsCM-extinction}).} In Sections \ref{sec:max_GW_localext}, \ref{sec:max_GW_ext_light} for the proof of Theorems \ref{thm:max_GW}(c) and \ref{thm:max_trees}(a) we develop a new recursive path counting argument on the space of infection paths, where we essentially carry out a (probability-weighted) loop erasure on the set of possible infection paths. Then we relate the probability that $\BRW$ survives on $\CT$ to the product of degrees $\prod_{i=1}^t d_{\pi_i}^{1-\mu}$ summed over non-backtracking paths, called rays $\pi=(\pi_0=\varnothing, \pi_1, \pi_2, \dots, )$ on the tree, i.e., paths that always go downwards.

To extend the same result to the configuration model, i.e. to prove Theorem \ref{thm:max_CM_extinction}(a) (in Section \ref{sec:proofsCM-extinction}), we need to handle loops in the underlying graph. First in Lemma \ref{lemma:moments} we develop a new moment bound for the total size of GW trees with power-law offspring distribution with $n$-dependent maximum degree, (i.e. coming from the empirical degrees of the configuration model) valid for all $\tau>3$. We use this new bound to show that whp the following holds for configuration models with $\tau>3$ on $n$ vertices: for some small $\delta>0$, the $\delta \log n$ graph-neighborhood of every vertex only has at most a constant $\ell$ many \emph{surplus edges}, i.e., upon removing at most $\ell$ vertices the $\delta \log n$ neighborhood becomes a tree. This result, Proposition \ref{prop:surplus}, may be of independent interest. Returning to the degree-penalized contact process on the configuration model, we extend the (probability weighted) loop-erasure method that we developed for trees, to graphs with a bounded number of surplus edges, which is a non-trivial adaptation itself.

\emph{Survival on GW-trees with stretched-exponential-tailed offspring (Section \ref{sec:survival_GW}).} Theorem \ref{thm:prod_GW} is the analog of the result by Huang and Durrett \cite{huang2020contact}, where the authors show that the classical contact process shows local survival on Galton-Watson trees whenever the offspring distribution has no exponential moments, i.e., for all $c>0$, it holds that $\E[\e^{cD}]=\infty$. For the degree-penalized versions, due to the penalties, the same condition is not sufficient for the proofs to carry through. For our proofs to hold, we need that $D$ has heavier tails than stretched exponential with stretch exponent that is strictly less than $1-2\mu$, as in Definition \ref{def:stretched-heavy}. We leave it an open question whether this condition in Theorem \ref{thm:prod_GW} is sharp. For the classical contact process on Galton-Watson trees, the all-exponential-moments-infinite condition is sharp, as shown by Bhamidi, Nam, Nguyen and Sly \cite{Sly19}.

The combination of Theorems \ref{thm:prod_GW} and \ref{thm:prod-general} shows that the product penalty has a phase transition at $\mu=1/2$. The usual argument that star-graph maintain the infection, as introduced by Chatterjee and Durrett \cite{chatterjee2009contact}, gives a back-of-the-envelope calculation that suggests this phase transition.
Namely, a star-graph has a central vertex of degree say $K$, connected to $K$ leaves or very low-degree vertices.  The degree-penalized contact process on this structure survives typically for a time that is $\Omega_\P(\exp(\lambda^2 K^{1-2\mu}))$. Hence, whenever $1-2\mu>0$, star-graphs survive long enough to infect other star-graphs embedded in the graph, provided these stars are not too far away from each other, i.e., at most the logarithm of the survival time, giving at most $o(K^{1-2\mu})$ away. The stretched-exponential condition on the tail of $D$ ensures that we can find stars within this distance of each other. For the infection to be able to pass between the stars, we also need to ensure that the path connecting the stars only contain low-degree vertices, so that the penalty does not hinder the infection from passing. This is new compared to the classical contact process, see Section \ref{sec:prod_GW_strong}.

\emph{Local extinction and global survival for small $\la$ on power-law GW-trees.} The combination of Theorems \ref{thm:max_GW} and \ref{thm:max_trees} shows that for the \emph{max-penalty} when $\mu\in(1/2, 1)$, on a Galton-Watson tree, local extinction but global survival happens for any small $\lambda>0$ and $D$ has a power-law tail with tail-exponent $\alpha<1-\mu$. The behavior for large rates ($\la>1$) may depend on the exact offspring distribution, and the contact process and the branching random walk may differ in behavior, see  the work of Pemantle and Stacey \cite{pemantle2001branching}. 
Comparing Theorems \ref{thm:max_GW} and \ref{thm:max_trees} for the max-penalty with the corresponding Theorems \ref{thm:prod_GW} and \ref{thm:prod-general} for the \emph{product} penalty, we see that the phase of $\mu\ge 1/2$ for the max-penalty is subdivided into three different sub-phases, and the almost-sure extinction on arbitrary graphs requires $\mu\ge1$ for the max-penalty, c.f. $\mu\ge 1/2$ for the product penalty. The subphases of $\mu\in[1/2,1)$ (Theorem \ref{thm:max_GW} part (b)--(c)) are novel, since they provide the first natural static graph model where the contact process on power-law degree graphs \emph{can} be subcritical (c) and show \emph{only global survival} (b); and the exact condition also depends on the exact power-law exponent.  For dynamical graphs a similar phenomenon occurs, see the recent work of Jacob, Linker and M\"orters \cite{jacob2022contact}.

\emph{Survival proofs: k-cores sustain the infection when stars heal quickly (Section \ref{sec:proofsCM}).} When $\mu\ge 1/2$, in the degree-penalized contact process, star-graphs heal essentially immediately and hence the usual arguments that they maintain the infection for a long time break down. In this regime on the GW tree, when the offspring distribution is sufficiently heavy-tailed (so that the $1-\mu-\eps$th moment is infinite for some $\eps>0$), we prove that  contact process shows local extinction but global survival by escaping to infinity, by Theorem \ref{thm:max_trees}(a) and Theorem \ref{thm:max_GW}(b). 

In the configuration model with the same local weak limit, we find a new sub-graph that maintains the infection exponentially long in $n$. This is a $k$-core $H_n\subseteq G_n$ that we show exists on vertices with degree $k^{(1+ \eta)/(3-\tau)}$, with size linear in $n$, for some small $\eta=\eta(\varepsilon)$.
 We prove that such a $k$-core is always present whp whenever $\tau\in(2,3)$, using the results of Janson and Luczak \cite{janson2007simple}.
 The heuristic idea is that within $H_n$, the expected number of  vertices that an infected vertex infects before healing is (ignoring the $\eta$ error in the exponent):
 \[\deg_{H_n}(u) r(u,v) = \deg_{H_n}(u) \la(\deg_{G_n} (u)\vee \deg_{G_n}(v))^{-\mu} \approx k \la  k^{-\mu/(3-\tau)}\approx \la k^{1-\mu/(3-\tau)},\]
which grows with $k$ whenever $\mu<3-\tau$. We then show that  when we choose $k$ a large $\la$-dependent constant, the graph $H_n$ sustains the contact process exponentially long.
As far as we know this is the first model where $k$-cores are directly used to maintain the infection process.

\emph{Long survival on the configuration model with stretched exponential degree distribution (Section \ref{sec:embedded_stars_GW}).} In the regime where $\mu<1/2$, a star-graph of degree $j$ maintains the infection long enough to pass it to a neighboring star-graph if the graph-distance between them is $o(j^{1-2\mu})$. This idea will lead to Theorem \ref{thm:prod_CM} (a) and, as a consequence, Theorem \ref{thm:max_CM_survival} (a). Our proof here is an almost direct adaptation of the argument in \cite{Sly19} where we embed an expander-graph of stars with degree approximately $j$ into the original graph so that each edge of the expander corresponds to a path of length  $o(j^{1-2\mu})$. This leads to the condition of heavier than stretched exponential degree distributions with the exponent at most $1-2\mu$.

\emph{Another model with degree-dependent transmission rates.} 
Wei Su in \cite{su14} studies a degree-penalized contact process and branching random walk with the \emph{asymmetric} penalty function $f(x,y)=x$. This penalty function implies that the total rate of infection from every vertex $v$ is a constant $\la>0$, irrespective of the degree of $v$. In this case, CP can be coupled to a ``usual'' un-penalized BRW on the GW tree with Poisson($\lambda$) total offspring, and finer results can be obtained on Galton-Watson trees, not just the small $\lambda>0$ behavior. For BRW, extinction occurs when $\la<1$, and  local vs.\ only global survival depends on whether $\la>1/ r(\CT)$ or not, where $r(\CT)$ is  the spectral radius of the underlying tree with respect to symmetric random walk. For the contact process, the minimal degree in the Galton-Watson tree is decisive, see \cite[Theorems 3.1, 4.2]{su14}. 

\emph{Further directions.} We believe that most of our results can be relatively easily adapted to graphs with GW trees as local weak limits, e.g.\ the Chung-Lu  or Norros-Reitu models or even to general inhomogeneous random graphs \cite{BolJAnRio07, Chung_Lu, NorRei04}. Our current proof techniques pose the restriction that they all rely on tree-based arguments or ``almost'' tree-based arguments. 
It would be interesting to see how far this can be relaxed. Sparse random intersection graphs \cite{Bloz10, Bloz13, DeiKet09, KarSchSin99, Sing96} or random intersection graphs with communities (where not every community is a complete graph \cite{van2021random, van2022phase}) provide a natural candidate for this. These graphs are no longer locally tree-like, yet there is an embedded tree-like structure formed by the communities \cite{van2021random}. Another interesting direction is to develop robust techniques that can extend our results (beyond the $\mu\ge 1$ case) to spatial graphs with inhomogeneous degree distributions, for instance to geometric inhomogeneous random graphs \cite{bringmann2019geometric}, scale-free percolation \cite{deijfen2013scale},  or the hyperbolic random graph \cite{krioukov2010hyperbolic}. A coupling argument to the related degree-dependent first passage percolation \cite{komjathy2021penalising}, which explodes also exactly when $\alpha:=\tau-2<1-\mu$, indicates that at least Theorem \ref{thm:max_GW}(b) on global survival must carry through for these graphs. Considering the recent growth phases of degree-dependent first passage percolation (1-FPP) in \cite{komjathy2023four1, komjathy2023four2}, it is an intriguing question to ask whether the front of the degree-dependent contact process started from the origin and conditioned to survive, follows  the same universality classes of growth as the 1-FPP spreading process. 

Metastable behavior of the original contact process on finite graphs is a lively field of research starting with \cite{cassandro1984metastable}; see also \cite{durrett1988contact, mountford2016exponential, mountford1999existence,    mountford1993metastable, schapira2017extinction, schonmann1985metastability}. See \cite{berger2005spread, can2015metastability, chatterjee2009contact,  mountford2013metastable, MVY2013} for results on power-law preferential attachment models and configuration models, \cite{linker2021contact} on hyperbolic random graphs, \cite{da2021contact, jacob2019metastability, jacob2022contact} on dynamically evolving graphs, and \cite{gracar2022contact} on spatial random graphs. Further studying metastability of the degree-penalized processes here (for instance, investigating metastable densities) is an interesting future direction.

\textbf{Organization of the rest of the paper}: Before the proofs we introduce some necessary terminology and preliminary facts about the contact process and branching random walks in Section \ref{sec:prelim}. Then, in Section \ref{sec:proofsGW} we give the proofs of Theorems \ref{thm:prod-general}, \ref{thm:prod_CM}(b), \ref{thm:max_GW}(c), \ref{thm:max_trees}(a), (b) and \ref{thm:max_CM_extinction}(c). In Section \ref{sec:proofsCM-extinction} we prove Theorem \ref{thm:max_CM_extinction}(a). Section \ref{sec:survival_GW} contains the proofs of Theorems \ref{thm:prod_GW} and \ref{thm:max_GW}(a), (b). In Section \ref{sec:proofsCM} we provide the proof of Theorem \ref{thm:max_CM_survival}(b). Finally, in Section \ref{sec:embedded_stars_GW} we give a sketch of the proofs of Theorems \ref{thm:prod_CM}(a) and \ref{thm:max_CM_survival}(a).

\textbf{Notation}:\label{sec:notation} When we compare degrees of vertices in graphs on the same vertex set, we use the notation $\deg_G(v)$ for the degree of vertex $v$ within graph $G$. Unless specified, we always think of graphs as undirected.
With a slight abuse of notation, we use $|G|$ as a shorthand for $|V(G)|$, the number of vertices in $G$.

We use the abbreviations `rhs' and `lhs' for `right-hand side' and `left-hand side' (of an equation), `iid' for `independent and identically distributed'
and `whp' for `with high probability', i.e., with probability converging to 1 as 
the size of the underlying graph (the number of its vertices) tends to infinity. 
For a deterministic function $g(n)$, we say that a sequence of random variables $X_n=o_\P(g(n))$,
if the sequence $(X_n/g(n))_{n\ge 1}$ tends to $0$ in probability, 
and we say that  $X_n=O_\P(g(n))$  if $(X_n/g(n))_{n\ge 1}$  is a tight sequence of random variables. 
Similarly,  $X_n=\Omega_\P(g(n))$ if  $(g(n)/X_n)_{n\ge 1}$ is a tight sequence, 
and finally,  we say that $X_n=\Theta_\P(g(n))$ if $X_n=O_\P(g(n))$ and $X_n=\Omega_\P(g(n))$ both hold.
\section{Preliminaries}\label{sec:prelim}

In this section we describe some basic properties of the contact process and the underlying random graphs that will be used throughout the paper.

\subsection{Graphical representation of the contact process} We briefly discuss the \emph{graphical representation} of the contact process, based on Section 6.2 of \cite{grimmett2018probability}.  The graphical representation is useful for various coupling arguments.  The idea is to record the infection and healing events of the contact process $\CPf(G,\underline\xi_0)$ on the space-time domain $V\times [0,\infty)$. For a Poisson point process $\mathrm{PPP}$ on $[0,\infty)$, we say that $t\in \mathrm{PPP}$ if $t$ is an arrival time (a point) in the given $\mathrm{PPP}$. Further, $\mathrm{PPP}(I)$ denotes the set of points that fall in the set $I\subseteq R$.
\begin{definition}[Graphical representation of CP]\label{def:graphical}
Consider for each $v\in V$ an independent Poisson process $\PPP_v$ with rate 1, and, independently of these, further independent Poisson processes $\PPP_{uv}$ for each $u,v\in V$ with corresponding rate $r(u,v)=\la \cdot e(u,v)/f(d_u,d_v)$. The healing events in \eqref{eq:healing-CP} form a subset of the arrival times of $(\PPP_v)_{v\in V}$, and the infection events in \eqref{eq:infection-rate-CP} form a subset of the arrival times of $(\PPP_{uv})_{\{u,v\}\in E}$ that we describe now.

An \emph{infection path} is a sequence $\{(v_0,t_0),(v_0,t_1),(v_1,t_1),(v_1,t_2),\ldots,(v_k,t_{k+1})\}$ with vertices $v_0,v_1,\ldots,v_k\in V$ and times $t_0\le t_1\le\ldots\le t_{k+1}$ such that
\begin{enumerate}
\item[(i)] $\PPP_{v_i}([t_i,t_{i+1}])=\emptyset$ for each $i\in\{0,\ldots,k\}$, and
\item[(ii)] $t_i\in\PPP_{v_{i-1}v_i}$ for each $i\in\{1,\ldots,k\}$.
\end{enumerate}
Then, a vertex $u\in V$ is infected at time $t\ge 0$ (equivalently, we set $\xi_t(u)=1$), if there is an \emph{infection path} in $V\times[0,\infty)$ from $(v,0)$ to $(u,t)$ for some $v\in\underline\xi_0$.
\end{definition}
It is straightforward to see that this procedure encodes the contact process $\CPf(G,\underline\xi_0)$. This representation is useful for coupling contact processes with different initial conditions and different spreading rates. The following is an easy consequence of the graphical representation.

\begin{corollary}\label{cor:StochDom1}
For two penalty functions $f_1, f_2$ for which $f_1(x,y)\ge f_2(x,y)$ holds for all $x,y \ge 1$, it holds on any graphs $G$ and arbitrary initial starting state $\underline \xi_0$ and any $\la>0$ that 
\begin{equation}\label{eq:domination-between-two-f}
    \mathrm{CP}_{f_1,\la}(G,\underline \xi_0) \  {\buildrel d \over \le }\ \mathrm{CP}_{f_2,\la}(G,\underline \xi_0).
\end{equation}
\end{corollary}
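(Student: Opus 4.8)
The plan is to exhibit an explicit monotone coupling of the two processes $\mathrm{CP}_{f_1,\la}(G,\underline\xi_0)$ and $\mathrm{CP}_{f_2,\la}(G,\underline\xi_0)$ on a common probability space, using the graphical representation of Definition \ref{def:graphical}, so that pathwise the first process is dominated by the second. The key observation is that since $f_1(x,y)\ge f_2(x,y)$ for all $x,y\ge 1$, the edge rates satisfy $r_1(u,v)=\la\, e(u,v)/f_1(d_u,d_v)\le \la\, e(u,v)/f_2(d_u,d_v)=r_2(u,v)$ for every pair $u,v$. First I would construct, for each $v\in V$, a single rate-$1$ Poisson process $\PPP_v$ to be used for the healing (recovery) marks of \emph{both} processes; this is legitimate because the healing rate is $1$ in both models and does not depend on $f$. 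Then, for each pair $u,v$, I would construct the infection (transmission) process for the dominating process $\mathrm{CP}_{f_2,\la}$ as a rate-$r_2(u,v)$ Poisson process $\PPP_{uv}^{(2)}$, and obtain the transmission process $\PPP_{uv}^{(1)}$ for $\mathrm{CP}_{f_1,\la}$ by independently thinning $\PPP_{uv}^{(2)}$: keep each arrival with probability $r_1(u,v)/r_2(u,v)\le 1$ (and when $r_2(u,v)=0$ there is nothing to do, since then $r_1(u,v)=0$ as well). By the thinning property of Poisson processes, $\PPP_{uv}^{(1)}$ is a Poisson process of rate $r_1(u,v)$, and the collections $(\PPP_v)_v$, $(\PPP_{uv}^{(1)})_{u,v}$, respectively $(\PPP_v)_v$, $(\PPP_{uv}^{(2)})_{u,v}$, are mutually independent with exactly the intensities required by Definition \ref{def:graphical}; hence each collection indeed realizes the corresponding contact process with the correct law.

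The next step is the pathwise domination. By construction $\PPP_{uv}^{(1)}\subseteq\PPP_{uv}^{(2)}$ for every pair, while the healing processes coincide. Consequently, every infection path (in the sense of Definition \ref{def:graphical}) that is valid for $\mathrm{CP}_{f_1,\la}$ — i.e.\ one whose transmission times lie in the $\PPP^{(1)}$-processes and which avoids the relevant $\PPP_v$-marks — is also a valid infection path for $\mathrm{CP}_{f_2,\la}$, because its transmission times lie in the larger $\PPP^{(2)}$-processes and the avoided healing marks are the same. Therefore, for every $u\in V$ and every $t\ge 0$, if $\xi_t^{(1)}(u)=1$ then there is an infection path from some $(v,0)$ with $v\in\underline\xi_0$ to $(u,t)$ in the $f_1$-representation, hence also in the $f_2$-representation, so $\xi_t^{(2)}(u)=1$. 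This gives $\underline\xi_t^{(1)}\le\underline\xi_t^{(2)}$ coordinatewise for all $t\ge 0$ on this coupled space, which is precisely the statement $\mathrm{CP}_{f_1,\la}(G,\underline\xi_0)\,{\buildrel d\over\le}\,\mathrm{CP}_{f_2,\la}(G,\underline\xi_0)$.

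I expect the main (minor) obstacle to be purely a matter of care rather than difficulty: one must make sure the thinning construction is carried out independently across the different pairs $\{u,v\}$ and independently of the healing processes, so that the joint law of each marked collection matches exactly the independent Poisson structure demanded in Definition \ref{def:graphical}; and one should handle the degenerate cases $r_2(u,v)=0$ (forcing $r_1(u,v)=0$) and $e(u,v)=0$ separately, where no transmission marks are placed at all. Since $f_1,f_2>1$ these rates are finite and the thinning probabilities $r_1/r_2\in[0,1]$ are well defined whenever $r_2>0$, so no further regularity is needed. No deep input is required; the whole argument is the standard monotone coupling via the graphical representation, and this is the sense in which the corollary is described as ``an easy consequence.''
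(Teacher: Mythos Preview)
Your proposal is correct and follows essentially the same approach as the paper: build the graphical representation for $\mathrm{CP}_{f_2,\lambda}$, obtain that of $\mathrm{CP}_{f_1,\lambda}$ by independently thinning each $\PPP_{uv}^{(2)}$ with probability $r_1(u,v)/r_2(u,v)=f_2(d_u,d_v)/f_1(d_u,d_v)$, share the healing processes, and conclude pathwise domination via containment of infection paths. If anything, your write-up is slightly more detailed than the paper's, which simply records the thinning coupling and asserts the resulting domination.
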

 The stochastic domination in \eqref{eq:domination-between-two-f} is the consequence of a standard coupling argument: construct the graphical representation of $\mathrm{CP}_{f_2,\la}(G,\underline \xi_0)$, i.e., of the process with higher infection rates $(\la/f_2(u,v))_{u,v}$. Then, independently for different pairs $uv$, on $\mathrm{PPP}_{uv}$, keep every infection event (point) with probability $(\la/f_1(u,v)) / (\la/f_2(u,v) )  = f_2(u,v)/f_1(u,v)$, independently across points. The thinned PPP has rate $\la f_1(u,v)$, hence we obtain a graphical representation of $\mathrm{CP}_{f_1,\la}(G,\underline \xi_0)$. This joint realization of the two processes gives a coupling of $\mathrm{CP}_{f_1,\la}(G,\underline \xi_0)$ and $\mathrm{CP}_{f_2,\la}(G,\underline \xi_0)$, so that every infection event in the former process is also an infection event in the latter process. This finishes the proof of \eqref{eq:domination-between-two-f}. 
\vskip1em

\subsection{Genealogic branching random walks} \label{s_gbrw}
We now describe a construction of branching random walks that keeps track not only of the number of particles per site, but also of the genealogy of particles. This will be useful for proofs to show extinction, which are based on counting particles with given genealogies. Recall that for two vertices~$u$ and~$v$ in a graph~$G$, we write~$\mathrm{e}(u,v)$ to denote the number of edges between~$u$ and~$v$.

\begin{definition}[Set of genealogical labels]\label{def:labels} Given a graph $G=(V, E)$, we let~$\mathscr{T} = \mathscr{T}(G)$ be the set
	\[\mathscr{T}:=  \{(u_0,\ldots,u_m): m \in \mathbb{N},\; u_0,\ldots, u_m \in V,\; e(u_i,u_{i+1}) > 0 \text{ for all }i\}.\]
\end{definition}
An element~$\pi = (u_0,\ldots,u_m) \in \mathscr{T}$ will be a genealogical label attributed to certain particles that occupy~$u_m$, the final vertex in the sequence. More specifically, a particle occupying~$u_m$ receives label~$\pi$ if it has the following genealogical history: its oldest ancestor particle (present at time 0) was at~$u_0$ and gave birth to its next ancestor particle at~$u_1$, which then gave birth to its next ancestor particle at~$u_2$, ..., which then gave birth to the particle in question, at~$u_m$.
Hence, the label~$\pi$ lists the vertices occupied by the ancestors of the particle (and the particle itself), in chronological order. In particular, a particle present at vertex~$v$ at time~$0$ receives the label~$(v)$.

For~$\pi =(u_0,\ldots,u_m) \in \mathscr{T}$, we define
\begin{equation}\label{eq:length-pi} \begin{array}{ll}
	\mathfrak{l}(\pi) := m& \text{(length of $\pi$)},\\
	\mathfrak{s}(\pi) := u_m&\text{(end-location of~$\pi$)}.
\end{array}
\end{equation}
In case~$m > 1$, we also let
\[ \begin{array}{ll}
	\mathfrak{p}(\pi) := (u_0,\ldots,u_{m-1})&\text{(parent path of $\pi$)}.
\end{array}\]

\begin{definition}[Degree-penalized genealogic branching random walk]\label{def_gbrw}
	Consider a graph $G=(V,E)$, with $d_v$ denoting the degree of vertex $v\in V$. Let $f(x,y)>1$ be a function of two variables and~$\lambda>0$; for $u,v\in V$ let $r(u,v)=\lambda \cdot \mathrm{e}(u,v)/f(d_u, d_v)$.  Also let~$\underline x_0\in \mathbb{N}^V$.
	We define  $\mathrm{GBRW}_{f,\lambda}(G,\underline{x}_0) = (\underline{y}_t)_{t \ge 0} = ({y}_t(\pi))_{\pi \in \mathscr{T},t \ge 0}$ to be the following continuous-time Markov process on the state space $\mathbb{N}^\mathscr{T}$. The process starts at time~$t=0$ from the state $\underline y_0$ defined by
\[y_0(\pi) = \begin{cases} x_0(\mathfrak{s}(\pi))&\text{if } \mathfrak{l}(\pi) = 0;\\ 0&\text{otherwise,}\end{cases}\]
and evolves according to the following transition rates:
\begin{align}
\underline{y} \longrightarrow \underline{y} - \ind_{\pi} &\quad \text{ with rate } {y}(\pi) \text{ for all } \pi \in \mathscr{T};\label{eq:pi-rate-down}\\
	\underline{y} \longrightarrow \underline{y} + \ind_{\pi} &\quad \text{ with rate } {y}(\mathfrak{p}(\pi))\cdot r(\mathfrak{s}(\mathfrak{p}(\pi)),\mathfrak{s}(\pi)) \text{ for all } \pi \in \mathscr{T} \text{ with } \mathfrak{l}(\pi) \ge 1. \label{eq:pi-rate-up}
\end{align}
\end{definition}

We interpret~$y_t(\pi)$ as the number of particles with label~$\pi$ at time~$t$. Guided by this interpretation, it is easy to see that we can obtain a degree-penalized branching random walk from~$\mathrm{GBRW}_{f,\lambda}$, as stated in the following lemma.
\begin{lemma}\label{lem_corr_gbrw}
Let~$(\underline{y}_t)_{t \ge 0} = \mathrm{GBRW}_{f,\lambda}(G,\underline{x}_0)$, and define 
\begin{equation} \label{eq:projection}{x}_t(v) = \sum_{\pi \in \mathscr{T}:\ \mathfrak{s}(\pi)=v} {y}_t(\pi),\quad t > 0,\;v \in V.\end{equation}
	Then,~$(\underline{x}_t)_{t \ge 0} = (x_t(v))_{v \in V,t \ge 0}$ is a degree-penalized branching random walk on~$G$ with rate~$\lambda$, penalization function~$f$, and initial configuration~$\underline{x}_0$.
\end{lemma}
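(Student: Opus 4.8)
The plan is to verify that the projection $(\underline{x}_t)_{t\ge 0}$ defined in \eqref{eq:projection} is itself a Markov process with exactly the transition rates \eqref{eq:healing-BRW}--\eqref{eq:infection-rate-BRW} required of a degree-penalized branching random walk. The natural tool is the generator (or equivalently, a direct reading of the jump rates). First I would observe that $\mathrm{GBRW}_{f,\lambda}(G,\underline x_0)$ is a well-defined pure jump Markov process, and that a jump of $\underline y$ of the form $\underline y \to \underline y - \ind_\pi$ produces a jump $\underline x \to \underline x - \ind_{\mathfrak{s}(\pi)}$, while a jump $\underline y \to \underline y + \ind_\pi$ (with $\mathfrak{l}(\pi)\ge 1$) produces a jump $\underline x \to \underline x + \ind_{\mathfrak{s}(\pi)}$. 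So the image process $\underline x$ only ever changes by $\pm \ind_v$ for some $v\in V$, as it must.

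Next I would compute the aggregated rates. Fix a vertex $v$ and a current state $\underline y$ with projection $\underline x$. The total rate at which $\underline x$ decreases by $\ind_v$ is the sum of the rates of all downward $\underline y$-jumps producing this change, namely $\sum_{\pi:\,\mathfrak{s}(\pi)=v} y(\pi) = x_t(v)$ by \eqref{eq:projection}, matching \eqref{eq:healing-BRW}. For the upward jumps: the total rate at which $\underline x$ increases by $\ind_v$ is $\sum_{\pi:\,\mathfrak{s}(\pi)=v,\ \mathfrak{l}(\pi)\ge 1} y(\mathfrak{p}(\pi))\, r(\mathfrak{s}(\mathfrak{p}(\pi)),v)$. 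Here I would reorganize the sum by the parent path $\sigma=\mathfrak{p}(\pi)$: the labels $\pi$ with $\mathfrak{s}(\pi)=v$, $\mathfrak{l}(\pi)\ge 1$ and $\mathfrak{p}(\pi)=\sigma$ are in bijection with pairs where $\sigma\in\mathscr{T}$, $e(\mathfrak{s}(\sigma),v)>0$, and $\pi=(\sigma,v)$ is the unique extension; thus the sum becomes $\sum_{\sigma\in\mathscr{T}:\,e(\mathfrak{s}(\sigma),v)>0} y(\sigma)\, r(\mathfrak{s}(\sigma),v)$. Now I would split according to the end-location $u=\mathfrak{s}(\sigma)$, which ranges over neighbors of $v$, and use $\sum_{\sigma:\,\mathfrak{s}(\sigma)=u} y(\sigma) = x(u)$ again, to get $\sum_{u\in N(v)} x(u)\, r(u,v)$, which is precisely the rate in \eqref{eq:infection-rate-BRW}.

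Having matched all jump rates, I would conclude with a Markov-functional argument: since for each $v$ the aggregated rates depend on $\underline y$ only through the projection $\underline x$ (indeed only through $(x(u))_{u\in N(v)}$ and $x(v)$), the process $(\underline x_t)_{t\ge 0}$ is itself Markov — one can invoke a standard lumping lemma for continuous-time Markov chains (e.g. Dynkin's criterion / the fact that if the generator of $\underline y$ maps functions of $\underline x$ to functions of $\underline x$ with the claimed form, then $\underline x$ is Markov with that generator). Checking the initial condition is immediate: $x_0(v) = \sum_{\pi:\,\mathfrak{s}(\pi)=v} y_0(\pi) = x_0(\mathfrak{s}((v))) = x_0(v)$ since only labels of length $0$ carry mass at time $0$. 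The one point requiring a little care — and the main (mild) obstacle — is a non-explosion / well-posedness check: one should note that $\sum_v x_t(v) = \sum_\pi y_t(\pi)$ is itself a (continuous-time) branching process dominated by a Yule-type process whose rate is controlled because $\sup_{u}\sum_{v} r(u,v) \le \lambda \sup_u \sum_{v\in N(v)} e(u,v)/f(d_u,d_v)$; on locally finite graphs this is finite per vertex, so no particle has infinitely many offspring in finite time and the lumping is legitimate. For infinite graphs with unbounded degrees one restricts attention to finite initial configurations, exactly as in the hypotheses under which $\BRW$ is used elsewhere in the paper, and argues by monotone approximation on an exhausting sequence of finite subgraphs.
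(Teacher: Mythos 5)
Your proposal is correct and follows essentially the same route as the paper: the paper's proof is exactly the rate-matching computation you perform, namely that the aggregated downward rate at $v$ is $\sum_{\pi:\,\mathfrak{s}(\pi)=v}y(\pi)=x(v)$ and the aggregated upward rate, after regrouping by the parent path, is $\sum_{u\in N(v)}x(u)\,r(u,v)$. Your additional remarks on the lumping criterion, the initial condition, and non-explosion are finer points the paper leaves implicit, but they do not change the argument.
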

\begin{proof}
Let~$(\underline{x}_t)_{t \ge 0}$ be the process obtained from~$(\underline{y}_t)_{t \ge 0}$ as in~\eqref{eq:projection}. For each~$v \in V$,  the transition~$\underline{x} \longrightarrow \underline{x} - \ind_v$ occurs with rate
\[\sum_{\pi \in \mathscr{T}:\ \mathfrak{s}(\pi) = v} {y}(\pi) = {x}(v),\]
and the transition~$\underline{x} \longrightarrow \underline{x} + \ind_v$ occurs with rate
\[\sum_{\substack{\pi \in \mathscr{T}:\ \mathfrak{s}(\pi) = v,\\ \mathfrak{l}(\pi)\ge 1}} y(\mathfrak{p}(\pi)) \cdot r(\mathfrak{s}(\mathfrak{p}(\pi)),\mathfrak{s}(\pi)) = \sum_{w \in V}\sum_{\substack{\pi' \in \mathscr{T}:\\\mathfrak{s}(\pi')=  w}} y(\pi')\cdot  r(w,v) = \sum_{w \in V} x(w)\cdot r(w,v).\]
where we used \eqref{eq:projection} to obtain the last equality.
\end{proof}

In the statement of the following lemma, we interpret products of the form~$\prod_{i=0}^{m-1}$ as~$1$ when~$m = 0$.
\begin{lemma}[Expectation formulas for genealogic branching random walks]\label{lem_products}
	Let~$(\underline{y}_t)_{t \ge 0} = \mathrm{GBRW}_{f,\lambda}(G,\underline{x}_0)$, and let~$\pi = (u_0,\ldots,u_m) \in \mathscr{T}$.
	\begin{itemize}
		\item[(a)] For any~$t \ge 0$, we have
			\begin{equation}\label{eq:ypi-expectation}
				\mathbb{E}[y_t(\pi)] =\frac{t^m}{m!}\mathrm{e}^{-t} \cdot\Big(x_0(u_0) \prod_{i=0}^{m-1} r(u_i,u_{i+1})\Big).
			\end{equation}
		\item[(b)] Define
\begin{equation}\label{eq:Zpi-def}
	Z(\pi):= \begin{cases} x_0(u_0)&\text{if } \pi = (u_0);\\[.2cm] \#\{t > 0: y_t(\pi) = y_{t-}(\pi)+1\}&\text{otherwise,}\end{cases}
\end{equation}
			that is, in case~$\pi$ has length zero (so that~$\pi= (u_0)$),~$Z(\pi)$ is the number of initial particles~$x_0(u_0)$, and in case~$m=\mathfrak{l}(\pi) \ge 1$,~$Z(\pi)$ is the number of particles with label~$\pi$ ever born. Then, 
\begin{equation}\label{eq:Zpi-expectation}
\mathbb{E}[Z(\pi)] = z(\pi)= x_0(u_0) \prod_{i=0}^{m-1} r(u_i,u_{i+1}). 
\end{equation}
	\end{itemize}
\end{lemma}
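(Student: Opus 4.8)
The plan is to establish part (a) first, by a direct first-moment computation in the particle-level construction of $\mathrm{GBRW}_{f,\lambda}$, and then to read off part (b) by integrating the birth intensity of the $\pi$-labelled particles over all time. Fix $\pi = (u_0,\ldots,u_m)\in\mathscr{T}$ and write $r_j := r(u_{j-1},u_j)$ for $1\le j\le m$. The structural observation that makes this clean is that the coordinates $\big(y_t((u_0)),\,y_t((u_0,u_1)),\,\ldots,\,y_t(\pi)\big)$ evolve \emph{autonomously}: $y_t((u_0))$ is a pure death process started from $x_0(u_0)$ (no label-$(u_0)$ particle is ever born, since \eqref{eq:pi-rate-up} only applies to $\mathfrak{l}(\pi)\ge1$), and for $1\le j\le m$ the coordinate $y_t((u_0,\ldots,u_j))$ gains one particle at rate $r_j\,y_t((u_0,\ldots,u_{j-1}))$ and loses each of its particles at rate $1$, with no influence from emissions toward vertices not appearing in $\pi$. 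This subsystem is finite and non-explosive — indeed $\mathbb{E}[Z(\cdot)]<\infty$ for every prefix of $\pi$ by the computation in (b), so each such coordinate has only finitely many particles ever — so the expectations below are finite and the interchanges are legitimate.

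For part (a), I use that in the construction each particle carries an independent $\mathrm{Exp}(1)$ lifetime and, while alive at a vertex $v$, emits offspring toward a neighbour $w$ at the points of an independent rate-$r(v,w)$ Poisson process. A label-$\pi$ particle alive at time $t$ corresponds bijectively to a choice of (i) one of the $x_0(u_0)$ time-$0$ particles at $u_0$, call it $P_0$, and (ii) times $0 = t_0 < t_1 < \cdots < t_m < t$ such that, for each $1\le j\le m$, the ancestor $P_{j-1}$ (alive at $t_j$) emits $P_j$ toward $u_j$ at time $t_j$, and the last ancestor $P_m$ survives up to time $t$. Writing $t_{m+1} := t$, the distinct ancestors $P_0,\ldots,P_m$ have independent lifetimes, so $\mathbb{P}(P_j\text{ survives }[t_j,t_{j+1}]) = \mathrm{e}^{-(t_{j+1}-t_j)}$; integrating against the intensities of the $m$ independent emission processes (Campbell's formula) gives
\[
\mathbb{E}[y_t(\pi)] \;=\; x_0(u_0)\Big(\prod_{j=1}^{m} r_j\Big)\int_{0<t_1<\cdots<t_m<t}\ \prod_{j=0}^{m}\mathrm{e}^{-(t_{j+1}-t_j)}\ \mathrm{d}t_1\cdots\mathrm{d}t_m.
\]
The product of exponentials telescopes to $\mathrm{e}^{-(t_{m+1}-t_0)} = \mathrm{e}^{-t}$ and the simplex $\{0<t_1<\cdots<t_m<t\}$ has volume $t^m/m!$, so this equals $\tfrac{t^m}{m!}\mathrm{e}^{-t}\,x_0(u_0)\prod_{j=1}^m r_j$, which is \eqref{eq:ypi-expectation}. (An alternative is to derive from the generator the linear ODE $m'_\pi(t) = r_m\,m_{\mathfrak{p}(\pi)}(t) - m_\pi(t)$ with $m_\pi(0)=0$ for $m\ge1$, and $m_{(u_0)}(t) = x_0(u_0)\mathrm{e}^{-t}$, and solve it inductively on $m$ with integrating factor $\mathrm{e}^{t}$; both routes give the same formula.)

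For part (b), the case $\mathfrak{l}(\pi)=0$ is just the definition $Z((u_0)) = x_0(u_0)$. For $m = \mathfrak{l}(\pi)\ge1$, the counting process of births of label-$\pi$ particles has stochastic intensity $r_m\,y_t(\mathfrak{p}(\pi))$, so by part (a) and Fubini,
\[
\mathbb{E}[Z(\pi)] \;=\; r_m\int_0^\infty \mathbb{E}[y_t(\mathfrak{p}(\pi))]\,\mathrm{d}t \;=\; r_m\,\Big(x_0(u_0)\prod_{j=1}^{m-1} r_j\Big)\int_0^\infty \frac{t^{m-1}}{(m-1)!}\mathrm{e}^{-t}\,\mathrm{d}t \;=\; x_0(u_0)\prod_{j=1}^{m} r_j,
\]
since the last integral equals $1$; this is \eqref{eq:Zpi-expectation}. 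The only place that needs care is the bookkeeping in part (a) — verifying that each label-$\pi$ particle alive at $t$ is counted exactly once by the bijection, and that the survival events of $P_0,\ldots,P_m$ are genuinely independent — but this is immediate from the particle-level construction, in which every particle carries its own independent lifetime and its own independent emission clocks, so the whole argument reduces to the telescoping identity and an elementary simplex-volume computation.
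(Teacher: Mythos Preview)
Your proof is correct. For part (a), you go directly via a particle-level Campbell decomposition to the nested integral, while the paper sets up the same integral by induction on $m$: conditioning on the trajectory of $y_\cdot(\mathfrak{p}(\pi))$ to obtain $\mathbb{E}[y_t(\pi)] = r_m\int_0^t \mathbb{E}[y_s(\mathfrak{p}(\pi))]\,\mathrm{e}^{-(t-s)}\,\mathrm{d}s$, then unwinding the recursion. Both routes arrive at the same telescoping integrand $\prod_j \mathrm{e}^{-(t_{j+1}-t_j)}=\mathrm{e}^{-t}$ over the simplex $\{0<t_1<\cdots<t_m<t\}$, so the evaluation is identical; your approach is a touch more direct, the paper's a touch more formal. (You also mention the inductive ODE route as an alternative, which is precisely what the paper does.) For part (b), your argument --- integrating the birth intensity $r_m\,y_t(\mathfrak{p}(\pi))$ over $t\ge0$ and applying part (a) with the Gamma integral $\int_0^\infty \tfrac{t^{m-1}}{(m-1)!}\mathrm{e}^{-t}\,\mathrm{d}t=1$ --- matches the paper's proof verbatim.
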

Before the proof we mention that the factor $\mathrm{e}^{-t} t^m/m!$ is the density of a Gamma random variable with parameters $1$ and $m+1$, i.e., the convolution of $m+1$ iid Exp$(1)$ random variables. Intuitively this factor comes from the convolution of the healing times of the $m+1$ vertices on the path $u_0, \dots, u_m$.

\begin{proof}
	 
		Proof of part (a).\  We argue by induction in~$m = \mathfrak{l}(\pi)$. In case~$m=0$, we have~$\pi = (u_0)$, and the process~$(y_s(\pi))_{s \ge 0}$ is a continuous-time Markov chain that starts at~$y_0(\pi) = x_0(u_0)$ at time 0 and can only decrease, doing so with rate~$y_s(\pi)$ at any time~$s \ge 0$. If we interpret the state of this chain as a number of particles, where each particle dies with rate 1 (and no particles are born), then the probability that a particle is still alive at time~$t$ is~$\mathrm{e}^{-t}$, so the expected number of living particles at time~$t$ is~$x_0(u_0)\mathrm{e}^{-t}$, as desired.

			Now assume that~$m \ge 1$ and the statement in \eqref{eq:ypi-expectation} holds for all $\pi'\in \mathscr{T}$ with $\mathfrak{l}(\pi')\le m-1$. Let
			\[\pi_0 = (u_0),\quad \pi_1 = (u_0,u_1),\quad \ldots, \quad  \pi_m = \pi = (u_0,\ldots,u_m),\]
			and let~$\mathcal{F}$ be the~$\sigma$-algebra generated by
			\[(y_s(\pi_i): 1 \le i \le m-1,\; s \ge 0\}.\]
			Conditioned on~$\mathcal{F}$, the process~$(y_s(\pi))_{s \ge 0}$ is an~$\mathbb{N}$-valued (time-inhomogeneous) Markov process that starts at~$0$ at time~$0$ and, at any time~$s \ge 0$, increases by $1$ with rate
   \[y_s(\pi_{m-1})r(\mathfrak{s}(\mathfrak{p}(\pi)),\mathfrak{s}(\pi))  = y_s(\pi_{m-1})r(u_{m-1},u_m),\]
   and decreases by $1$ with rate~$y_s(\pi)$, by \eqref{eq:pi-rate-up} and \eqref{eq:pi-rate-down}. Again seeing this process as counting particles (which as before die with rate 1, but now can also be born with a time-dependent rate), the conditional expectation of the number of particles at time~$t$ is
			\[\mathbb{E}[y_t(\pi)\mid \mathcal{F}] = r(u_{m-1},u_m)\cdot \int_0^t y_s(\pi_{m-1})\cdot  \mathrm{e}^{-(t-s)}\;\mathrm{d}s.\]
			Taking expectation and using Tonelli's theorem, this gives
			\[\mathbb{E}[y_t(\pi)] = r(u_{m-1},u_m)\cdot \int_0^t \mathbb{E}[y_s(\pi_{m-1})]\cdot \mathrm{e}^{-(t-s)}\;\mathrm{d}s.\]
			Using this recursively~$m$ times, and then using the base induction case~$\mathbb{E}[y_s(\pi_0)] = x_0(u_0)\mathrm{e}^{-s}$, we obtain
			\begin{align*}
				&\mathbb{E}[y_t(\pi)] \\&= x_0(u_0) \prod_{i=0}^{m-1}r(u_{i-1},u_i) \int_0^t \int_{s_1}^t \cdots \int_{s_{m-1}}^t \mathrm{e}^{-s_1} \mathrm{e}^{-(s_2-s_1)}\cdots \mathrm{e}^{-(s_m-s_{m-1})} \mathrm{e}^{-(t-s_m)}\;\mathrm{d}s_m\cdots \mathrm{d}s_1\\
				&= x_0(u_0) \prod_{i=0}^{m-1}r(u_{i-1},u_i)\cdot \mathrm{e}^{-t}\cdot \frac{t^m}{m!}.
			\end{align*}
Proof of part (b). \ 
In case~$\mathfrak{l}(\pi) = 0$, the statement is obvious, using the fact that~$y_0((v)) = x_0(v)$ for all~$v$. Assume that~$\mathfrak{l}(\pi) = m \ge 1$, and write~$\pi = (u_0,\ldots,u_{m})$.  Since the transition~$\underline{y} \to \underline{y} + \ind_\pi$ occurs with rate~$\underline{y}(\mathfrak{p}(\pi))\cdot r(u_{m-1},u_{m})$ by \eqref{eq:pi-rate-up}, we have
\begin{align*}
\mathbb{E}[Z(\pi)] &= r(u_{m-1},u_{m})\cdot \mathbb{E}\left[\int_0^\infty y_t(\mathfrak{p}(\pi))\;\mathrm{d}t\right]\\
&= r(u_{m-1},u_{m})\cdot \mathbb{E}\left[\int_0^\infty y_t((u_0,\ldots,u_{m-1}))\;\mathrm{d}t\right].
\end{align*}
Using Tonelli's theorem and~\eqref{eq:ypi-expectation} on the right-hand side, we obtain
\begin{equation*}
\mathbb{E}[Z(\pi)] = r(u_{m-1},u_{m}) \cdot \left(x_0(u_0) \prod_{i=0}^{m-2}r(u_i,u_{i+1}) \right) \cdot \underbrace{\int_0^\infty \frac{t^{m-1}}{(m-1)!}\mathrm{e}^{-t}\;\mathrm{d}t}_{=1}, 
    \end{equation*}
as desired.
\end{proof}

\begin{corollary}\label{cor:localexttime}
Let~$(\underline{y}_t)_{t \ge 0} = \mathrm{GBRW}_{f,\lambda}(G,\underline{x}_0)$, and let~$\pi  \in \mathscr{T}$ with~$m = \mathfrak{l}(\pi)$. Let $X_{m+1}$ be a $\mathrm{Gamma}(1,m+1)$ random variable, i.e., with density $f_m(s)=e^{-s}s^m/m!$. For any~$t \ge 0$, we have
	\begin{equation} \label{eq_in_cor}
		\mathbb{P}(y_s(\pi) > 0 \text{ for some } s \ge t) \le e \cdot \E[Z(\pi)] \cdot   \P(X_{m+1} \ge t).
	\end{equation}
\end{corollary}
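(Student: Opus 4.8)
The plan is a clean first-moment argument. Write $m = \mathfrak{l}(\pi)$ and set $A := \{y_s(\pi) > 0 \text{ for some } s \ge t\}$, so the goal \eqref{eq_in_cor} amounts to bounding $\P(A)$. The key step will be the inequality
\[\P(A) \;\le\; \E\Big[\int_t^\infty y_s(\pi)\,\mathrm{d}s\Big],\]
after which I evaluate the right-hand side using the expectation formulas from Lemma \ref{lem_products}. Indeed, by Tonelli's theorem and \eqref{eq:ypi-expectation},
\[\E\Big[\int_t^\infty y_s(\pi)\,\mathrm{d}s\Big] = \int_t^\infty \E[y_s(\pi)]\,\mathrm{d}s = z(\pi)\int_t^\infty \tfrac{s^m}{m!}\e^{-s}\,\mathrm{d}s = z(\pi)\cdot\P(X_{m+1}\ge t),\]
where the last equality is just the fact that $f_m(s)=\e^{-s}s^m/m!$ is the density of $X_{m+1}\sim\mathrm{Gamma}(1,m+1)$; then $z(\pi)=\E[Z(\pi)]$ by \eqref{eq:Zpi-expectation}, so $\P(A)\le\E[Z(\pi)]\,\P(X_{m+1}\ge t)$, which is even a bit stronger than \eqref{eq_in_cor} (the factor $\e$ there leaves slack, presumably for a cruder version of the step below).

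To prove the key inequality I would introduce the stopping time $\sigma := \inf\{s\ge t : y_s(\pi)\ge 1\}$ (with $\sigma = +\infty$ when the set is empty) for the natural filtration $(\mathcal{F}_s)$ of the genealogic branching random walk. Then $A = \{\sigma<\infty\}\in\mathcal{F}_\sigma$, and on $A$ the count process has (by right-continuity) $y_\sigma(\pi)\ge 1$. Now, each of the $y_\sigma(\pi)$ particles carrying label $\pi$ that are present at time $\sigma$ dies at rate $1$, and crucially its death clock is unaffected by births of its descendants (whose labels strictly extend $\pi$); hence, conditionally on $\mathcal{F}_\sigma$, by the memoryless property together with the strong Markov property, each such particle survives for a further $\mathrm{Exp}(1)$ time, during which it contributes $1$ to $y_s(\pi)$. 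Consequently $\E\big[\int_\sigma^\infty y_s(\pi)\,\mathrm{d}s \mid \mathcal{F}_\sigma\big]\ge y_\sigma(\pi)\ge 1$ on $A$, and since $\int_t^\infty y_s(\pi)\,\mathrm{d}s \ge \ind_A\int_\sigma^\infty y_s(\pi)\,\mathrm{d}s$ (both integrands being nonnegative and $\sigma\ge t$ on $A$), taking expectations and using $A\in\mathcal{F}_\sigma$ yields $\E[\int_t^\infty y_s(\pi)\,\mathrm{d}s]\ge\P(A)$.

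I expect this last step — making the strong-Markov/memorylessness bookkeeping precise at the stopping time $\sigma$ — to be the only point needing a little care. A fully equivalent alternative, avoiding stopping times, is to work with the labelled-particle construction underlying $\mathrm{GBRW}_{f,\lambda}(G,\underline x_0)$: writing $B_p$ for the birth time and $E_p\sim\mathrm{Exp}(1)$ for the lifetime of each label-$\pi$ particle $p$ (with the $E_p$ independent of the $B_p$'s and of each other), one has $\int_t^\infty y_s(\pi)\,\mathrm{d}s = \sum_p \big|[B_p,B_p+E_p)\cap[t,\infty)\big|$ while $A = \{\exists p: B_p+E_p>t\}$, and a direct computation of $\E\big[|[B_p,B_p+E_p)\cap[t,\infty)|\;\big|\;B_p\big]=\e^{-(t-B_p)^+}=\P(B_p+E_p>t\mid B_p)$ again gives $\E[\int_t^\infty y_s(\pi)\,\mathrm{d}s]=\E[\#\{p: B_p+E_p>t\}]\ge\P(A)$; the rest is then as above.
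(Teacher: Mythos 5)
Your proposal is correct, and it in fact proves a slightly stronger bound than \eqref{eq_in_cor}. The overall skeleton coincides with the paper's: both arguments reduce the event $\{y_s(\pi)>0 \text{ for some } s\ge t\}$ to the occupation integral $\int_t^\infty \mathbb{E}[y_s(\pi)]\,\mathrm{d}s$, which is then evaluated via Tonelli and \eqref{eq:ypi-expectation} as $\E[Z(\pi)]\,\P(X_{m+1}\ge t)$. The difference is in how the probability of the event is compared to the occupation integral. The paper bounds $y_s(\pi)\ge \ind_{\{y_s(\pi)>0\}}$ and works with the event $\CA$ that, after the hitting time $\uptau$, the count stays positive for a full unit interval; since $\P(\CA)\ge \e^{-1}\P(\uptau<\infty)$, this costs the factor $\e$ appearing in \eqref{eq_in_cor}. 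You instead keep the full count $y_s(\pi)$ and use that, at the stopping time $\sigma$, the (at least one) label-$\pi$ particle present has a memoryless $\mathrm{Exp}(1)$ residual lifetime of mean $1$, so $\E\big[\int_\sigma^\infty y_s(\pi)\,\mathrm{d}s\mid \CF_\sigma\big]\ge y_\sigma(\pi)\ge 1$ on $\{\sigma<\infty\}$; this yields $\P(y_s(\pi)>0 \text{ for some } s\ge t)\le \E[Z(\pi)]\,\P(X_{m+1}\ge t)$ with no factor $\e$ (tight already for a single particle with no offspring), which trivially implies \eqref{eq_in_cor}. The one step you flag as delicate — justifying the conditional mean-$1$ contribution at $\sigma$ — is indeed the only point needing care in the counts-only Markov formulation (e.g.\ via domination of $(y_{\sigma+s}(\pi))_{s\ge0}$ by a pure-death process started from $y_\sigma(\pi)$, or directly by the strong Markov property), and your labelled-particle alternative, computing $\E\big[\lvert[B_p,B_p+E_p)\cap[t,\infty)\rvert\mid B_p\big]=\P(B_p+E_p>t\mid B_p)$ and summing over particles, settles it cleanly since each lifetime $E_p$ is independent of the corresponding birth time. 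So there is no gap; your route buys a sharper constant at the price of slightly more bookkeeping at the stopping time, while the paper's unit-interval trick is cruder but requires no appeal to residual lifetimes.
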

\begin{proof}
	Let
	\[\uptau:= \inf\{s \ge t:\;y_s(\pi) > 0\},\]
	so that the left-hand side of~\eqref{eq_in_cor} equals~$\mathbb{P}(\uptau < \infty)$. Next, define the event
	\[\CA:= \{\uptau < \infty,\; y_s(\pi) > 0 \text{ for all } s \in [\uptau,\uptau+1]\}.\]
	It is easy to check that
	\begin{equation}\label{eq_with_A}
		\mathbb{P}(\CA) \ge \mathbb{P}(\uptau < \infty) \cdot \mathrm{e}^{-1}.
	\end{equation}

		Using first Tonelli's theorem and then the fact that~$y_s(\pi)$ is integer-valued, we have
	\begin{align*}
		\int_t^\infty\mathbb{E}[y_s(\pi)]\;\mathrm{d}s = \mathbb{E}\left[\int_t^\infty y_s(\pi)\;\mathrm{d}s \right]\ge \mathbb{E}\left[\int_t^\infty \ind_{\{y_s(\pi) > 0\}}\;\mathrm{d}s \right].
	\end{align*}
	Now, the right-hand side is bounded from below by
	\[\mathbb{E}\left[\ind_{\CA} \cdot \int_\tau^{\tau+1} \ind_{\{y_s(\pi) > 0\}}\;\mathrm{d}s \right] = \mathbb{P}(\CA), \]
where the equality follows from the definition of~$\CA$. Also using~\eqref{eq_with_A}, we have thus obtained
	\[\mathbb{P}(y_s(\pi) > 0 \text{ for some } s \ge t)=\mathbb{P}(\uptau < \infty) \le \mathrm{e} \int_t^\infty \mathbb{E}[y_s(\pi)]\;\mathrm{d}s.\]
	The desired bound in \eqref{eq_in_cor} now follows from \eqref{eq:ypi-expectation} in Lemma~\ref{lem_products} (a).
\end{proof}
The last statement in this section states the stochastic domination between the contact process and branching random walk.
\begin{lemma}[Domination of contact process by branching random walk]\label{lem:StochDom2}
Given any graph $G=(V,E)$, parameters $f:\mathbb R^2\to [0, \infty)$, $\lambda>0$, and starting state $\underline\xi_0\in\{0,1\}^V$, it holds that 
\begin{equation}
\label{eq:stochdom-2}
(\underline\xi_t)_{t\ge 0}=\CPf(G,\underline\xi_0) \ {\buildrel d \over \le } \  \BRW(G,\underline \xi_0)=(\underline x_t)_{t\ge 0}.
\end{equation}
\end{lemma}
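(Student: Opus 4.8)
The plan is to exhibit an explicit coupling of the two processes on a common probability space under which the contact process is dominated by the branching random walk \emph{pathwise}, i.e.\ $\xi_t(v)\le x_t(v)$ for every $v\in V$ and every $t\ge 0$. Concretely, I would build a single Markov process $(\underline\xi_t,\underline x_t)_{t\ge 0}$ on $\{0,1\}^V\times\N^V$, started from $(\underline\xi_0,\underline\xi_0)$, whose first marginal is $\CPf(G,\underline\xi_0)$, whose second marginal is $\BRW(G,\underline\xi_0)$, and which never leaves the ``ordered'' set $\mathcal I:=\{(\underline\xi,\underline x):\ \xi(v)\le x(v)\ \text{for all }v\in V\}$. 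Since $(\underline\xi_0,\underline\xi_0)\in\mathcal I$, this immediately gives the claim.

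The generator of the coupled process is defined as follows, from a state $(\underline\xi,\underline x)\in\mathcal I$. \emph{Healing/death.} For each $v$ with $x(v)\ge 1$: at rate $\ind\{\xi(v)=1\}$ perform $(\underline\xi,\underline x)\mapsto(\underline\xi-\ind_v,\underline x-\ind_v)$ (a simultaneous healing and particle death), and at rate $x(v)-\ind\{\xi(v)=1\}$ perform $(\underline\xi,\underline x)\mapsto(\underline\xi,\underline x-\ind_v)$ (a particle death with no healing). \emph{Infection/birth.} For each $v$ with $\xi(v)=0$ and each $u\in N(v)$: at rate $\ind\{\xi(u)=1\}\,r(u,v)$ perform $(\underline\xi,\underline x)\mapsto(\underline\xi+\ind_v,\underline x+\ind_v)$ (a simultaneous infection and particle birth), and at rate $(x(u)-\ind\{\xi(u)=1\})\,r(u,v)$ perform $(\underline\xi,\underline x)\mapsto(\underline\xi,\underline x+\ind_v)$ (a particle birth with no infection). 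For each $v$ with $\xi(v)=1$ and each $u\in N(v)$: at rate $x(u)\,r(u,v)$ perform $(\underline\xi,\underline x)\mapsto(\underline\xi,\underline x+\ind_v)$.

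Then there are three short verifications. First, the coupled process stays in $\mathcal I$: the only move that could break $\xi(\cdot)\le x(\cdot)$ is a particle death at $v$ unaccompanied by healing, but its rate $x(v)-\ind\{\xi(v)=1\}$ is positive only when $x(v)-\ind\{\xi(v)=1\}\ge 1$, so that $x(v)-1\ge\xi(v)$ after the move; the birth moves either increase $x(v)$ alone or increase $\xi(v)$ and $x(v)$ together, and the simultaneous healing/death move clearly preserves the inequality. Second, collecting the rates of all moves that change $\underline x$ recovers exactly the transition rates of $\BRW$ from Definition~\ref{def:BRW}: at each $v$ a particle dies at total rate $x(v)$, and a particle is born at total rate $\sum_{u\in N(v)}x(u)\,r(u,v)$, regardless of $\underline\xi$. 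Third, collecting the rates of all moves that change $\underline\xi$ recovers the transition rates of $\CPf$ from Definition~\ref{def:CP}: each infected $v$ heals at rate $1$, and each healthy $v$ becomes infected at rate $\sum_{u\in N(v)}\xi(u)\,r(u,v)$ — here the point is that whenever $\xi(u)=1$ we have $x(u)\ge 1$ on $\mathcal I$, so the ``simultaneous infection and birth'' move really is available with rate $r(u,v)$. These matching marginal rates force the marginals of the coupled process to be the stated processes, and we conclude.

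The one genuinely delicate point — the step I would treat most carefully — is well-posedness on infinite graphs, since the branching random walk can in principle explode (accumulate infinitely many particles in finite time), whereas the contact process cannot. This is handled in the usual way: run the coupled dynamics on an exhausting sequence of finite subgraphs $G_k\uparrow G$, where everything is elementary, and pass to the limit; the pointwise domination $\underline\xi_t\le\underline x_t$ is preserved along the way and holds up to the (possible) explosion time of the branching random walk, after which there is nothing left to prove. An equally standard alternative is to couple directly through the graphical representation of Definition~\ref{def:graphical}: each infection path of the contact process from $(v,0)$ to $(u,t)$ traces out an ancestral line of particles in the branching random walk, so that an infected vertex always carries at least one particle. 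Either route reduces the lemma to checking that the transition rates come out exactly right, which is the only bookkeeping one must be careful about.
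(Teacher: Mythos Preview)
Your proposal is correct. The paper does not actually prove this lemma: it simply states that the result is well known, can be proved either ``by comparison of transition rates or a coupling using a graphical construction,'' cites Liggett's book, and omits further details. Your explicit generator coupling is precisely a fleshed-out version of the first approach the paper names, and you also mention the graphical-representation alternative; so your write-up is more detailed than, but entirely in line with, what the paper intends.
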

This is a well-known result which can be proved either by comparison of transition rates or a coupling using a graphical construction. See \cite[p.34]{liggett1999stochastic} for details of the latter approach; here we omit further details.

\section{Extinction proofs via particle counting and martingales}\label{sec:proofsGW}

In this section we prove several results relating to global, local or fast extinction. We start by showing Theorem \ref{thm:prod-general} on global extinction for the product penalty with $\mu\ge 1/2$ in Section \ref{sec:prod_GW_ext}. Theorem \ref{thm:prod_CM} (in Section \ref{sec:prod_CM_fast}), Theorem \ref{thm:max_trees}(b) (in Section \ref{sec:max_GW_ext}) and Theorem \ref{thm:max_CM_extinction}(b) (in Section \ref{sec:max_CM_fast}) will all be straightforward consequences. Then we establish the other extinction phases for the max-penalty, showing local extinction on all trees  -- Theorem \ref{thm:max_trees}(a) -- for $\mu\ge 1/2$ in Section \ref{sec:max_GW_localext}. We then prove  global extinction on GW trees with finite $(1-\mu)$th moment -- Theorem \ref{thm:max_GW}(c) --  in Section \ref{sec:max_GW_ext_light}. 

\subsection{Product penalty: global extinction for all graphs when \texorpdfstring{$\mu\ge 1/2$}{mu>=1/2} via martingales}\label{sec:prod_GW_ext}
We start by establishing the subcritical phase for the product penalty (Theorem \ref{thm:prod-general}). Here, the result holds generally for any underlying graph, not just a Galton-Watson tree, and any monomial penalty function with polynomial-degree at least $1$:

\begin{claim}[Supermartingale for global extinction]\label{claim:supermg1}
Let $f(x,y)=a x^\mu y^\nu$ for some $a> 0$ and $\mu,\nu\ge 0$ such that $\mu+\nu\ge 1$, and let $G=(V,E)$ be an arbitrary locally finite graph. Consider the process $(x_t(v))_{t\ge0, v\in V}=\BRW(G,  \underline x_0)$ for $\lambda>0$ on $G$, starting from a given state $\underline{x}_0\in\N^{V}$.
  Define, for any $\alpha\in[1-\mu,\nu]$,
\[M_t:=\sum_{v\in V}x_t(v)d_v^\alpha.\]
Then, whenever  $\sum_{v\in V}x_0(v)d_v^\alpha<\infty$, the process $(M_t)_{t\ge 0}$ is a supermartingale with respect to the filtration $\CF_t=\sigma\big((x_s(v))_{v\in V(G),s\le t}\big)$ for all $\lambda \in (0,a]$ and a strict supermartingale when $\lambda\in(0,a)$. 
\end{claim}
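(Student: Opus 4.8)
The statement is a generator computation, so the plan is to write down the infinitesimal drift of $M_t$ and show it is $\le 0$. Fix a configuration $\underline x$ with $\sum_v x(v) d_v^\alpha < \infty$. The process $(M_t)$ changes only at the jump times of the branching random walk: a death at $v$ (rate $x(v)$) decreases $M$ by $d_v^\alpha$, and a birth at $v$ from a neighbor $u$ (total rate $\sum_{u \in N(v)} x(u) r(u,v)$) increases $M$ by $d_v^\alpha$. Hence the generator applied to $M$ at state $\underline x$ is
\[
(\mathcal{L}M)(\underline x) = \sum_{v\in V}\Big(-x(v)\, d_v^\alpha + \sum_{u\in N(v)} x(u)\, r(u,v)\, d_v^\alpha\Big) = -M(\underline x) + \sum_{v\in V}\sum_{u\in N(v)} x(u)\, r(u,v)\, d_v^\alpha.
\]
First I would rewrite the double sum by swapping the order of summation and recalling $r(u,v) = \lambda\, e(u,v)/f(d_u,d_v) = \lambda\, e(u,v)/(a\, d_u^\mu d_v^\nu)$, obtaining
\[
\sum_{u\in V} x(u)\, \frac{\lambda}{a\, d_u^\mu}\sum_{v\in N(u)} \frac{e(u,v)}{d_v^\nu}\, d_v^\alpha = \frac{\lambda}{a}\sum_{u\in V} \frac{x(u)}{d_u^\mu}\sum_{v\in N(u)} e(u,v)\, d_v^{\alpha-\nu}.
\]

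The key estimate is then to bound the inner sum $\sum_{v\in N(u)} e(u,v)\, d_v^{\alpha-\nu}$. Since $\alpha \le \nu$ we have $\alpha-\nu \le 0$, so $d_v^{\alpha-\nu} \le 1$ for every $v$ (using $d_v \ge 1$, which holds as $v$ has a neighbor), and therefore $\sum_{v\in N(u)} e(u,v)\, d_v^{\alpha-\nu} \le \sum_{v\in N(u)} e(u,v) = d_u$. Plugging this in gives
\[
\sum_{v\in V}\sum_{u\in N(v)} x(u)\, r(u,v)\, d_v^\alpha \le \frac{\lambda}{a}\sum_{u\in V} \frac{x(u)}{d_u^\mu}\, d_u = \frac{\lambda}{a}\sum_{u\in V} x(u)\, d_u^{1-\mu} \le \frac{\lambda}{a}\sum_{u\in V} x(u)\, d_u^\alpha = \frac{\lambda}{a}\, M(\underline x),
\]
where the last inequality uses $1-\mu \le \alpha$ together with $d_u \ge 1$. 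Combining, $(\mathcal{L}M)(\underline x) \le (\lambda/a - 1)\, M(\underline x)$, which is $\le 0$ for $\lambda \le a$ and strictly negative for $\lambda < a$ (as long as $M(\underline x) > 0$); this yields the supermartingale (resp.\ strict supermartingale) property.

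Two technical points need care and I would address them in order. First, to legitimately pass from the generator computation to the (super)martingale property one must control integrability: $M_t$ should have finite expectation for all $t$, and the process should not explode. Here I would invoke Lemma~\ref{lem_products}(a): the genealogic branching random walk gives $\E[x_t(v)] = \sum_{\pi:\, \mathfrak{s}(\pi)=v}\E[y_t(\pi)]$ as an explicit convergent sum, and summing the identity \eqref{eq:ypi-expectation} against $d_v^\alpha$ — using the same bound on products of $r$-values along a path that appears in the generator estimate — shows $\E[M_t] \le \e^{(\lambda/a - 1)t} M(\underline x_0) < \infty$; more carefully, a Gronwall-type argument on $g(t) := \E[M_t]$ using $g'(t) \le (\lambda/a-1)g(t)$ makes the supermartingale inequality $\E[M_t \mid \mathcal{F}_s] \le M_s$ rigorous. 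The main obstacle is precisely this: justifying that the local (generator-level) drift inequality integrates up to a genuine supermartingale for a countable-state jump process whose jump rates are unbounded over the state space. Second, one should note $d_v \ge 1$ for every vertex appearing, which is automatic since any $v$ with $x_t(v)>0$ that ever gets infected lies in a component with at least one edge; vertices of degree $0$ are irrelevant as no birth ever reaches them. With these in place the claim follows.
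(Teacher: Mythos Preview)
Your proof is correct and follows essentially the same approach as the paper: both compute the infinitesimal drift of $M_t$, use the two key inequalities $d_v^{\alpha-\nu}\le 1$ (from $\alpha\le\nu$) and $d_u^{1-\mu}\le d_u^\alpha$ (from $1-\mu\le\alpha$), together with $\sum_{v\in N(u)}e(u,v)=d_u$, to obtain $(\mathcal{L}M)(\underline x)\le(\lambda/a-1)M(\underline x)$. Your discussion of integrability and degree-$0$ vertices goes slightly beyond what the paper includes in this particular proof.
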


\begin{proof} We start by observing that the interval $[1-\mu,\nu]$ is nonempty since $\mu+\nu\ge 1$. To prove the supermartingale property we analyze the expected increments of $(M_t)_{t\ge 0}$, using the definition of $\BRW$ in Def.~\ref{def:BRW}. The change in $M_t$ may come from either  a particle disappearing at $v$ due to a death event, or from a new particle appearing at $v$ due to reproduction events from neighboring particles.  We obtain, using the rates in \eqref{eq:healing-BRW} and \eqref{eq:infection-rate-BRW} with $r(u,v)=\la e(u,v)/f(d_u,d_v)$, that
\begin{align*}
\E[M_{t+\mathrm{d}t}-M_t \mid \CF_t]&=\E\left[\left.\sum_{v\in V(G)} (x_{t+\mathrm{d}t}(v)-x_t(v))d_v^\alpha \;\right|\; \CF_t\right]\\
&=\sum_{v\in V(G)}\Bigg(-x_t(v)+\sum_{u\in N(v)} x_t(u)r(u,v)\Bigg)\mathrm{d}t\cdot d_v^\alpha\\
&=-M_t\mathrm{d}t+\sum_{v\in V(G)}\sum_{u\in N(v)}x_t(u)\cdot [\lambda e(u,v)/f(d_u,d_v)]\cdot d_v^\alpha\mathrm{d}t.
\end{align*}
We substitute $f(d_u,d_v)=a d_u^{\mu}d_v^{\nu}$ in the last line above, and use that $d_v^{\alpha-\nu}\le 1$ by the assumption  that $\alpha\le\nu$ to obtain that:
\begin{align*}
\E[M_{t+\mathrm{d}t}-M_t \mid \CF_t]
&=-M_t\mathrm{d}t+(\la/a)\cdot\sum_{v\in V(G)}\sum_{u\in N(v)}x_t(u) e(u,v)d_u^{-\mu}d_v^{\alpha-\nu}\mathrm{d}t\\
&\le-M_t\mathrm{d}t+(\la/a)\cdot\sum_{v\in V(G)}\sum_{u\in N(v)}x_t(u) e(u,v)d_u^{-\mu}\mathrm{d}t.
\end{align*}
Exchanging the sums and using that $\sum_{v\in V}e(u,v)=d_u$ (see Notation in Section \ref{sec:introduction}), we obtain
\begin{equation*}
\E[M_{t+\mathrm{d}t}-M_t \mid \CF_t]\le-M_t\mathrm{d}t+(\la/a)\cdot\sum_{u\in V(G)} x_t(u) d_u^{1-\mu}\mathrm{d}t.
\end{equation*}
Finally, since $d_u\ge 0$ is an integer, $d_u^{1-\mu}\le d_u^{\alpha}$ holds by the assumption $1-\mu\le\alpha$. Hence,
\begin{equation}\label{eq:submartingale-1}
\E[M_{t+\mathrm{d}t}-M_t \mid \CF_t]\le -M_t\mathrm{d}t+(\la/a)\cdot\sum_{u\in V(G)}  x_t(u) d_u^\alpha\mathrm{d}t=[(\lambda/a)-1]\cdot M_t\mathrm{d}t.
\end{equation}
Since $M_t\ge 0$, for $\lambda\le a$ we obtain the supermartingale property, as $[(\lambda/a)-1]\cdot M_t\mathrm{d}t\le 0$,  with strict inequality when $\lambda<a$. The finiteness of the initial state $M_0$ is ensured by the assumption that $M_0=\sum_{v\in V} x_0(v)d_v^\alpha<\infty$. This finishes the proof. 
\end{proof}

\begin{proof}[Proof of Theorem \ref{thm:prod-general}]
Without loss of generality we may assume that all vertices in $G$ have degree at least $1$. Indeed, if $G$ would contain a (countably infinite or finite) number of vertices with degree $0$, the contact process on those, starting from any $\underline\xi_0$ with finitely many infected vertices, reduces to a pure death process where each particle dies at rate $1$. This is because infection cannot happen to and from these vertices. This process goes almost surely extinct. Hence we assume wlog that $d_v\ge 1$ for all $v\in V$.  

By Lemma \ref{lem:StochDom2}, it is sufficient to prove the almost sure extinction of $\BRW(G, \underline \xi_0)$ for any $\xi_0$ that is almost surely finite, i.e., $\sum_{v\in V} \xi_0(v)<\infty$ almost surely. Fix now any such realization of the initial state. Then, since only finitely many coordinates are non-zero, $\sum_{v\in V} \xi_0(v) d_v^\alpha=\sum_{v\in V} x_0(v) d_v^\alpha<\infty$ also holds for any $\alpha>0$. We assumed $\la\in (0,1)$ also in Theorem \ref{thm:prod-general}. Hence, the conditions of Claim \ref{claim:supermg1} are satisfied with $\nu=\mu\ge 1/2$, and we can set $\alpha=1-\mu$ there to obtain the non-negative (strict) supermartingale $(M_t)_{t\le 0}$, (i.e., not a martingale).  

Apply Doob's martingale convergence theorem for the non-negative supermartingale $(M_t)_{t\ge 0}$. Since $(M_t)_{t\ge 0}$ is integer-valued, its almost sure limit can only be $0$, and it cannot take any value in $(0,1)$. Therefore, almost surely $M_t=0$ for large enough $t$. 
By the coupling between $\CPf$ and $\BRW$ and that $d_v^{1-\mu}\ge 1$ whenever $d_v\ge 1$, we obtain that  \[ 
\sum_{v\in V} \xi_t(v) \le  \sum_{v\in V} x_t(v)d_v^{1-\mu} =M_t \ {\buildrel a.s. \over \longrightarrow}\  0 
\]
implying global extinction. To compute the extinction time, by definition, $T_{\mathrm{ext}}(G, \underline \xi_0)\ge t$ implies the existence of at least one infected particle at time $t$. Since $\alpha>0$ and $d_v\ge1$ for all $v$, the existence of at least one infected particle at time $t$ in turn implies $M_t\ge 1$. By Markov's inequality, and since $M_0=|\underline\xi_0|$, taking expectation of \eqref{eq:submartingale-1} and solving the resulting differential equation for $\mathbb E[M_t \mid  G, \underline \xi_0]$ yields for all $\la\le 1$:
\begin{align*}
\Pv(T_{\mathrm{ext}}(G, \underline \xi_0)  \ge t \mid G, \underline \xi_0) &\le  \Pv(M_t\ge 1\mid G, \underline \xi_0) \le \mathbb E[M_t\mid G, \underline \xi_0]\\
&\le \Big(\sum_{v\in V}\xi_0(v)d_v^\alpha\Big)\exp(-(1-\lambda) t).
\end{align*}
Hence,
\begin{align*}
\Ev[T_{\mathrm{ext}}(G, \underline \xi_0) \mid G, \underline \xi_0]&\le \int_{0}^{\infty} \Big(\sum_{v\in V}\xi_0(v)d_v^\alpha\Big)\exp(-(1-\lambda) t) \mathrm dt\\
&= \Big(\sum_{v\in V}\xi_0(v)d_v^{1-\mu}\Big)/(1-\lambda).
\end{align*}
This finishes the proof. The extensions in Remark \ref{rem:polynomial-penalties} follow immediately by the stochastic domination in \eqref{eq:domination-between-two-f} and then the martingale argument applied to the monomial obtained. 
\end{proof}

\subsection{Product-penalty: fast extinction on the configuration model when \texorpdfstring{$\mu\ge 1/2$}{mu>=1/2}}\label{sec:prod_CM_fast}
We obtain Theorem \ref{thm:prod_CM}(b) as an immediate consequence of Theorem \ref{thm:prod-general}, since it applies for arbitrary finite graphs as well.

\begin{proof}[Proof of Theorem \ref{thm:prod_CM}(b)] The bound in \eqref{eq:mean-ext-time-product} in Theorem \ref{thm:prod-general} applied to the configuration model $G_n$ yields that $\mathbb E[T^{\mathrm{cp}}_{\mathrm{ext}}(G_n, \underline 1_{G_n})| (d_i)_{i\le n}]\le \sum_{i=1}^n d_i^{1-\mu}/(1-\la)$. Fast extinction now follows by using the assumption that $\sum_{i=1}^n d_i^{1-\mu}=O_{\P}(\mathrm{poly}(n))$. Assumption \ref{assu:regularity} implies that $\max_{i\le n} d_i=o(n)$, so then this condition is automatically satisfied,  but it holds even in a much larger class of degree sequences $(\underline d_n)$ that do not grow superpolynomially.
\end{proof}

\subsection{Max-penalty: global extinction for all graphs when \texorpdfstring{$\mu\ge 1$}{mu>=1}}\label{sec:max_GW_ext}
Global extinction in Theorem \ref{thm:max_trees} part (b) is a straightforward consequence of that in Theorem \ref{thm:prod-general}.
\begin{proof}[Proof of Theorem \ref{thm:max_trees}, part (b)]
For all $\mu\ge 0$,
\[f_1(x,y):=\max(x,y)^\mu \ge x^{\mu/2}y^{\mu/2}=:f_2(x,y)\]
holds for all $x,y\ge 1$. 
Hence, the stochastic domination in \eqref{eq:domination-between-two-f} applies and $\mathrm{CP}_{f_1,\la}\ {\buildrel d \over \le }\ \mathrm{CP}_{f_2,\la}$. Since the exponent in $f_2$ is $\mu/2\ge 1/2$ by the assumption that $\mu\ge 1$, Theorem \ref{thm:prod-general} applies for $\mathrm{CP}_{f_2,\la}$, and the process goes extinct for all $\lambda<1$. Hence, so does also $\mathrm{CP}_{f_1,\la}$.
\end{proof}

\subsection{Max-penalty: fast extinction on the configuration model when \texorpdfstring{$\mu\ge 1$}{mu>=1}}\label{sec:max_CM_fast}
Fast extinction in Theorem \ref{thm:max_CM_extinction} part (b) follows from Theorem \ref{thm:prod_CM} part (b) in a similarly straightforward way.
\begin{proof}[Proof of Theorem \ref{thm:max_CM_extinction}, part (b)]
The stochastic domination between the product and max-penalties discussed in the proof of Theorem \ref{thm:max_trees}(b) above implies the result from Theorem \ref{thm:prod_CM}(b).
\end{proof}

\subsection{Max-penalty: Loop erasure in particle counting when \texorpdfstring{$\mu\in (1/2,1)$}{1/2<mu<1}}\label{sec:max_GW_localext}
To prove local extinction, and also global extinction later under the max-penalty, we go back to the construction of genealogic branching random walks from Section~\ref{s_gbrw}. We use Lemma~\ref{lem_products} and bound the number of total particles ever born, decomposed along genealogical paths. We first give some definitions. 

For a graph $G = (V,E)$, we will take throughout this section the infection-rate function to be
\begin{equation}\label{eq_choice_of_r}
r(u,v)= \frac{\lambda\cdot  \mathrm{e}(u,v)}{\max(d_u,d_v)^\mu},\quad u,v \in V.
\end{equation}
Recall that~$\mathscr{T}=\mathscr{T}(G)$ denotes the set of genealogical labels in~$G$, as in Definition~\ref{def:labels}, and $Z(\pi)$ from \eqref{eq:Zpi-def}. We define, for~$\pi = (\pi_0,\ldots,\pi_m) \in \mathscr{T}$, \begin{equation}\label{eq_def_z}
		z(\pi) := \prod_{i=0}^{m-1} r(\pi_i,\pi_{i+1}) = \lambda^{m}\cdot  \prod_{i=0}^{m-1}\frac{\mathrm{e}(\pi_i,\pi_{i+1})}{\max(d_{\pi_i},d_{\pi_{i+1}})^\mu},
	\end{equation}
 with $z(\pi) = 1$ if  the length of the path $\mathfrak{l}(\pi) = 0$.
 Note that, by Lemma~\ref{lem_products}(b), $z(\pi)=\E[Z(\pi)]$, the expected number of particles with label $\pi$ ever born, in a genealogical branching process with birth rate $\lambda$, maximum-penalty function with exponent $\mu$, and started with a single particle with label~$(\pi_0)$.

\begin{definition}[Backtracking steps]\label{def_backtracking}
	Let~$G = (V,E)$ be a graph.  Given a path $\pi = (\pi_1,\ldots,\pi_m) \in \mathscr{T}$ with length~$\mathfrak{l}(\pi) = m \ge 2$, we define
	\begin{equation}\label{eq:tau-def}
		\tau(\pi):= \min\{i \ge 2:\; \pi_i = \pi_{i-2} \neq \pi_{i-1}\}
	\end{equation}
	(with the convention~$\min \varnothing = \infty$). That is,~$\tau(\pi)$ is the first index on the path when~$\pi$ returns to a vertex $u$ right after having jumped away from it to a different vertex $v$. We informally refer to this kind of motion~$u \to v \to u$ (with~$u \neq v$) as a \emph{backtracking step}. 
	For~$\pi$ with~$\tau(\pi) < \infty$, we define
	\[g(\pi):= (\pi_0,\ldots,\pi_{\tau-2},\pi_{\tau+1},\ldots,\pi_{\mathfrak{l}(\pi)}),\]
	that is,~$g(\pi)$ is the path obtained by removing the first backtracking step of~$\pi$. We define $g^{-1}(\pi)=\{\pi': g(\pi')=\pi\}$ the set of paths that map to $\pi$ under $g$.
\end{definition}

We clarify that traversal of self-loops, even multiple times, is not considered a backtracking step for the above definition.

\begin{figure}[htb]
\begin{center}
\setlength\fboxsep{0cm}
\setlength\fboxrule{0cm}
\fbox{\includegraphics[width=\textwidth]{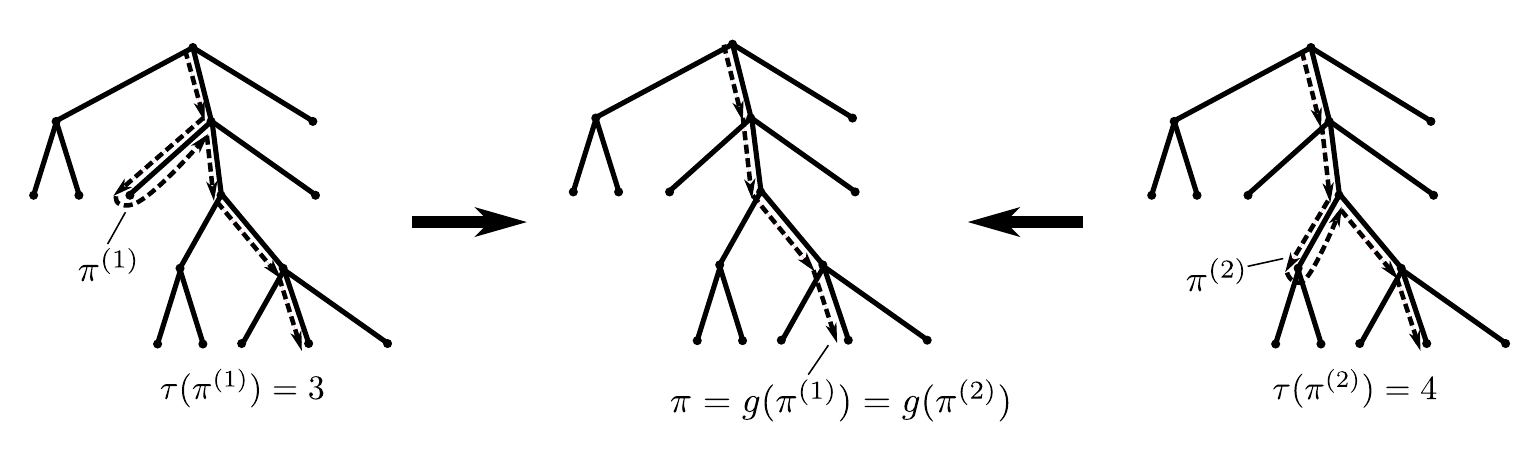}}
\end{center}
	\caption{\label{fig_loop} {Illustration of the definition of $\tau$ and $g$ of Definition~\ref{def_backtracking}.}}
\end{figure}

\begin{claim}[Removal of one backtracking step]\label{claim_remove_one}
	Let~$G=(V,E)$ be a graph,~$\lambda > 0$ and~$\mu \ge 1/2$. Let $z(\cdot)$ be as in~\eqref{eq_def_z}. 
	For any~$\pi \in \mathscr{T}$ and any index~$a \in \N$, we have
	\begin{equation}\label{eq_sum_zpi}
		\sum_{ \pi' \in g^{-1}(\pi):\  \tau(\pi') = a}z(\pi') \le \lambda^2\cdot z(\pi) \max_{v: v\neq \pi_{a-2}}e(\pi_{a-2},v).
	\end{equation}
\end{claim}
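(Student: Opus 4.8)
The plan is to set up an explicit bijection-with-weights between the paths $\pi'\in g^{-1}(\pi)$ with $\tau(\pi')=a$ and a controlled family of "re-insertions" of a backtracking step into $\pi$, and then to compare the $z$-weights of matched paths. Concretely, if $\pi = (\pi_0,\ldots,\pi_m)$, a path $\pi'$ with $g(\pi')=\pi$ and $\tau(\pi')=a$ is obtained by taking $\pi$, keeping $\pi_0,\ldots,\pi_{a-2}$, then inserting a detour $\pi_{a-2}\to v \to \pi_{a-2}$ for some vertex $v\neq \pi_{a-2}$ with $e(\pi_{a-2},v)>0$, and then continuing with $\pi_{a-1},\ldots,\pi_m$ (which becomes $\pi'_{a+1},\ldots,\pi'_{m+2}$). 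The only freedom is the choice of $v$; moreover we must check $\tau(\pi')=a$ exactly, i.e.\ that this detour is genuinely the \emph{first} backtracking step — but since $\pi_0,\ldots,\pi_{a-2}$ is an initial segment of $\pi$ and $\pi$ itself has no backtracking step before index $a$ would force one (one needs to be a little careful here: $g(\pi')=\pi$ only tells us $\pi$ has one fewer backtracking step, so among the $\pi'$ counted, the map $v\mapsto \pi'$ is an injection into a superset of what we want, which is all that is needed for an upper bound).

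Next I would compare $z(\pi')$ to $z(\pi)$. Writing out \eqref{eq_def_z}, the edge-multiplicity and max-degree factors of $\pi'$ agree with those of $\pi$ on all steps except that $\pi'$ has two extra steps, $\pi_{a-2}\to v$ and $v\to \pi_{a-2}$, contributing
\[
\lambda^2\cdot \frac{e(\pi_{a-2},v)^2}{\max(d_{\pi_{a-2}},d_v)^{2\mu}},
\]
and it replaces nothing — the step $\pi_{a-2}\to\pi_{a-1}$ of $\pi$ (i.e.\ $\pi'_{a+1}\to\pi'_{a+2}$) is still present in $\pi'$. Hence $z(\pi') = \lambda^2 \cdot z(\pi)\cdot e(\pi_{a-2},v)^2/\max(d_{\pi_{a-2}},d_v)^{2\mu}$. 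Since $\mu\ge 1/2$, we have $\max(d_{\pi_{a-2}},d_v)^{2\mu}\ge \max(d_{\pi_{a-2}},d_v) \ge d_v \ge e(\pi_{a-2},v)$ (the degree of $v$ is at least the number of edges from $v$ to $\pi_{a-2}$), so
\[
z(\pi')\le \lambda^2 z(\pi)\, e(\pi_{a-2},v).
\]

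Finally I would sum over the admissible choices of $v$. Each $\pi'$ in the sum corresponds to a distinct $v$ with $v\neq \pi_{a-2}$, so
\[
\sum_{\pi'\in g^{-1}(\pi):\ \tau(\pi')=a} z(\pi')\ \le\ \lambda^2 z(\pi)\sum_{v:\,v\neq\pi_{a-2}} e(\pi_{a-2},v)\ \le\ \lambda^2 z(\pi)\, d_{\pi_{a-2}},
\]
which is already a clean bound; to get exactly the form in \eqref{eq_sum_zpi} I would instead keep only the \emph{largest} edge-multiplicity bound, i.e.\ bound each $e(\pi_{a-2},v)\le \max_{v\neq\pi_{a-2}} e(\pi_{a-2},v)$ outside the sum — wait, that overcounts; the correct route to the stated bound is to note $\sum_{v\neq \pi_{a-2}} e(\pi_{a-2},v)\cdot [\text{one term per such } v]$, but the statement has a single $\max$, not a sum. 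Re-reading the claim: the right-hand side is $\lambda^2 z(\pi)\max_{v\neq\pi_{a-2}} e(\pi_{a-2},v)$, so the intended argument must bound $z(\pi')\le \lambda^2 z(\pi)\cdot[\![v]\!]$ in a way whose \emph{sum} over $v$ telescopes to a single max — this happens precisely because in a \emph{simple-ish} graph there is at most one vertex realizing the max, or more robustly because the factor $e(\pi_{a-2},v)/\max(d_{\pi_{a-2}},d_v)^\mu \le e(\pi_{a-2},v)^{1/2}/(\text{something})$ and one uses $\sum_v e(\pi_{a-2},v) = d_{\pi_{a-2}}$ combined with $e(\pi_{a-2},v)\le \max_v e(\pi_{a-2},v)$ in only one of the two copies. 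I expect the main obstacle to be exactly this bookkeeping: correctly identifying which of the two extra edge-factors to bound by the max and which by a sum-to-degree, so that the product of the two estimates collapses to $\lambda^2 z(\pi)\max_{v\neq\pi_{a-2}} e(\pi_{a-2},v)$ rather than $\lambda^2 z(\pi) d_{\pi_{a-2}}$; the inequality $\mu\ge 1/2$ is what makes one of these two savings available. Everything else — the injectivity of the re-insertion map and the $z$-weight identity — is routine.
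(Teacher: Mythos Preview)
Your setup and weight identity are correct: each $\pi'\in g^{-1}(\pi)$ with $\tau(\pi')=a$ is parametrized by a neighbor $v\neq u:=\pi_{a-2}$, and
\[
z(\pi') = \lambda^2\, z(\pi)\cdot \frac{e(u,v)^2}{\max(d_u,d_v)^{2\mu}}.
\]
The gap is in the next step, where you bound $\max(d_u,d_v)^{2\mu}\ge d_v\ge e(u,v)$. This throws away the dependence on $d_u$, and after summing over $v$ you are left with $\sum_{v\neq u}e(u,v)=d_u$ (up to self-loops), which can be much larger than the claimed $\max_{v\neq u} e(u,v)$. Your own bookkeeping at the end shows you sensed the problem but did not resolve it.

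The fix, and this is exactly what the paper does, is to bound the \emph{other} side of the max: use $\max(d_u,d_v)^{2\mu}\ge d_u^{2\mu}$, keeping $d_u$ in the denominator uniformly in $v$. Then
\[
\sum_{v\neq u}\frac{e(u,v)^2}{d_u^{2\mu}}
\ \le\ \frac{1}{d_u^{2\mu}}\Bigl(\max_{v\neq u} e(u,v)\Bigr)\sum_{v\neq u} e(u,v)
\ \le\ d_u^{1-2\mu}\cdot \max_{v\neq u} e(u,v),
\]
where one copy of $e(u,v)$ is bounded by the maximum and the other is summed to at most $d_u$. Now $\mu\ge 1/2$ gives $d_u^{1-2\mu}\le 1$, and the claim follows. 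So the ``two savings'' you mention are: pull out one factor $e(u,v)$ as a max, sum the other to $d_u$, and cancel against $d_u^{2\mu}$ --- but for that cancellation to work you must keep $d_u$, not $d_v$, in the denominator.
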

\begin{proof} 
	Fix~$\pi$ and~$a$ as in the statement of the lemma. Write~$\pi = (\pi_0,\ldots,\pi_m)$, where~$m = \mathfrak{l}(\pi)$. We assume that the set~$\{\pi' \in g^{-1}(\pi):\; \tau(\pi') = a\}$ is non-empty, as the desired inequality is trivial otherwise. By \eqref{eq:tau-def} we then have~$a \in \{2,\ldots, m+2\}$, and any~$\pi' \in g^{-1}(\pi)$ with~$\tau(\pi') = a$ and $\pi$ are of the form
	\begin{equation} \label{eq_rep_pi}
		\pi' = (\pi_0,\ldots, \pi_{a-3}, u,v,u,\pi_{a-1}, \ldots, \pi_m),\qquad \pi=(\pi_0, \ldots, \pi_{a-3}, u, \pi_{a-1}, \ldots, \pi_m),
	\end{equation}
	where~$u = \pi_{a-2}=\pi'_{a-2}=\pi'_a$ and~$v$ is a neighbor of~$u$ (with~$v \neq u$). 
	Then, by \eqref{eq_def_z},
	\begin{equation*}
		z(\pi') = r(u,v)^2\cdot z(\pi) =  \left(\frac{\lambda\cdot  \mathrm{e}(u,v)}{\max(d_u,d_v)^\mu}\right)^2 \cdot z(\pi) \le \frac{\lambda^2\cdot \mathrm{e}(u,v)^{2}}{(d_u)^{2\mu} } \cdot z(\pi),
	\end{equation*}
	and
	\begin{align*}
		\sum_{\pi' \in g^{-1}(\pi):\  \tau(\pi') = a}z(\pi') &\le \frac{\lambda^2\cdot z(\pi)}{(d_u)^{2\mu}} \sum_{v:v\neq u} \mathrm{e}(u,v)^{2}\le \frac{\lambda^2\cdot z(\pi)}{(d_u)^{2\mu}} \cdot d_u \cdot \max_{v: v\neq u}e(u,v) \\
		&= \lambda^2\cdot z(\pi)\cdot (d_u)^{1-2\mu}  \cdot \max_{v: v\neq u}e(u,v)\le \lambda^2\cdot z(\pi) \cdot  \max_{v: v\neq \pi_{a-2}}e(\pi_{a-2},v) ,
	\end{align*}
where the last inequality follows from~$d_u \ge 1$ and~$\mu \ge 1/2$, and that $u=\pi_{a-2}$. 
\end{proof}

Let us write
\[g^{(0)} := g,\quad g^{(k+1)} := g \circ g^{(k)},\; k \ge 0.\]
In the statement and proof of the following lemma, to avoid summations with long subscripts, for any set~$A$ and any function~$h:A \to \mathbb{R}$, we write~$\sum\{h(x): x\in A\} = \sum_{x \in A} h(x)$
(with the convention that this is zero when~$A$ is empty).
\begin{lemma}[Removal of multiple backtracking steps]\label{lem_remove_many}
	Let~$G$,~$\lambda$,~$\mu$,~$f$ and~$z(\cdot)$ be as in Claim~\ref{claim_remove_one}. Fix~$\pi \in \mathscr{T}$. Then, for any~$k \ge 1$ and any sequence of positive integers~$(a_1,\ldots,a_k)$, we have
	\begin{align*}
		\sum&\left\{ z(\pi'):\begin{array}{l} \;\pi' \in (g^{(k)})^{-1}(\pi),\\[.1cm]
		\tau(\pi') = a_1,\; \tau(g(\pi')) = a_2,\;\ldots,\; \tau(g^{(k-1)}(\pi')) = a_k \end{array}  \right\}\\
		&\le \Big(\max_{\substack{u,v\in G\\ v\neq u}}e(u,v)\Big)^k\lambda^{2k}\cdot z(\pi).
	\end{align*}
\end{lemma}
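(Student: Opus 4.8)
The plan is to prove the lemma by induction on $k$, using Claim~\ref{claim_remove_one} to strip off one backtracking step at each stage. Throughout, write $M := \max_{u,v \in G:\, u \neq v} e(u,v)$ for the maximal edge multiplicity, so the target bound reads $M^k \lambda^{2k} z(\pi)$.

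For the base case $k = 1$, the left-hand side is exactly $\sum\{z(\pi') : \pi' \in g^{-1}(\pi),\ \tau(\pi') = a_1\}$, which Claim~\ref{claim_remove_one} with index $a = a_1$ bounds by $\lambda^2 z(\pi)\max_{v\neq \pi_{a_1 - 2}} e(\pi_{a_1-2},v) \le \lambda^2 M\, z(\pi)$. Replacing the edge-multiplicity factor by the uniform constant $M$ is precisely the (harmless) loss built into the statement.

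For the inductive step I would fix $\pi$ and a sequence $(a_1,\dots,a_{k+1})$ and introduce $\pi'' := g(\pi')$ as a new summation variable. Since $g^{(k+1)} = g^{(k)} \circ g$, and since for $1 \le j \le k$ the condition $\tau(g^{(j)}(\pi')) = a_{j+1}$ is the same as $\tau(g^{(j-1)}(\pi'')) = a_{j+1}$, the map $\pi' \mapsto \pi''$ sends the index set of the $(k+1)$-sum onto the index set of the $k$-sum associated to $\pi$ and the shifted sequence $(a_2,\dots,a_{k+1})$, with the fiber over a given $\pi''$ equal to $\{\pi' \in g^{-1}(\pi'') : \tau(\pi') = a_1\}$; all iterates of $g$ appearing here are well defined because $a_1,\dots,a_{k+1}$ are finite. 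Reorganizing the sum as a double sum over $\pi''$ and its one-step preimages, applying Claim~\ref{claim_remove_one} to each inner sum to extract a factor $\lambda^2 M$, and then invoking the induction hypothesis for $\pi$ with the sequence $(a_2,\dots,a_{k+1})$ to bound $\sum_{\pi''} z(\pi'') \le M^k\lambda^{2k} z(\pi)$, one obtains $\lambda^2 M \cdot M^k\lambda^{2k} z(\pi) = M^{k+1}\lambda^{2(k+1)} z(\pi)$, which closes the induction.

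The argument is essentially routine. The only points that need care are the bookkeeping of the iterated preimages of $g$ — verifying that the $(k+1)$-fold sum decomposes exactly as "sum over $\pi''$ of sum over the one-step preimage of $\pi''$", with no double counting and no omissions — and observing that the edge-multiplicity factor produced by Claim~\ref{claim_remove_one} depends on the base path only through a single vertex, so that it may be replaced by the constant $M$ and pulled out of the sum. I do not anticipate any genuine obstacle beyond this combinatorial reorganization.
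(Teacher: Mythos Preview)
Your proposal is correct and follows essentially the same route as the paper: induction on $k$, with the inductive step carried out by setting $\pi'' = g(\pi')$, decomposing the $(k+1)$-sum as an outer sum over $\pi'' \in (g^{(k)})^{-1}(\pi)$ satisfying the shifted constraints $(a_2,\ldots,a_{k+1})$ and an inner sum over $\pi' \in g^{-1}(\pi'')$ with $\tau(\pi')=a_1$, then applying Claim~\ref{claim_remove_one} to the inner sum and the induction hypothesis to the outer one. The only cosmetic difference is that the paper keeps the vertex-dependent factor $\max_{v\neq \pi''_{a_1-2}} e(\pi''_{a_1-2},v)$ one line longer before bounding it by $M$.
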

\begin{proof}
	The proof is by induction on~$k$, the case~$k=1$ being Claim~\ref{claim_remove_one}. Assume the statement has been proved for~$k$, and fix~$\pi \in \mathscr{T}$ and a sequence~$(a_1,\ldots,a_{k+1})$. Then, since $\tau$ gives the location of the first backtracking step,  
	\begin{align}
		&\sum\left\{ z(\pi'): \;\pi' \in (g^{(k+1)})^{-1}(\pi),\; \tau(\pi') = a_1,\; \ldots,\; \tau(g^{(k)}(\pi')) = a_{k+1}   \right\}\nonumber\\[.2cm]
			&= \sum\left\{\sum\left\{ z(\pi'): \begin{array}{l}
				\pi' \in g^{-1}(\pi''),\\[.1cm]\tau(\pi') = a_1 
			\end{array} \right\}: \begin{array}{l}
				\pi'' \in (g^{(k)})^{-1}(\pi),\\[.1cm]
				\tau(\pi'') = a_2,\; \ldots,\; \tau(g^{(k-1)}(\pi'')) = a_k
			\label{eq_remove_many}\end{array}\right\}.
	\end{align}
	By Claim~\ref{claim_remove_one}, for each~$\pi''$, the inner sum above is smaller than
 \[\lambda^2 z(\pi'') \max_{v: v\neq \pi''_{a_1-2}}e(\pi''_{a_1-2},v)\le \lambda^2 z(\pi'') \max_{u,v \in G: v\neq u}e(u,v),\]
 so the double sum in \eqref{eq_remove_many} is smaller than
	\begin{align*}
		\max_{u,v \in G: v\neq u}e(u,v)\lambda^2 \cdot   \sum \left\{z(\pi''):
			\pi'' \in (g^{(k)})^{-1}(\pi),\; \tau(\pi'') = a_2,\; \ldots,\; \tau(g^{k-1}(\pi'')) = a_k
		\right\}.
	\end{align*}
	Using the induction hypothesis, this is smaller than~$\big(\max_{u,v \in G: v\neq u}\big)^{k+1}\lambda^{2(k+1)}z(\pi)$, as required.
\end{proof}
We would now like to use the above lemma to obtain a bound involving all possible sequences~$(a_1,\ldots,a_k)$. Before doing so, we prove the following simple fact.

\begin{claim}\label{claim_mon_tau}
	Let~$G$ be a graph and~$\pi \in \mathscr{T}$ be such that~$\tau(\pi) < \infty$ and~$\tau(g(\pi)) < \infty$. Then,
	\[\tau(g(\pi)) \ge \tau(\pi) - 1.\]
\end{claim}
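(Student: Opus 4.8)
The plan is a direct index-tracking argument. Write $a := \tau(\pi)$ and $\pi = (\pi_0,\ldots,\pi_m)$, so that by Definition~\ref{def_backtracking} the index $a$ is the smallest $i \ge 2$ with $\pi_i = \pi_{i-2} \neq \pi_{i-1}$, and $g(\pi) = (\pi_0,\ldots,\pi_{a-2},\pi_{a+1},\ldots,\pi_m)$. First I would dispose of the trivial cases: since $\tau(\cdot) \ge 2$ whenever it is finite, if $a \in \{2,3\}$ then $\tau(g(\pi)) \ge 2 \ge a-1$ automatically, and there is nothing to prove. Hence from now on assume $a \ge 4$.

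Now set $\sigma := g(\pi)$. The key observation, which I would state carefully, is that $\sigma$ agrees with $\pi$ on its first $a-1$ coordinates, i.e. $\sigma_i = \pi_i$ for all $0 \le i \le a-2$, because the surgery in the definition of $g$ only affects coordinates of index $\ge a-1$ (coordinate $a-1$ of $\sigma$ is $\pi_{a+1}$). Consequently, for every index $i$ with $2 \le i \le a-2$, the triple $(\sigma_{i-2},\sigma_{i-1},\sigma_i)$ coincides with $(\pi_{i-2},\pi_{i-1},\pi_i)$, and so $i$ is a backtracking index of $\sigma$ if and only if it is a backtracking index of $\pi$. Since $a = \tau(\pi)$ is the minimal backtracking index of $\pi$ and $i \le a-2 < a$, no such $i$ is a backtracking index of $\pi$, hence none is a backtracking index of $\sigma$.

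Therefore the minimal backtracking index of $\sigma = g(\pi)$, which is finite by hypothesis, must be at least $a-1$, that is, $\tau(g(\pi)) \ge \tau(\pi) - 1$, as claimed. I do not expect a genuine obstacle here; the only point requiring care is the off-by-one in the removal rule $g$, together with the observation that the newly created adjacency $\pi_{a-2}\sim\pi_{a+1}$ first shows up at position $a-1$ of $\sigma$ (so it cannot manufacture a backtracking index below $a-1$) — this is precisely what makes the correct bound $a-1$ rather than $a$.
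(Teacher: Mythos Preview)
Your proof is correct and follows essentially the same approach as the paper's: both argue that the initial sub-path $(\pi_0,\ldots,\pi_{a-2})$ is preserved by $g$ and contains no backtracking steps by minimality of $\tau(\pi)$. Your version is simply a more detailed write-up of the same observation, with the small cases $a\in\{2,3\}$ handled explicitly.
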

\begin{proof}
	This follows from the observation that the sub-path~$(\pi_0,\ldots,\pi_{\tau-2})$ remains intact after applying~$g$ to~$\pi$, and this sub-path contains no backtracking steps by the minimality of~$\tau(\pi)$.
\end{proof}

\begin{corollary}\label{cor_rem_mult}
Let~$G$,~$\lambda$,~$\mu$ and~$f$ be as in Claim~\ref{claim_remove_one}. Fix~$\pi \in \mathscr{T}$ and~$k \ge 1$. Then,
	\begin{equation}\label{eq_big_sum}
		\sum_{ \pi' \in (g^{(k)})^{-1}(\pi)} z(\pi') \le 2^{\mathfrak{l}(\pi)}\cdot \Big(4\lambda^2\big(\max_{\substack{u,v\in G\\ v\neq u}}e(u,v)\big)\Big)^k \cdot z(\pi).
	\end{equation}
\end{corollary}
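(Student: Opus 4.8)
\textit{Plan.} The plan is to decompose the index set $(g^{(k)})^{-1}(\pi)$ according to \emph{where} the successively removed backtracking steps sit, and then count how many such configurations can occur. Concretely, to each $\pi' \in (g^{(k)})^{-1}(\pi)$ I would attach the \emph{profile}
\[
\big(a_1(\pi'),\ldots,a_k(\pi')\big) := \big(\tau(\pi'),\,\tau(g(\pi')),\,\ldots,\,\tau(g^{(k-1)}(\pi'))\big);
\]
all these indices are finite, since $g^{(k)}(\pi')=\pi$ forces $g$ to be applicable $k$ times in a row. Partitioning the sum on the left of \eqref{eq_big_sum} along the fibres of the map $\pi'\mapsto(a_1(\pi'),\ldots,a_k(\pi'))$ and applying Lemma~\ref{lem_remove_many} to each fibre, I would get
\[
\sum_{\pi' \in (g^{(k)})^{-1}(\pi)} z(\pi') \;\le\; \#\mathcal{A}_k(\pi)\cdot\Big(\max_{u,v\in G:\,v\neq u} e(u,v)\Big)^{k}\lambda^{2k}\,z(\pi),
\]
where $\mathcal{A}_k(\pi)$ is the (finite) set of profiles that actually arise. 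The whole task then reduces to the bound $\#\mathcal{A}_k(\pi)\le 2^{\mathfrak{l}(\pi)+2k}$.

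\textit{Counting the profiles.} Each application of $g$ deletes two vertices from a path, so for $\pi'\in(g^{(k)})^{-1}(\pi)$ one has $\mathfrak{l}(g^{(i-1)}(\pi')) = \mathfrak{l}(\pi)+2(k-i+1)$; in particular the last path $g^{(k-1)}(\pi')$ has length $\mathfrak{l}(\pi)+2$. Since $\tau$ of a path of length $m$ is, when finite, an index in $\{2,\ldots,m\}$, every profile satisfies $a_i\ge 2$ for all $i$ and $a_k\le \mathfrak{l}(\pi)+2$. The crucial structural input is Claim~\ref{claim_mon_tau}, applied with $g^{(i-1)}(\pi')$ in place of $\pi$ (both $\tau(g^{(i-1)}(\pi'))$ and $\tau(g^{(i)}(\pi'))$ being finite): it gives $a_{i+1}\ge a_i-1$ for $1\le i\le k-1$. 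Substituting $b_i:=a_i+i$ turns this into $b_{i+1}\ge b_i$, so $(b_1,\ldots,b_k)$ is non-decreasing; combined with $a_i\ge2$ and $a_k\le\mathfrak{l}(\pi)+2$ this gives $3\le b_1\le b_i\le b_k\le \mathfrak{l}(\pi)+k+2$ for every $i$. Thus $(b_1,\ldots,b_k)$ is a non-decreasing sequence of length $k$ with values in a window of $\mathfrak{l}(\pi)+k$ consecutive integers, and the number of such sequences is $\binom{\mathfrak{l}(\pi)+2k-1}{k}\le\binom{\mathfrak{l}(\pi)+2k}{k}\le 2^{\mathfrak{l}(\pi)+2k}$. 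Since $(a_i)_i\mapsto(b_i)_i$ is injective, $\#\mathcal{A}_k(\pi)\le 2^{\mathfrak{l}(\pi)+2k}$, and plugging this into the display above yields $2^{\mathfrak{l}(\pi)+2k}=2^{\mathfrak{l}(\pi)}4^k$ and hence \eqref{eq_big_sum}.

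\textit{Where the difficulty lies.} There is no serious obstacle beyond bookkeeping, but the one point that needs care — and the reason the constant in \eqref{eq_big_sum} can be taken as $4$ rather than something larger — is the counting step: bounding each $a_i$ separately by its own ``length budget'' $\mathfrak{l}(\pi)+2(k-i+1)$ overcounts drastically, and one genuinely needs the near-monotonicity $a_{i+1}\ge a_i-1$ from Claim~\ref{claim_mon_tau} (equivalently, the true monotonicity of the shifted sequence $b_i=a_i+i$) to collapse the effective range of the profile to $\mathfrak{l}(\pi)+k$ values. The remaining ingredients are routine: verifying that all the relevant $\tau$'s are finite so that Lemma~\ref{lem_remove_many} and Claim~\ref{claim_mon_tau} apply on each of the $k$ intermediate paths, tracking the lengths $\mathfrak{l}(g^{(i-1)}(\pi'))$, and the elementary estimates for binomial coefficients.
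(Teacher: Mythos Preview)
Your proof is correct and follows essentially the same approach as the paper: partition $(g^{(k)})^{-1}(\pi)$ according to the profile $(\tau(\pi'),\tau(g(\pi')),\ldots,\tau(g^{(k-1)}(\pi')))$, invoke Lemma~\ref{lem_remove_many} on each fibre, and then bound the number of profiles via a shifted monotone-sequence argument using Claim~\ref{claim_mon_tau}. The only cosmetic difference is that the paper shifts by $2(i-1)$ to obtain a \emph{strictly} increasing sequence in $\{1,\ldots,\mathfrak{l}(\pi)+2k\}$, whereas you shift by $i$ to obtain a \emph{non-decreasing} sequence in a window of size $\mathfrak{l}(\pi)+k$; both lead to the same binomial bound $\binom{\mathfrak{l}(\pi)+2k}{k}\le 2^{\mathfrak{l}(\pi)+2k}$.
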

\begin{proof}
	Fix~$\pi$ and~$k$ as in the statement.  Define
	\[\mathcal{A}:= \{(\tau(\pi'),\tau(g(\pi')),\ldots, \tau(g^{(k-1)}(\pi'))):\; \pi' \in (g^{(k)})^{-1}(\pi)\}.\]
	That is, for a single $\pi' \in (g^{(k)})^{-1}(\pi)$, the sequence $(\tau(\pi'),\tau(g(\pi')),\ldots, \tau(g^{(k-1)}(\pi')))$ gives the locations -- i.e., not the vertex but its index on the `current' path -- of loop erasure when we sequentially apply $g$, $k$ times, on the path $\pi'$.
$\mathcal{A}$ is then the set of all sequences of length~$k$ that can be obtained by taking~$\pi' \in (g^{(k)})^{-1}(\pi)$ and applying~$\tau$,~$\tau \circ g$,~$\ldots$,~$\tau \circ g^{(k-1)}$ to~$\pi'$.
	By Lemma~\ref{lem_remove_many}, the left-hand side of~\eqref{eq_big_sum} is smaller than
	\[\sum_{ \pi' \in (g^{(k)})^{-1}(\pi)} z(\pi')\le\Big(\max_{u,v \in G: v\neq u}e(u,v)\Big)^{k}\lambda^{2k}z(\pi)\cdot |\mathcal{A}|.\]
	The desired bound will then follow from the inequality~$|\mathcal{A}| \le 2^{\mathfrak{l}(\pi)+2k}$, which we now prove.

	For each~$\pi' \in (g^{(k)})^{-1}(\pi)$, we add $2(i-1)$ to the location of the $i$th erasure in the sequential application of loop erasure $g$ on $\pi'$, which, by Claim \ref{claim_mon_tau} leads to a   a strictly increasing sequence of numbers,  i.e., we define
	\[ c_i(\pi'):= \tau(g^{(i-1)}(\pi')) + 2(i-1),\quad  i \in \{1,\ldots, k\}\]
	(with~$g^{(0)}(\pi') =\pi'$).
	Note that
	\begin{align*}
		c_k(\pi') &= \tau(g^{(k-1)}(\pi')) + 2(k-1) \le \mathfrak{l}(g^{(k-1)}(\pi')) + 2(k-1) \\
  &= \mathfrak{l}(\pi)+2 + 2(k-1) = \mathfrak{l}(\pi) + 2k.
	\end{align*}
	Moreover, for~$i \in \{1, \ldots, k-1\}$,
	\[c_{i+1}(\pi') - c_i(\pi') = \tau(g^{(i)}(\pi')) - \tau(g^{(i-1)}(\pi')) + 2,\]
	which is positive by Claim~\ref{claim_mon_tau}. These considerations show that~$(c_1(\pi'),\ldots,c_k(\pi'))$ is an increasing sequence in~$\{1,\ldots,\mathfrak{l}(\pi)+2k\}$. Therefore,~$\mathcal{A}$ can be mapped injectively into the set of increasing sequences with~$k$ elements in $\{1,\ldots,\mathfrak{l}(\pi) +2k\}$. It is a combinatorial exercise to show that the number of such sequences is~$\binom{\mathfrak{l}(\pi)+2k} {k} \le 2^{\mathfrak{l}(\pi)+2k}$.
\end{proof}

\begin{proof}[Proof of Theorem \ref{thm:max_trees} part (a)]
	By Lemma \ref{lem:StochDom2} it is enough to prove the result for the branching random walk. Assume that~$\mu \ge 1/2$ and~$\lambda < 1/2$. Let~$\mathcal{T}$ be a tree with a root~$\varnothing$. For each vertex~$u$ of~$\mathcal{T}$, let~$\pi_{\downarrow u}$ denote the geodesic path from~$\varnothing$ to~$u$. Consider the branching random walk on~$\mathcal{T}$ with penalty function~$f(x,y) = \max(x,y)^\mu$, birth rate~$\mu$ and initial configuration consisting of a single particle, located at the root. For this process, let~$Z(\cdot)$ be as in~\eqref{eq:Zpi-def} and~$z(\cdot)$ be as in~\eqref{eq_def_z}; note that by~\eqref{eq:Zpi-expectation}, we have~$\mathbb{E}[Z(\pi)] = z(\pi)$ for any~$\pi \in \mathscr{T}$. Further, let $\mathscr{T}_0=\{\pi\in\mathscr{T}:\ \pi_0=\varnothing\}$ denote the set of paths in $\mathcal{T}$ that start at the root.
Then, since $\CT$ is a tree and $e(u,v)\in\{0,1\}$ for all pairs $u,v\in \CT$, 
	\begin{equation} \label{eq_computation_z}\begin{split}
		\sum_{\pi \in \mathscr{T}_0:\  \mathfrak{s}(\pi) = u}\mathbb{E} \left[Z(\pi) \right]&= \sum_{\pi \in \mathscr{T}_0:\   \mathfrak{s}(\pi) = u}z(\pi) = \sum_{k=0}^\infty \; \;\sum_{\pi \in (g^{(k)})^{-1}(\pi_{\downarrow u})} z(\pi) \\
		&\le z(\pi_{\downarrow u}) \cdot 2^{\mathfrak{l}(\pi_{\downarrow u})}\cdot  \sum_{k=0}^\infty (4\lambda^2)^k = \frac{2^{\mathfrak{l}(\pi_{\downarrow u})}}{1-4\lambda^2}\cdot z(\pi_{\downarrow u}), 
	\end{split}
	\end{equation}
	where the inequality follows from Corollary~\ref{cor_rem_mult}. Since the right-hand side above is finite, we see that the expectation of the number of particles ever born at~$u$ is finite, so this number is almost surely finite. This proves local extinction for the initial configuration in which there is a single particle at the root. As already observed, this implies local extinction for the branching random walk, and also the contact process, started from any finite initial configuration.

To prove the exponential decay of the local extinction time, we will use Corollary \ref{cor:localexttime} to write, for any $t>0$,
\begin{align}
\P\left(T_{\mathrm{ext}}^{\mathrm{brw}}(\CT,\ind_{\varnothing},u)>t\right)
&=\sum_{\pi \in \mathscr{T}_0:\ \mathfrak{s}(\pi) = u}\P(y_s(\pi)>0\text{ for some }s\ge t)\nonumber\\
&\le\sum_{\pi \in \mathscr{T}_0:\ \mathfrak{s}(\pi) = u} e\cdot \E[Z(\pi)]\cdot\P(X_{\mathfrak{l}(\pi)+1}\ge t)\label{eq:localexttime1},
\end{align}
where $X_{m}$ is a Gamma($1,m$) variable for any $m>0$. Let $\alpha\in(0,1)$ be a constant specified later. Further bounding the right-hand side of \eqref{eq:localexttime1}, we write
\begin{equation}\label{eq:localexttime2}
\P\left(T_{\mathrm{ext}}^{\mathrm{brw}}(\CT,\ind_{\varnothing},u)>t\right)\le
\sum_{\substack{ \pi \in \mathscr{T}_0:\ \mathfrak{s}(\pi) = u, \\ \mathfrak{l}(\pi)<\lfloor\alpha t\rfloor}}e\cdot z(\pi)\cdot\P(X_{\mathfrak{l}(\pi)+1}\ge t)+
\sum_{\substack{ \pi \in \mathscr{T}_0:\ \mathfrak{s}(\pi) = u, \\ \mathfrak{l}(\pi)\ge\lfloor\alpha t\rfloor}}e\cdot z(\pi).
\end{equation}
First, we bound the first sum on the right-hand side of \eqref{eq:localexttime2}. Noting that $X_{\lfloor\alpha t\rfloor}$ stochastically dominates $X_{\mathfrak{l}(\pi)+1}$ when $\mathfrak{l}(\pi)<\lfloor\alpha t\rfloor$, and using Corollary \ref{cor_rem_mult} we get
\begin{align}
\sum_{\substack{ \pi \in \mathscr{T}_0:\ \mathfrak{s}(\pi) = u, \\ \mathfrak{l}(\pi)<\lfloor\alpha t\rfloor}}e\cdot z(\pi)&\cdot\P(X_{\mathfrak{l}(\pi)+1}\ge t)\le
e\cdot \P(X_{\lfloor\alpha t\rfloor}\ge t)\cdot\sum_{r=\mathfrak{l}(\pi_{\downarrow}(u))}^{\lfloor\alpha t\rfloor-1}\sum_{\substack{ \pi \in \mathscr{T}_0:\ \mathfrak{s}(\pi) = u, \\ \mathfrak{l}(\pi)=r}}z(\pi)\nonumber\\
&\le
e\cdot \P(X_{\lfloor\alpha t\rfloor}\ge t)\cdot z(\pi_{\downarrow}(u))\cdot2^{\mathfrak{l}(\pi_{\downarrow}(u))}\sum_{k=0}^{(\lfloor\alpha t\rfloor-1-\mathfrak{l}(\pi_{\downarrow}(u)))/2}(4\lambda^2)^k.\label{eq:localexttime3}
\end{align}
Since $\lambda<1/2$, the sum on the right-hand side of \eqref{eq:localexttime3} is bounded by $1/(1-4\lambda^2)$. By \eqref{eq_def_z}, we have
\begin{equation}\label{eq:localexttime_z}
z(\pi_{\downarrow}(u))=\lambda^{\mathfrak{l}(\pi_{\downarrow}(u))}\prod_{i=0}^{\mathfrak{l}(\pi_{\downarrow}(u))-1}(\max(d_{\pi_i},d_{\pi_{i+1}}))^{-\mu}\le(2^{-\mu}\lambda)^{\mathfrak{l}(\pi_{\downarrow}(u))}.
\end{equation}
Combining \eqref{eq:localexttime3} and \eqref{eq:localexttime_z} to further upper bound the right-hand side of \eqref{eq:localexttime3} yields
\begin{equation}\label{eq:localexttime4}
\sum_{\substack{ \pi \in \mathscr{T}_0:\ \mathfrak{s}(\pi) = u, \\ \mathfrak{l}(\pi)<\lfloor\alpha t\rfloor}}e\cdot z(\pi)\cdot\P(X_{\mathfrak{l}(\pi)+1}\ge t)\le \frac{e}{1-4\lambda^2}\cdot \P(X_{\lfloor\alpha t\rfloor}\ge t)\cdot (2^{1-\mu}\lambda)^{\mathfrak{l}(\pi_{\downarrow}(u))}.
\end{equation}
To bound the probabilistic term on the right-hand side of \eqref{eq:localexttime4}, we use the large deviation principle for Gamma variables to write
\begin{equation*}
\P(X_{\lfloor\alpha t\rfloor}\ge t)\le e^{-\lfloor\alpha t\rfloor I_{\mathrm{exp}}(1/\alpha)},
\end{equation*}
where $I_{\mathrm{exp}}$ is the large deviation rate function of the Exponential distribution with parameter $1$, defined as
\begin{equation}\label{eq:LDP_exp}
I_{\mathrm{exp}}(a)=a-1+\log(1/a)
\end{equation}
for $a>1$.
As a result, we get
\begin{equation}\label{eq:localexttime5}
\sum_{\substack{ \pi \in \mathscr{T}_0:\ \mathfrak{s}(\pi) = u, \\ \mathfrak{l}(\pi)<\lfloor\alpha t\rfloor}}e\cdot z(\pi)\cdot\P(X_{\mathfrak{l}(\pi)+1}\ge t)\le \frac{e\cdot (2^{1-\mu}\lambda)^{\mathfrak{l}(\pi_{\downarrow}(u))}}{1-4\lambda^2}\cdot e^{-\lfloor\alpha t\rfloor I_{\mathrm{exp}}(1/\alpha)}.
\end{equation}

Next, we bound the second sum on the right-hand side of \eqref{eq:localexttime2}. Similarly to \eqref{eq:localexttime3}, again using Corollary \ref{cor_rem_mult}, we get
\begin{equation}\label{eq:localexttime6}
\sum_{\substack{ \pi \in \mathscr{T}_0:\ \mathfrak{s}(\pi) = u, \\ \mathfrak{l}(\pi)\ge\lfloor\alpha t\rfloor}}e\cdot z(\pi)\le
e\cdot z(\pi_{\downarrow}(u))\cdot2^{\mathfrak{l}(\pi_{\downarrow}(u))}\sum_{k=(\lfloor\alpha t\rfloor-\mathfrak{l}(\pi_{\downarrow}(u)))/2}^{\infty}(4\lambda^2)^k. 
\end{equation}
Bounding $z(\pi_{\downarrow}(u))$ as in \eqref{eq:localexttime_z}, and evaluating the geometric sum in \eqref{eq:localexttime6} yields
\begin{align}
\sum_{\substack{ \pi \in \mathscr{T}_0:\ \mathfrak{s}(\pi) = u, \\ \mathfrak{l}(\pi)\ge\lfloor\alpha t\rfloor}}e\cdot z(\pi)&\le
e\cdot (2^{1-\mu}\lambda)^{\mathfrak{l}(\pi_{\downarrow}(u))}\cdot\frac{(4\lambda^2)^{(\lfloor\alpha t\rfloor-\mathfrak{l}(\pi_{\downarrow}(u)))/2}}{1-4\lambda^2}\nonumber\\
&=\frac{e\cdot 2^{-\mu\mathfrak{l}(\pi_{\downarrow}(u))}}{1-4\lambda^2}\cdot (2\lambda)^{\lfloor\alpha t\rfloor}\label{eq:localexttime7}.
\end{align}

Substituting the bounds \eqref{eq:localexttime5} and \eqref{eq:localexttime7} into 
\eqref{eq:localexttime2} yields
\begin{equation}
\P\left(T_{\mathrm{ext}}^{\mathrm{brw}}(\CT,\ind_{\varnothing},u)>t\right)\le
\frac{e\cdot (2^{1-\mu}\lambda)^{\mathfrak{l}(\pi_{\downarrow}(u))}}{1-4\lambda^2}\cdot e^{-\lfloor\alpha t\rfloor I_{\mathrm{exp}}(1/\alpha)}+\frac{e\cdot 2^{-\mu\mathfrak{l}(\pi_{\downarrow}(u))}}{1-4\lambda^2}\cdot (2\lambda)^{\lfloor\alpha t\rfloor}.\label{eq:localexttime8}
\end{equation}
For $\lambda<1/2$, \eqref{eq:localexttime8} shows the exponential decay of the local extinction time at $u$. Since the first term on the right-hand side is increasing in $\alpha$, whereas the second term is decreasing, the optimized bound is given by $\alpha=\alpha^\star$, where $\alpha^\star$ is the solution of
\begin{equation}\label{eq:localext_alpha}
e^{-\lfloor\alpha^\star t\rfloor I_{\mathrm{exp}}(1/\alpha^\star)}=(2\lambda)^{\lfloor\alpha^\star t\rfloor}.
\end{equation}
Using \eqref{eq:LDP_exp}, \eqref{eq:localext_alpha} simplifies to
\begin{equation}\label{eq:localexttime_alphaopt}
1/\alpha^\star-1+\log(\alpha^\star)=-\log(2\lambda).
\end{equation}
Since the left-hand side of \eqref{eq:localexttime_alphaopt} is strictly decreasing from $\infty$ to $0$ as $\alpha^\star$ increases from $0$ to $1$, there is exactly one solution $\alpha^\star\in(0,1)$ for any given $\lambda<1/2$. This finishes the proof for $\underline{x}_0=\ind_\varnothing$.

To extend the argument to any starting state $\underline x_0$ with $|\underline x_0|<\infty$, we make two observations. First, since the above argument is valid for \emph{any} tree $\CT$ with \emph{any} fixed root $\varnothing$, (by re-rooting the tree) this implies that
\begin{equation}\label{eq:localexttime_any_vertex}
\P\left(T_{\mathrm{ext}}^{\mathrm{brw}}(\CT,\ind_v,u)>t\right)\le c_1(v)e^{-c_2 t}
\end{equation}
for any $u,v\in\CT$ and $t>0$ (for $\lambda<1/2$). Here, the constant $c_1(v)$ further depends on $u,\lambda,\mu$, while $c_2$ depends on $\lambda$, but, importantly, not on $v$. Second, when $(\underline x_t)_{t\ge 0}=\BRW(\CT,\underline x_0)$, then by the independent behavior of the particles in BRW, we have that
\begin{equation}\label{eq:localexttime_BRWs}
(\underline x_t)_{t\ge 0}\stackrel{d}{=}\left(\sum_{v:x_0(v)>0}\sum_{i=1}^{x_0(v)}\underline x^{(v,i)}_t\right)_{t\ge 0},
\end{equation}
where $(\underline x^{(v,i)}_t)_{v,i}$ are independent realizations of the processes $\BRW(\CT,\ind_v)$. Hence, if $T_\mathrm{ext}^{(v,i,u)}$ denotes the local extinction time of $(\underline x^{(v,i)}_t)_{t\ge 0}$ at $u$, then a union bound combined with \eqref{eq:localexttime_any_vertex} gives
\begin{align*}
\P\left(T_{\mathrm{ext}}^{\mathrm{brw}}(\CT,\underline x_0,u)>t\right)&=\P\left(\max_{v,i} T_{\mathrm{ext}}^{(v,i,u)}>t\right)\\&\le\sum_{v:x_0(v)>0}\sum_{i=1}^{x_0(v)}\P\left(T_{\mathrm{ext}}^{(v,i,u)}>t\right)\le\sum_{v:x_0(v)>0}\sum_{i=1}^{x_0(v)} c_1(v)e^{-c_2 t},
\end{align*}
that is, exponential decay of the distribution of the local extinction time (with the same constant in the exponent for any $|\underline x_0|$). This finishes the proof.
\end{proof}

\subsection{Max-penalty: global extinction on trees when growth is limited}\label{sec:max_GW_ext_light}

In this section, we consider rooted trees. The root will always be denoted by~$\varnothing$. We always assume that trees have no loops or parallel edges.   For any vertex~$u$ of~$\mathcal{T}$, we keep using the notation~$\pi_{\downarrow u}$ for the geodesic from~$\varnothing$ to~$u$. Given~$\mu > 0$, for each vertex~$u$ in~$\mathcal{T}$ we let
\begin{equation}\label{eq_def_zeta}
	\zeta(u):=\prod_{i=0}^{\mathfrak{l}(\pi)-1} {\max(d_{\pi_i},d_{\pi_{i+1}})^{-\mu}},\quad \text{where }\pi = \pi_{\downarrow u},
\end{equation}
so that (recalling~\eqref{eq_def_z}, and recalling that we exclude parallel edges, so that~$\mathrm{e}(\pi_i,\pi_{i+1}) = 1$) we have
\begin{equation}\label{eq_rel_z_zeta}
z(\pi_{\downarrow u}) = \lambda^{\mathfrak{l}(\pi_{\downarrow u})}\cdot \zeta(u).
\end{equation}
We will write~$\mathrm{Gen}_N(\mathcal{T})$ for the set of vertices at graph distance~$N$ from~$\varnothing$, for~$N \in \mathbb{N}$.

\begin{lemma}\label{lem_condition_global_ext}
	Let~$\mathcal{T}$ be a tree with root~$\varnothing$. Fix~$\mu \in (1/2,1)$,~$\lambda > 0$ and assume that
	\begin{equation}
		\label{eq_assumption_beta}	
		\sum_{N=0}^\infty (2\lambda)^N \sum_{u \in \mathrm{Gen}_N(\mathcal{T})} \zeta(u) < \infty.
	\end{equation}
	Then, $\mathrm{BRW}_{f,\lambda}(\CT, \ind_{\varnothing})$ with penalty function~$f(x,y) = \max(x,y)^\mu$ goes extinct globally.
\end{lemma}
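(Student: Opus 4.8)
The plan is to bound the expected total number of particles ever born in the branching random walk by summing the expectations $\mathbb{E}[Z(\pi)] = z(\pi)$ over all genealogical paths $\pi$ in $\mathscr{T}_0$ (those starting at the root), exactly as was done for the local-extinction proof of Theorem~\ref{thm:max_trees}(a), but now summing over \emph{all} end-locations $u$ instead of fixing one. Concretely, I would first invoke Lemma~\ref{lem_corr_gbrw} and Lemma~\ref{lem:StochDom2} to reduce to the genealogic branching random walk; then, for each vertex $u$ of $\mathcal{T}$, the computation in~\eqref{eq_computation_z} (which rests on Corollary~\ref{cor_rem_mult}, the loop-erasure bound) already gives
\[
\sum_{\pi \in \mathscr{T}_0:\ \mathfrak{s}(\pi)=u} \mathbb{E}[Z(\pi)] \;\le\; \frac{2^{\mathfrak{l}(\pi_{\downarrow u})}}{1-4\lambda^2}\cdot z(\pi_{\downarrow u}),
\]
valid whenever $\lambda < 1/2$. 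The point is that on a tree with no parallel edges the maximum-multiplicity factor $\max_{u\neq v} e(u,v)$ equals $1$, so Corollary~\ref{cor_rem_mult} loses only the harmless $2^{\mathfrak{l}(\pi)}(4\lambda^2)^k$ factors.

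Next I would sum the displayed bound over all $u \in \mathcal{T}$, grouping vertices by their generation $N = \mathfrak{l}(\pi_{\downarrow u})$. Using~\eqref{eq_rel_z_zeta}, namely $z(\pi_{\downarrow u}) = \lambda^{\mathfrak{l}(\pi_{\downarrow u})}\zeta(u)$, the right-hand side becomes $\frac{1}{1-4\lambda^2}(2\lambda)^N \zeta(u)$, so
\[
\sum_{u \in \mathcal{T}} \sum_{\pi \in \mathscr{T}_0:\ \mathfrak{s}(\pi)=u} \mathbb{E}[Z(\pi)] \;\le\; \frac{1}{1-4\lambda^2} \sum_{N=0}^\infty (2\lambda)^N \sum_{u \in \mathrm{Gen}_N(\mathcal{T})} \zeta(u),
\]
which is finite precisely by hypothesis~\eqref{eq_assumption_beta}. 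Hence the expected total number of particles ever born (over all labels $\pi\in\mathscr{T}_0$, equivalently over all sites, via the projection in Lemma~\ref{lem_corr_gbrw}) is finite. Since this count is a non-negative integer-valued random variable with finite expectation, it is almost surely finite; in particular only finitely many birth events ever occur, so the process $\mathrm{BRW}_{f,\lambda}(\mathcal{T},\ind_\varnothing)$ hits $\underline 0$ in finite time almost surely, which is global extinction as in Definition~\ref{def:survival}(i). (Strictly, one also notes that starting from a single particle at the root and $\lambda<1/2$ is covered; the hypothesis $\lambda>0$ with~\eqref{eq_assumption_beta} forces no constraint beyond what is needed, and if $\lambda \ge 1/2$ one should check whether~\eqref{eq_assumption_beta} can even hold — it typically requires $\lambda$ small — but the argument above only needs $\lambda<1/2$ for the geometric series $\sum_k (4\lambda^2)^k$ to converge; I would state the lemma's proof under that reading, or observe that~\eqref{eq_assumption_beta} with the extra $2^N$ already present forces effective smallness.)

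The only genuinely delicate point is making sure the interchange of the (infinite) sum over $u$ with the expectation is legitimate and that $\sum_{\pi\in\mathscr{T}_0:\ \mathfrak{s}(\pi)=u}\mathbb{E}[Z(\pi)]$ really is an upper bound for (rather than merely comparable to) the expected particle count at $u$ — this is where one must be careful that $Z(\pi)$ counts \emph{all} particles ever born with label $\pi$, that distinct labels correspond to distinct particles, and that every particle receives exactly one label, so that $\sum_{\pi\in\mathscr{T}_0} Z(\pi)$ genuinely equals the total number of particles ever born; all of this is Tonelli plus the bookkeeping already set up in Section~\ref{s_gbrw}, so it is routine. I expect the main obstacle to be purely expository: stating cleanly why almost-sure finiteness of the total birth count implies global extinction (the process is a pure jump process whose jump rate is bounded by a constant times the current population, so finitely many births forces it to die out), rather than anything technically hard.
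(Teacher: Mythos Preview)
Your proposal is correct and follows essentially the same route as the paper: invoke the per-vertex bound~\eqref{eq_computation_z} (which rests on Corollary~\ref{cor_rem_mult}), rewrite $z(\pi_{\downarrow u})$ via~\eqref{eq_rel_z_zeta}, sum over all $u$ grouped by generation, and conclude that the expected total number of particles ever born is finite, hence almost surely finite, hence global extinction. Your observation that the argument implicitly needs $\lambda<1/2$ for the geometric series $\sum_k(4\lambda^2)^k$ to converge is valid; the paper's proof has the same implicit restriction, and in every application of the lemma (Theorem~\ref{thm:max_GW}(c) and Corollary~\ref{cor:SST}) the hypothesis on $\lambda$ does force $\lambda<1/2$.
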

\begin{proof}
	We continue using the notation $\mathscr{T}_0=\{\pi\in\mathscr{T}:\ \pi_0=\varnothing\}$ for the set of paths in $\mathcal{T}$ that start at the root. Repeating the estimate in~\eqref{eq_computation_z} and using~\eqref{eq_rel_z_zeta}, for any~$N \in \mathbb{N}$ and any vertex~$u \in \mathrm{Gen}_N(\mathcal{T})$ we have $\mathfrak{l}(\pi_{\downarrow u})=N$, so summing over all infection paths ending at $u$ gives
	\begin{align*}
		\sum_{\pi \in \mathscr{T}_0:\ \mathfrak{s}(\pi) = u}\mathbb{E}[Z(\pi)] =\sum_{\pi \in \mathscr{T}_0:\ \mathfrak{s}(\pi) = u} z(\pi) \le \frac{(2\lambda)^N}{1-4\lambda^2}\cdot \zeta(u).
	\end{align*}
	Then, when summing over all infection paths in the tree, we have 
	\begin{align*}
		\sum_{\pi \in \mathscr{T}_0} \mathbb{E}[Z(\pi)] &= \sum_{N=0}^\infty\; \sum_{u \in \mathrm{Gen}_N(\mathcal{T})}\; \sum_{\pi \in \mathscr{T}_0:\ \mathfrak{s}(\pi) = u}\mathbb{E}[Z(\pi)]\\
  &\le \frac{1}{1-4\lambda^2} \sum_{N=0}^\infty (2\lambda)^N \sum_{u \in \mathrm{Gen}_N(\mathcal{T})} \zeta(u)< \infty
	\end{align*}
	by the assumption. 
	 This shows that, starting from a single particle at the root, the expected number of particles ever born (overall in~$\mathcal{T}$) is finite, so this number is finite almost surely. This implies global extinction.
\end{proof}

In the applications we have in mind, rather than verifying~\eqref{eq_assumption_beta} directly, we will verify that
	\begin{equation}
		\label{eq_tilde_beta}	
		\sum_{N=1}^\infty (2\lambda)^N \sum_{u \in \mathrm{Gen}_N(\mathcal{T})} \tilde{\zeta}(u) < \infty,
	\end{equation}
where~$\tilde{\zeta}(u)$ is defined for all~$u \neq \varnothing$ by
\begin{equation}
\label{eq_def_tilde_zeta}
	\tilde{\zeta}(u):= {(d_{\varnothing})^{-\mu}} \cdot \prod_{i=1}^{\mathfrak{l}(\pi)-1} {(d_{\pi_i}-1)^{-\mu}},\quad \text{where }\pi = \pi_{\downarrow u}
\end{equation}
(we leave~$\tilde{\zeta}$ undefined at the root).
Clearly, by \eqref{eq_def_zeta},~$ \zeta(u)\le\tilde{\zeta}(u) $ for all~$u \neq \varnothing$, so~\eqref{eq_tilde_beta} implies~\eqref{eq_assumption_beta}.

\begin{proof}[Proof of Theorem \ref{thm:max_GW} part (c)]
	We assume that the offspring distribution of the Galton-Watson tree satisfies~$\mathbb{E}[D^{1-\mu}] < \infty$. We claim that, for any~$N \ge 1$,
	\begin{equation}\label{eq_claim_gens}
		\mathbb{E}\left[\sum_{u \in \mathrm{Gen}_N(\mathcal{T})} \tilde{\zeta}(u)\right] = (\mathbb{E}[D^{1-\mu}])^N.
	\end{equation}
	This is obvious in case~$N=1$. Assume that it has been proved for~$N$. For the induction step, by \eqref{eq_def_tilde_zeta}, we note that
	\begin{align}
		\sum_{u \in \mathrm{Gen}_{N+1}(\mathcal{T})} \tilde{\zeta}(u) &= \sum_{v \in \mathrm{Gen}_N(\mathcal{T})} \tilde{\zeta}(v) \sum_{\substack{u \in \mathrm{Gen}_{N+1}(\mathcal{T}):\\u \sim v}} {(d_v-1)^{-\mu}} \nonumber \\
		&=\sum_{v \in \mathrm{Gen}_N(\mathcal{T})} \tilde{\zeta}(v)\cdot  (d_v - 1)\cdot  {(d_v-1)^{-\mu}}  = \sum_{v \in \mathrm{Gen}_N(\mathcal{T})} \tilde{\zeta}(v)\cdot (d_v-1)^{1-\mu}. \label{eq_gens_z}
	\end{align}
	Let~$\mathcal{T}_N$ denote the truncation of~$\mathcal{T}$ at generation~$N$, that is,~$\mathcal{T}_N$ is the subgraph of~$\mathcal{T}$ induced by the set of vertices at graph distance at most~$N$ from~$\varnothing$. Note that~$\mathcal{T}_N$ does not include information about the offsprings of vertices in generation~$N$, and conditioned on~$\mathcal{T}_N$, the sizes of these offsprings are iid, with same law as~$D$.  Taking expectations in~\eqref{eq_gens_z}, we have
\begin{align*}
	\mathbb{E}\left[\sum_{u \in \mathrm{Gen}_{N+1}(\mathcal{T})} \tilde{\zeta}(u) \right]&= \mathbb{E}\left[ \mathbb{E}\left[\left.\sum_{v \in \mathrm{Gen}_N(\mathcal{T})} \tilde{\zeta}(v)\cdot (d_v-1)^{1-\mu}\right| \mathcal{T}_N\right]\right]\\
	&=\mathbb{E}\left[ \sum_{v \in \mathrm{Gen}_N(\mathcal{T})}\tilde{\zeta}(v)\cdot\mathbb{E}\left[ \left.  (d_v-1)^{1-\mu}\right| \mathcal{T}_N\right]\right]\\
	&=\mathbb{E}\left[ \sum_{v \in \mathrm{Gen}_N(\mathcal{T})}\tilde{\zeta}(v)\right]\cdot \mathbb{E}[D^{1-\mu}] = (\mathbb{E}[D^{1-\mu}])^{N+1},
\end{align*}
where the last equality follows from the induction hypothesis. This completes the proof of~\eqref{eq_claim_gens}.

Now, if~$\lambda < (2\mathbb{E}[D^{1-\mu}])^{-1}$, then
\[\mathbb{E}\left[ \sum_{N=1}^\infty (2\lambda)^N \cdot \sum_{u \in \mathrm{Gen}_N(\mathcal{T})} \tilde{\zeta}(u) \right] = \sum_{N=1}^\infty (2\lambda \cdot  \mathbb{E}[D^{1-\mu}])^N < \infty.\]
Hence,~$\sum_{N=1}^\infty (2\lambda)^N \cdot \sum_{u \in \mathrm{Gen}_N(\mathcal{T})} \tilde{\zeta}(u)$ is finite for almost all realizations of~$\mathcal{T}$. It then follows from Lemma~\ref{lem_condition_global_ext} (and the observation following its proof) that there is global extinction of the penalized branching random walk for almost every realization of~$\mathcal{T}$.
\end{proof}

We now see further applications of Lemma~\ref{lem_condition_global_ext}, the proof of Corollary \ref{cor:SST}.
\begin{proof}[Proof of Corollary \ref{cor:SST}]
	The case of trees with finite upper branching number $b$ follows from verifying condition~\eqref{eq_assumption_beta} with the simple bound~$\zeta(u) \le 1$ for all~$u$.
	For the case of spherically symmetric trees, we can verify condition~\eqref{eq_tilde_beta} directly instead of working with the branching number. Note that, for any~$N \ge 1$, we have
	\[\tilde{\zeta}(u) = (d_0)^{-\mu} \prod_{i=1}^{N-1} (d_i-1)^{-\mu}  \quad \text{for any } u \in\mathrm{Gen}_N(\mathcal{T}),\]
	so
	\[\sum_{u \in \mathrm{Gen}_N(\mathcal{T})} \tilde{\zeta}(u) = (d_0)^{-\mu} \prod_{i=1}^{N-1} (d_i-1)^{-\mu} \cdot |\mathrm{Gen}_N(\mathcal{T})| = (d_0)^{1-\mu}\prod_{i=1}^{N-1}(d_i -1)^{1-\mu},\]
	and then
	\begin{align*}
		&(2\lambda)^N\sum_{u \in \mathrm{Gen}_N(\mathcal{T})} \tilde{\zeta}(u)\\
 &= \exp\left\{N \left(\log(2) + \log(\lambda) + \frac{(1-\mu)(\log d_0)}{N} + \frac{1-\mu}{N} \sum_{i=1}^{N-1} \log(d_i-1) \right) \right\}.
	\end{align*}
	Now, it is easy to check that $\limsup 1/N\cdot \sum_{i=1}^{N-1} \log(d_i-1)\le \log \overline{\mathrm{br}}(\CT)$, so  if~$\lambda < \mathrm{e}^{-(1-\mu)\log \overline{\mathrm{br}}(\CT)}/2$, then there exists~$c < 0$ such that the expression inside parenthesis above is smaller than~$c$ for~$N$ large enough. It readily follows that~\eqref{eq_tilde_beta} is satisfied, so global extinction follows from Lemma~\ref{lem_condition_global_ext}.
\end{proof}

\subsection{Max-penalty: fast extinction when \texorpdfstring{$\mu\in (1/2,1)$}{1/2<mu<1}}\label{sec:max_overall_fast}

We close this section by proving a result that bounds the survival of $\BRW$ for $f(x,y)=\max(x,y)^\mu$, $\mu\in(1/2,1)$ on any graph, both in space and in time. We will use this result in Section~\ref{sec:proofsCM-extinction} to prove Theorem~\ref{thm:max_CM_extinction} part (a), stating that the max-penalty contact process goes quickly  extinct on the configuration model whenever $\tau>3$.

We again go back to the genealogic branching random walk construction of Section~\ref{s_gbrw}. For a graph~$G=(V,E)$, recall the definition of the set of genealogical labels~$\mathscr{T}$ from Definition~\ref{def:labels}, the notations~$\mathfrak{l}(\pi)$ and~$\mathfrak{s}(\pi)$, the construction of~$(\underline{y}_t)_{t \ge 0}$ in Definition~\ref{def_gbrw} and its relation to the branching random walk~$(\underline{x}_t)_{t \ge 0}$ given in Lemma~\ref{lem_corr_gbrw}. Here we will take these processes with birth rate~$\lambda$ and max-penalty function with exponent~$\mu$,~$f(x,y) = \max(x,y)^\mu$, so that~$r(\cdot,\cdot)$ is as in~\eqref{eq_choice_of_r}.  As before, for~$\pi$ with~$\mathfrak{l}(\pi) \ge 1$,~$Z(\pi)$ denotes the number of particles with label~$\pi$ born in the whole history of the process. We let~$z(\cdot)$ be as in~\eqref{eq_def_z}. Finally, recall the first backtracking index~$\tau(\cdot)$ and the backtracking erasure function~$g(\cdot)$ from Definition~\ref{def_backtracking}.
\begin{lemma}\label{lem_overall_fast}
	Let~$\mu \in (1/2,1)$ and~$G= (V,E)$ be a graph with a distinguished vertex~$\bar{v}$. Assume that for some constant $\ell>0$~$\mathrm{e}(u,v) \le \ell$ for any~$u,v \in V$. Fix~$N \ge 2$ and let~$b_N$ denote the number of non-backtracking paths of length at most~$N$ started at~$\bar{v}$,
	\begin{equation}\label{eq_assumption_growth}
		b_N:=|\{\pi \in \mathscr{T}:\; \pi_0 = \bar{v},\; \mathfrak{l}(\pi) \le N,\; \tau(\pi) = \infty\}|.
	\end{equation}
	Consider the penalized branching random walk~$(\underline{x}_t^{(\bar v)})_{t \ge 0}$ on~$G$ with penalization function~$f(x,y)= \max(x,y)^\mu$, birth rate~$\lambda<1/(4\ell)$ and started from a single particle, located at~$\bar{v}$. Then, for any fixed constant $C>1$,
	\begin{equation}\label{eq:overall-fast}
	\begin{aligned}
	\mathbb{P}&\left( \begin{array}{l}
			(\underline{x}^{(\bar v)}_t) \text{ dies before time $CN$, and never reaches }\\
		\text{ any vertex at graph distance $N$ from $\bar{v}$}\end{array} \right)\\
		&> 1 - 2b_N\Big (\mathrm e \ell\cdot (4\ell\lambda)^N  + \mathrm{e}^{-N(C-1)^2/(2C)}\Big).
	\end{aligned}
	\end{equation}
\end{lemma}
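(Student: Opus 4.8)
The statement asks for a quantitative bound on the event that the branching random walk dies quickly (before time $CN$) without ever reaching graph distance $N$ from the start vertex $\bar v$. I would prove this by bounding the complementary event, which splits into two failure modes: (i) the process does reach distance $N$, and (ii) the process survives past time $CN$. For both, the strategy is to use the genealogic branching random walk representation and the fact that particle-counting expectations factorize along genealogical paths, as in Lemma~\ref{lem_products}. The key technical input is the loop-erasure machinery of Section~\ref{sec:max_GW_localext}: Corollary~\ref{cor_rem_mult} lets us control $\sum_{\pi' \in (g^{(k)})^{-1}(\pi)} z(\pi')$ by $2^{\mathfrak{l}(\pi)}(4\lambda^2 \ell)^k z(\pi)$, so summing over $k$ turns any bound over \emph{non-backtracking} paths into a bound over \emph{all} paths — provided $4\lambda^2\ell < 1$, which is guaranteed by $\lambda < 1/(4\ell)$. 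This is exactly why the hypothesis is stated with $b_N$ counting only non-backtracking paths.

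\textbf{Step 1: reaching distance $N$.} For a vertex $u$ at graph distance exactly $N$ from $\bar v$, I would write the probability that $u$ is ever occupied as at most $\sum_{\pi:\ \pi_0=\bar v,\ \mathfrak{s}(\pi)=u}\mathbb{E}[Z(\pi)] = \sum z(\pi)$, then group the infection paths $\pi$ by their loop-erasure $g^{(k)}(\pi)$. Every non-backtracking path from $\bar v$ to a distance-$N$ vertex has length $\ge N$, so $z(\pi_{\mathrm{ne}}) \le (\ell\lambda \cdot 2^{-\mu})^{\mathfrak{l}(\pi_{\mathrm{ne}})} \le (\ell\lambda)^N$ using $\max(d_u,d_v)\ge 1$ (wait — more carefully, $z$ of a non-backtracking path of length $m\ge N$ is at most $(\ell\lambda)^m\le (\ell\lambda)^N$ when $\ell\lambda<1$, which holds). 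Summing over the $b_N$ non-backtracking paths and then over $k$ via Corollary~\ref{cor_rem_mult}, the total expected number of particles ever reaching distance $N$ is at most $b_N \cdot (\ell\lambda)^N \cdot 2^N/(1-4\ell\lambda^2) \le \mathrm{e}\ell\, b_N (4\ell\lambda)^N$ after absorbing constants. A union bound / Markov inequality over the distance-$N$ vertices (there are at most $b_N$ of them too) then gives the first term in the right-hand side of~\eqref{eq:overall-fast}.

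\textbf{Step 2: surviving past time $CN$.} Here I would use Corollary~\ref{cor:localexttime}: for each label $\pi$, $\mathbb{P}(y_s(\pi)>0 \text{ for some } s\ge CN) \le \mathrm{e}\cdot\mathbb{E}[Z(\pi)]\cdot\mathbb{P}(X_{\mathfrak{l}(\pi)+1}\ge CN)$, where $X_{m+1}$ is $\mathrm{Gamma}(1,m+1)$. Summing over all labels $\pi$ with $\pi_0=\bar v$, I split according to whether $\mathfrak{l}(\pi) < N$ or $\ge N$. For $\mathfrak{l}(\pi)\ge N$: on the event that the process never reaches distance $N$, any surviving label is confined to the distance-$<N$ ball, so such long paths must backtrack a lot; grouping again by loop-erasure (whose non-backtracking core has length $< N$) and summing the geometric series in $k$ gives a bound comparable to the Step-1 term. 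For $\mathfrak{l}(\pi) < N$: here $X_{\mathfrak{l}(\pi)+1}$ is stochastically dominated by $X_N$, and a standard Gamma/Poisson large-deviation estimate gives $\mathbb{P}(X_N\ge CN)\le \mathrm{e}^{-N\,I_{\mathrm{exp}}(C)} \le \mathrm{e}^{-N(C-1)^2/(2C)}$ for $C>1$ (using $I_{\mathrm{exp}}(C)=C-1-\log C \ge (C-1)^2/(2C)$). The sum of $z(\pi)$ over short paths, again via loop-erasure over the $b_N$ non-backtracking cores of length $<N$, is at most $2b_N/(1-4\ell\lambda^2)$, contributing the second term $\mathrm{e}^{-N(C-1)^2/(2C)}$ (up to the factor $2b_N$ and an $\mathrm e$). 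Adding the at-most-three error contributions and being slightly generous with constants yields the stated bound $2b_N(\mathrm e\ell (4\ell\lambda)^N + \mathrm e^{-N(C-1)^2/(2C)})$.

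\textbf{Main obstacle.} The routine parts are the Gamma tail estimate and the geometric summations. The delicate point is Step~2's long-path case: I need that on the event ``never reached distance $N$,'' a genealogic label $\pi$ that is still alive at a late time and has $\mathfrak{l}(\pi)\ge N$ necessarily has its loop-erased core $g^{(k)}(\pi)$ confined to the distance-$<N$ ball (so its core length stays below $N$ and $b_N$ controls the count), while the erased backtracking steps are paid for by the $4\ell\lambda^2 < 1$ factor. Making this confinement argument rigorous — i.e., checking that loop erasure never pushes a path outside the explored region and that the bookkeeping of $k$ erasures versus path length works out to keep everything summable — is where I expect to spend the most care. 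Once that is in place, the two error terms assemble exactly as in~\eqref{eq:overall-fast}.
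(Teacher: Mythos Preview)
Your overall strategy—split the complement into ``reaches distance $N$'' and ``survives past $CN$'', then control both via the genealogic BRW, loop erasure (Corollary~\ref{cor_rem_mult}), and a Gamma/Poisson tail bound—is exactly right, and your large-deviation step $I_{\mathrm{exp}}(C)=C-1-\log C\ge (C-1)^2/(2C)$ is correct.

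However, there is a genuine gap in how you index the sums. You organize everything by the \emph{distance of the endpoint} from $\bar v$, and then claim that loop-erased cores have length below $N$ (``whose non-backtracking core has length $<N$''). On a general graph with cycles this is false: a non-backtracking path that stays entirely inside the ball of radius $N-1$ can have arbitrarily large length, so neither the number of such cores nor $\sum z(\pi_{\mathrm{ne}})$ is controlled by $b_N$. This is exactly the obstacle you flagged, and your proposed resolution does not hold outside of trees. The same issue bites your Step~1: non-backtracking paths from $\bar v$ to a distance-$N$ vertex need not have length $\le N$, so ``summing over the $b_N$ non-backtracking paths'' undercounts.

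The paper sidesteps this by decomposing by \emph{label length} rather than endpoint distance. The complementary event is contained in
\[
\{\,y_{CN}(\pi)>0\text{ for some }\pi,\ \mathfrak l(\pi)<N\,\}\ \cup\ \{\,y_t(\pi)>0\text{ for some }t,\ \text{some }\pi,\ \mathfrak l(\pi)=N\,\}.
\]
The second event absorbs both failure modes at once: reaching distance $N$ forces a label of length $\ge N$, hence its length-$N$ ancestor was born; and surviving at time $CN$ with a long label likewise produces a length-$N$ ancestor. Now every $\pi$ with $\mathfrak l(\pi)=N$ has a loop-erased core of length $m$ with $m+2k=N$, so automatically $m\le N$ and $b_N$ controls the count. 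The first event (short labels at time $CN$) is handled via $\mathbb E[y_{CN}(\pi)]=\tfrac{(CN)^m}{m!}e^{-CN}\,z(\pi)$ (Lemma~\ref{lem_products}(a)), bounding $\max_{m<N}\tfrac{(CN)^m}{m!}e^{-CN}\le\mathbb P(\mathrm{Poisson}(CN)\le N)\le e^{-N(C-1)^2/(2C)}$, and bounding $\sum_{\mathfrak l(\pi)<N}z(\pi)\le 2b_N$ via loop erasure. With this length-based split, there is no need for your Step~2 long-path case at all, and the two terms in \eqref{eq:overall-fast} fall out directly.
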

\begin{proof}
	Let~$(\underline{y}_t)_{t \ge 0}$ be the genealogic branching random walk corresponding to~$(\underline{x}_t)_{t \ge 0}$ as in Lemma~\ref{lem_corr_gbrw}; in particular,~${y}_0((\bar{v})) = 1$ and~${y}_0(\pi) = 0$ for any~$\pi \neq \bar{v}$. We note that
	\begin{align*}
		&\left\{ (\underline{x}_t) \text{ is alive at time $CN$, or reaches some vertex at distance $N$ from $\bar{v}$} \right\} \\
			&\subset \{y_{CN}(\pi) > 0 \text{ for some } \pi \in \mathscr{T} \text{ with } \pi_0 = \bar{v},\; \mathfrak{l}(\pi) <N \}\\
			&\quad\cup \{y_t(\pi) > 0 \text{ for some } \pi \in \mathscr{T} \text{ with } \pi_0 = \bar{v},\; \mathfrak{l}(\pi) = N \text{ and some } t > 0\}.
	\end{align*}
	Using a union bound and the inequalities~$\mathbb{P}(y_{CN}(\pi) > 0) \le \mathbb{E}[y_{CN}(\pi)]$ and~$\mathbb{P}(y_t(\pi) > 0 \text{ for some } t) \le \P\mathrm{e} \cdot \mathbb{E}[Z(\pi)] = \mathrm e \cdot z(\pi)$ from Corollary \ref{cor:localexttime}, we have
	\begin{align} \label{eq_union_bound_pis}
		\mathbb{P}\left(\begin{array}{l}
			(\underline{x}_t) \text{ is alive at time $CN$, or reaches }\\
			\text{some vertex at distance~$N$ from~$\bar{v}$} 
		\end{array}\right) \le \sum_{\substack{\pi \in \mathscr{T}:\\\pi_0 = \bar{v},\\\mathfrak{l}(\pi) < N }} \mathbb{E}[y_{CN}(\pi)] + \mathrm{e}\cdot  \sum_{\substack{\pi \in \mathscr{T}:\\ \pi_0 = \bar{v},\\ \mathfrak{l}(\pi) = N}}z(\pi).
	\end{align}
	We bound the two sums in the rhs separately. Using \eqref{eq_def_z} the following bound holds for any path:
	\begin{equation} \label{eq_simple_bound_z}z(\pi) \le (\ell\lambda)^{\mathfrak{l}(\pi)},\end{equation} which follows from~$\max(d_u,d_v)^\mu \ge 1$ and the assumption that~$\mathrm{e}(u,v) \le \ell$.

	We first deal with the second sum in~\eqref{eq_union_bound_pis}. Recall that if~$\pi' \in (g^{(k)})^{-1}(\pi)$, then~$\mathfrak{l}(\pi') = \mathfrak{l}(\pi)+2k$. Then, we break the sum as follows:
	\begin{align*}
		\sum_{\substack{\pi : \pi_0 = \bar{v},\\ \mathfrak{l}(\pi) = N}} z(\pi) &= \sum_{\substack{(m,k):\\ m+2k = N}}  \; \sum_{\substack{ \pi: \pi_0 = \bar{v},\\ \mathfrak{l}(\pi) = m,\\ \tau(\pi) = \infty}} \;\sum_{\pi' \in (g^{(k)})^{-1}(\pi)}\; z(\pi').
	\end{align*}
	Using~\eqref{eq_big_sum} in Corollary \ref{cor_rem_mult}, the right-hand side is at most
	\[ \sum_{\substack{\pi : \pi_0 = \bar{v},\\ \mathfrak{l}(\pi) = N}} z(\pi)\le  \sum_{\substack{(m,k):\\ m+2k = N}}  \; \sum_{\substack{ \pi: \pi_0 = \bar{v},\\ \mathfrak{l}(\pi) = m,\\ \tau(\pi) = \infty}} \;2^m\cdot (4\lambda^2\ell)^k \cdot z(\pi).\]
	Using~\eqref{eq_simple_bound_z} and $b_N$ from \eqref{eq_assumption_growth}, this is at most
	\begin{align*}\sum_{\substack{(m,k):\\ m+2k = N}}  \; &\sum_{\substack{ \pi: \pi_0 = \bar{v},\\ \mathfrak{l}(\pi) = m,\\ \tau(\pi) = \infty}} \;2^m\cdot (4\ell\lambda^2)^k \cdot (\ell\lambda)^m\\
  &= \sum_{\substack{(m,k):\\ m+2k = N}} (4\ell)^{m+k}\cdot \lambda^{m+2k} \cdot |\{\pi:\; \pi_0 = \bar{v},\; \mathfrak{l}(\pi) = m,\; \tau(\pi) = \infty\}|\\[.2cm]
		&\le b_N\cdot \sum_{\substack{(m,k):\\ m+2k = N}} (4\ell)^{m+k}\cdot \lambda^{m+2k}. 
	\end{align*}
	Using that $m+2k=N$ implies that $m+k=(N+m)/2$ for each $m\in{0, \dots, N}$, the above sum is at most
	\begin{align}
	\sum_{\substack{\pi : \pi_0 = \bar{v},\\ \mathfrak{l}(\pi) = N}} z(\pi)&\le b_N\cdot  \sum_{m=0}^N (4\ell)^{(N+m)/2}\cdot \lambda^N  = (2\ell^{1/2}\lambda)^N\cdot b_N\cdot  \sum_{m=0}^N (2\ell^{1/2})^m \nonumber \\
	&\le 2\ell^{1/2}(4\ell\lambda)^N\cdot b_N. \label{eq:first-term-to-handle}
	\end{align}

	We now turn to the first term in~\eqref{eq_union_bound_pis}. Using~\eqref{eq:ypi-expectation}, we have
	\begin{align}\label{eq_max_and_z}
		\sum_{\substack{\pi: \pi_0 = \bar{v},\\\mathfrak{l}(\pi) < N}} \mathbb{E}[y_{CN}(\pi)] \le \left( \max_{0 \le m < N} \frac{(CN)^m}{m!} \mathrm{e}^{-CN}\right) \cdot  \sum_{\substack{\pi: \pi_0 = \bar{v},\\ \mathfrak{l}(\pi) < N}} z(\pi).
	\end{align}
Let us bound the sum in the right-hand side using \eqref{eq_big_sum} with $\max e(u,v)\le \ell$ and then \eqref{eq_simple_bound_z} as
	\begin{align*}
		\sum_{\substack{\pi: \pi_0 = \bar{v},\\ \mathfrak{l}(\pi) < N}} z(\pi) 
		&\le \sum_{\substack{\pi: \pi_0= \bar{v},\\ \mathfrak{l}(\pi) <N,\\ \tau(\pi) = \infty}} \;\sum_{k=0}^\infty \;\sum_{\pi' \in (g^{(k)})^{-1}(\pi)} z(\pi')\\
  &\stackrel{\eqref{eq_big_sum}}{\le} \frac{1}{1-4\ell\lambda^2}\sum_{\substack{\pi: \pi_0= \bar{v},\\ \mathfrak{l}(\pi) <N,\\ \tau(\pi) = \infty}}2^{\mathfrak{l}(\pi)} z(\pi) 
	\stackrel{\eqref{eq_simple_bound_z}}{\le} \frac{1}{1-4\ell\lambda^2} \sum_{\substack{\pi: \pi_0= \bar{v},\\ \mathfrak{l}(\pi) <N,\\ \tau(\pi) = \infty}} (2\ell\lambda)^{\mathfrak{l}(\pi)}.	\end{align*}
	Since~$\lambda < 1/(4\ell)$ with $\ell\ge 1$, we have~$\frac{1}{1-4\ell\lambda^2} < 2$ and~$2\ell\lambda < 1/2$, so the last factor in \eqref{eq_max_and_z} is smaller than
	\begin{equation}\label{eq:first-second-factor} 2|\{\pi: \pi_0 = \bar{v},\; \mathfrak{l}(\pi) < N,\;\tau(\pi) = \infty\}| \le 2b_N.\end{equation}
	Next, the expression inside the maximum in~\eqref{eq_max_and_z} equals~$\mathbb{P}(W = m)$ for $W$ having the $\mathrm{Poisson}(CN)$ distribution. We bound
	\[\max_{0 \le m < N} \mathbb{P}(W = m) \le \mathbb{P}(W \le N).\]
	We use a Chernoff bound for Poisson random variables: for~$X \sim \mathrm{Poisson}(\nu)$ we have~$\mathbb{P}(X \le \nu - t) \le \mathrm{e}^{-t^2/(2\nu)}$, see~\cite[Exercise 2.21]{van2016random}. This gives
	\[\mathbb{P}(W \le N) \le \exp\left\{- \frac{(CN-N)^2}{2CN} \right\} = \exp\left\{-\frac{(C-1)^2}{2C}\cdot N\right\}.\]
Combining this with \eqref{eq:first-second-factor} in \eqref{eq_max_and_z} and \eqref{eq:first-term-to-handle} completes the proof of \eqref{eq:overall-fast}.
\end{proof} 
\section{The configuration model: fast extinction via loop erasure }\label{sec:proofsCM-extinction}
In this section we prove Theorem \ref{thm:max_CM_extinction} part (a). This theorem says that the contact process $\CPf$ and the branching random walk $\BRW$ go extinct quickly for small $\lambda>0$ on the configuration model when $f(x,y)=\max(x,y)^{\mu}$ with $\mu\in (1/2, 1)$ and the degree distribution is lighter than a power-law with exponent $\tau>3$. The proof idea is the following. Fixing a large constant $\ell$, first, we show that with probability $1-o(1/n)$, there are at most $\ell$ surplus edges in the $r$-neighborhood $B_{r}(u_n)$ of a uniformly chosen vertex $u_n$ with  $r=\delta \log n$ for some small $\delta>0$.  That is, one can remove at most $\ell$ edges from $B_{\delta \log n}(u_n)$ to obtain a tree. Then, we apply Lemma \ref{lem_overall_fast} to show that the expected number of particles of $\BRW$ on infection paths in $B_{\delta \log n}(u_n)$ that reach the boundary $\partial B_{\delta \log n}(u_n)$ decays exponentially for small $\lambda$. This implies that $\BRW$ dies out inside $B_{\delta \log n}(u_n)$ before reaching $\partial B_{\delta \log n}(u_n)$ with probability at least $1-o(1/n)$. A union bound over the $n$ vertices then finishes the proof.

Our first goal is to prove a statement about   the surplus edges of $B_{\delta \log n}(u_n)$, and then we move on to the analysis of infection paths of $\BRW$. The number of surplus edges of a (sub)graph $H=(V_H, E_H)$ is given by $|E_H| - (|V_H|-1)$.
Recall the configuration model from Definition \ref{def:CM} and that $e(u,v)$ denotes the number of edges between vertices $u,v$.
\begin{proposition}\label{prop:surplus}
Consider the configuration model with degree sequence $\underline d_n$ satisfying Assumption \ref{assu:regularity}, and Assumptions \ref{assu:empirical-power-law} and \ref{assu:empirical-power-law-2} with some $\tau, \varepsilon, c_u, z_0$ (for all sufficiently large $n$) with $\tau(1-\eps)>3$.
Fix some $\delta>0$. Let $u_n$ be a uniformly chosen vertex in $[n]$ and let $\mathrm{Surp}_{\delta \log n}(u_n)$ denote the number of surplus edges in $B_{\delta \log n}(u_n)$. 
Then, for all $\eps'>0$ there exists $\delta>0$ and $\delta'>0$ so that for any  $\ell>(\tau(1-\eps)-1)/(\tau(1-\eps)-3-2\eps')$ 
\begin{equation}\label{eq:size}
    \P(|B_{\delta\log n}(u_n)| \ge n^{(1+\eps')/(\tau(1-\eps)-1)} \text{ and }\mathrm{Surp}_{\delta \log n}(u_n) \ge \ell) \le n^{-1-\delta'}
\end{equation} 
Finally, for any $\ell> 3 \vee (\tau(1-\eps)-1)/(\tau(1-\eps)-3)$, there exists some $\delta'>0$ that
\begin{equation}\label{eq:multiple-edges}
\P(\max_{u,v\in[n]} e(u,v) \ge \ell) \le n^{-1-\delta'}.
\end{equation}
\end{proposition}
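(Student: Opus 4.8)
My plan is to run the breadth-first exploration of $B_{\delta\log n}(u_n)$ in the pairing model one half-edge at a time, and to compare the \emph{backbone tree} (recording the first discovery of each vertex) with a Galton--Watson tree whose offspring law is the $n$-dependent truncation of the size-biased degree distribution; this comparison is faithful as long as the number of explored half-edges stays $o(n)$. Since $\tau(1-\eps)>3$, this offspring law has tail exponent $\beta:=\tau(1-\eps)-2>1$ and finite mean $m$, and its total progeny up to generation $r:=\delta\log n$ is controlled by the moment bound of Lemma~\ref{lemma:moments}. Write $a:=\tau(1-\eps)-1=\beta+1$, $K_n:=n^{(1+\eps')/a}$, $d_{\max}:=\max_{i\le n}d_i$, and let $V$ be the number of half-edges incident to $B_{r-1}(u_n)$ (these are exactly the half-edges paired during the exploration). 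Two elementary facts drive the argument. First, classifying each half-edge of a generation-$k$ vertex as pointing up to its parent, down to a newly discovered child, or back to an already-discovered vertex, and summing over generations, yields $V\le 2|B_r(u_n)|+2\,\mathrm{Surp}_r(u_n)$; so off the (super-polynomially rare) event $\{\mathrm{Surp}_r>|B_r|\}$ we have $V\le 4|B_r(u_n)|$. Second, at any exploration step at which at most $v$ half-edges have been explored, the next pairing closes a cycle with conditional probability at most $v/(h_n-2v)$; hence, on the event that the explored count never exceeds $V_\star$, we get $\mathrm{Surp}_r(u_n)\preceq\mathrm{Bin}(V_\star,2V_\star/h_n)$ and therefore $\P(\mathrm{Surp}_r(u_n)\ge\ell,\ V\le V_\star)\le\binom{V_\star}{\ell}(2V_\star/h_n)^\ell\le(2V_\star^2/h_n)^\ell$.

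For \eqref{eq:size} I would decompose $\{|B_r(u_n)|\ge K_n\}$ over the dyadic ranges $\{2^jK_n\le|B_r(u_n)|<2^{j+1}K_n\}$. On such a range I may take $V_\star\asymp 2^jK_n$ by the first elementary fact, so the second gives conditional surplus probability at most $(C(2^jK_n)^2/n)^\ell$; multiplying by the tail bound $\P(|B_r(u_n)|\ge 2^jK_n)$ supplied by Lemma~\ref{lemma:moments} and summing over $j$ produces a series of the type $\sum_j \P(|B_r(u_n)|\ge 2^jK_n)\,(2^jK_n)^{2\ell}n^{-\ell}$, the ranges with $|B_r(u_n)|$ of macroscopic order being absorbed into the super-polynomially small bound of Lemma~\ref{lemma:moments}. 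Carefully accounting for the exponents of $n$ in this series — using $\beta=a-1$ and that the critical ball-size scale is governed by $d_{\max}m^r\le n^{1/a+o(1)}$ once $\delta$ is chosen small enough — one finds the series is at most $n^{-1-\delta'}$ for some $\delta'>0$ precisely when $\ell(a-2-2\eps')>a$, i.e.\ when $\ell>\frac{\tau(1-\eps)-1}{\tau(1-\eps)-3-2\eps'}$, which is the hypothesis; obtaining exactly this threshold, rather than a weaker one, is where the accounting must be done with care.

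For \eqref{eq:multiple-edges} a direct first-moment bound suffices. For a fixed ordered pair $u\neq v$, the probability that the uniform pairing produces at least $\ell$ edges between $u$ and $v$ is at most $\binom{d_u}{\ell}\binom{d_v}{\ell}\ell!\prod_{i=1}^{\ell}(h_n-2i+1)^{-1}\le(d_ud_v/(h_n-2\ell))^\ell$, and the probability of at least $\ell$ loops at $u$ is at most $(d_u^2/(h_n-4\ell))^\ell$. Summing over pairs and over vertices, with $h_n=\Theta(n)$, it remains to bound $(\sum_u d_u^{\ell})^2n^{-\ell}$ and $\sum_u d_u^{2\ell}n^{-\ell}$. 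Assumption~\ref{assu:empirical-power-law-2} gives $\nu_n(z)\le C_uz^{-\tau(1-\eps)}$ and $d_{\max}\le C_un^{1/a}$, so $\sum_u d_u^p=n\sum_z\nu_n(z)z^p$ is $\Theta(n)$ for $p<a$ and at most $Cn\,d_{\max}^{p-a}\le C'n^{p/a}$ for $p>a$; feeding in $p=\ell$ and $p=2\ell$ shows both quantities decay like a negative power of $n$ whose exponent is below $-1-\delta'$ once $\ell$ exceeds the stated threshold $3\vee\frac{\tau(1-\eps)-1}{\tau(1-\eps)-3}$ — the constant $3$ coming from the regime $\ell\le a$, where $\sum_u d_u^\ell=\Theta(n)$.

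The main obstacle is Step~2: extracting from Lemma~\ref{lemma:moments} a tail bound for $|B_r(u_n)|$ that is simultaneously sharp in the power of $z$ up to the critical scale and super-polynomially small in $n$ beyond it, uniformly over the $n$-dependent truncation of the offspring law, and then carrying out the exponent bookkeeping in the dyadic sum so that the sharp threshold $\ell>\frac{\tau(1-\eps)-1}{\tau(1-\eps)-3-2\eps'}$ emerges. A secondary technical point is making the exploration/Galton--Watson coupling valid over $\delta\log n$ generations (which forces $\delta$ to be small) while tracking the explored volume $V$ and the surplus count together — this is handled by the relation $V\le 2|B_r(u_n)|+2\,\mathrm{Surp}_r(u_n)$.
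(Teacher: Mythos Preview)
Your treatment of \eqref{eq:multiple-edges} is correct and is essentially the paper's argument. The problem is with \eqref{eq:size}.

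Your dyadic decomposition runs only over $\{|B_r|\ge K_n\}$, so what you are bounding is the \emph{intersection} $\{|B_r|\ge K_n\}\cap\{\mathrm{Surp}_r\ge\ell\}$. But what is actually needed (and what the paper proves --- the ``and'' in the displayed inequality should be read as ``or'', as the use of $\CA_{\mathrm{good}}(u_n)$ in the proof of Theorem~\ref{thm:max_CM_extinction}(a) makes clear) is that \emph{each} of the two events has probability $\le n^{-1-\delta'}$. Your decomposition never touches $\{|B_r|<K_n,\ \mathrm{Surp}_r\ge\ell\}$, and that is precisely where the threshold on $\ell$ lives. In fact, for the intersection you are bounding, no constraint on $\ell$ is required at all: trivially
$\P(\{|B_r|\ge K_n\}\cap\{\mathrm{Surp}_r\ge\ell\})\le\P(|B_r|\ge K_n)$,
and the moment method already makes the right-hand side $\le n^{-1-\delta'}$. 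So the claim that your dyadic series is $\le n^{-1-\delta'}$ ``precisely when $\ell(a-2-2\eps')>a$'' cannot be correct; the threshold does not emerge from that computation.

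The paper's route is simpler and avoids any dyadic sum. After coupling the exploration to a dominating Galton--Watson tree with the $\eta$-heavier size-biased law (Lemma~\ref{lemma:coupled-exploration}), one applies Lemma~\ref{lemma:moments} together with Minkowski's inequality across generations to obtain, with $\zeta:=(1+\eps')/a$ and $h_k=(k+1)/a-1$,
\[
\P\bigl(|B_{r+1}|\ge n^\zeta\bigr)\ \le\ C\,n^{-k\zeta+h_k+\mathfrak{C}_k\delta}\ =\ C\,n^{-k\eps'/a+1/a-1+\mathfrak{C}_k\delta},
\]
which is $\le n^{-1-\delta'}$ once $k$ is taken large (depending only on $\eps'$) and then $\delta$ small. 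Separately, on $\{|B_{r+1}|\le n^\zeta\}$ the exploration makes at most $n^\zeta$ pairings, each hitting an active half-edge with probability at most $c\,n^{\zeta-1}$, so $\mathrm{Surp}_r$ is dominated by $\Bin(n^\zeta, c\,n^{\zeta-1})$ and
\[
\P\bigl(\mathrm{Surp}_r\ge\ell,\ |B_{r+1}|\le n^\zeta\bigr)\ \le\ c'\,n^{(2\zeta-1)\ell}.
\]
This is $\le n^{-1-\delta'}$ exactly when $\ell>1/(1-2\zeta)=a/(a-2-2\eps')$, the stated threshold. A union bound of the two displays finishes. Two minor points: Lemma~\ref{lemma:moments} yields polynomial decay with arbitrarily large exponent (by raising $k$), not super-polynomial decay; and the inequality $V\le 2|B_r|+2\,\mathrm{Surp}_r$, while correct, is not needed --- on the size event one simply uses that both the number of steps and the number of active half-edges are at most $n^\zeta$.
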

Observe that with probability $1/n$ the root's degree is the maximal degree in the graph, which can be as high as $O(n^{1/(\tau(1-\eps)-1})$, so $\eps'>0$ in \eqref{eq:size} is necessary for the bound to be true. If one aims to bound the maximal multiplicity of edges inside $B_{\delta \log n}$, the inequality \eqref{eq:size} also includes that, since multiple edges also count as surplus edges. For generality we include the stronger result in \eqref{eq:multiple-edges} here.

The proof is based on a breadth-first-search exploration process of $B_{\delta \log n}(u_n)$, and a coupling to a (power-law) branching process tree $\CT^{\#}_{\delta \log n}$ so that the tree contains $B_{\delta \log n}(u_n)$. First we give a good bound on the size of the tree that holds with probability $1-o(1/n)$. When the offspring distribution decays exponentially, this is fairly easy, but when it follows for instance a power law, we need to develop some new bounds.  

Hence, the next lemma bounds the $k$th moment of the size of (truncated) power-law BP trees, but before that, we give some definitions.
Let $(\zeta_n)_{n\ge 1}$ be a sequence of discrete measures on $\N$ that satisfies 
\begin{align}
\zeta_n(z)&\le c_u z^{-(\tau'-1)},\qquad 
M_n:=\max \mathrm{support}(\zeta_n) \le C_u n^{1/(\tau'-1)}\label{eq:zeta-support},\\
&\tau' \notin \N.     \label{eq:not-integer}
\end{align}
Usually $\zeta_n$ is the size-biased measure of an empirical degree sequence $\underline{d}_n$ satisfying Assumptions \ref{assu:regularity} and \eqref{assu:empirical-power-law-2}. 
For each integer $k \ge 1$, there exists~${C}_k > 0$ such that, if $n$ is large enough, domination by an integral of the rhs of \eqref{eq:widehat-nu} yields that the $k$th moment
\begin{equation}\label{eq_moments_empirical}
    \sum_{z=1}^{M_n} z^k \cdot \zeta_n(z) \le c_u+\sum_{z=1}^{M_n} c_u z^{k-(\tau'-1)}\le  {C}_k \cdot n^{h_k} 
    \end{equation}
    where
    \begin{equation}\label{eq:hk-def}
    h_k:= \max\left\{(k+1)/(\tau'-1)-1,\;0\right\}.  
     \end{equation}
Whenever $\tau'>2$, the coefficient of $k+1$ in $h_k$ is positive but less than $1$. Thus,~$k \mapsto h_k$ is non-decreasing and, due to the additive term $-1$, for any~$k,\ell$, the super-additivity property holds:
\begin{equation}\label{eq_super_additivity}
   h_k+ h_{\ell} \le h_{k+\ell}.
\end{equation}
\begin{lemma}\label{lemma:moments}
Let $\mathcal{T}^{\#}$ be a Galton-Watson tree with offspring distribution~$\zeta_n$ satisfying \eqref{eq:zeta-support} and \eqref{eq:not-integer} with $\tau'>3$, and for each~$r$, let~$Z_r$ be the size of its generation~$r$. 
    For any integer~$k \ge 1$, there exists $\mathfrak{C}_k > 0$ such that the following holds for all sufficiently large~$n$:
\begin{equation}\label{eq_bound_high_moment_coll}
    \mathbb{E}[(Z_r)^k] \le \mathfrak{C}_k \cdot n^{h_k} \cdot \mathrm{e}^{\mathfrak{C}_k r}\qquad \text{for all } r \ge 0.
\end{equation}
\end{lemma}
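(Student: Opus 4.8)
The plan is to prove the moment bound \eqref{eq_bound_high_moment_coll} by induction on $k$, establishing along the way a recursive inequality relating the $k$th moment of $Z_{r+1}$ to the moments of order $\le k$ of $Z_r$. First I would set up notation: write $m_j := \sum_{z} z^j \zeta_n(z)$ for the $j$th moment of the offspring law, so that by \eqref{eq_moments_empirical} we have $m_j \le C_j n^{h_j}$ for each fixed $j$, with $h_j$ as in \eqref{eq:hk-def}. Conditioned on $Z_r = s$, the next generation is $Z_{r+1} = \sum_{i=1}^s X_i$ with $(X_i)$ iid copies of the offspring variable. The key computation is then to expand $\mathbb{E}[(X_1 + \dots + X_s)^k]$ using the multinomial theorem: it is a sum over partitions of the exponent $k$ among the $s$ summands, and grouping terms by how many summands receive a nonzero exponent gives a bound of the form $\mathbb{E}[(Z_{r+1})^k \mid Z_r = s] \le \sum_{j=1}^{k} B_{k,j}\, s^j$, where $B_{k,j}$ is a sum of products of offspring moments $m_{i_1}\cdots m_{i_j}$ with $i_1 + \dots + i_j = k$ and all $i_\ell \ge 1$. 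By the superadditivity \eqref{eq_super_additivity} of $h$, each such product is $\le C\, n^{h_{i_1} + \dots + h_{i_j}} \le C\, n^{h_k}$, so $B_{k,j} \le C'_k\, n^{h_k}$ for every $j \le k$; hence
\begin{equation}\label{eq:plan-recursion}
\mathbb{E}[(Z_{r+1})^k \mid Z_r] \le C''_k\, n^{h_k} \sum_{j=1}^{k} (Z_r)^j,
\end{equation}
and taking expectations, $\mathbb{E}[(Z_{r+1})^k] \le C''_k\, n^{h_k} \sum_{j=1}^{k} \mathbb{E}[(Z_r)^j]$.

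Next I would run the induction on $k$. The base case $k=1$ is the classical branching-process identity $\mathbb{E}[Z_r] = m_1^{\,r} \le (C_1 n^{h_1})^r$; but this is too crude — we want a bound of the form $\mathfrak{C}_1 n^{h_1} e^{\mathfrak{C}_1 r}$, which follows since $m_1 \le C_1 n^{h_1}$ gives $\mathbb{E}[Z_r] \le n^{h_1 r}$, and we need instead to note $h_1 = \max\{2/(\tau'-1) - 1, 0\}$; when $\tau' > 3$ we have $2/(\tau'-1) < 1$ so $h_1 = 0$ and $\mathbb{E}[Z_r] = m_1^r \le e^{\mathfrak{C}_1 r}$ with $\mathfrak{C}_1 = \log C_1$, consistent with \eqref{eq_bound_high_moment_coll} since $n^{h_1} = n^0 = 1$. (This is the real reason the hypothesis $\tau' > 3$ appears: it forces $h_1 = 0$, so that the first-moment growth is purely exponential in $r$ with an $n$-independent rate.) For the inductive step, assume \eqref{eq_bound_high_moment_coll} holds for all orders $\le k-1$. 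Plugging the inductive bounds into the right-hand side of \eqref{eq:plan-recursion}, and using $h_j \le h_k$ and $e^{\mathfrak{C}_j r} \le e^{\mathfrak{C}_{k-1} r}$ for $j \le k-1$, gives
\begin{equation}\label{eq:plan-step}
\mathbb{E}[(Z_{r+1})^k] \le C''_k\, n^{h_k}\Big( \mathbb{E}[(Z_r)^k] + (k-1)\,\widetilde{\mathfrak{C}}_{k-1}\, n^{h_k} e^{\mathfrak{C}_{k-1} r}\Big).
\end{equation}
This is a linear recursion in $r$ for the sequence $a_r := \mathbb{E}[(Z_r)^k]$, of the form $a_{r+1} \le A\, a_r + B\, e^{\gamma r}$ with $A = C''_k n^{h_k}$, $B = C''_k (k-1)\widetilde{\mathfrak{C}}_{k-1} n^{2h_k}$, $\gamma = \mathfrak{C}_{k-1}$, and $a_0 = 1$; solving it gives $a_r \le A^r + B \sum_{i=0}^{r-1} A^{r-1-i} e^{\gamma i} \le (A + B + e^{\gamma})^r$ after crude bounding, which is $\le n^{2 h_k r} \cdot (\text{const})^r$ — and here I would need to be slightly more careful, because naively $A^r = (C''_k n^{h_k})^r$ already carries an unwanted factor $n^{h_k r}$. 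The fix is to absorb the single factor $n^{h_k}$ into the leading constant: write $A^r \le n^{h_k} \cdot (C''_k)^r \cdot n^{h_k(r-1)}$ — still bad. So the correct accounting must keep the polynomial-in-$n$ degree at exactly $h_k$ uniformly in $r$, which means the recursion \eqref{eq:plan-recursion} needs the sharper form where the coefficient of $(Z_r)^k$ is a constant not involving $n$; indeed, inspecting $B_{k,k}$: the only partition of $k$ into $k$ parts each $\ge 1$ is $(1,1,\dots,1)$, contributing $m_1^k \le C_1^k n^{k h_1} = C_1^k$ (since $h_1 = 0$ when $\tau' > 3$!). So $B_{k,k}$ is $n$-independent, and only the terms with $j < k$ carry powers of $n$, and those powers are $\le h_k$. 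This refined version of \eqref{eq:plan-recursion}, namely $\mathbb{E}[(Z_{r+1})^k \mid Z_r] \le C\,(Z_r)^k + C\, n^{h_k} \sum_{j=1}^{k-1}(Z_r)^j$, is what drives the induction cleanly.

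I expect the main obstacle to be exactly this bookkeeping: ensuring the power of $n$ in the bound stays at $h_k$ and does not accumulate a factor $n^{h_k r}$ as $r$ grows, which crucially relies on $h_1 = 0$ (hence on $\tau' > 3$) so that the "diagonal" coefficient $B_{k,k}$ and more generally the top-order-in-$s$ terms are $n$-free. Concretely, I would prove by induction the sharper statement $\mathbb{E}[(Z_r)^k] \le \mathfrak{C}_k n^{h_k} e^{\mathfrak{C}_k r}$ by: (i) expanding $\mathbb{E}[(\sum_{i\le s} X_i)^k]$ via multinomial and collecting into $\sum_{j \le k} B_{k,j} s^j$; (ii) showing $B_{k,k} \le$ const and $B_{k,j} \le$ const $\cdot\, n^{h_k}$ for $j < k$ using superadditivity of $h$ together with $h_1 = 0$; (iii) feeding in the inductive hypothesis for orders $1,\dots,k-1$ and solving the resulting scalar linear recursion $a_{r+1} \le C a_r + C' n^{h_k} e^{\mathfrak{C}_{k-1} r}$ with $a_0 = 1$, whose solution is $\le \mathfrak{C}_k n^{h_k} e^{\mathfrak{C}_k r}$ for $\mathfrak{C}_k := \max\{\mathfrak{C}_{k-1}, \log C\} + \log(2C'+2)$ or similar. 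The base case $k=1$ uses $\mathbb{E}[Z_r] = m_1^r$ with $m_1 \le C_1$ (as $h_1 = 0$), so $\mathbb{E}[Z_r] \le e^{(\log C_1) r}$. Everything else is routine; the only genuinely model-specific input is the moment bound \eqref{eq_moments_empirical} on $\zeta_n$ and the observation that $\tau' > 3 \iff h_1 = 0$.
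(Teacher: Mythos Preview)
Your overall strategy---induction on $k$, multinomial expansion of $\mathbb{E}[(\sum_{i\le s} X_i)^k]$, isolating the top-order term $B_{k,k}\,s^k$ with an $n$-free coefficient (using $h_1=0$), and then solving a scalar linear recursion---is sound and parallels the paper's approach (which uses generating functions and Fa\`a di Bruno in place of the direct multinomial expansion). However, there is a genuine gap in the passage from step~(ii) to step~(iii).

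You bound $B_{k,j}\le C\,n^{h_k}$ for $j<k$ using superadditivity $\sum_\ell h_{\alpha_\ell}\le h_k$, and then feed in the inductive hypothesis $\mathbb{E}[(Z_r)^j]\le \mathfrak{C}_j\,n^{h_j}e^{\mathfrak{C}_j r}$. But multiplying these gives an inhomogeneous term of order $n^{h_k+h_j}$, not $n^{h_k}$. Whenever some $j<k$ has $h_j>0$ (i.e.\ $j>\tau'-2$, which certainly occurs once $k$ is large), this extra factor $n^{h_j}$ prevents the induction from closing: your claimed recursion $a_{r+1}\le C\,a_r + C'\,n^{h_k}e^{\mathfrak{C}_{k-1}r}$ is not what your bounds actually yield. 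What is needed is the sharper arithmetic inequality
\[
h_j + \sum_{\ell=1}^{j} h_{\alpha_\ell} \;\le\; h_k
\qquad\text{for every composition }(\alpha_1,\dots,\alpha_j)\text{ of }k\text{ into }j\ge 2\text{ positive parts},
\]
which goes strictly beyond superadditivity. This is precisely the inequality the paper isolates and proves (their display~\eqref{eq_claim_with_h}); its proof is a short case analysis (split on whether $h_j=0$ or $h_j>0$) that genuinely uses $\tau'>3$ through the sign of $\tfrac{1}{\tau'-1}-(1-\tfrac{1}{\tau'-1})$. Once you have this inequality, your multinomial route works: it gives $B_{k,j}\,\mathbb{E}[(Z_r)^j]\le C\,n^{h_k}e^{\mathfrak{C}_{k-1}r}$ for each $j<k$, and the recursion closes exactly as you describe. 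So the missing ingredient is not structural but this one combinatorial lemma on the $h$-function.
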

The criterion $\tau'>3$ is important: this guarantees that the mean offspring $\E[\CX]=\E[\CX_n]$ does not grow with $n$. BPs with $\tau'\in(2,3)$ grow doubly-exponentially, and \eqref{eq_bound_high_moment_coll} does not hold for them. The importance here is that the rhs of~\eqref{eq_bound_high_moment_coll} only depends on the generation number $r$ exponentially, i.e., the constant $\mathfrak{C}_k$ in the exponential growth does not depend on~$n$. This is non-trivial, since the $k$-th moment of the offspring distribution itself does, but it only enters the bound \emph{once}, as the prefactor $n^{h_k}$. 
\begin{proof}[Proof of Lemma \ref{lemma:moments}]
We will argue by induction over~$k$. Let~$\mathcal{X}$ be a random variable distributed as~$\zeta_n$ (we will generally omit the dependence on~$n$).

For the base case~$k = 1$, recalling~\eqref{eq_bound_high_moment_coll}, note that~$h_1 = 0$ since~$\tau' > 2$; hence,~$\mathbb{E}[\mathcal{X}]$ is bounded by the constant~${C}_1$ which does not depend on~$n$ (equivalently, in $h_k$ the maximum is at $0$ in \eqref{eq:hk-def}). The right-hand side of~\eqref{eq_bound_high_moment_coll} is safisfied in this case since 
\[\mathbb{E}[Z_r] = \mathbb{E}[\mathcal{X}]^r \le ({C}_1)^r = n^{h_1} \cdot \mathrm{e}^{\log(C_1) r}.\]
Now assume that we have proved~\eqref{eq_bound_high_moment_coll} for~$j=1,\ldots,k-1$, that is, assume that we have already found constants~$\mathfrak{C}_1,\ldots,\mathfrak{C}_{k-1}$ such that
\begin{equation} \label{eq_step_first_induction}
    \E[(Z_r)^j] \le \mathfrak{C}_{j} \cdot n^{h_{j}} \cdot \mathrm{e}^{\mathfrak{C}_{j} r} \qquad \text{for all } j \in \{1,\ldots, k-1\} \text{ and all } r \ge 0,
\end{equation}
and we want to find $\mathfrak{C}_{k}$ so that \eqref{eq_bound_high_moment_coll} holds.
Let~$\mathfrak{f}(s)$ denote the probability-generating function of~$\mathcal{X}$,
\[\mathfrak{f}(s):= \sum_{z \ge 1} s^z \cdot \mathbb{P}(\mathcal{X} = z) = \sum_{z \ge 1} s^z \cdot \widehat{\nu}_n(z), \qquad s \in \mathbb{R}. \]
Since~$\zeta_n$ has finite support,~$\mathfrak{f}$ is well defined for any~$s$; it is also infinitely differentiable, with derivative of order~$m$ at~$s=1$ satisfying
\[\mathfrak{f}^{(m)}(1) = \mathbb{E}[\mathcal{X} (\mathcal{X}-1) \cdots (\mathcal{X}-m+1)].\]
For any~$r \in \N$, let~$\mathfrak{f}_r$ denote the~$r$-fold composition of~$\mathfrak{f}$ with itself (i.e.,~$\mathfrak{f}_0$ is the identity function,~$\mathfrak{f}_1 = \mathfrak{f}$ and~$\mathfrak{f}_r = \mathfrak{f} \circ \mathfrak{f}_{r-1}$ for~$r >1$). It is well-known that~$\mathfrak{f}_r$ is the probability-generating function of~$Z_r$, which is again well defined and infinitely differentiable for all~$s$,
\[\mathfrak{f}_r(s) = \sum_{z =1}^\infty s^z \cdot \mathbb{P}(Z_r = z), \quad s \in \mathbb{R}, \quad \mbox{ and }  \quad \mathfrak{f}_r^{(m)}(1) = \mathbb{E}[Z_r(Z_r-1)\cdots (Z_r -m+1)].
\]
We claim that there exists~$\mathfrak{C}_{k}'>0$ such that
\begin{equation} \label{eq_want_from_induction2}
    \mathfrak{f}_r^{(k)}(1) \le \mathfrak{C}_{k}' \cdot n^{h_{k}} \cdot \mathrm{e}^{\mathfrak{C}_{k}' r} \qquad \text{for all }r \ge 0.
\end{equation}
Before proving this, let us show how to use it together with the induction hypothesis to obtain~\eqref{eq_bound_high_moment_coll} (with a constant~$\mathfrak{C}_{k}$ that is possibly different from~$\mathfrak{C}_{k}'$). We bound
\begin{align*}
    \E[(Z_r)^{k}] \le &\E[Z_r (Z_r-1) \cdots (Z_r -(k-1))]\\
    &+ | \E[(Z_r)^{k}]- \E[Z_r (Z_r-1) \cdots (Z_r -(k-1))]|\\
    \le &\mathfrak{f}_r^{(k)}(1) + \sum_{j=1}^{k-1} |a_{k-1,j}| \cdot \E[Z_r^j], 
\end{align*}
where~$a_{k-1,j}$ is the coefficient of~$x^j$ in the polynomial~$x(x-1)\cdots (x-(k-1))$. By~\eqref{eq_want_from_induction2} and the induction hypothesis, the right-hand side above is smaller than
\[\E[(Z_r)^{k}]\le \mathfrak{C}_{k}' \cdot  n^{h_{k}} \cdot \mathrm{e}^{\mathfrak{C}_{k}' r} + \sum_{j=1}^{k-1} |a_{k-1,j}| \cdot \mathfrak{C}_j \cdot n^{h_j} \cdot \mathrm{e}^{\mathfrak{C}_j r}.\]
Since~$j \mapsto h_j$ is increasing, we can choose~$\mathfrak{C}_{k}$ (not depending on~$n$ or~$r$) such that the above expression is smaller than~$\mathfrak{C}_{k} \cdot n^{h_{k}} \cdot \mathrm{e}^{\mathfrak{C}_{k}r}$ for all~$r$. This proves~\eqref{eq_bound_high_moment_coll} once \eqref{eq_want_from_induction2} is proved. To prove~\eqref{eq_want_from_induction2}, fix~$r \ge 1$. We start by writing
\begin{equation}\label{eq_derivative_comp}\mathfrak{f}_{r}^{(k)}(1) = (\mathfrak{f}\circ \mathfrak{f}_{r-1})^{(k)}(1).\end{equation}

We will use the chain rule for higher-order derivatives (also known as Fa\`a di Bruno's formula); let us briefly state it. 
Let~$f,g: I \to \mathbb{R}$ be functions defined in an open interval~$I$ containing~$s \in \mathbb{R}$. Fix~$k \in \mathbb{N}$ and assume that~$f$ and~$g$ are~$k$ times differentiable in~$s$. Let $\CP_k$ denote the set of partitions of $\{1,\ldots, k\}$. For some $\CP=\{B_1, \dots, B_\ell\}\in \CP_k$, we let $|\CP|=\ell$ the number of blocks in $\CP$, and for $B\in \CP$ similarly we write $|B|$ the number of elements in $B$. Let then $\CP_{k,\ell}\subset \CP_k$ be the set of partitions containing $\ell$ blocks. Then,
\begin{align*}(f \circ g)^{(k)}(s) &= \sum_{\CP \in \CP_k} f^{(|\mathcal{P}|)}(g(s)) \cdot \prod_{B \in \mathcal{P}} g^{(|B|)}(s)\\
&=\sum_{\ell=1}^k \sum_{\{B_1, \dots, B_\ell\} \in \CP_{m,\ell}} f^{(\ell)}(g(s)) \cdot \prod_{j=1}^\ell g^{(|B_j|)}(s),
\end{align*}
Using this formula with $f=\mathfrak f$ and $g=\mathfrak f_{r-1}$ (together with~$\mathfrak{f}_{r-1}(1)=1$) in \eqref{eq_derivative_comp}, we have
\begin{equation} \label{eq_faa}
   \mathfrak{f}_{r}^{(k)}(1)= (\mathfrak{f}\circ \mathfrak{f}_{r-1})^{(k)}(1) = \sum_{\ell=1}^k \sum_{\{B_1, \dots, B_\ell\} \in \CP_{k,\ell}} \mathfrak f^{(\ell)}(1) \cdot \prod_{j=1}^\ell \mathfrak f_{r-1}^{(|B_j|)}(1). \end{equation} 
We now inspect each term in \eqref{eq_faa}. 
The value $\ell=1$ gives the trivial partition which consists of a single block~$\{1,\ldots,k\}$. The corresponding term is
\begin{equation} \label{eq_trivial_partition} \mathfrak{f}'(1) \cdot \mathfrak{f}_{r-1}^{(k)}(1) = \E[\mathcal{X}]  \cdot \mathfrak{f}_{r-1}^{(k)}(1) \qquad \mbox{when } \ell=1.\end{equation}
Now fix a partition~$\mathcal{P} = \{B_1,\ldots, B_\ell\}$ with $\ell\ge 2$. The corresponding term in \eqref{eq_faa} equals
\begin{align*}
\mathfrak{f}^{(\ell)}(1) \cdot \prod_{j=1}^\ell \mathfrak{f}_{r-1}^{(|B_j|)}(1)&= \E[\mathcal{X} (\mathcal{X}-1)\cdots (\mathcal{X}-\ell+1)]\\
&\quad \cdot \prod_{j\le\ell} \E[Z_{r-1}(Z_{r-1}-1) \cdots (Z_{r-1}-|B_j|+1)]\\
&\le \E[\mathcal{X}^\ell] \cdot \prod_{j\le \ell} \E[(Z_{r-1})^{|B_j|}] \stackrel{\eqref{eq_moments_empirical}}{\le} C_\ell \cdot n^{h_\ell} \cdot \prod_{j\le \ell} \E[(Z_{r-1})^{|B_j|}].
\end{align*}
Since~$\ell\ge 2$, each block has size~$|B_j| < k$. We thus use the induction hypothesis~\eqref{eq_step_first_induction} to bound the rhs as
\begin{align}
\nonumber
\mathfrak{f}^{(\ell)}(1) \cdot \prod_{j=1}^\ell \mathfrak{f}_{r-1}^{(|B_j|)}(1)&\le  C_\ell \cdot n^{h_\ell} \cdot \prod_{j=1}^\ell \mathfrak{C}_{|B_j|} \cdot n^{h_{|B_j|}} \cdot \mathrm{e}^{\mathfrak{C}_{|B_j|} (r-1) } \\
\nonumber &= C_\ell \Big(\prod_{j=1}^\ell \mathfrak{C}_{|B_j|} \cdot \mathrm{e}^{\sum_{j=1}^\ell \mathfrak{C}_{|B_j|} \cdot (r-1)}\Big) \cdot n^{h_\ell + \sum_{j=1}^\ell h_{|B_j|}}\\
&\le c'\mathrm{e}^{C'(r-1)} \cdot n^{h_\ell + \sum_{j=1}^\ell h_{|B_j|}}, \label{eq_power_of_n}
\end{align}
where $c', C'$ are constants that neither depend on $r$ nor on the partition $\CP$, and are given by
\[c':= (\max_{i \le k} C_i) \cdot (\max_{i\le k-1}  \mathfrak{C}_i)^{k},\qquad C':= k\cdot \max_{i \le k-1} \mathfrak{C}_i. \]
We inspect the exponent of $n$ that appears in \eqref{eq_power_of_n}, and set out to prove the inequality
\begin{equation}\label{eq_claim_with_h}
    h_\ell + \sum_{j\le \ell} h_{|B_j|} \le h_{k}.
\end{equation}
We consider two cases. The first case is when~$h_\ell = 0$. The superadditivity~\eqref{eq_super_additivity} yields that
\[h_\ell + \sum_{j\le \ell}h_{|B_j|} \le h_{\sum |B_j|} = h_{k}.\]
The second case is~$h_\ell > 0$, with a more involved proof. Recall $h_k$ from \eqref{eq:hk-def}. We write~$\alpha:= \frac{1}{\tau'-1}$ and~$\beta := 1- \frac{1}{\tau'-1}$, so that~$h_i = \max(\alpha i - \beta,0)$ for any~$i$, and carry out some formal rearrangements:
\begin{align*}
    h_\ell + \sum_{j\le \ell} h_{|B_j|} &= \alpha \ell - \beta + \sum_{j:h_{|B_j|} >0} (\alpha |B_j| - \beta)= -\beta+ \sum_{j=1}^\ell \alpha +\sum_{j:h_{|B_j|} >0} (\alpha |B_j| - \beta)\\
       &= -\beta+ \sum_{j:h_{|B_j|} = 0} \alpha + \sum_{j:h_{|B_j|} > 0} (\alpha|B_j| + \alpha - \beta)\\
       &\le -\beta+\sum_{j:h_{|B_j|} = 0} \alpha|B_j| + \sum_{j:h_{|B_j|} > 0} (\alpha|B_j| + \alpha - \beta).
\end{align*}
By the assumption in the lemma that $\tau'>3$,~$\alpha-\beta < 0$. Since $\{B_1, \dots, B_\ell\} \in \CP_{k,\ell}$, i.e., the blocks  partition $\{1,\dots, k\}$, $\sum_{j\le \ell}|B_j| =k$ holds which gives that 
\[h_\ell + \sum_{j\le \ell} h_{|B_j|}\le  -\beta+\sum_{j:h_{|B_j|} = 0} \alpha|B_j| + \sum_{j:h_{|B_j|} > 0} \alpha|B_j|  = \alpha k - \beta = h_{k}.\]
This completes the proof of~\eqref{eq_claim_with_h}. We substitute it as an upper bound in \eqref{eq_power_of_n} to obtain that for any~$\mathcal{P}\in \CP_{k,\ell}$ for any $\ell\ge 2$,
\[\mathfrak{f}^{(\ell)}(1) \cdot \prod_{B \in \mathcal{P}} \mathfrak{f}_{r-1}^{(|B|)}(1) \le c' \mathrm{e}^{C'(r-1)} \cdot n^{h_{k}}.\]
Substituting this bound into~\eqref{eq_faa} and using~\eqref{eq_trivial_partition} for $\ell=1$, we arrive at
\[\mathfrak{f}_{r}^{(k)}(1)  \le c'' \mathrm{e}^{C'(r-1)} \cdot n^{h_{k}}+ \E[\mathcal{X}] \cdot \mathfrak{f}^{(k)}_{r-1}(1),\]
where~$c'':= c' \cdot |\CP_k|=c'2^k$. This bound can now be used recursively: the same inequality (with~$(r,r-1)$ replaced by~$(r-1,r-2)$) can be used to bound~$\mathfrak{f}^{(k)}_{r-1}(1)$ on the right-hand side, and then further. This gives
\begin{align*}
\mathfrak{f}_{r}^{(k)}(1)  &\le c'' \mathrm{e}^{C'(r-1)} \cdot n^{h_{k}} \cdot (1 + \E[\mathcal{X}] + \E[\mathcal{X}]^2+\cdots + \E[\mathcal{X}]^r)\\
&\le c'' \mathrm{e}^{C'(r-1)} \cdot \mathbb{E}[\mathcal{X}]^{r+1} \cdot n^{h_{k}}.
\end{align*}
Now, we can choose~$\mathfrak{C}_{k}' > 0$ such that the right-hand side above is smaller than~$\mathfrak{C}_{k}' \mathrm{e}^{\mathfrak{C}_{k}' r} \cdot n^{h_{k}}$ for all~$r$. This completes the proof of~\eqref{eq_want_from_induction2}.
\end{proof}
We now proceed to embed $B_r(u_n)$ in Proposition \ref{prop:surplus} to a branching process that satisfies the conditions of Lemma \ref{lemma:moments}.
Recall $\nu_n(z)=n_z/n$ from \eqref{eq:empirical-degree}.
Define the \emph{size-biased version} and the \emph{down-shifted size-biased version} of $\nu_n$ as
\begin{equation}\label{eq:size-biased}
\nu_n^\star(z):=\frac{z\nu_n(z)}{\E[D_n]}, \qquad\mbox{and}\qquad  \widetilde\nu_n(z):=\frac{(z+1) \nu_n(z+1)}{\E[D_n]} = \frac{(z+1)n_{z+1}}{\sum_{i\le n}d_i}.
\end{equation}
If $D_n \sim \nu_n, D_n^\star\sim \nu_n^\star$ then $D_n^\star-1\sim \widetilde \nu_n$. It is well-known that $\nu_n \ {\buildrel d\over \le }\  \nu_n^\star$, i.e., the size-biased version of a random variable on $\N$ stochastically dominates the original measure. This follows from Harris' inequality: $\P(D_n^\star> z) \E[D_n]= \E[D_n \ind_{\{D_n> z\}}]\ge \E[D_n]\P(D_n> z)$ for any $z>0$. The next definition makes the tail of any starting distribution $\nu$ having a $q>1$ moment slightly heavier so that it also stochastically dominates $\nu$.
\begin{definition}[$\eta$-heavier-transformation of a probability measure]\label{def:eta-heavy}
    Let $\nu$ be a probability measure so that $\nu(z)\le z^{-\tau'}$ holds for all sufficiently large $z>0$. Let $\eta$ satisfy that $\tau'(1-\eta)>1$, and given a distribution $\nu$, let  $z_0^{\#}\ge 1$ be the smallest integer that satisfy the following:
    \begin{equation}\label{eq:z0-crit}
    \begin{aligned}
       \min_{z\ge z_0^{\#}: \nu(i)\neq 0} \nu(i)^{-\eta} &\ge 8/7, \qquad
        \sum_{z\ge z_0^{\#}} \nu(i)^{1-\eta} < 7/8.
        \end{aligned}
    \end{equation}
   Choose a normalising factor $Z:=Z(\eta, \nu)$ so that the following measure is a probability measure: 
\begin{equation}\label{eq:nu-hash}
 \nu^{\#}(z):=\begin{cases} 0 \quad&\mbox{if } z\le z_0^{\#}, \\
\nu(z)^{1-\eta}/Z \qquad &\mbox{if } z> z_0^{\#}\end{cases}   
\end{equation}
\end{definition}
The choice $7/8$ is quite arbitrary in \eqref{eq:z0-crit}, any number strictly less than $1$ would serve our purposes. 
\begin{claim}[Stochastic domination between $\nu$ and $\nu^{\#}$]\label{claim:hash-domin}
Let $\nu$ be a probability measure so that for some $\tau'>1$, $\nu(z)\le z^{-\tau'}$ holds for all sufficiently large $z>0$.
Then the measure $ \nu^{\#}$ exists and  stochastically dominates $\nu$ for all $\eta$ satisfying $\tau'(1-\eta)>1$, and has finite $q$-th moment for all $q<\tau'(1-\eta)-1$. Finally, $Z<7/8$.
\end{claim}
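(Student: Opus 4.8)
The plan is to verify the four assertions in turn: that the threshold~$z_0^{\#}$ of~\eqref{eq:z0-crit} is a well-defined finite integer with $Z\in(0,7/8)$ (so that $\nu^{\#}$ is a genuine probability measure, giving simultaneously existence and the bound $Z<7/8$); that $\nu^{\#}$ stochastically dominates $\nu$; and that $\nu^{\#}$ has the stated moments. I first record the preliminary observation that $\tau'(1-\eta)>1$ together with $\tau'>1$ forces $\eta\in(0,1)$, so $x\mapsto x^{1-\eta}$ is an increasing concave map on $[0,\infty)$ with $0^{1-\eta}=0$; this is what makes the transformation genuinely ``heavier''.

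For existence: let $z_1$ be a threshold beyond which $\nu(z)\le z^{-\tau'}$, so that $\nu(z)^{1-\eta}\le z^{-\tau'(1-\eta)}$ for $z\ge z_1$. Since $\tau'(1-\eta)>1$, the series $\sum_z\nu(z)^{1-\eta}$ converges, hence its tails $\sum_{z\ge m}\nu(z)^{1-\eta}$ tend to $0$ and the second condition in~\eqref{eq:z0-crit} holds for all large $m$; likewise $\nu(z)\le z^{-\tau'}\to 0$, so $\nu(z)^{-\eta}\ge 8/7$ for all large $z$ and the first condition holds for all large $m$. Both conditions only become easier as $m$ grows (the index set shrinks), so the admissible $m$ form a final segment of $\N$, and $z_0^{\#}$ is its minimum, a finite positive integer. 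Since $Z=\sum_{z>z_0^{\#}}\nu(z)^{1-\eta}\le\sum_{z\ge z_0^{\#}}\nu(z)^{1-\eta}<7/8$ by the second condition, and $Z>0$ because $\nu$ (having infinite support, as in all applications) charges some $z>z_0^{\#}$, the measure $\nu^{\#}$ is well defined and $Z<7/8$.

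For stochastic domination, the key pointwise estimate is that for every $z\ge z_0^{\#}$ one has $\nu(z)^{1-\eta}\ge\tfrac87\nu(z)$: this is trivial if $\nu(z)=0$, and if $\nu(z)>0$ it follows from the first condition in~\eqref{eq:z0-crit} via $\nu(z)^{1-\eta}=\nu(z)\cdot\nu(z)^{-\eta}\ge\tfrac87\nu(z)$. Now fix $m\ge 0$ and compare $\sum_{z>m}\nu^{\#}(z)$ with $\sum_{z>m}\nu(z)$. If $m\le z_0^{\#}$, then $\nu^{\#}$ is supported on $\{z>z_0^{\#}\}\subseteq\{z>m\}$, so $\sum_{z>m}\nu^{\#}(z)=1\ge\sum_{z>m}\nu(z)$. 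If $m>z_0^{\#}$, then every $z>m$ satisfies $z\ge z_0^{\#}$, so summing the pointwise bound gives $\sum_{z>m}\nu(z)^{1-\eta}\ge\tfrac87\sum_{z>m}\nu(z)$; dividing by $Z$ and using $Z<7/8$ yields $\sum_{z>m}\nu^{\#}(z)\ge\tfrac{8}{7Z}\sum_{z>m}\nu(z)>\tfrac{64}{49}\sum_{z>m}\nu(z)\ge\sum_{z>m}\nu(z)$. Hence $\nu^{\#}$ stochastically dominates $\nu$. Finally, for the moments: for $q<\tau'(1-\eta)-1$ write $\sum_z z^q\nu^{\#}(z)=Z^{-1}\sum_{z>z_0^{\#}}z^q\nu(z)^{1-\eta}$ and split the sum at $z_1$; the finitely many terms with $z_0^{\#}<z<z_1$ contribute a finite amount, while for $z\ge z_1$ we bound $z^q\nu(z)^{1-\eta}\le z^{q-\tau'(1-\eta)}$, which is summable since $q-\tau'(1-\eta)<-1$. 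This finishes the proof.

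The argument is essentially bookkeeping; the only place that requires a little care is matching the constants in the domination step, namely combining the ``one-point'' gain factor $8/7$ from the first condition in~\eqref{eq:z0-crit} with the normalization bound $Z<7/8$ from the second condition so that their quotient $\tfrac{8}{7Z}$ exceeds $1$. (One should also keep in mind the degenerate possibility that $\nu$ has finite support exhausted by $z_0^{\#}$, in which case $\nu^{\#}$ would not be defined; this does not arise in the applications, where $\nu$ is a size-biased empirical degree distribution whose support grows with $n$.)
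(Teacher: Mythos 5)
Your proof is correct and follows essentially the same route as the paper's: zero mass of $\nu^{\#}$ below $z_0^{\#}$ handles small thresholds, a tail comparison handles $z>z_0^{\#}$, and the moment and existence statements come from $\nu(z)^{1-\eta}\le z^{-\tau'(1-\eta)}$ with $\tau'(1-\eta)>1$. The only (harmless) divergence is in the tail-comparison step, where you invoke the first criterion of \eqref{eq:z0-crit} to get the pointwise gain $\nu(z)^{1-\eta}\ge\tfrac{8}{7}\nu(z)$ and then the factor $\tfrac{8}{7Z}>1$, whereas the paper gets by with the trivial bound $\nu(z)\le\nu(z)^{1-\eta}$ (valid since $\nu(z)<1$) combined with $Z<7/8<1$; your parenthetical about the degenerate finite-support case is a fair caveat that the paper sidesteps by simply assuming the measure exists.
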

\begin{proof}
Suppose the measure exists. Then $Z<7/8$ follows from the second criterion in \eqref{eq:z0-crit} since $Z=\sum_{i>z_0^{\#}} \nu(i)^{1-\eta}\le 7/8$.
For $z\le z_0^{\#}$, $\nu^{\#}([0,z])\le \nu([0,z])$ is immediate from the first row in \eqref{eq:nu-hash}. For $z>z_0^{\#}$, we aim to show
$\nu((z,\infty))\le \nu^{\#}((z,\infty))$, which is equivalent to
\[
Z\sum_{i>z} \nu(i) \le \sum_{i>z} \nu(i)^{1-\eta},
\]
which holds since $Z<7/8$ and $\nu(i)<1$ implies that $\nu(i)\le\nu(i)^{1-\eta}$ for each $i>z$.
To see the moment conditions, for all $z\ge 1$ it holds that $Z\nu^{\#}(z)\le \nu(z)^{1-\eta}$, and so the $q$th moment is finite whenever $ \sum_{z\ge z_0^\#} z^q \nu(z)^{1-\eta}<\infty$, which in turn is at most $\sum_{z\ge z_0^\#} z^q z^{-\tau'(1-\eta)}$. This sum is convergent if $q-\tau'(1-\eta)<-1$, equivalently if $q<\tau'(1-\eta)-1$. This also gives with $q=0$ that $\tau'(1-\eta)>1$ is indeed sufficient for $z_0^{\#}$ in \eqref{eq:z0-crit} to exist and the normalising factor $Z$ to be finite. 
\end{proof}

The following exploration process gradually constructs the configuration model by matching half-edges sequentially in a way that reveals the graph neighborhood of a vertex $v_0$, $B_r(v_0)$, in a breadth-first search manner. The exploration also immediately couples the $r$-neighborhood $B_r(v_0)$ to the first $r$ generations of a random rooted tree $\CT^{\mathrm{expl}}_r$ so that  $B_r(v_0)\subseteq \CT^{\mathrm{expl}}_r$ holds a.s. under the coupling.

\begin{construction}[Exploration of the neighborhood of a vertex]\label{cons:exploration-1}\normalfont We take as input a degree sequence $\underline d_n$, a starting vertex $v_0$, a target radius $r$, and an additional offspring distribution $\zeta$. The coupled exploration of $B_r(0)$ in the configuration model $\mathrm{CM}(\underline d_n)$ is then as follows:

\emph{Step 0. Initialization.} To initialize, we set $v_0$ active and reveal its half-edges (say $h_1, \dots h_{d_{v_0}}$) and set also all of its half-edges active. We introduce the list of the active vertices $A_v(0):=\{v_0\}$ and of the active half-edges $A_h(0):=\{h_1, \dots, h_{d_{v_0}}\}$, and we set $\mathrm{Ex}_v(0):=\emptyset, \mathrm{Ex}_h(0):=\emptyset$ for the list of explored vertices and half-edges, respectively.  

\emph{Step $s$. Exploring a half-edge.}    In each discrete step $s\ge 1$ we take the first half-edge $h_s$ from $A_h(s-1)$, in a first-in-first-out (breadth-first search) order, and reveal the half-edge $m(h_s)$ it is matched to. We then append $h_s$ and $m(h_s)$ to the end of the list of explored half-edges $\mathrm{Ex}_h(s-1)$, obtaining $\mathrm{Ex}_h(s)$, and we remove $h_s$ from the active half-edges $A_h(s-1)$, and also remove $m(h_s)$ from it if it happened to belong to $A_h(s-1)$. Then we carry out three more substeps:

\emph{Substep s.(i): Adding newly discovered vertices.} If the vertex $v(m(h_s))$ that $m(h_s)$ is attached to is a new vertex, i.e., not in $A_v(s-1)$, then we append $v(m(h_s))$ to the end of the list $A_v(s-1)$, obtaining $A_v(s)$, and we append the remaining $X_s^{\sss{(n)}}$ many half-edges of $v(m(h_s))$ to the end of the active half-edge list, obtaining $A_h(s)$. We call $X_s^{\sss{(n)}}$ the forward degree of the vertex discovered in step $s$.

\emph{Substep s.(ii) Handling loops and creating ghost subtrees.} If, however, the half-edge $m(h_s)$ is already active and it is attached to an active vertex $v(m(h_s))$, then we call this a \emph{collision} at step $s$. This creates a loop and hence a surplus edge in $B_r(v_0)$. We then do the following: in $B_r(v_0)$ we create the loop formed by $(h_s, m(h_s))$, and in $\CT^{\mathrm{expl}}_r$ we create two `ghost' subtrees as follows. Let $r_1:=d_G(v_0, v(h_s)), r_2:=d_G(v_0, v(m(h_s)))$, respectively.  We then sample two independent branching processes, $\CT^{\#, s1}_{r -r_1}$ and $\CT^{\#,s2}_{r -r_2}$ with offspring distribution $\zeta$, (the first one has depth $r-r_1$ while the second one has depth $r-r_2$) and attach their root to the half-edges $h_s$ and $m(h_s)$ respectively, and add these ghost-subtrees to $\CT_r^{\mathrm{expl}}$. 

\emph{Substep s.(iii): Checking for vertices being fully explored.} If the half-edges of the vertices $v(h_s)$ and/or $v(m(h_s))$ are \emph{all} explored after substep s.(ii), then we append $v(h_s)$ and/or $v(m(h_s))$ also to the set of explored vertices $\mathrm{Ex}_v(s)$, otherwise we keep them active.

\emph{Stopping condition.} The exploration stops when we have matched all half-edges belonging to vertices at graph distance $r -1$ from $v_0$. We denote the number of needed steps by $t(r)$.

\emph{Output.} The output is the graph $B_r(v_0)$ and the tree $\CT_r^{\mathrm{\mathrm{expl}}}$. We denote the number of half-edges added in step $s$ to the active half-edges by $X_s^{\sss{(n)}}$, giving the random sequence $X_1^{\sss{(n)}}, X_2^{\sss{(n)}}, \dots, X_{t(r)}^{\sss{(n)}}$, with the convention that we set $X_s^{\sss{(n)}}:=0$ if a collision have occurred at step $s$ and no new vertex was added. We denote by $\mathrm{Coll}_r(v_0)$ the number of collisions that occurred during the process. \end{construction}

\begin{observation}\label{obs:last-gen}\normalfont The exploration reveals the whole graph (including all loops) within $B_{r-1}(v_0)$, and also the size of $B_{r}(v_0)$. To see the latter, by the stopping condition, we have explored all vertices in generation $r-1$, and their forward degrees, say $X_{s_{r-1}}^{\sss{(n)}}, \dots, X_{t_{r-1}}^{\sss{(n)}}$ are thus known. Matching then all these half-edges reveals edges between at least one vertex in generation $r-1$, and the other vertex can be either in generation $r-1$ or $r$. For each edge where the other vertex is also in generation $r-1$, a loop between two vertices in generation $r-1$ arises, and the size of $B_{r}(v_0)$ is reduced by $2$ compared to $\sum_{i\in[s_{r-1}, t_{r-1}]} X_{i}^{\sss{(n)}}$. Each collision where two edges lead to the same vertex in generation $r$, reduces the size of $B_{r}(v_0)$ compared to $\sum_{i\in[s_{r-1}, t_{r-1}]} X_{i}^{\sss{(n)}}$ by $1$. Note that $B_r(v_0)\subseteq \CT_r^{\mathrm{expl}}$ for any offspring distribution $\zeta$. 
\end{observation}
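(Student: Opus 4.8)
The plan is to prove the three deterministic assertions bundled in the observation separately: (I) the exploration of Construction~\ref{cons:exploration-1} reveals the full induced subgraph on $B_{r-1}(v_0)$ --- every vertex at graph distance at most $r-1$ from $v_0$ and every edge, self-loop and parallel edge among them; (II) it then determines $|B_r(v_0)|$ through the stated accounting with the forward degrees $X_i^{\sss{(n)}}$; and (III) the main exploration tree $\CT_r^{\mathrm{expl}}$ contains $B_r(v_0)$ level by level, for \emph{any} auxiliary offspring law $\zeta$. All three statements concern a single fixed pairing of the half-edges, so there is no probability to do; everything reduces to an induction on the \emph{round number} of the breadth-first exploration.

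For the induction set $V_k:=\{v:\ d_G(v_0,v)=k\}$ and call ``round $k$'' the stretch of Construction~\ref{cons:exploration-1} that processes, as lead half-edges $h_s$, exactly the half-edges present in $A_h$ at the end of round $k-1$ (round $0$ processes the half-edges of $v_0$). I would prove, for $0\le k\le r-1$, the invariant $\mathcal H(k)$: \emph{after round $k$,} (a) the discovered vertex set is exactly $V_0\cup\cdots\cup V_{k+1}$; (b) every half-edge incident to $V_0\cup\cdots\cup V_k$ has been matched, and the discovery tree assembled so far --- each non-root discovered vertex hung below the vertex through which it was first reached --- is a subtree of $\CT_r^{\mathrm{expl}}$ whose level-$j$ vertex set equals $V_j$ for all $j\le k+1$; (c) the list $A_h$ now consists precisely of the not-yet-matched half-edges of the $V_{k+1}$-vertices. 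The case $\mathcal H(0)$ is immediate. In the step, $\mathcal H(k-1)$(c) tells us that round $k$ processes exactly the forward half-edges of $V_k$-vertices not already consumed in round-$(k-1)$ collisions; when such an $h_s$, belonging to $u\in V_k$, is matched to $m(h_s)$ attached to $w$, either $w$ has not yet been discovered --- and then, since $V_0\cup\cdots\cup V_k$ is already discovered and $w\sim u\in V_k$, we get $w\in V_{k+1}$, entered as a child of $u$ at level $k+1$ --- or $w$ is already discovered, in which case the involution structure of the pairing forces $m(h_s)$ to still be active (so the move is a collision), and a FIFO-ordering argument rules out $w\in V_{k-1}$ (any half-edge of $u$ leading to $V_{k-1}$ is either $u$'s discovery half-edge or has been consumed no later than round $k-1$), so $w\in V_k\cup V_{k+1}$, i.e.\ the collision is a (self-)loop inside $V_k$ or a parallel edge onto a $V_{k+1}$-vertex. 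Recording, in each case, which half-edges leave $A_h$ re-establishes (a)--(c) and closes the induction.

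With $\mathcal H(r-1)$ in hand the three assertions follow quickly. Assertion (I): by $\mathcal H(r-1)$(b) every half-edge incident to $V_0\cup\cdots\cup V_{r-1}$ has been matched, so every edge with both endpoints in $B_{r-1}(v_0)$, including loops and multiple edges, is produced. Assertion (III): the discovery tree is a spanning subtree of $\CT_r^{\mathrm{expl}}$ on the vertex set $V_0\cup\cdots\cup V_r=V(B_r(v_0))$ with matching levels, and the ghost subtrees added at collisions consist of fresh ghost vertices disjoint from $V(B_r(v_0))$, so they only enlarge $\CT_r^{\mathrm{expl}}$, whatever $\zeta$ is; hence $V(B_r(v_0))$ injects into $V(\CT_r^{\mathrm{expl}})$ level by level, giving $|B_r(v_0)|\le|\CT_r^{\mathrm{expl}}|$ and $|V_k|\le|\mathrm{Gen}_k(\CT_r^{\mathrm{expl}})|$ --- the inputs needed downstream. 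Assertion (II): in round $r-1$ each processed forward half-edge of a $V_{r-1}$-vertex yields one new $V_r$-vertex unless it enters a collision; a loop inside $V_{r-1}$ consumes two such half-edges and yields none, a parallel edge onto an already-present $V_r$-vertex consumes one, so $|V_r|=\sum_{i\in[s_{r-1},t_{r-1}]}X_i^{\sss{(n)}}-2L-C-S$, where $L$ counts the loops inside $V_{r-1}$, $C$ the collisions onto already-present $V_r$-vertices, and $S$ the parallel edges between $V_{r-1}$ and $V_{r-2}$ (whose $V_{r-1}$-side half-edges were matched already in round $r-2$ and are therefore not processed in round $r-1$); together with the already-revealed value of $|B_{r-1}(v_0)|$ and $|B_r(v_0)|=|B_{r-1}(v_0)|+|V_r|$ this fixes $|B_r(v_0)|$, and in particular gives the bound $|V_r|\le\sum_{i\in[s_{r-1},t_{r-1}]}X_i^{\sss{(n)}}$ used in the sequel.

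I expect the only genuinely delicate part --- and the piece I would write out in detail --- to be the half-edge bookkeeping inside the inductive step: using the breadth-first (FIFO) order of Construction~\ref{cons:exploration-1} together with the fact that the pairing is a fixed involution, one must verify that (i) the lead half-edges of round $k$ are precisely the not-yet-consumed forward half-edges of $V_k$, (ii) a ``collision'' is declared exactly when the partner half-edge $m(h_s)$ is still active, and (iii) no round-$k$ lead half-edge is matched into $V_{k-1}$. Once these three facts are secured, the identification of $V_j$ with level $j$ of the discovery tree, together with assertions (I), (II) and (III), is routine; and since the whole statement is a property of a fixed pairing, no extra argument is needed to pass from one realization of $\mathrm{CM}(\underline d_n)$ to another.
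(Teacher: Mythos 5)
Your proposal is correct and takes essentially the same route as the paper: the observation's own inline text \emph{is} the paper's proof (a direct bookkeeping of the breadth-first exploration of Construction~\ref{cons:exploration-1}), and you simply formalize that bookkeeping as an induction on exploration rounds, with the same classification of matches into discoveries, loops within generation $r-1$, and collisions onto already-present generation-$r$ vertices. The one place you diverge is your extra term $S$ for forward half-edges of generation-$(r-1)$ vertices already consumed by collisions during round $r-2$ (edges back to generation $r-2$), a case the observation's literal wording (``the other vertex can be either in generation $r-1$ or $r$'') glosses over; this is a harmless refinement rather than a conflict, since the downstream arguments only use the resulting upper bound $|V_r|\le\sum_{i\in[s_{r-1},t_{r-1}]}X_i^{\sss{(n)}}$, the fact that all surplus edges inside $B_{r-1}(v_0)$ are revealed, and the level-preserving containment $B_r(v_0)\subseteq \CT_r^{\mathrm{expl}}$, all of which your argument delivers.
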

\begin{observation}\label{obs:surplus}\normalfont
All surplus edges are either self-loops, multiple edges, or between two vertices, say $v, v'$ so that the distance between $|d_G(u_n, v)-d_G(u_n, v')|\le 1$. Indeed, when a surplus edge is created, the half-edge $h_s$ is matched to an active half-edge in $A_h(s-1)$. All half-edges in $A_h(s-1)$ either belong to the same generation as $v(h_s)$ or they belong to the next generation.
\end{observation}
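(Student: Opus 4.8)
The plan is to extract Observation~\ref{obs:surplus} directly from the breadth-first bookkeeping of Construction~\ref{cons:exploration-1} (run from the start vertex $u_n$ with target radius $r$), via two ingredients: (i) every surplus edge of the revealed subgraph $B_r(u_n)$ is produced at a \emph{collision} step (Substep s.(ii)); and (ii) a first-in-first-out invariant guaranteeing that the active half-edge list never spans more than two consecutive generations.

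For (i), I would first observe that every edge of $B_r(u_n)$ is revealed exactly once, as a matched pair $\{h_s, m(h_s)\}$ at some step $s$ with $v(h_s)$ at distance at most $r-1$ from $u_n$. If, at step $s$, $m(h_s)$ is attached to a not-yet-active vertex (Substep s.(i)), then by construction this is the edge along which that vertex is first reached, hence an edge of the breadth-first spanning tree, and such edges are not surplus. Therefore every surplus edge is revealed at a collision step, where $m(h_s)$ is attached to an already-active vertex. Since $m(h_s)$ is revealed for the first time at step $s$, and since every half-edge of an active vertex was placed in the active list when that vertex was discovered and can leave the list only upon being matched, $m(h_s)$ must still belong to $A_h(s-1)$. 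The self-loop case ($v(h_s)=v(m(h_s))$) and the multiple-edge case ($v(h_s)$ and $v(m(h_s))$ already joined by a previously-revealed edge) are exactly the sub-cases of collisions isolated in the statement; all three cases satisfy the asserted distance bound (trivially for self-loops, and by the triangle inequality otherwise).

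For (ii), I would prove by induction on $s$ that, writing $g(s):=d_G(u_n,v(h_s))$ for the generation of the half-edge popped at step $s$, the list $A_h(s-1)$ consists, read front to back, of a (possibly empty) block of half-edges attached to distance-$g(s)$ vertices followed by a (possibly empty) block of half-edges attached to distance-$(g(s)+1)$ vertices; in particular every half-edge in $A_h(s-1)$ is attached to a vertex at distance $g(s)$ or $g(s)+1$ from $u_n$. The base case $s=1$ holds because $A_h(0)$ contains only the half-edges of $u_n$, all at distance $0$. For the inductive step, the stopping condition forces the exploration to match all half-edges of distance-$(g-1)$ vertices before any half-edge of a distance-$g$ vertex, so when $h_s$ (the front of the list, at distance $g=g(s)$) is processed, every vertex within distance $g$ of $u_n$ has already been discovered; hence any vertex discovered while processing $h_s$ is at distance exactly $g+1$, and the half-edges appended to the back of the list in Substep s.(i) are all at distance $g+1$, which preserves the two-block structure. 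A collision step appends no half-edges and only deletes one (namely $m(h_s)$) from inside a block, so it preserves the structure as well.

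Combining (i) and (ii) finishes the proof: a surplus edge $\{h_s,m(h_s)\}$ is revealed at a collision step with $m(h_s)\in A_h(s-1)$, so by the invariant $v(m(h_s))$ lies at distance $g(s)$ or $g(s)+1$ from $u_n$ while $v(h_s)$ lies at distance $g(s)$, whence $|d_G(u_n,v(h_s))-d_G(u_n,v(m(h_s)))|\le 1$. The genuine content here is (i), that surplus edges are precisely the collision edges, which is what later lets one control them through the ghost-subtree construction; the distance bound alone is immediate from the triangle inequality. I expect the only delicate point to be the bookkeeping in the induction for (ii): verifying that the exploration really does exhaust one generation before moving to the next (which rests on the FIFO order together with the stopping condition) and that a half-edge deleted from $A_h$ during a collision can never resurface as a future match. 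Neither presents a real obstacle.
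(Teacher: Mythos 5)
Your proposal is correct and follows essentially the same route as the paper's own (brief) justification: surplus edges arise exactly at collision steps where $h_s$ is matched to a half-edge still in $A_h(s-1)$, and the FIFO/breadth-first order guarantees that $A_h(s-1)$ only contains half-edges of vertices in the generation of $v(h_s)$ or the next one, giving the distance bound. Your write-up merely makes the two-block invariant explicit via induction, which the paper leaves implicit.
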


Recall the size biasing from \eqref{eq:size-biased} and the hash-transformation of a measure from \eqref{eq:nu-hash} in Definition \ref{def:eta-heavy}.
\begin{lemma}\label{lemma:coupled-exploration}
Consider Construction \ref{cons:exploration-1} started from a uniformly chosen vertex $v_0:=u_n$ on the configuration model $\mathrm{CM}(\underline d_n)$ so that $(\underline d_n)_{n\ge 1}$ satisfies Assumptions \ref{assu:regularity} and \ref{assu:empirical-power-law-2} with some $\tau, \varepsilon, c_u, z_0$ for all sufficiently large $n$ so that $\tau(1-\eps)>2$ in \eqref{eq:point-mass}. Let $\eta>0$ be so that $(\tau(1-\eps)-1)(1-\eta)>1$.
Assume that the number of exploration steps $t(r)\le \sum_{i\in[n]}d_i/17$. Then, for all sufficiently large $n$, the forward-degree sequence $(X_s^{\sss{(n)}})_{s\le t(r)}$ is stochastically dominated by an iid sequence $(Y_s)_{s\le t(r)}$
from $(\nu_n^\star)^{\#}$ defined from \eqref{eq:size-biased} and \eqref{eq:nu-hash}. 
Under Assumption \ref{assu:empirical-power-law-2} this measure satisfies for some constant $c_u'<\infty$:
\begin{equation}\label{eq:widehat-nu}
 \widehat\nu_n(z):=(\nu_n^\star)^{\#}(z) \le c_u' z^{-(\tau(1-\eps)-1)(1-\eta)}.
 \end{equation}
As a result, there exists a coupling $B_r(u_n)\subseteq \CT_r^{\mathrm{expl}}\subseteq \CT^{\#}_r$  where $\CT^{\#}_r$ is the first $r$ generations of a branching process having iid offspring from $\widehat\nu_n(z)$.
\end{lemma}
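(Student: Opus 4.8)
The plan is to control, one exploration step at a time, the conditional law of the forward degree $X_s^{(n)}$ produced by Construction \ref{cons:exploration-1}, then promote this one-step stochastic domination to a domination of the whole sequence by an iid sequence, and finally read off the two tree embeddings. Concretely, I would introduce the filtration $(\CF_s)_{s\ge0}$ generated by the exploration and use that after $s-1$ steps exactly $2(s-1)$ half-edges have been matched, so that $m(h_s)$ is uniform among the remaining $\sum_{i\le n}d_i-2(s-1)-1$ unmatched half-edges. For $k\ge 1$, the event $\{X_s^{(n)}\ge k\}$ forces $m(h_s)$ to land on a not-yet-discovered vertex of degree at least $k+1$, and there are at most $\sum_{d\ge k+1}d\,n_d = n\,\E[D_n]\,\widetilde\nu_n([k,\infty))$ half-edges attached to degree-$\ge k+1$ vertices; together with $s-1<t(r)\le\sum_{i\le n}d_i/17$ and $\sum_{i\le n}d_i=n\,\E[D_n]$ this gives
\[
\P\big(X_s^{(n)}\ge k\mid\CF_{s-1}\big)\ \le\ \tfrac{17}{15}\,(1+o(1))\,\widetilde\nu_n([k,\infty))\ \le\ \tfrac{8}{7}\,\widetilde\nu_n([k,\infty))\qquad(k\ge1,\ n\text{ large}),
\]
the constant $17$ in the budget being precisely what forces the half-edge depletion factor $\tfrac{17}{15}$ to lie below $\tfrac87$.

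Next I would compare this with the tail of $(\nu_n^\star)^{\#}$. Using $\widetilde\nu_n([k,\infty))=\nu_n^\star([k+1,\infty))\le\nu_n^\star([k,\infty))$, $\nu_n^\star(z)^{1-\eta}\ge\nu_n^\star(z)$, and $Z<7/8$ from Claim \ref{claim:hash-domin}, one obtains $(\nu_n^\star)^{\#}([k,\infty))\ge\tfrac87\,\widetilde\nu_n([k,\infty))$ for $k>z_0^{\#}$, while trivially $(\nu_n^\star)^{\#}([k,\infty))=1$ for $k\le z_0^{\#}$; combined with the previous display this shows $X_s^{(n)}$ is, conditionally on $\CF_{s-1}$, stochastically dominated by $(\nu_n^\star)^{\#}$ for every $s\le t(r)$. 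Along the way I would record that, by \eqref{eq:point-mass} and $\E[D_n]\to\E[D]>0$, one has $\nu_n^\star(z)\le C'z^{-(\tau(1-\eps)-1)}$ for $z\ge z_0(\eps)$, so that the hypothesis $(\tau(1-\eps)-1)(1-\eta)>1$ makes the defining requirements \eqref{eq:z0-crit} hold for all large $z$ uniformly in $n$; hence $z_0^{\#}$ can be fixed as a constant independent of $n$. The tail bound \eqref{eq:widehat-nu} then follows from $\widehat\nu_n(z)=\nu_n^\star(z)^{1-\eta}/Z$, this power-law estimate, and the fact that $Z\ge\sum_{z>z_0^{\#}}\nu_n^\star(z)=\E[D_n\ind\{D_n>z_0^{\#}\}]/\E[D_n]$ stays bounded away from $0$ by Assumption \ref{assu:regularity}, the finitely many $z\in(z_0^{\#},z_0(\eps)]$ being absorbed into $c_u'$.

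To finish, on the event $\{t(r)\le\sum_{i\le n}d_i/17\}$ a standard step-by-step coupling built from the conditional domination above produces an iid sequence $(Y_s)_{s\le t(r)}$ with law $(\nu_n^\star)^{\#}=\widehat\nu_n$ and $X_s^{(n)}\le Y_s$ for all $s\le t(r)$. Running Construction \ref{cons:exploration-1} with $\zeta=\widehat\nu_n$, one has $B_r(u_n)\subseteq\CT_r^{\mathrm{expl}}$ by Observation \ref{obs:last-gen}; and since each explored real vertex of $\CT_r^{\mathrm{expl}}$ has $X_s^{(n)}\le Y_s$ children while each ghost vertex has offspring distributed exactly as $\widehat\nu_n$, matching $\CT_r^{\mathrm{expl}}$ breadth-first against a Galton--Watson tree with iid $\widehat\nu_n$-offspring and discarding the surplus children of the dominating tree yields $\CT_r^{\mathrm{expl}}\subseteq\CT_r^{\#}$.

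I expect the delicate part to be the constant bookkeeping rather than any conceptual step: the budget $t(r)\le\sum_i d_i/17$ is tuned so that the depletion factor stays below the slack factor $\tfrac87$ built into the $\eta$-heavier transformation, and one has to verify that both $z_0^{\#}$ and $1/Z$ remain controlled uniformly in $n$. Everything else — the case analysis in the exploration (newly discovered versus already-discovered vertices, collisions, ghost subtrees) and the size-biasing created by the uniform half-edge pairing — is routine for the configuration model.
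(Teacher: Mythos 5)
Your argument is essentially the paper's: the paper likewise reduces the lemma to a one-step statement that, conditionally on the exploration so far, the newly revealed forward degree is stochastically dominated by $(\nu_n^\star)^{\#}$, with the budget $t(r)\le \sum_{i\le n}d_i/17$ guaranteeing that the depletion of half-edges costs a factor that is beaten by the slack $1/Z\ge 8/7$ built into the $\eta$-heavier transform, and then iterates this using the conditional independence of the sequential matching. The only packaging difference is that the paper runs the one-step domination through the size-biased law $\nu_{n,\Delta_s}^\star$ of the reduced degree sequence (Claim \ref{claim:removal-Delta}, budget $|\Delta_s|\le \sum_{i\le n}d_i/8$, factor $8/7$), whereas you bound $\P(X_s^{\sss{(n)}}\ge k\mid\CF_{s-1})$ directly by counting half-edges attached to vertices of degree at least $k+1$, obtaining the factor $\tfrac{17}{15}(1+o(1))<\tfrac87$; both are correct, and your explicit remarks on the $n$-uniformity of $z_0^{\#}$ and on keeping $Z$ bounded away from zero (needed so that $c_u'$ in \eqref{eq:widehat-nu} is independent of $n$) address points the paper passes over silently.

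The one step you have not covered is the root. Your conditional domination, and hence the iid coupling, concerns only the forward degrees $X_s^{\sss{(n)}}$, $s\ge1$, but in $\CT_r^{\mathrm{expl}}$ the root has $d_{u_n}$ children, and $d_{u_n}$ has law $\nu_n$, which is not a forward-degree law; your breadth-first matching against a Galton--Watson tree with iid $\widehat\nu_n$ offspring therefore also needs the root's child count to be coupled below a $\widehat\nu_n$ draw. The paper supplies this via the chain ``$\nu_n$ is stochastically dominated by $\nu_n^\star$'' (Harris' inequality, noted below \eqref{eq:size-biased}) and ``$\nu_n^\star$ is stochastically dominated by $(\nu_n^\star)^{\#}$'' (Claim \ref{claim:hash-domin}). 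Without this observation the final inclusion $\CT_r^{\mathrm{expl}}\subseteq\CT_r^{\#}$ is not justified at the first generation; it is a one-line fix, but it is required for the conclusion of the lemma as stated. The rest of your write-up (the convention $X_s^{\sss{(n)}}=0$ at collisions, the ghost subtrees carrying exactly $\widehat\nu_n$ offspring, and $B_r(u_n)\subseteq\CT_r^{\mathrm{expl}}$ from Observation \ref{obs:last-gen}) matches the paper's proof.
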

\begin{remark}\normalfont With the same method it could also be proved that $(X_s^{\sss{(n)}})_{s\le t(r)}$ is stochastically dominated by an iid sequence $(Z_s)_{s\le t(r)}$
from $(\widetilde \nu_n)^{\#}$ defined from \eqref{eq:size-biased} and \eqref{eq:nu-hash}, the $\eta$-heavier transformation of the down-shifted size-biased version of $\nu_n$. In that case, however, the root's degree $d_{u_n}$ cannot necessarily be dominated by $(\widetilde \nu_n)^{\#}$. Further, $(\widetilde \nu_n)^{\#}$ and $(\nu_n^\star)^{\#}$ both satisfy the same inequality \eqref{eq:widehat-nu}, so for simplicity we dominate by a `usual' GW tree $\calT_r^{\#}$ where all vertices have the same offspring distribution.
\end{remark}
The proof will follow from the following statement and Construction \ref{cons:exploration-1}.
\begin{claim}[Domination and size-biasing during the exploration]\label{claim:removal-Delta}
Let $\nu_n$ be the empirical measure of $\underline d_n=(d_1, \dots d_n)$ in \eqref{eq:empirical-degree} satisfying that $\nu_n^\star(z)\le c_u z^{-\tau'}$ for all $z\ge z_0$ for some $\tau'>1$ in \eqref{eq:size-biased}. For a subset $\Delta\subset[\sum_{i\le n}d_i]$, remove the half-edges with label in $\Delta$ to obtain a new degree sequence $\underline d_n^{\Delta}:=(d_1'\ind_{d_1'\neq 0}, \dots d_n'\ind_{d_n'\neq 0})$, and 
let $\nu_{n,\Delta}^\star$ denote the size-biased version of the empirical distribution of $\underline d_n^{\Delta}$. Then, for any choice of $\Delta$ with $|\Delta|\le (\sum_{i\in[n]}d_i)/8$, $\nu_{n,\Delta}^\star$ is stochastically dominated by $(\nu_n^\star)^{\#}$ for any $\eta$ so that $\tau'(1-\eta)>1$.
\end{claim}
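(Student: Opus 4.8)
The plan is to prove Claim~\ref{claim:removal-Delta} as a purely deterministic comparison of degree sequences, reducing the stochastic domination to an inequality between upper tails. Write $h_n:=\sum_{i\le n}d_i$ for the total number of half-edges and, for $z\ge 1$, let $n_z^{\Delta}:=\#\{i:d_i'=z\}$ be the number of degree-$z$ vertices of $\underline d_n^{\Delta}$; vertices that lose all their half-edges contribute nothing below, so the indicator in the definition of $\underline d_n^{\Delta}$ can be ignored. By \eqref{eq:size-biased} applied to $\underline d_n^{\Delta}$ we have $\nu_{n,\Delta}^\star(z)=z\,n_z^{\Delta}/\big(\sum_i d_i'\big)$, and since stochastic domination between probability measures on $\N$ is equivalent to domination of the upper tails, it suffices to show
\[
 \nu_{n,\Delta}^\star\big([z,\infty)\big)\ \le\ (\nu_n^\star)^{\#}\big([z,\infty)\big)\qquad\text{for every }z\ge 1.
\]

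The first step, which I expect to be the main obstacle (really careful bookkeeping rather than a deep difficulty), is the \emph{monotonicity estimate}
\[
 \nu_{n,\Delta}^\star\big([z,\infty)\big)\ \le\ \tfrac{8}{7}\,\nu_n^\star\big([z,\infty)\big),\qquad z\ge 1.
\]
Here one writes $\nu_{n,\Delta}^\star\big([z,\infty)\big)=\big(\sum_{i:\,d_i'\ge z}d_i'\big)\big/\big(\sum_i d_i'\big)$ and observes that removing half-edges only decreases degrees, so $\{i:d_i'\ge z\}\subseteq\{i:d_i\ge z\}$ and $d_i'\le d_i$ on this set; hence the numerator is at most $\sum_{i:\,d_i\ge z}d_i$. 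The denominator is $h_n-|\Delta|\ge\tfrac{7}{8}h_n$ by the hypothesis $|\Delta|\le h_n/8$. Dividing, and using $\nu_n^\star\big([z,\infty)\big)=\big(\sum_{i:\,d_i\ge z}d_i\big)/h_n$, gives the estimate. The point to be careful with is that both passing to the size-biased measure and restricting to a tail $[z,\infty)$ behave monotonically under half-edge deletion, with the loss quantified by the explicit factor $8/7$ coming from the normalisation.

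It remains to compare the two right-hand sides. Since $\nu_n^\star(z)\le c_u z^{-\tau'}$ with $\tau'(1-\eta)>1$, the constant $c_u$ can be absorbed into a slightly smaller exponent that still satisfies $\tau''(1-\eta)>1$, so Claim~\ref{claim:hash-domin} applies with $\nu=\nu_n^\star$: the measure $(\nu_n^\star)^{\#}$ exists and its normalising constant satisfies $Z<7/8$, while the first line of \eqref{eq:z0-crit} gives $\nu_n^\star(y)^{\eta}\le 7/8$ for every integer $y>z_0^{\#}$ with $\nu_n^\star(y)\neq 0$. For such $y$ we therefore have
\[
 \tfrac{1}{Z}\,\nu_n^\star(y)^{1-\eta}=\tfrac{1}{Z}\,\nu_n^\star(y)^{-\eta}\,\nu_n^\star(y)\ \ge\ \tfrac{8}{7}\cdot\tfrac{8}{7}\,\nu_n^\star(y)\ \ge\ \tfrac{8}{7}\,\nu_n^\star(y),
\]
and summing over $y\ge z$ for a fixed $z>z_0^{\#}$ (using \eqref{eq:nu-hash} and then the monotonicity estimate) yields
\[
 (\nu_n^\star)^{\#}\big([z,\infty)\big)=\tfrac{1}{Z}\sum_{y\ge z}\nu_n^\star(y)^{1-\eta}\ \ge\ \tfrac{8}{7}\,\nu_n^\star\big([z,\infty)\big)\ \ge\ \nu_{n,\Delta}^\star\big([z,\infty)\big).
\]
Finally, for $z\le z_0^{\#}$ the interval $[z,\infty)$ contains the entire support of the probability measure $(\nu_n^\star)^{\#}$, so its tail there equals $1$ and the inequality is trivial. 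These two ranges of $z$ exhaust all integers $z\ge 1$, which completes the proof.
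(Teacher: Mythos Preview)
Your proof is correct and follows essentially the same approach as the paper's: split into the trivial range $z\le z_0^\#$ and the nontrivial range $z>z_0^\#$, use the monotonicity of degrees under half-edge removal together with $|\Delta|\le h_n/8$ to get $\nu_{n,\Delta}^\star([z,\infty))\le\tfrac{8}{7}\nu_n^\star([z,\infty))$, and then show that the $\#$-transformation inflates the tail of $\nu_n^\star$ by at least $\tfrac{8}{7}$. The only cosmetic difference is that the paper bounds $\nu_n^\star(y)^{1-\eta}\ge\nu_n^\star(y)$ directly (since $\nu_n^\star(y)<1$) and combines it with $1/Z>8/7$, whereas you invoke the first condition in \eqref{eq:z0-crit} to get an extra factor of $8/7$ that you do not actually need.
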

\begin{proof}We assume here that $\nu_n^\star(z)\le c_u z^{-\tau'}$ for all $z\ge z_0$. 
Then, Claim \ref{claim:hash-domin} gives that $\nu_n^\star$ is stochastically dominated by $(\nu_n^\star)^{\#}$ whenever $\tau'(1-\eta)>1$. So when $\Delta=\emptyset$ then the statement holds. Recall that $\nu_n(z)=n_z/n$, and let $h_n:=\sum_{i\in[n]}d_i$. Then $\sum_{i\in[n]}d_i'\ind_{d_i'\neq 0}= h_n-|\Delta|$ since we removed $|\Delta|$ many half-edges. Recall $z_0^\#$ from \eqref{eq:z0-crit} and \eqref{eq:nu-hash}. Let us first consider any $z<z_0^\#$. Clearly $\nu_{n,\Delta}^\star([0,z])\ge 0$ while $(\nu_{n}^\star)^\#([0,z])=0$ so the criterion for stochastic domination $\nu_{n,\Delta}^\star([0,z])\ge (\nu_{n}^\star)^\#([0,z])$ holds in this case. 
Let now $z\ge z_0^{\#}$. Observe that all degrees can only decrease by removing half-edges, hence writing $n_i':=\sum_{j\in[n]}\ind_{d_j'=i}$ for the number of vertices of degree $i$ after removing the half-edges with label in $\Delta$, it holds that $\sum_{i>z}i  n_{i}'\le \sum_{i>z}i n_{i}$ relating to \eqref{eq:size-biased}.
 Now we look at the upper tail using that $|\Delta|\le n\E[D_n]/8$
\begin{equation*}
\nu_{n,\Delta}^\star((z,\infty))=\frac{\sum_{i>z}i  n_{i}'}{h_n-|\Delta|}\le \frac{\sum_{i> z}i n_{i}}{n\E[D_n](1-1/8)} = \nu_n^\star((z,\infty))\cdot 8/7. 
\end{equation*}
At the same time, using that $Z<7/8$ in Claim \ref{claim:hash-domin}, the tail of $(\nu_n^\star)^\#$ satisfies:
\[ (\nu_n^\star)^{\#}((z,\infty))=\frac1Z\Big(\sum_{i>z}\nu_n^\star(i)^{1-\eta}\Big)\le \nu_n^\star((z,\infty))/Z \ge\nu_n^\star((z,\infty))\cdot 8/7.   \]
Hence the stochastic domination criterion $\nu_{n,\Delta}^\star((z,\infty))\le (\nu_n^\star)^{\#}((z,\infty))$ is satisfied.
\end{proof}

\begin{proof}[Proof of Lemma \ref{lemma:coupled-exploration}]
Let us write $h_n:=\sum_{i\le n}d_i$. 
Consider step (s) of Construction \ref{cons:exploration-1}, when we match half-edge $h_s$. Its pair $m(h_s)$ is chosen uniformly among 
the available $h_n-2s-1$ many half-edges at step $s$. At this point the half-edges \emph{not} available for matching $h_s$ to form the set $\Delta_s:=\mathrm{Ex}_h(s-1)\cup \{h_s\}$.
Consider the `available' degrees at this moment, say $\underline d_n^{(s)}:=(d_1^{(s)}\ind_{d_1^{(s)}\neq 0}, \dots, d_n^{(s)}\ind_{d_n^{(s)}\neq 0})$, where $d_j^{\sss{(s)}}$ is the number of not-matched half-edges of vertex $j$ before step $s$ if $h_s$ is not attached to vertex $j$ and $1$ less if $h_s$ is attached to vertex $j$. Since we choose the half-edge $m(h_s)$ uniformly at random from the currently available half-edges, the vertex $v(m(h_s))$ that $m(h_s)$ is attached to is chosen \emph{size-biasedly} from $\underline d_n^{(s)}$, conditionally independently of previous matchings, i.e., its forward degree then
\[\P(X_s^{\sss{(n})}=i-1)=\P(d_{v(m(h_s))}^{\sss{(s)}}=i) = \frac{ i \sum_{j\in[n]} \ind_{\{d_j^{\sss{(s)}}=i\}}}{h_n-2s-1} =\nu_{n, \Delta_s}^\star (i), \]
with $\Delta_s:=\mathrm{Ex}_h(s-1)\cup \{h_s\}$. In particular $X_s^{\sss{(n)}}+1$ follows the measure $\nu_{n, \Delta_s}^\star (i)$ in Claim \ref{claim:removal-Delta}.
Thus, let us apply Claim \ref{claim:hash-domin} with $\Delta_s:=\mathrm{Ex}_h(s-1)\cup \{h_s\}$, i.e., removing the set of unavailable half-edges. Since $t_r\le h_n/17$, we have $|\Delta_s|\le 2h_n/17+1\le h_n/8$ so Claim \ref{claim:removal-Delta} applies. By Claim \ref{claim:removal-Delta}, the measure $\nu_{n, \Delta_s}^\star(i)$ is stochastically dominated by $(\nu_n^\star)^\#$ for each $s$, so let $Y_s $ be such a random variable. Using the conditional independence of the consecutive matchings, one can thus construct a coupling where $X_s^{\sss{(n)}}\le X_s^{\sss{(n)}}+1\le Y_s$ and $Y_s$ are iid from $(\nu_n^\star)^\#$. Further, since $u_n$ is a vertex chosen uniformly at random, the root's degree $d_{u_n}$ has distribution $\nu_n$. By below \eqref{eq:size-biased}, the measure $\nu_n^\star$ stochastically dominates $\nu_n$. So it holds that 
\[ \nu_n\ {\buildrel d\over \le}\  \nu_n^\star\ {\buildrel d\over \le}\ (\nu_n^\star)^{\#}, \] 
and thus one can construct a coupling where $d_{u_n}\le Y_0$ with $Y_0$ from $(\nu_n^\star)^{\#}$.
To finish, recall that whenever the exploration discovers a loop at some step $s$, it appends two ghost subtrees to the half-edges $h_s$ and $m(h_s)$ exactly so that their last generation ends at distance $r$ from $u_n$. Setting the offspring distribution of these branching processes to be also $\widehat\nu_n=(\nu_n^\star)^{\#}$ gives then a coupling where $B_r(0)$ is embedded in $\CT_r^{\mathrm{expl}}$ which are both embedded in $\CT^{\#}_r$, a branching process where  all vertices have iid degree from $\widehat \nu_n$. 

Using Assumption \ref{assu:empirical-power-law-2} we now bound $\widehat \nu(z)=(\nu_n^\star)^\#(z)$ for all $z\ge z_0^\#\vee z_0$. Since we assumed $\nu_n(z)\le c_u z^{-\tau(1-\eps)}$ with $\tau(1-\eps)>2$, it holds for some finite constant $\overline m$ that $\E[D_n]<\overline m<\infty$ uniformly for all $n$, and Assumption \ref{assu:regularity} also ensures that  $\E[D_n]\ge \underline m$ for some $\underline m$, uniformly for all $n$. Hence $\nu_n^\star(z)\le c_u (z+1) z^{-\tau(1-\eps)}/\underline m$ for all $n$ and all $z\ge z_0$. Finally, for all $z\ge z_0^\#\vee z_0$
\[ \widehat\nu_n(z)=(\nu_n^\star)^{\#}(z)=\frac{1}{Z}\nu_n^\star(z)^{1-\eta}=\frac{c_u^{1-\eta}}{Z \underline m^{1-\eta}}z^{1-\eta}z^{-\tau(1-\eps)(1-\eta)}\le c_u' z^{-(\tau(1-\eps)-1)(1-\eta)},  \]
which proves \eqref{eq:widehat-nu}. The condition $(\tau(1-\eps)-1)(1-\eta)>1$ is necessary for the hash-measure to exist in Claim \ref{claim:hash-domin}. 
\end{proof}
We are ready to prove Proposition \ref{prop:surplus}.
\begin{proof}[Proof of Proposition \ref{prop:surplus}]
We start by applying Lemma \ref{lemma:coupled-exploration}. This gives that $B_r(u_n)$ is contained in a BP tree $\CT_r^{\#}$ as long as the number of half-edges explored is $t(r)<n\E[D_n]/17$, with offspring distribution $\widehat \nu_n$ defined in \eqref{eq:widehat-nu}. Next, we ensure that this measure satisfies the conditions \eqref{eq:zeta-support} and \eqref{eq:not-integer} so that we can use the moment bounds of Lemma \ref{lemma:moments}.
To see \eqref{eq:zeta-support} is satisfied, we observe that $\widehat \nu_n$ has power-law exponent
$\tau'-1:=(\tau(1-\eps)-1)(1-\eta)>2$, i.e., $\tau'>3$, and we can easily ensure that $\tau'\notin \N$ by changing $\eta$ if necessary. The condition on the maximum of the support in \eqref{eq:zeta-support} follows from Assumption \ref{assu:empirical-power-law-2} since the exponent $1/(\tau(1-\eps)-1)$ there is less than $1/(\tau'-1)$ which is allowed in \eqref{eq:zeta-support}. Hence Lemma \ref{lemma:moments} is applicable for the BP tree $\CT_r^{\#}$ in Lemma \ref{lemma:coupled-exploration}.

 By Observation \ref{obs:last-gen}, in order to bound also the surplus edges in $B_{\delta \log n}$ we need to reveal the size of one more generation, and so we set out to bound $|\CT^{\#}_{\delta \log n+1}|$ for some $\delta>0$.  Set $r_n:=\delta \log n+1$. Let $k\in \N$, and $\zeta>0$ to be determined later. We use first the increasing function $x^k$, then Markov's inequality, and then Minkowski's inequality in the second inequality:
 \begin{equation*}
 \P\big(|\CT_{r_n}^{\#}|\ge n^{\zeta}\big)=\P\big((|\CT_{r_n}^{\#}|)^k\ge n^{k\zeta}\big)\le n^{-k\zeta} \E\Big[\Big(\sum_{i\le k_n}Z_i\big)^k\Big] \le n^{-k\zeta} \Big(\sum_{i\le r_n} \E\big[Z_i^k\big]^{1/k} \Big)^k.
\end{equation*}
 We now apply Lemma \ref{lemma:moments} on $\E[Z_i^k]$ for each $i\le r_n$:
  \begin{equation*}
 \P\big((|\CT_{r_n}^{\#}|)^k\ge n^{k\zeta}\big)\le  n^{-k\zeta} \Big(\sum_{i\le r_n} (\mathfrak{C}_k \cdot n^{h_k}\cdot \mathrm{e}^{\mathfrak{C}_k i})^{1/k} \Big)^k = n^{-k\zeta}\cdot   \mathfrak{C}_k \cdot n^{h_k} \Big(\sum_{i\le r_n} \mathrm{e}^{\mathfrak C_k i/k} \Big)^{k}.
 \end{equation*}
 The sum on the rhs is geometric and since $\mathfrak C_k>0$, it is at most $C' \e^{(\mathfrak{C}_k/k) r_n}$ for some constant $C'$, with $r_n=\delta \log n+1$, which gives 
   \begin{equation*}
 \P\big((|\CT_{r_n}^{\#}|)^k\ge n^{k\zeta}\big)\le  n^{-k\zeta}\cdot  \mathfrak{C}_k \cdot n^{h_k} C'^{k} n^{\delta \mathfrak C_k} = C n^{-k \zeta + h_k + \delta \mathfrak C_k}.
 \end{equation*}
 We inspect the exponent of $n$. Recall that $h_k=((k+1)/(\tau'-1)-1) \vee 0$ from \eqref{eq:hk-def}. Since $\tau'-1>2$, we may write
 \begin{equation}\label{eq:exponent} -k \zeta + h_k + \delta \mathfrak C_k = -k(\zeta-\tfrac{1}{\tau'-1}) - (1-\tfrac{1}{\tau'-1})+\delta \mathfrak C_k.\end{equation}
 The exponent of $n$ can be made strictly less than $-1$ for sufficiently large $k$ if $\zeta>1/(\tau'-1)$. Since $\tau'-1=(\tau(1-\eps)-1)(1-\eta)$ with $\eta$ arbitrarily small, this yields the formulation $B_{\delta\log n}> n^{(1+\eps')/(\tau(1-\eps)-1)}$ in \eqref{eq:size} of the proposition. For any such $\zeta$ one can now choose $k\in \N$ so large that the exponent goes below $-1$, in particular any $k$ satisfying  $k> \zeta/(\tau'-1)-1$ is a good choice. Given $k$, one now chooses $\delta$ small enough so that the whole exponent in \eqref{eq:exponent} still stays below $-1$, giving also $\delta'>0$.
 
 By the coupling  $B_{\delta \log n}(u_n)\subseteq B_{\delta \log n+1}(u_n)\subseteq \CT_{\delta \log n+1}^{\#}$, we have just proved
 \begin{equation}\label{eq:delta-logn}
 \P(\CA_{\mathrm{size}}):=\P\big( |B_{\delta \log n+1}(u_n)| \le n^{(1+\eps')/(\tau(1-\eps)-1)}\big) \ge 1-n^{-1-\delta'}, 
 \end{equation}
 and then by monotonicity $\{|B_{\delta \log n}(u_n)| \le n^{(1+\eps')/(\tau(1-\eps)-1)}\}$ also holds with the same error probability. 
 Now we start bounding the surplus edges inside $B_{\delta \log n}(u_n)$. On the event $\calA_{\mathrm{size}}$, the exploration in Construction \ref{cons:exploration-1} finishes in $t(\delta \log n)\le n^{\zeta}$ with $\zeta:=(1+\eps')/(\tau(1-\eps)-1)$ steps, and by Observation \ref{obs:last-gen}, the exploration reveals $B_{\delta \log n}(u_n)$ and all surplus edges inside. We estimate the probability of a collision from above at each step of the exploration. When the exploration is at step $s$, a collision happens if the half-edge $h_s$ is matched to one of the active half-edges in $A_h(s-1)$, see substep s.(ii) in Construction \ref{cons:exploration-1}. The size of $A_h(s-1)$ is at any time no more than the total size of $B_{\delta \log n +1}(u_n)$, i.e., at most  $n^{\zeta}$.
Hence, since $s\le n^\zeta$ on $\calA_{\mathrm{size}}$ also,  and so
\[ \P(\mbox{a surplus edge is created at step } s)\le \frac{n^{\zeta}}{h_n-2s-1}\le \frac{2}{\E[D_n]}n^{\zeta-1}:=cn^{\zeta-1},\]
uniformly for all $s\le n^\zeta$, and conditionally independently of other steps. One can thus dominate the sequence of indicators of whether 
a surplus edge is created at step $s$ by an iid sequence of $n^\zeta$ many Bernoulli random variables with mean $2n^{\zeta-1} \E[D_n]$. Thus,   
the number of collisions is at most $\mathrm{Bin}(n^{\zeta}, c n^{\zeta-1})$. Since $\zeta=(1+\eps')/(\tau(1-\eps)-1)$, and we assumed $\tau(1-\eps)-1>2$, we have $\zeta<1/2$, and 
so  the mean, $\Theta(n^{2\zeta-1})$ tends to zero for small $\eps'$. For some $\ell$ to be chosen later, we bound 
\begin{align*}
\P( \mathrm{Surp}_{\delta \log n}(u_n) \ge \ell \mid \calA_{\mathrm{size}})&\le \P(\mathrm{Bin}(n^{\zeta}, cn^{\zeta-1})\ge \ell)\\
&\le  \sum_{i\ge \ell} \binom{n^\zeta}{i} (cn^{\zeta-1})^i \le  \sum_{i=\ell}^{\infty} (cn^{2\zeta-1})^i \le c' n^{(2\zeta-1)\ell},
\end{align*}
where we used that $\binom{n^\zeta}{i}\le n^{\zeta i}$, and that the geometric sum in the middle has base less than $1$ for all sufficiently large $n$ since $2\zeta-1<0$. 
Choose now $\ell$ so large that the exponent of $n$ on the rhs, $(2\zeta-1)\ell<-1$, i.e., $\ell>1/(1-2\zeta)$.
Then one has for some $\delta'>0$ that
\begin{equation}\label{eq:surplus}\P( \mathrm{Surp}_{\delta \log n}(u_n)\ge \ell \mid \calA_{\mathrm{size}}) \le n^{-1-\delta'}.
 \end{equation}
 One can compute using $\zeta$ that $\ell \ge \tfrac{\tau(1-\eps)-1}{\tau(1-\eps) -3 -2\eps'}$
which also shows that $\tau(1-\eps)>3$ is necessary for the argument to work.
Combining now \eqref{eq:delta-logn} with \eqref{eq:surplus} with a union bound finishes the proof of \eqref{eq:size}.
Finally we estimate the maximal multiplicity of the edges in the whole graph. We introduce $\ind_{u,v}^{(\ell)}:=1$ if there are at least $\ell$ edges between vertex $u$ and $v$.
Then by Markov's inequality, and pairing $\ell$ chosen half-edges from $u$ and from $v$ together yields that
\begin{equation}\label{eq:multiple-1}
\begin{aligned}
 \P\Big(\max_{u,v\in[n]} e(u,v)\ge \ell\Big)&=\P\Big(\sum_{u,v\in[n]} \ind_{u,v}^{(\ell)}\ge 1\Big) \le \E\Big[\sum_{u,v\in[n]} \ind_{u,v}^{(\ell)}\Big] \\
 &\le \sum_{u,v} \frac{d_u^\ell d_v^\ell}{(h_n-2\ell-1)^\ell}\le c n^2 \E[D_n^\ell]^2/n^\ell
 \end{aligned}
 \end{equation}
 for some constant $c>0$. Using \eqref{eq:point-mass} and \eqref{eq:max-degree} in Assumption \ref{assu:empirical-power-law-2}, with $M_n:=C_u n^{1/(\tau(1-\eps)-1)}$ and so one bounds the moment as
\begin{align*}
\E[D_n^\ell] &\le \sum_{z\le M_n} c_u z^{\ell-\tau(1-\eps)} \le c \int_1^{M_n} z^{\ell-\tau(1-\eps)} \mathrm dz\\
&\le C M_n^{(\ell+1-\tau(1-\eps))\vee 0} = n^{(\ell/(\tau(1-\eps)-1) -1)\vee 0},
\end{align*}
similarly to $h_\ell$ in \eqref{eq:hk-def}. If now the maximum is at $0$ in the exponent, one obtains $\ell>3$ is necessary for the exponent to be below $-1$, and if the maximum is at the other term $\ell/(\tau(1-\eps)-1) -1$ then one obtains $\ell>(\tau(1-\eps)-1)/(\tau(1-\eps)-3)$ then the exponent in \eqref{eq:multiple-1} is less than $-1$. Hence 
$\ell > 3\vee (\tau(1-\eps)-1)/(\tau(1-\eps)-3)$ is a sufficient choice, finishing the proof of \eqref{eq:multiple-edges} and thus the proposition. 
\end{proof}

With Proposition \ref{prop:surplus} at hand, we now move on to analyze the contact process on $B_{\delta \log n}(u_n)$.
On the event in \eqref{eq:size}, $B_{\delta \log n}(u_n)$ has at most $\ell$ surplus edges. By Observation \ref{obs:surplus}, all the surplus edges created during the exploration are either self-loops, multiple edges, or the distance between the root $u_n$ and the two end-vertices of the surplus edge differ by at most $1$. We will apply the next lemma to bound the number of non-backtracking infection paths of the contact process on $B_{\delta \log n}(u_n)$.

Recall from Definition \ref{def:labels} that $\mathscr{T}(G)$ denotes the genealogical label of particles in the contact process, equivalently, the set of possible infection paths $\pi$ on $G$. Recall also that $\mathfrak{l}(\pi)$ is the length of the path (number of edges) from \eqref{eq:length-pi}, while $\tau(\pi)$ in \eqref{eq:tau-def} denotes the location of the first backtracking step on the path, with the convention that $\tau(\pi)=\infty$ if the path is non-backtracking.
\begin{lemma}\label{lemma:suprplus-paths}
	Let~$\mathcal{T} = (V,E)$ be a tree with root~$\varnothing$; assume that~$\mathcal{T}$ has no self-loops or parallel edges. Let~$N, k \in \mathbb{N}$. Let $u_1,v_1,u_2,v_2,\ldots,u_k,v_k\in V$ be (not necessarily distinct) vertices such that for all $i\in\{1,\ldots,k\}$
 \begin{equation}\label{eq_distances1}
 0\le\mathrm{dist}_{\mathcal{T}}(\varnothing,u_i)\le\mathrm{dist}_{\mathcal{T}}(\varnothing,v_i)\le\mathrm{dist}_{\mathcal{T}}(\varnothing,u_i)+1.
 \end{equation}
Consider another graph $\mathcal{T}^{(k)}$ on the same vertex set $V$, with edge set $E':=E\cup\{u_1,v_1\}\cup\ldots\cup\{u_k,v_k\}$.
Let $\CT_N:= \{v \in V:\; \mathrm{dist}_{\mathcal{T}}(\varnothing,v) \le N\}$ as before, and define
\begin{align}
\CB_N:= \{\pi \in \mathscr{T}(\mathcal{T}^{(k)}):\; \pi_0 = \varnothing,\; \mathfrak{l}(\pi) \le N,\; \tau(\pi) = \infty\}. \label{eq_want_count}
\end{align}
Then $|\CB_N|\le (2k+1)^N |\CT_N|$.
\end{lemma}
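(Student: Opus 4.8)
## Proof plan for Lemma \ref{lemma:suprplus-paths}

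The plan is to encode each non-backtracking path $\pi \in \CB_N$ in $\mathcal{T}^{(k)}$ by recording, at each step, a little extra bookkeeping information that lets one reconstruct $\pi$ from a non-backtracking path in the original tree $\mathcal{T}$ together with a bounded-size choice at each step. The key observation is that $\mathcal{T}^{(k)}$ differs from the tree $\mathcal{T}$ only by the $k$ surplus edges $\{u_i,v_i\}$, and by the distance constraint \eqref{eq_distances1} each such edge connects two vertices in consecutive generations of $\mathcal{T}$ (or in the same generation). So, at any vertex $w$ of $\mathcal{T}^{(k)}$, the edges available to a path leaving $w$ are the tree-edges at $w$ plus at most $2k$ extra endpoints of surplus edges incident to $w$ (each surplus edge contributing at most one new neighbor; there are $k$ of them, and $w$ could be either endpoint, but crudely at most $2k$ suffices).

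First I would set up the counting map. Fix $\pi = (\pi_0 = \varnothing, \pi_1, \ldots, \pi_m) \in \CB_N$, so $m \le N$. I would show that from $\pi$ one can extract a \emph{tree path} $\hat\pi = (\hat\pi_0 = \varnothing, \hat\pi_1, \ldots)$ in $\mathcal{T}$ ending at a vertex of $\CT_N$, together with a sequence of $m$ "instructions" $(b_1,\ldots,b_m)$, each taking at most $2k+1$ values, such that $\pi$ is uniquely determined by $(\hat\pi, b_1,\ldots,b_m)$. The idea: traverse $\pi$ step by step; when the step $\pi_{i-1}\to\pi_i$ uses a tree-edge, the instruction $b_i$ records "tree-step" (one value, but we fold it into the $2k+1$ budget) and $\hat\pi$ advances one tree-edge; when the step uses one of the $k$ surplus edges, $b_i$ records \emph{which} surplus edge was used and in which direction — at most $2k$ possibilities — and $\hat\pi$ does \emph{not} advance (or is simply held). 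Because $\pi$ is non-backtracking and because each surplus edge joins consecutive generations, the endpoint of $\hat\pi$ after processing all $m$ steps stays within distance $N$ of $\varnothing$ in $\mathcal{T}$: a tree-step moves at most one generation, and a surplus-edge step, being between generations differing by at most one, also cannot push the tree-distance beyond the bound.

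The cleanest way to make this rigorous is probably to argue by a direct combinatorial injection: define $\Phi:\CB_N \to \CT_N \times \{1,\ldots,2k+1\}^N$ and check injectivity. Given $\pi$, let $\Phi(\pi) = (\mathfrak{s}(\hat\pi), (b_1,\ldots,b_m,*,\ldots,*))$ where the $*$'s pad the sequence out to length $N$ with a fixed dummy symbol. Injectivity: from $\mathfrak{s}(\hat\pi)$ we don't immediately recover $\pi$, so I would instead make $\Phi$ land in $\bigcup_{w \in \CT_N}(\{w\}\times \{1,\ldots,2k+1\}^{\le N})$ — equivalently just bound $|\CB_N| \le |\CT_N|\cdot\sum_{m=0}^N (2k+1)^m$ — no wait, that gives $(2k+1)^{N+1}/(2k)$, slightly weaker. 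The sharp statement $|\CB_N|\le (2k+1)^N|\CT_N|$ suggests instead: for each \emph{final tree-vertex} $w\in\CT_N$ there is a unique non-backtracking tree-geodesic $\pi_{\downarrow w}$ to $w$ of some length $r\le N$, and any $\pi\in\CB_N$ ending "at $w$ in the tree sense" is obtained by interleaving this geodesic with surplus-edge excursions, each excursion coded by one of $\le 2k$ symbols, at one of $\le \binom{N}{\cdot}$ positions — but the interleaving data plus geodesic fit in $\{1,\ldots,2k+1\}^N$ once one thinks of each of the $N$ "time slots" as either "advance along geodesic" or "use surplus edge number $j$". That is exactly a word of length $N$ over an alphabet of size $1 + 2k$, which gives the bound.

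The main obstacle I anticipate is verifying that the reconstruction is genuinely well-defined and injective — specifically, that the non-backtracking condition $\tau(\pi)=\infty$ together with the generation constraint \eqref{eq_distances1} is strong enough that, knowing the target tree-vertex and the word of instructions, one can unambiguously replay the path, never getting "stuck" or having two different $\pi$'s map to the same pair. One has to be careful that a surplus edge traversed from $u_i$ to $v_i$ followed by its reverse would be a backtracking step (excluded), so each surplus edge is used at most "one-directionally per visit," and that tree self-loops/parallel edges are absent (given) so tree-steps are unambiguous. I would handle this by induction on $m=\mathfrak{l}(\pi)$: reading the word left to right and maintaining the current vertex of $\mathcal{T}^{(k)}$ plus the current vertex of the "shadow" geodesic in $\mathcal{T}$, I show each symbol determines the next vertex uniquely, using that from the current vertex there is exactly one non-backtracking continuation along the tree and at most $2k$ continuations along surplus edges. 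Once that replay lemma is in place, the bound $|\CB_N|\le (2k+1)^N|\CT_N|$ follows immediately by counting words.
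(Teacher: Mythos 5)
Your encoding is essentially the paper's: to each $\pi\in\CB_N$ you attach a word of per-step labels (a symbol for ``tree edge'' versus one of the at most $2k$ directed surplus edges $(u_i,v_i)$, $(v_i,u_i)$), padded to length exactly $N$, together with a vertex of $\CT_N$; the target set has size $(2k+1)^N|\CT_N|$, so the entire content of the lemma is injectivity of this map. Two small corrections to your bookkeeping: the recorded vertex should be the final vertex $\pi_{\mathfrak{l}(\pi)}$ of $\pi$ itself, which lies in $\CT_N$ by \eqref{eq_distances1} since every step (tree or surplus) changes the tree-distance to $\varnothing$ by at most one --- not the endpoint of your ``shadow'' $\hat\pi$, which is not even a path in $\CT$ once a surplus jump has occurred; and your worry about losing a factor from variable-length words is unfounded, since padding with the tree-step symbol to length exactly $N$ causes no ambiguity under the correct decoding.

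The genuine gap is in your injectivity argument. The claim that ``from the current vertex there is exactly one non-backtracking continuation along the tree'' is false at any vertex of degree at least $3$: the tree-step symbol says the step uses a tree edge but not which one, so a left-to-right replay cannot determine the next vertex. Likewise, the picture of $\pi$ as an interleaving of the geodesic from $\varnothing$ to the final vertex $w$ with surplus excursions is inaccurate: after a jump across $\{u_i,v_i\}$ the path continues from the other endpoint, possibly in a completely different branch of $\CT$, so its tree steps are in general not steps of that geodesic. The decoding must be segment-wise, not step-wise --- and this is precisely why the final vertex has to be part of the code. The non-tree symbols cut $\pi$ into maximal tree segments whose two endpoints are known from the data (the root, the entry and exit vertices of the recorded directed surplus edges, and the recorded final vertex for the last segment), and a non-backtracking walk in a tree between two given vertices is necessarily the unique simple path between them; since sub-walks of a non-backtracking walk are non-backtracking, each segment, hence $\pi$, is determined. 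With this repair your argument coincides with the paper's proof; without it, the key step fails.
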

The lemma allows for self-loops and multiple edges, these also satisfy \eqref{eq_distances1}.
\begin{proof}

 We start by introducing a labelling of the directed edges of any path $\pi\in\calB_N$, describing whether the edge uses a surplus edge in one of the two possible directions, or the edge is not a surplus edge. So introducing the symbol $o$ for the latter,  
we define the set of possible labels $\CL$, and we then introduce $\mathrm{Seq}_N$ as the set of length-$N$ sequences with elements from $\CL$ with a vertex in $\CT$ appended at the end:
\begin{align}
\label{eq_def_E}
\CL&:=\bigcup_{1\le i\le k}\{(u_i,v_i),(v_i,u_i)\}\cup\{o\},
\\
\label{eq_def_S}
\mathrm{Seq}_N&:=\{(s_1,s_2,\ldots,s_N,v):\ s_j\in\CL,v\in V, \mathrm{dist}_{\mathcal{T}}(\varnothing,v)\le N\}.
\end{align}
Observe that $|\CL|\le 2k+1$ (self-loops and multiple edges can make this inequality strict) and thus $|\mathrm{Seq}_N|\le (2k+1)^N |\CT_N|$. Therefore, if we show that there is an injection from $\mathcal{B}_N$ to $\mathrm{Seq}_N$, it will yield
\[|\mathcal{B}_N|\le|\mathrm{Seq}_N|\le(2k+1)^N |\CT_N|,\]
proving the lemma. We now construct this injection.

Fix any $\pi=(\pi_0,\pi_1,\ldots,\pi_m)\in\mathcal{B}_N$, where $m=\mathfrak{l}(\pi)$. We think of this path as the sequence $(e_1, e_2, \ldots, e_m)$ with $e_j=(\pi_{j-1}, \pi_j)$ a directed edge.  
By the definition of $\mathcal{B}_N$ in \eqref{eq_want_count}, $m\le N$. Recalling the labels from \eqref{eq_def_E}, for each $1\le j\le m$ define
\begin{equation*}
s_j:=
\begin{cases}
(u_i,v_i)&\text{if } e_j=(u_i,v_i) \text{ for some $i\le k$},\\
(v_i,u_i)&\text{if } e_j=(v_i,u_i) \text{ for some $i\le k$},\\
o&\text{otherwise}.
\end{cases}
\end{equation*}
Furthermore, define $s_j=o$ for each $m+1\le j\le N$, and finally, let $v=\pi_{m}$. By the condition \eqref{eq_distances1}, each edge in $\pi$ can only change the distance from $\varnothing$ by at most $1$, thus $\mathrm{dist}_{\mathcal{T}}(\varnothing,v)\le N$. Hence, we associate a vector $L(\pi):=(s_1,\ldots,s_N,v)\in\mathrm{Seq}_N$ to each $\pi\in\mathcal{B}_N$. We will show that this mapping is injective, that is, $(s_1,\ldots,s_N,v)$ uniquely encodes the path $\pi$.

For each $1\le j\le N$ the label $s_j$ reveals whether the edge $e_j$ crosses one of the surplus edges $\{u_i,v_i\}$, and if so, in which direction. Between two consecutive crossings,  $\pi$ is a non-backtracking path on the edges of the tree $\mathcal{T}$, hence it is uniquely determined, since in a tree there is a single non-backtracking path between any two vertices: e.g. if $s_j=(u_i,v_i)$ and $s_{j'}=(u_{i'},v_{i'})$ for $j<j'$ and $s_{j+1}=\ldots=s_{j'-1}=o$, then $(\pi_j,\ldots,\pi_{j'-1})$ is the unique geodesic (i.e., non-backtracking shortest path) in $\mathcal{T}$ from $v_i$ to $u_{i'}$. A similar argument shows that if $j_{\mathrm{max}}=\max\{j:s_j\ne o\}$, then $(\pi_{j_{\mathrm{max}}},\ldots,\pi_{\mathfrak{l}(\pi)})$ is the unique geodesic in $\mathcal{T}$ from the endpoint of $s_{j_{\mathrm{max}}}$ to $v$, the endpoint of $\pi$. This shows that the defined map is indeed injective, finishing the proof.
\end{proof}

\begin{proof}[Proof of Theorem \ref{thm:max_CM_extinction}(a)]
Let $G_n$ be a realization of $\mathrm{CM}(\underline d_n)$.
Recalling from Lemma \ref{lem:StochDom2} the stochastic domination between CP and BRW, and that a branching random walk with initial configuration $\underline \xi_0$ can be realized as the union of independent BRWs, each started from a single particle present in $\underline \xi_0$, we obtain that
\[\CPf(G_n, \underline 1_{G_n}) \ {\buildrel d \over \le}\ \BRW(G_n,\underline 1_{G_n} ) = \bigcup_{v\in[n]} \BRW(G_n, \ind_{v})=:\bigcup_{v\in[n]} \underline x_t^{(v)},\]
where the branching random walks $\underline x_t^{(v)}$ are independent given $G_n$.
Let now $T_\mathrm{ext}$ denote the extinction time of $\BRW(G_n,\underline 1_{G_n} )$, and let 
$T_{\mathrm{ext}}^{(v)}$ denote the extinction time of $ \underline x_t^{(v)}$. Then $T_\mathrm{ext}=\max_{v\in[n]} T_{\mathrm{ext}}^{(v)}$.
Hence for any $t>0$, 
\begin{equation}\label{eq:union-bound-brw} \P\big(T_{\mathrm{ext}}>t\big) =\P\big(\exists v\in[n]: T_{\mathrm{ext}}^{(v)}>t\big)\le n \cdot\Big(\frac{1}{n}\sum_{v\in[n]} \P\big(T_{\mathrm{ext}}^{(v)}>t\big)\Big)=n\cdot \P\big( T_{\mathrm{ext}}^{(u_n)}>t\big), \end{equation}
where $u_n$ is a uniformly chosen vertex. 
We will show that for some $C>0$, $\P\big( T_{\mathrm{ext}}^{(u_n)}>C\log n\big)=o(1/n)$.   
which then shows that the extinction time is  $O_\P(\log n)$ by \eqref{eq:union-bound-brw}.

We first apply Proposition \ref{prop:surplus}, which is applicable since its conditions coincide with that of Theorem \ref{thm:max_CM_extinction}(a). Proposition \ref{prop:surplus} then gives constants $\delta, \delta', \eps', \ell>0$ and $\zeta:=(1+\eps')/(\tau(1-\eps)-1)<1/2$ so that  the event 
\begin{equation*}
\CA_{\mathrm{good}}(u_n):=\{\max_{u,v\in[n]} e(u,v)\le \ell\}\cap\Big\{|B_{\delta\log n}(u
_n)|\le n^{\zeta}\Big\}\cap \big\{ \mathrm{Surp}_{\delta \log n}(u_n)\le\ell\big\} 
\end{equation*}
holds with probability $1-2n^{-1-\delta'}$.
On the event $\CA_{\mathrm{good}}(u_n)$, there are at most $\ell$ surplus edges in $B_{\delta\log n}(u_n)$, so we may apply Lemma \ref{lemma:suprplus-paths} to see that the set of non-backtracking infections paths in $B_{\delta \log n}(u_n)=\CT^{(\ell)}$ starting at $u_n$ of length $N=\delta\log n$, defined in \eqref{eq_want_count} satisfies  on the event$ \CA_{\mathrm{good}}$ that
\begin{equation*} |\CB_{\delta \log n}|\le (2\ell+1)^{\delta \log n} |\CT_{\delta\log n} | \le n^{\delta \log(2\ell+1)} |B_{\delta \log n}(u_n)| \le n^{\zeta+\delta \log(2\ell+1)}.
\end{equation*}
Now we apply Lemma \ref{lem_overall_fast}, with $\ell$ as the maximal number of multiple edges and $\bar v:=u_n$. The main result there, \eqref{eq:overall-fast} turns into, with $N=\delta \log n$ and $\lambda<1/(4\ell)$,
\begin{equation}\label{eq:un-event}
\begin{aligned}
\mathbb{P}&\left( \begin{array}{l}
(\underline{x}^{(u_n)}_t) \text{ dies before time $C\delta \log n$, and never reaches }\\
		\text{ any vertex at graph distance $\delta \log n$ from $u_n$}\end{array} \right) \\ &\qquad \qquad \qquad > 1 - 2|\CB_{\delta \log n}|\Big (\mathrm e \ell\cdot (4\ell\lambda)^{\delta \log n}  + \mathrm{e}^{-\delta \log n(C-1)^2/(2C)}\Big)\\
 &\qquad \qquad\qquad \ge 1- 2 n^{\zeta+ \delta \log (2\ell+1)}\Big( \e \ell n^{-\delta |\log(4\ell \lambda)| } +  n^{-\delta (C-1)^{2}/2C}\Big).
  \end{aligned}
  \end{equation}
 Distributing the brackets,  there are two error terms, the first one is 
  \[ 2e\ell \cdot n^{\zeta + \delta \log (2\ell +1) -\delta |\log (4\ell\lambda)|} \le  n^{-1-\delta'}\]
  whenever $4\ell\lambda$ is small enough so that the exponent of $n$ goes below $-1-\delta'$, in particular when
\begin{equation}\label{eq:lambda-crit}\lambda < \frac{1}{4\ell} \exp\Big( -\tfrac{1}{\delta} (1+\delta'+\zeta+\delta \log (2\ell+1))\Big).\end{equation}
  The second error term is
  \[ 2 n^{\zeta + \delta \log (2\ell +1) -\delta (C-1)^2/(2C)}\le n^{-1-\delta'} \]
  whenever $C$ is so large that the exponent of $n$ goes below $-1-\delta'$, in particular using that $(C-1)^2/(2C) > (C-1)/4$ the exponent is below $-1$ whenever 
  \[C >1+ 4 \tfrac{1}{\delta}(1+\delta'+\zeta + \delta \log (2\ell+1)).  \]
  This shows that for all $\lambda$ sufficiently small (satisfying \eqref{eq:lambda-crit}), the event in \eqref{eq:un-event} holds with probability at least $1- n^{-1-\delta'}$.   On this event, the process $\underline x_t^{(u_n)}$ never leaves the ball $B_{\delta \log n}(u_n)$, in particular the process never sees other parts of the graph. In other words, extinction of $\underline x_t^{(u_n)}$ on $B_{\delta \log n}(u_n)$ without reaching the boundary of $B_{\delta \log n}(u_n)$ implies extinction of $\underline x_t^{(u_n)}$ on $G_n$.  Hence,  the event $\{ T_{\mathrm{ext}}^{(u_n)}> C\delta \log n\}$ is covered by the complement of the event in \eqref{eq:un-event}, $\P(T_{\mathrm{ext}}^{(u_n)}> C\delta \log n)\le 2n^{-1-\delta'}$. Substituting this back to \eqref{eq:union-bound-brw} finishes the proof.
\end{proof}

\section{Proofs of survival on Galton-Watson trees}\label{sec:survival_GW}
In this section we present proofs of survival regimes. We start with (only) global survival -- Theorem \ref{thm:max_GW}(b), then we prove Theorems \ref{thm:prod_GW} and \ref{thm:max_GW} (a) in Section \ref{sec:prod_GW_strong}.
\subsection{Max-penalty: global survival via infinite infection rays on heavy tailed GW trees}\label{sec:max_GW_weak}
To prove global survival for the max-penalty with $\mu\in(1/2,1)$ on GW trees with sufficiently fat-tailed offspring distributions, i.e., Theorem \ref{thm:max_GW}(b), we will show the existence of a (random) infinite \emph{ray} in the Galton-Watson tree on which the infection survives forever. 

\begin{definition}[Down-directed contact process]
Let $\CT$ be any (given) tree with root $\varnothing$. Consider the directed graph $\CT^{\downarrow} $ where each edge $\{u,v\}$ of $\CT$ is directed away from the root, i.e., from parent to child. Then we denote by $\CPf^{\downarrow}(\CT, \underline \xi_0)=(\underline \xi_t^{\downarrow})_{t\ge 1}$ the degree-penalized contact process in Definition \ref{def:CP} on the \emph{directed} graph $\CT^{\downarrow}$ with initial state $\underline \xi_0$.
\end{definition}
One can obtain the down-directed contact process $\CT^{\downarrow}(\CT, \underline \xi_0)$ from the graphical representation of the original $\CPf(\CT, \underline \xi_0)$ by deleting the Poisson point processes that represent infections from child to parent (i.e., upward in the tree), and leaving only those infection paths intact which only contain parent-to-child infection events. 
Hence, for every given tree $\CT$ and starting state $\underline \xi_0\in \{0,1\}^{V(\CT)}$ it holds that
\begin{equation}\label{eq:domin-downCP-CP}
\CPf^{\downarrow}(\CT, \underline \xi_0)\  {\buildrel d \over \le} \ \CPf(\CT, \underline \xi_0). 
\end{equation}
The next proposition shows that $\CPf^{\downarrow}$ survives globally with  positive probability on a Galton-Watson tree:
\begin{proposition}\label{prop:rays}
Let $\CT$ be a Galton-Watson tree with offspring distribution $D$ satisfying Definition \ref{def:power-heavy} for some $\alpha>0$ and $\Pv(D\ge 1)=1$. Suppose $\mu\in[1/2, 1)$ and $f(x,y)=\max(x,y)^\mu$, and moreover $\mu+\alpha<1$.
Then the down-directed contact process $\CPfd(\CT, \ind_{\varnothing})$  exhibits global survival with positive probability on $\CT$ for any $\lambda>0$, for almost all realizations $\CT$ of the Galton-Watson tree.
\end{proposition}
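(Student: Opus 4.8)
The plan is to construct, via a branching-process embedding, an infinite ray $\varnothing = \rho_0, \rho_1, \rho_2, \dots$ in $\CT$ on which the infection can be made to survive indefinitely with positive probability, exploiting the fact that high-degree vertices along the ray slow down the infection only polynomially. The heuristic is that from an infected vertex $\rho_k$ of degree $d_{\rho_k}$, the infection rate to each particular child is $\lambda/\max(d_{\rho_k}, d_{\rho_{k+1}})^\mu$; if we can arrange that $\rho_{k+1}$ has degree comparable to $d_{\rho_k}$, then the \emph{number} of suitable children is $\approx d_{\rho_k}$, so the total rate of producing a ``good'' infected child is $\approx \lambda\, d_{\rho_k}^{1-\mu} \to \infty$ if the degrees grow. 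The weak-power-law assumption with $\alpha < 1-\mu$ is exactly what guarantees that, among the $d_{\rho_k}$ children, enough of them have degree at least (say) $d_{\rho_k}^{\theta}$ for a suitable $\theta>1$ so that the process can ratchet up the degrees fast enough to win the race against healing.

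First I would set up the following discrete renewal/percolation structure. Fix a growing sequence of degree thresholds $K_0 < K_1 < K_2 < \cdots$ (e.g. $K_j = K_0^{c^j}$ for constants $K_0$ large and $c>1$ to be tuned) and declare a vertex $v$ at tree-distance $j$ from $\varnothing$ to be \emph{$j$-open} if $d_v \ge K_j$ and $v$ has at least $\tfrac12 K_j^{-\alpha-\eps} d_v$ children of degree $\ge K_{j+1}$ (here using Definition \ref{def:power-heavy} with a small $\eps$ so that $\alpha+\eps+\mu<1$). Using the weak power law and a Chernoff/second-moment bound, one shows that conditioned on $d_v$ being large, $v$ is $j$-open with probability bounded below, and moreover the expected number of $(j{+}1)$-open children of a $j$-open vertex tends to infinity with $j$. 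A standard argument (a supercritical branching process comparison, cf.\ the coupling to a Galton-Watson tree) then shows that, with positive probability, $\CT$ contains an infinite ray $\rho_0,\rho_1,\dots$ with $\rho_j$ being $j$-open for all $j$; since this holds for a.e.\ realization of $\CT$ (the event is measurable with respect to the tree and has positive probability conditioned on non-extinction, which here is sure as $\P(D\ge1)=1$), the ray exists a.s.\ on an event of positive probability.

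Next I would run the down-directed contact process along this embedded structure and compare it to a suitable $1$-dependent oriented site percolation on $\mathbb{Z}_{\ge 0}$, in the spirit of the block arguments for the contact process on trees (e.g.\ \cite{Pem92, liggett1996multiple}) adapted to our penalized rates. The key quantitative input is that when $\rho_j$ is infected, the rate at which it infects $\rho_{j+1}$ is $\lambda \max(d_{\rho_j}, d_{\rho_{j+1}})^{-\mu} \ge \lambda K_{j+1}^{-\mu}$ — small — but in a block construction we do not need $\rho_{j+1}$ itself; rather, from a $j$-open $\rho_j$ we have $\gtrsim K_j^{-\alpha-\eps} d_{\rho_j} \ge K_j^{1-\alpha-\eps}$ children of degree $\ge K_{j+1}$, each infected at rate $\ge \lambda K_{j+1}^{-\mu}$, so the total rate to infect \emph{some} high-degree child, i.e.\ to advance along the ray, is $\gtrsim \lambda K_j^{1-\alpha-\eps} K_{j+1}^{-\mu}$, and with the right choice of growth rate $K_{j+1} = K_j^{(1-\alpha-\eps)/\mu - \delta}$ this tends to infinity. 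Meanwhile each such vertex also acts as a star that locally sustains the infection for time $\exp(\Omega(\lambda^2 K_j^{1-2\mu}))$ (the Chatterjee–Durrett star heuristic, valid here since $\mu<1/2$ is \emph{not} assumed — wait, we only have $\mu<1$, so instead the relevant mechanism is simply that a high-degree vertex, once infected, survives long enough to push the infection one step further with probability bounded below). I would make this precise by a direct estimate: conditioned on $\rho_j$ infected at some time and on $\rho_j$ being $j$-open, the probability that before $\rho_j$ heals it infects at least one degree-$\ge K_{j+1}$ child that in turn stays infected long enough to repeat, is at least $1 - \exp(-\text{const}\cdot \lambda K_j^{1-\alpha-\eps-\mu})$. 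Choosing the $K_j$ so these probabilities are summably close to $1$, Borel–Cantelli plus the percolation comparison yields that the infection advances along the ray forever with positive probability.

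The main obstacle I anticipate is the last step: making the comparison to oriented percolation genuinely rigorous with the \emph{inhomogeneous} (level-dependent) rates, since the standard contact-process block constructions assume spatial homogeneity. The rates and the local survival times change with $j$, so one cannot directly quote an off-the-shelf theorem; instead one must track a time-varying sequence of block events and verify a Borel–Cantelli-type condition that the ``failure'' probabilities at level $j$ sum to something less than $1$ (after an initial burn-in). A secondary technical point is handling the dependence between the choice of the open ray (a function of the tree) and the infection dynamics — this is dealt with by first exposing the tree, fixing the ray, and then running the contact process, using \eqref{eq:domin-downCP-CP} to pass from $\CPfd$ to $\CPf$; the asymmetry (down-directed) is what lets children be infected independently of what happens above them. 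Finally, one should check the measurability/zero-one issue: that ``$\CPfd$ survives globally'' has positive probability for a.e.\ $\CT$, which follows because the event of containing a suitable infinite open ray is a tail-type event with positive probability under the annealed law, hence positive conditional probability for a.e.\ realization.
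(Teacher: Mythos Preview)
Your overall strategy matches the paper's proof: construct an infinite ray with geometrically growing degree thresholds, and at each step show that before the current vertex heals it infects at least one child of the right degree, with error probabilities summable in the generation index. The paper's argument is more direct than yours---no oriented-percolation comparison is needed; one simply multiplies the conditional success probabilities along the ray and checks that the product is positive via a Borel--Cantelli-type bound, exactly as you outline in your final paragraph.

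However, there is a genuine gap in your rate estimate. You declare $v$ to be $j$-open if it has many children of degree $\ge K_{j+1}$, and then claim each such child is infected at rate $\ge \lambda K_{j+1}^{-\mu}$. This inequality is \emph{false}: with the max-penalty, the rate to a child of degree $d$ is $\lambda/\max(d_{\rho_j},d)^\mu \le \lambda/d^\mu$, so a child of degree much larger than $K_{j+1}$ receives infection at an arbitrarily small rate. A lower bound on the child's degree gives only an \emph{upper} bound on the rate. The paper fixes this by restricting children's degrees to a \emph{two-sided} window $[L^{s_1},L^{s_2}]$ (with $L=d_{\rho_j}$ and $1<s_1<s_2$), so that every candidate child is infected at rate $\ge \lambda L^{-s_2\mu}$, and the number of such children is $\gtrsim L^{1-s_1(\alpha+\eps)}$; the total rate is then $\gtrsim \lambda L^{1-s_1(\alpha+\eps)-s_2\mu}$, which diverges precisely when $s_1,s_2$ can be taken slightly above $1$ with $1-s_1\alpha-s_2\mu>0$, i.e., when $\alpha+\mu<1$. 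Your formula $K_j^{1-\alpha-\eps-\mu}$ for the total rate is the $s_1=s_2=1$ limit of this and is not literally attainable; you must allow $s_2>1$ and pay the corresponding price in the $\mu$-term. Once you impose the upper degree bound on the children, your exponent bookkeeping also needs repair (the fraction of children with degree $\ge K_{j+1}$ is $\approx K_{j+1}^{-\alpha-\eps}$, not $K_j^{-\alpha-\eps}$), after which the argument goes through exactly as in the paper.
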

\begin{proof}[Proof of Theorem \ref{thm:max_GW}(b)]
The result follows from Proposition \ref{prop:rays} by using the stochastic domination in \eqref{eq:domin-downCP-CP}.
\end{proof}
\begin{proof}[Proof of Proposition \ref{prop:rays}]
In this proof we denote by $D_v=d_v-1$ the out-degree (number of children) of the vertex in $\CT^{\downarrow}$.
Let $\CA_{\mathrm{glob}}$ be the event that $\CPf^{\downarrow}(\CT, \ind_{\varnothing})$ survives globally.
Let $\CB_K=\{\exists t_0\ge 0, \exists v\in V(\CT), \deg(v)\ge K: \xi^{\downarrow}_{t_0}(v)=1\}$ be the event that $\CPfd$ ever reaches a vertex with degree at least $K$ for a large enough $K$ decided later. This event has strictly positive probability $p_K$ with lower bound depending only on $K$, since $p_K\ge \P(D_\varnothing \ge K)>0$. 
\begin{equation}\label{eq:E1K}
\P(\CA_{\mathrm{glob}})\ge\P(\CB_K)\P(\CA_{\mathrm{glob}} \mid \CB_K),
\end{equation}
so it is enough to show that $\P(\CA_{\mathrm{glob}}  \mid \CB_K)>0$ for some large enough $K$.
Fix some constants $1<s_1<s_2$ to be chosen later.

Consider a vertex $v$ with degree $D_v= L\ge K$ in the Galton-Watson tree and let $\CN(v, [L^{s_1},L^{s_2}])$ and $N(v, [L^{s_1},L^{s_2}])$ be the set and  number of children of $v$ in $\CT$ with degrees in $[L^{s_1},L^{s_2}]$, respectively. Since the children have iid degrees, $N(v,[L^{s_1},L^{s_2}]))$ is Binomially distributed with parameters  $L$  and $\P(D\in [L^{s_1},L^{s_2}])$. We bound its mean from below using \eqref{eq:heavier-power-law}. Given some $\varepsilon\in[0, \alpha(s_2-s_1)/(s_2+s_1))$, assuming $L>K_0(\varepsilon)$ so that \eqref{eq:heavier-power-law} holds,
\[ 
\begin{aligned}
\E[&N(v, [L^{s_1},L^{s_2}])\mid  D_v= L]= L\big(\P(D\ge L^{s_1})-\P(D\ge L^{s_2})\big)\\
&\ge L \Big(\frac{1}{L^{s_1(\alpha+\varepsilon)}}- \frac{1}{L^{s_2(\alpha-\varepsilon)}}\Big) = L^{1-\alpha s_1-\varepsilon s_1}\left(1- L^{-\alpha(s_2-s_1)+\varepsilon(s_2+s_1)}\right).
\end{aligned}\]
By the assumption that $s_2>s_1$ and $\varepsilon<\alpha(s_2-s_1)/(s_2+s_1)$, we obtain the existence of $K_1(\varepsilon, s_2, s_1, \alpha)$ such that the second factor on the rhs above is at least $1/2$ for all $L>K_1(\varepsilon, s_2, s_1, \alpha)\vee K_0(\varepsilon)$. Hence for all such $L$,
\begin{equation}\label{eq:ENv}
\E\big[N(v, [L^{s_1},L^{s_2}])\mid D_v= L\big) \ge L^{1-\alpha s_1-\varepsilon s_1}/2.
\end{equation}
 We now require that $s_1, \varepsilon$ is such that $1-\alpha s_1 -\varepsilon s_1>0$, then the mean tends to infinity with $L$. Using now Chernoff's bound on this Binomial random variable  we obtain that 
 \begin{equation}
 \begin{aligned}\label{eq:error-1}
 \P(\CA_{1}(v,L) | D_v=L)&:=\P\big( N(v, [L^{s_1},L^{s_2}]) > L^{1-\alpha s_1-\varepsilon s_1}/4 \mid D_v= L \big)\\ &\ge  1-\exp\big( -  L^{1-\alpha s_1-\varepsilon s_1}/48\big)=:1-\mathrm{err}_1(L).
\end{aligned} 
 \end{equation}
 Assume now that $\CPf^{\downarrow}$ has reached vertex $v$ at some time, and that $\CA_{1}(v,L)$ holds for $v$.
 Let now $\CA_2(v,L)$ be the event that $v$ infects at least one of the first $L^{1-\alpha s_1-\varepsilon s_1}/4$ many children within the set $\CN(v, [L^{s_1}, L^{s_2}])$ before healing.
We bound the complement of this event using that the degree of such a child is in the interval $[L^{s_1}, L^{s_2}]$, which gives that the infection rate from $v$ to any child $u\in \CN(v, [L^{s_1}, L^{s_2}])$ is at least $r(v,u)=\la \max(L, D_u)^{-\mu}\ge \la L^{-\mu s_2}$ (since we assumed that $s_2> s_1>1$). We obtain that
\begin{equation}\label{eq:error-2}
\begin{aligned}
    \P&(\neg \CA_2(v,L) \mid v \mbox{ ever infected}, D_v=L, \CA_1(v,L) )\\
    &= \frac{1}{1+\sum_{u_i \in \CN(v, [L^{s_1}, L^{s_2}]), i \le L^{1-\alpha s_1-\varepsilon s_1}/4 }r(v,u_i)} \le \frac{1}{1+\lambda L^{1-\alpha s_1-\mu s_2-\varepsilon s_1}/4}
    \\
    &\le 8\lambda^{-1}L^{-(1-\alpha s_1-\mu s_2-\varepsilon s_1)}=:\mathrm{err}_2(L),
    \end{aligned}
\end{equation}
where we used that $L$ is sufficiently large, and the assumption that $1-\alpha s_1-\mu s_2-\varepsilon s_1>0$ to obtain the last line. This assumption can be satisfied with $s_2>s_1>1$ and $\varepsilon>0$ small enough whenever $1-\alpha-\mu>0$, which is true since we assumed $\alpha+\mu<1$. Also note that it cannot be satisfied when $\alpha+\mu\ge 1$.

We use the error bound in \eqref{eq:error-2} repeatedly. 
Let now $v_0$ be the first vertex reached by $\CPf^{\downarrow}$ with degree at least $K$ in the event $\CB_K$ in \eqref{eq:E1K}, and let $D_{v_0}$ denote its random degree. We now define a random infection ray $(v_0, v_1, \dots, v_m, v_{m+1} \dots)$ recursively. Suppose we already defined $(v_0, \dots, v_m)$ for some $m\ge 0$, and their degrees $(D_{v_0}, \dots, D_{v_m})$. 
We now check whether the event $\CA_1(v_m, D_{v_m})\cap \CA_{2}(v_m, D_{v_m})$ holds, and if so, then we choose any vertex $v_{m+1}\in \CN(v_m, [D_{v_m}^{s_1}, D_{v_m}^{s_2}])$ that is infected by $v_m$ before $v_m$ heals.
We now obtain the existence of an infinite ray by taking the limit of the nested sequence of events: 
\[ 
\begin{aligned}
\P\big( (v_0, \dots, v_m, \dots) \mbox{ exists}\big)&=\lim_{m_0\to \infty}  \P\Big(\cap_{m\le m_0} \{v_{m+1} \mbox{ exists}\}\Big)\\
&=
\lim_{m_0\to \infty} \prod_{m=0}^{m_0} \P\Big( v_{m+1} \mbox{ exists} \mid (v_0, \dots, v_{m}) \mbox{ exists}\Big),
\end{aligned}\] We denote by $\CF_{m}$ the sigma-algebra generated by
\[\cup_{i\le m-1}\{\CA_1(v_i, D_{v_i}), \CA_2(v_i, D_{v_i}), v_i, D_{v_i}\} \cup \{v_{m},D_{v_m}\}.\]
I.e., we reveal the degree and existence of $v_m$, but not whether $\CA_1(v_m, D_{v_m}) \cap \CA_2(v_m, D_{v_m})$ holds since those events already give $v_{m+1}$. Using this sigma-algebra, we can use the Markov property of $\CPf^{\downarrow}$, lower bound the probability of existence of $v_{0}$ by $\P(\CB_K)$, and that of $v_{m+1}$ by the conditional probability of $\CA_1(v_m, D_{v_m})\cap \CA_{2}(v_m, D_{v_m})$ to obtain
\begin{align}
\P\big( (v_0, &\dots, v_m, \dots)\mbox{ exists}\big)\nonumber\\
&\ge \lim_{m_0\to \infty}\P(\CB_K)\E\Bigg[\prod_{m=0}^{m_0} \P\Big(\CA_1(v_{m}, D_{v_m}) \cap \CA_2(v_m, D_{v_m})\mid \CF_{m}\Big)\bigg] \nonumber\\
& \ge\P(\CB_K)\lim_{m_0\to \infty}\Bigg[\prod_{m=1}^{m_0} \P\Big(\CA_1(v_m, D_{v_m}) \cap \CA_2(v_m, D_{v_m})\mid v_{m} \mbox{ ever infected}, D_{v_{m}}\Big)\Bigg]. \label{eq:infinite-ray}
\end{align}
Observe that now the calculations in \eqref{eq:error-1} and \eqref{eq:error-2} apply, and the $m$th factor is, conditionally on $D_{v_m}$, at least 
$1-\mathrm{err}_1(D_{v_m})-\mathrm{err}_2(D_{v_m})$.
We inductively show that the $m$th factor in the product above is at least 
\begin{equation}\label{eq:vm-degree}
 1-\mathrm{err}_1(K^{s_1^m})-\mathrm{err}_2(K^{s_1^m}),
\end{equation}
by showing that $D_{v_m}\ge K^{s_1^m}$ whenever $v_m$ exists. Monotonicity of $\mathrm{err}_1(L)+\mathrm{err}_2(L)$ in $L$ then immediately yields the lower bound \eqref{eq:vm-degree}, as follows. Since we assumed $D_{v_0}\ge K=K^{s_1^0}$, the induction starts. Assume now that $v_{m-1}\ge K^{s_1^{m-1}}$. Then per definition, (see \eqref{eq:error-2}),  $D_{v_m}\in [D_{v_{m-1}}^{s_1}, D_{v_{m-1}}^{s_2}]$. Using now the induction hypothesis immediately gives  \eqref{eq:vm-degree}.
Hence, we return to \eqref{eq:infinite-ray}, for a.e. realization in the conditional expectation the lower bound in  \eqref{eq:vm-degree} holds, hence, 
\begin{equation}\label{eq:infinite-ray-2}
\begin{aligned}
\P\big( (v_0, \dots, v_m, \dots) \mbox{ exists}\big)&\ge \P(\CB_K)\prod_{i=1}^{\infty} ( 1-\mathrm{err}_1(K^{s_1^m})-\mathrm{err}_2(K^{s_1^m}))\\
&\ge \P(\CB_K)\Big( 1- \sum_{m=0}^\infty \mathrm{err}_1(K^{s_1^m})+\mathrm{err}_2(K^{s_1^m}) \Big).
\end{aligned}\end{equation}
Using the values of $\mathrm{err}_1(K^{s_1^m})+\mathrm{err}_2(K^{s_1^m})$ from \eqref{eq:error-1}, \eqref{eq:error-2}, given that 
\begin{equation}\label{eq:s1-s2}
    1<s_1<s_2, \qquad 1-\alpha s_1-\mu s_2-\varepsilon s_1>0, \qquad 1-\alpha s_1 - \varepsilon s_1>0,
\end{equation}
the sum on the right hand side is summable in $m$, and both terms decrease faster then geometrically in $m$, hence they are dominated by a constant times their first term:
\begin{align}
\sum_{m=0}^{\infty}\exp(- K^{s_1^m(1-\alpha s_1 - \varepsilon s_1)}/48)&+\sum_{m=0}^{\infty}8\lambda^{-1} K^{-s_1^m(1-\alpha s_1-\mu s_2-\varepsilon s_1)}\nonumber\\&\le C \exp(-K^{1-\alpha s_1 - \varepsilon s_1}/48) + C \la^{-1} K^{-(1-\alpha s_1-\mu s_2-\varepsilon s_1)}. \label{eq:summed-errors}
\end{align}
One can check that the system of inequalities in \eqref{eq:s1-s2} is solvable whenever $1-\alpha-\mu>0$. Namely, choose first $1<s_1< s_2$ close enough to $1$ so that 
$1-\alpha s_1-\mu s_2>0$ holds. Choose then $\varepsilon>0$  small enough so that \eqref{eq:ENv} and \eqref{eq:s1-s2} hold as well, and finally one can set $K$ sufficiently large so that all inequalities above are valid. 
In particular, given now any $\la>0$ (i.e., small), one can choose $K$ sufficiently large so that the sum in \eqref{eq:summed-errors} is at most $1/2$, and then we obtain in \eqref{eq:infinite-ray-2} that an infinite infection ray exists with probability at least $\P(\CA_K)/2$, which is strictly positive. Hence, global survival occurs with strictly positive probability, whenever $\alpha+\mu<1$, finishing the proof.
\end{proof}

\subsection{Product penalty: local survival using a row of star-graphs when \texorpdfstring{$\mu<1/2$}{mu<1/2}}\label{sec:prod_GW_strong}
We will prove local survival of $\CPf$ (for both product and maximum penalty) when $\mu\in [0, 1/2)$ on the Galton-Watson tree, with at last stretched exponential offspring distributions, i.e., Theorem \ref{thm:prod_GW} in multiple steps. 

The idea is the following: As a direct consequence of known results, in Claim \ref{claim:star} we prove that when $\mu<1/2$, the infection survives on a star-graph of degree $K$, which consist of a degree-$K$ vertex and its degree-$1$ neighbors, for a time $T_K=\exp(\Theta(\lambda^2 K^{1-2\mu}))$ with probability very close to $1$. Moreover, throughout this time the star will be \emph{infested}, by which we mean that a sufficiently high fraction of its vertices are infected. 

We then show that a star-graph that is infested for time $T_K$, sends the infection through a path of length $\ell$ to another such star-graph with probability close to $1$ if and only if $\ell= o(\log T_K)$. Hence we need that $\ell=o(K^{1-2\mu})$ so that the infection successfully infests another star-graph. 

Let $H_{K, \ell(K)}$ be a graph that consists of a one-ended infinite row of star-graphs of degree $K$, $(v_1,v_2,  \dots)$, with paths of length $\ell(K)=o(K^{1-2\mu})$ between two consecutive stars. We show that the degree-penalized contact process survives forever on $H_{K, \ell(K)}$ with positive probability, as long as $K$ is sufficiently large compared to $\lambda$. We do this by mapping the process on $H_{K, \ell(K)}$ to a discrete time analog of the contact process on $\N_+=\{1,2,\dots\}$ corresponding to the infinite row of star-graphs $(v_1, v_2, \dots)$.

We then show that $H_{K, \ell(K)}$ can be embedded almost surely in a Galton Watson tree $\CT_D$ in a way that in the embedding, every vertex in $H_{K, \ell(K)}$ has degree at most $M$ times its degree in $H_{K, \ell(K)}$. This only changes $\lambda$ in the arguments above by a constant factor, i.e, to $\tilde\lambda:=\lambda/M^{2\mu}$, so if $\CPf$ survives on $H_{K, \ell(K)}$ whenever $K$ is sufficiently large, then the same is true for $\mathrm{CP}_{f, \tilde\lambda}$ by increasing $K$ if necessary. For the embedding to be possible, the tail of $D$ must be heavier than stretched exponential with stretch-exponent $1-2\mu$, in the sense of Definition \ref{def:stretched-heavy}, which is the mildest condition possible for this proof to work.

\subsubsection{Embedding stars in the Galton-Watson tree}\label{sec:embedded_stars}
We now make the former outline precise, starting with the definition of the infinite row of star-graphs and the embedding that does not increase degrees too much.  
\begin{definition}[Infinite path of stars and $M$-embedding]\label{def:M-embedding}
Given two integers $K, \ell\ge 1$, let $H=H_{K,\ell}$ be an infinite graph defined as follows: we start by taking an infinite path $(v_1, \CP_1, v_2, \CP_2, \dots, v_i, \CP_i, v_{i+1}, \dots)$, where for all $i\ge 1$ the  paths $\CP_i=(u_1^{\sss{(i)}}, \dots, u_\ell^{\sss{(i)}})$ have length $\ell$, and then to each $v_i, i\in \N$ 
we attach $K$ additional neighbors $w_1^{\sss{(i)}}, \dots, w_{K}^{\sss{(i)}}$, each with $\deg_H(w_j^{\sss{(i)}})=1$, which we call leaves.
We call $K$ the star-degree of $H_{K, \ell}$ and $\ell$ the connecting-path length, which might depend on $K$. See Figure \ref{fig:Hkl}.

We say that $H=H_{K,\ell}$ is (degree-factor) $M$-embedded in a graph $G$ if $G$ contains $H_{K,\ell}$ as subgraph, and for all vertices $v\in H_{K, \ell}\subseteq G$  it holds that 
\begin{equation}\label{eq:factor-M} \frac{\deg_G(v)}{\deg_H(v)}\le M. \end{equation}
\end{definition}

\begin{figure}[ht]
\includegraphics[width=\textwidth]{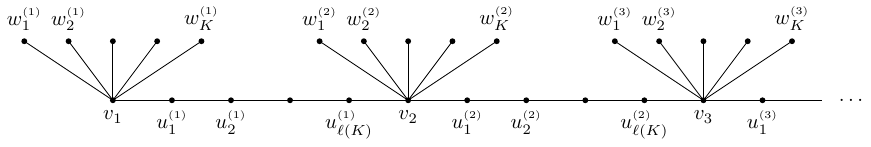}
\caption{The graph $H_{K,\ell(K)}$.}
\label{fig:Hkl}
\end{figure}
 
The next lemma shows that for large $K$, $H_{K,\ell}$ can be $M$-embedded almost surely into a Galton-Watson tree $\CT$ with offspring distribution $D$. The proof reveals that the tail of $D$ determines the minimal $\ell=\ell(K)$ that is possible for the embedding to hold almost surely.
\begin{lemma}\label{lem:embedded_stars}
Let $\CT$ be a Galton-Watson tree with degree distribution $D$ so that the tail of $D$ is heavier  than stretched exponential with stretch-exponent $1-2\mu$, in the sense of Definition \ref{def:stretched-heavy}, along the infinite sequence $(z_i)_{i\ge 1}$, and prefactor $g(z)\to 0$ as $z\to 0$.
 Then there exists a constant $M\ge 1$, such that $H_{K, \ell(K)}$ can be $M$-embedded in $\CT$ for all sufficiently large $K$ such that $2K\in \{z_i, i\ge 1\}$, for almost all realizations of $\CT$, whenever
 \begin{equation}\label{eq:ellK}
 \ell(K)\ge 2^{1-2\mu} \sqrt{g(2K)} K^{1-2\mu} = o(K^{1-2\mu}).
 \end{equation}
\end{lemma}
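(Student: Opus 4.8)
The plan is to construct the embedding greedily, generation by generation, using the first–moment/Borel–Cantelli machinery for Galton–Watson trees. The basic building block is this: inside $\CT$ I want to find, one after another, an infinite chain of vertices $v_1, v_2, \dots$ such that each $v_i$ has at least $K$ children that are themselves in $\CT$ (these will play the role of the leaves $w_j^{(i)}$, even though in $\CT$ they have their own subtrees hanging off — but that only inflates their degree by a bounded factor once I also control those), and such that consecutive $v_i, v_{i+1}$ are joined by a path of length exactly $\ell(K)$ all of whose internal vertices have bounded degree. I will first quantify how deep one must search in $\CT$ to find a vertex of degree $\ge 2K$: since $\P(D = z_i) \ge \exp(-g(z_i) z_i^\zeta)$ with $\zeta = 1-2\mu$ and $g(z)\to 0$, a vertex of out-degree $\ge 2K$ (hence degree $\ge 2K$, so $\ge 2K/2 = K$ usable children after discarding the edge to the parent) appears within the first $N(K)$ generations of $\CT$ with probability $\to 1$, where $N(K)$ can be taken of order $\exp\big((1+o(1)) g(2K) (2K)^{\zeta}\big)$ by a union bound over the $\asymp \Ev[D]^{N}$ vertices of generation $N$ and the single-vertex lower bound $\P(D \ge 2K) \ge \exp(-g(2K)(2K)^\zeta)$. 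Comparing exponents, $\log N(K) \asymp g(2K)(2K)^{1-2\mu} \ll K^{1-2\mu}$, which is exactly what \eqref{eq:ellK} is engineered to accommodate: we may choose $\ell(K) = 2^{1-2\mu}\sqrt{g(2K)}\,K^{1-2\mu}$, which dominates $\log N(K)$ (since $\sqrt{g(2K)}\gg g(2K)$) yet is still $o(K^{1-2\mu})$.

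Second, I will turn the "find one high-degree vertex" step into "find the next high-degree vertex at distance exactly $\ell(K)$, connected through bounded-degree vertices only". Starting from a star-center $v_i$, pick any one of its children $u_1^{(i)}$, then follow a length-$(\ell(K)-1)$ non-backtracking path into the subtree below; at each step the child we descend to has out-degree distributed as $D$ truncated to be at least $1$, and with probability bounded below by a constant $p_0 := \P(1 \le D \le M_0)$ (for a fixed large constant $M_0$) the degree stays $\le M_0+1$. Thus with probability $\ge p_0^{\ell(K)}$ — times a further correction for the path branching, which only helps — we reach, at distance $\ell(K)$, a vertex from which a whole fresh i.i.d.\ subtree of $\CT$ emanates. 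Inside that fresh subtree we re-run the first step: within $N(K)$ more generations it contains a vertex of degree $\ge 2K$. The catch is that $p_0^{\ell(K)} = \exp(-c\,\ell(K))$ is exponentially small in $\ell(K)$; to fix this I exploit the branching: the center $v_i$ has $\asymp K$ children, and below each I get an essentially independent attempt at producing a bounded-degree corridor of length $\ell(K)$ reaching a $2K$-vertex within $\ell(K)+N(K)$ generations. A single attempt succeeds with probability $\ge \exp(-c\,\ell(K)) \cdot (1-o(1))$, and I have $K \gg \exp(c\,\ell(K))$... which is false, since $\ell(K)\asymp \sqrt{g(2K)}K^{1-2\mu}$ can be much larger than $\log K$. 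So the honest route is different: rather than requiring bounded degree along the corridor, I allow the corridor vertices to have degree up to $M$ as well, and set $\ell(K)$ large enough that a corridor of length $\ell(K)$ reaching a $2K$-vertex exists \emph{typically} in the subtree — i.e. I use that a GW subtree conditioned to survive contains, within $\ell(K)$ generations, a vertex of degree $\ge 2K$ with probability $\to 1$ as $K\to\infty$ precisely when $\ell(K) \gg \log N(K) \asymp g(2K)(2K)^{1-2\mu}$, which \eqref{eq:ellK} guarantees with room to spare. The geodesic in $\CT$ from $v_i$ to that new $2K$-vertex then has some length $\ell'\le \ell(K)$; I pad it out to exactly $\ell(K)$ by first walking down a deterministic bounded-degree pendant path (available whp since a surviving GW subtree has arbitrarily long bounded-degree paths) — or, more simply, I define $\ell(K)$ as the \emph{maximum} admissible corridor length and note the construction in Definition~\ref{def:M-embedding} is monotone, so any $\ell$ up to that bound works.

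Third, I assemble the pieces into an a.s. statement. Having located $v_1$ (whp, in generation $\le N(K)$ from $\varnothing$), I inductively locate $v_{i+1}$ in the subtree below $v_i$ within $\ell(K)+N(K)$ further generations, an event of probability $1-\varepsilon_K$ with $\varepsilon_K\to 0$ as $K\to\infty$ uniformly in $i$ (by the i.i.d.\ nature of GW subtrees). The successive attempts are independent, so for $K$ large the chain $v_1, v_2, \dots$ is infinite with positive probability; by Kolmogorov's $0$–$1$ law / Levy's extension lemma applied along the spine, on the survival event the embedding exists a.s., and since $\P(\text{survival})=1$ for $\P(D\ge 1)=1$ and $\Ev[D]>1$... actually for $\P(D=1)<1$, $\CT$ is a.s. infinite, and "positive probability of an infinite embedded chain starting from $\varnothing$" upgrades to "a.s. such a chain exists starting from \emph{some} vertex" because we can retry the construction from each vertex of each generation independently, and infinitely many generations give infinitely many independent trials. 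The degree-factor bound \eqref{eq:factor-M} holds with $M := \max(M_0+1, 2)$ or so: star-centers $v_i$ have $\deg_\CT = D_{v_i}+1$ versus $\deg_H = K + (\text{corridor contributions})$, and I chose $D_{v_i}\in[2K, \text{something}]$ — here I must also ensure $D_{v_i}$ is not \emph{too} large, which I handle by taking the \emph{first} vertex of degree in the window $[2K, (2K)^{1+o(1)}]$ rather than of degree $\ge 2K$; discarding centers of runaway degree costs only a $1-o(1)$ factor by the power-law-ish upper control implicit in $g(z)\to 0$ (or, if no upper tail control is assumed, I simply note $H_{K,\ell}$ only requires $\deg_H(v_i)=K+2$, so I attach exactly $K$ of $v_i$'s children as leaves and the ratio $\deg_\CT(v_i)/\deg_H(v_i) = (D_{v_i}+1)/(K+2)$ — to bound this I \emph{do} need $D_{v_i}\le MK$, so the windowed choice is genuinely needed). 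Leaf vertices $w_j^{(i)}$ have $\deg_H=1$ but $\deg_\CT = D_{w}+1\ge 2$; to satisfy \eqref{eq:factor-M} at leaves I must instead take as leaves only children of $v_i$ that are themselves \emph{leaves of} $\CT$, or — since that may be too much to ask — I relax and note $\deg_H(w_j^{(i)})$ should be read as the degree in $H$, and the embedding definition compares to $\deg_G$; a leaf with a subtree violates the ratio only if that subtree is large, so I should define the "leaves" of the embedded $H$ to be children of $v_i$ that have bounded degree in $\CT$ — there are whp at least $K$ of these among $v_i$'s $\ge 2K$ children, since the fraction of children with $D\ge M_0$ is small.

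\textbf{Main obstacle.} The delicate point is the simultaneous control of \emph{both} tails of the degrees along the embedding: the lower tail drives how far one must search to find each star-center (fixing the minimal admissible $\ell(K)$ through $\log N(K)\asymp g(2K)(2K)^{1-2\mu}$), while the \emph{upper} side is needed to keep $\deg_\CT(v_i)/\deg_H(v_i)$ bounded and to guarantee that each star-center still has $\ge K$ \emph{bounded-degree} children to serve as leaves. Balancing these — choosing the degree window $[2K,MK]$, verifying a constant fraction of children land in $[1,M_0]$, and checking that all of this is compatible with $\ell(K)$ as small as $2^{1-2\mu}\sqrt{g(2K)}K^{1-2\mu}$ — is where the real bookkeeping lives; the probabilistic backbone (union bound over a generation, Borel–Cantelli along the spine, independence of GW subtrees) is standard.
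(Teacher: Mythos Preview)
Your proposal eventually lands on the right ingredients: run the \emph{truncated} branching process (degrees below a fixed $M$) for $\ell(K)$ generations to produce the corridors, then look among the boundary vertices for one whose true degree equals $2K$; the comparison $\sqrt{g(2K)}\gg g(2K)$ is indeed the reason $\ell(K)\ge 2^{1-2\mu}\sqrt{g(2K)}K^{1-2\mu}$ suffices. You also correctly identify that star-centers must be found in a degree \emph{window} (the paper takes $D_{v_i}=2K$ exactly, since $2K\in\{z_i\}$) and that leaves must be bounded-degree children of $v_i$.

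However, there is a genuine gap in your assembly step. You build a single chain $v_1\to v_2\to\cdots$ where each link is present independently with probability $1-\eps_K$, and then claim ``for $K$ large the chain $v_1,v_2,\dots$ is infinite with positive probability''. This is false: for any fixed $K$ with $\eps_K>0$, the chain length is geometric and hence a.s.\ finite, no matter how small $\eps_K$ is. Your retry-from-other-vertices heuristic does not fix this, since every retry produces another a.s.\ finite chain; the appeal to Kolmogorov's $0$--$1$ law is circular because you never establish positive probability of an infinite chain to begin with.

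The paper's fix is precisely here and it is elegant: at each star-center $v_i$, do not look for \emph{one} next center but for \emph{two} (the event $\widetilde{\CA}(v_1)$ that $\widetilde{\CB}_i\cap\widetilde{\CB}_j$ holds for two distinct children $w_i,w_j$). With $K+1$ independent trials each succeeding with probability $\ge\eta/4$, two successes occur with probability $\ge 1-\e^{-c(\eta)K}$. The renormalized process is then a supercritical Galton--Watson process (two offspring with probability $\ge 1-\eps$, zero otherwise), which survives with positive probability. Only then does the $0$--$1$ law upgrade this to almost sure existence of the $M$-embedding somewhere in $\CT$.
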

\begin{proof}
First, fix some small constant $\eps>0$ decided later. Let $\E[D]:=q>1$, and define $D_{M}:=D\ind_{D< M}$, i.e., the distribution where $\P(D_{M}=0)=\P(D=0)+\P(D\ge M)$ and $\P(D_{M}=k)=\P(D=k)$ for all $k\in[1,M)$.
Given $\varepsilon>0$, we choose $M>2$ such that both of the following inequalities hold:
\begin{equation}\label{eq:qM} 
\begin{aligned}
q_M:=\E[D_M]&=\E[D\ind\{D< M\}]\ge \E[D]-\eps>1+2\varepsilon,\\
\P(D < M) &\ge 1-\varepsilon.
\end{aligned}
\end{equation}

It is clear that $D_{M}$ can be coupled to $D$ such that $\P(D_{M} \le D)=1$, and this embedding can be done for each vertex of the original Galton Watson tree $\CT$, obtaining a sub-forest $\CF_{M}$ of $\CT$. The embedding can be done by first sampling $D_v \sim D$ many children for each vertex $v$, and then accepting the number of offspring as it is when $D_v$ is between $0$ and $M-1$, but   setting the degree of $v$ in $\CF_M$ to be $0$ when $D_v\ge M$.  We will denote the distribution of a single tree in $\CF_M$ by $\CT_M$, which is a branching process with offspring distribution $D_M$. 

 Define the event, for $2K\in\{z_i\}_{i\ge 1}$,
\[ \CA_1:=\{ \exists v\in \CT: D_v = 2K\}.\]
Since we assumed $\P(D=0)=0$, $\CT$ survives almost surely and so $\P(\CA_1)=1$.
Take then the vertex $v\in \CT$ that is closest to the root $\varnothing$ and has $D_v =2K$, and set it to $v_1$ in $H_{K, \ell}$ of the embedding. Clearly $v_1$ then satisfies \eqref{eq:factor-M} since its degree in $\CT$ is $2K\le MK$ by our assumption that $M\ge 2$.

 Similarly as in the proof of Proposition \ref{prop:rays} below \eqref{eq:E1K}, let $\CN(v, [a,b]), N(v, [a,b])$ denote the set and number of children of a vertex $v\in \CT$ with offspring in the interval $[a,b]$. 
Consider now the event $\CA_{\text{child}}(v_1):=\{ N(v_1, [0,M) ) \ge K+1\}$. Since $D_{v_1}= 2K$ per assumption, and the children of $v_1$ have iid degrees, using \eqref{eq:qM}, each of these children has offspring less than $M$ with probability at least $1-\varepsilon$. Hence, using the concentration of Binomial random variables (e.g. a Chernoff's bound), whenever $\varepsilon<1/8$ (which we safely assume), for all $K$ sufficiently large,
\begin{equation}\label{eq:M-children}
\begin{aligned}
\P\big(\CA_{\text{child}}(v_1)\big) &=\P\big(N(v_1, [0,M) ) \ge K+1\big)\\
&\ge \P( \ \mathrm{Bin}(2K, 1-\eps) > K) \ge 1-\e^{-K/12}.
\end{aligned}
\end{equation}

On the event $\CA_{\text{child}}(v_1)$, we label by $w_1,w_2,\ldots,w_{K+1}$ the first $K+1$  children in $\CN(v_1, [0,M))$. 
Including the edge towards $v_1$, the total degree of any of these vertices in $\CT$ is at most $M$, satisfying thus the degree factor $M$ in \eqref{eq:factor-M}. So, $v_1$ and any $K$ out of the children $w_1, \dots, w_{K+1}$ may serve as the embedding of $w_1^{\sss{(1)}}, \dots, w_{K}^{\sss{(1)}}$ of $H_{K, \ell}$, and any one of these children may take the role of $u_1^{\sss{(1)}}$ of the path $\mathcal P_1$ in $H_{K, \ell}$.

 From each of these vertices $w_i$ we start the (embedded) branching process $\CT_{M}(w_i)\subseteq \CT(w_i)$ with offspring distribution $D_{M}$. Let the number of descendants of $w_i$ in $\CT_{M}(w_i)$ in generation $\ell$  (that is, of distance $\ell$ from $w_i$) be $Z^{(i)}_{\ell}$ for each $\ell\ge 1$. It is well-known that $W^{(i)}_{\ell}:=Z^{(i)}_{\ell}/q_M^{\ell}$ is a martingale for each $i$ \cite{athreya2004branching}, and that $\lim_{\ell\to\infty}W^{(i)}_{\ell}=W^{(i)}_{\infty}$ exists a.s. Since $\E[D_M]=q_M>1+2\varepsilon$, this branching process is supercritical, and because $D_{M}$ is bounded by $M$, the Kesten-Stigum Theorem gives that $\eta:=\P(W^{(i)}_{\infty}\ne 0)>0$ is the probability that the corresponding branching process $\CT_M$ survives indefinitely. It follows then that, for any $i$,
\[
\lim_{\ell\to\infty}\P(Z^{(i)}_{\ell}\ge (q_M-\eps)^{\ell})=
\lim_{\ell\to\infty}\P\left(\frac{Z^{(i)}_{\ell}}{q_M^{\ell}}\ge \left(\frac{q_M-\eps}{q_M}\right)^{\ell}\right)=\P(W^{(i)}_{\infty}> 0)=\eta.
\]
By \eqref{eq:qM}, $q_M-\varepsilon>1+\varepsilon$ and consequently, there exists a (deterministic) $\ell_0$ only depending on $D_{M}$ (but not on $K$) such that for all $\ell>\ell_0$ we have
\begin{equation}\label{eq:E1}
\P(\CB_i):=\P(Z^{(i)}_{\ell}\ge (q_M-\eps)^{\ell})\ge\eta/2.
\end{equation}
Denote the set of individuals in the  $\ell$-th generation of $w_i$ by $\mathcal{G}^{(i)}_{\ell}$ for each $i=1,2,\ldots,K$, and let $\mathcal{G}_{\ell}=\cup_{i=1}^K\mathcal{G}^{(i)}_{\ell}$. 
Since $(w_i)_{i\le K}$ are siblings, $\CG_\ell$ is embedded in $\CT$ also in the same (possibly other than $\ell$) generation.
We now return to the original branching process $\CT$ for a single generation.
For each $v\in\mathcal{G}_{\ell}$ consider i.i.d. copies $D_v$ of $D$ (that is, without the truncation at $M$ used so far), and define the events for $i\le K$:
\begin{align}
\widetilde\CB_i&:=\{\exists u_{\sss{(i)}}\in\mathcal{G}^{(i)}_{\ell}:\ D_{u_{\sss{(i)}}}= 2K\}\label{eq:Fi}
\end{align}
for each $i=1,\ldots,K+1$. 
By \eqref{eq:E1} we have $\P(\CB_i)\ge \eta/2$.
Furthermore, since on the event $\CB_i$
\begin{align}
\P(\neg \widetilde\CB_i\mid \CB_i)&\le \big(1-\P( D= 2K)\big)^{(q_M-\eps)^{\ell}}\nonumber\\
&\le \exp\Big(-\P(D=2K)(q_M-\eps)^{\ell}\Big).\nonumber
\end{align}
Since we have assumed $2K\!\in\!\{z_i\}_{i\ge 1}$ in Definition \ref{def:stretched-heavy}, we can use the bound \[\P(D\!=\!2K)\ge\exp(-g(2K) (2K)^{1-2\mu})\]
for the function $g(2K)\to 0$ as $K\to 0$ in Definition \ref{def:stretched-heavy}. Hence, $g(2K)=o(\sqrt{g(2K)})$ but at the same time $\sqrt{g(2K)}\to 0$ as $K\to \infty$. We then also use that $q_M-\eps>1+\eps$ by assumption, and so by choosing $\ell=\ell(K)\ge\sqrt{g(2K)} (2K)^{1-2\mu}$, one can compute that $(2K)^{1-2\mu}(\sqrt{g(2K)}\log(q_M-\eps)-g(2K))\to \infty$ and so for all sufficiently large $K$ it holds that 
\begin{equation}\label{eq:FEi}
\begin{aligned}
\P(\neg \widetilde\CB_i\mid \CB_i)&\le\exp\Big(-\e^{-g(2K) (2K)^{1-2\mu}}(q_M-\eps)^{\ell(K)}\Big)\\
&\le \exp\big(-\e^{ (2K)^{1-2\mu} ( \sqrt{g(2K)}\log(q_M-\eps) - g(2K))}\big) \le 1/2.
\end{aligned}
\end{equation}
Combining \eqref{eq:E1} and \eqref{eq:FEi} yields
\begin{align*}
\P(\widetilde\CB_i)\ge\P(\CB_i)\cdot\P(\widetilde\CB_i\mid \CB_i)
\ge
(\eta/2)\cdot (1/2) \ge \eta/4. 
\end{align*}
Now we define the event that at least two events $\widetilde\CB_i, \widetilde\CB_j$ happen for $v_1$:
\begin{align}
\widetilde\CA(v_1)&:=\{\exists i, j:\  i\neq j : \widetilde\CB_i\cap \widetilde\CB_j \mbox{ holds}\}.\label{eq:defF}
\end{align}

Now consider the number of indices $i\le K+1$ for which $\widetilde\CB_i$ holds. 
By \eqref{eq:Fi}, on the event $\CA_{\text{child}}(v_1)$ in \eqref{eq:M-children}, this number stochastically dominates a binomial random variable with parameters $K+1$ and $\eta/4$. Hence, by the definition of $\widetilde\CA(v_1)$ in \eqref{eq:defF}, it holds for some constant $c(\eta)>0$ that
\begin{align*}
\P(\widetilde\CA(v_1)\mid \CA_{\text{child}}(v_1))&\ge \P(\mathrm{Bin}(K+1, \eta/4)\ge 2)\\
&=1-(1-\eta/4)^{K+1}-K(\eta/4)(1-\eta/4)^{K} \ge 1-\e^{-c(\eta) K}.
\end{align*}
Combining this with \eqref{eq:M-children}, we obtain that for all sufficiently large $K$,
\begin{equation}\label{eq:2-children}
\P(\CA_{\text{child}}(v_1)\cap \widetilde\CA(v_1)) \ge 1- \e^{-c(\eta) K} - \e^{-K/12}\ge 1-\varepsilon.
\end{equation}
On the event $\tilde\CA(v_1)\cap \CA_{\text{child}}(v_1)$, there are two vertices $v_{2,1}, v_{2,2}$ such that their most recent common ancestor is the starting vertex $v_1$, and $\deg(v_{2,1}),\deg(v_{2,2})=2K $, and $d_G(v_{1},v_{2,1})=d_G(v_1,v_{2,2})=\ell(K) \ge \sqrt{g(2K)}(2K)^{1-2\mu}$ with $\ell(K)=o(K^{1-2\mu})$,
and the paths $\mathcal{P}_{1,1}, \mathcal P_{1,2}$ joining  $v$ with $v_{2,1}$ and $v_{2,2}$ respectively are edge-disjoint with all internal vertices having degree at most $M$. 
Observe that $(v_1, \mathcal P_{1,1}, v_{2,1})$ and $(v_1,  \mathcal P_{1,2}, v_{2,2})$ both serve as a factor $M$-embedding of the vertices in $(v_1, \mathcal P_1, v_2)$ in $H_{K, \ell(K) }$, hence we may choose any of them for the embedding. Further, the vertices $v_{2,1}$ and $v_{2,2}$ have degree $2K$ in $\CT$, hence, using the argument between \eqref{eq:M-children} and \eqref{eq:defF}, one can repetitively apply the procedure of checking whether the events $\CA_{\text{child}}(\cdot) \cap \tilde\CA(\cdot)$ hold for these vertices, and the vertices then found by either $\CA_{\text{child}}(v_{2,1}) \cap \tilde\CA(v_{2,1})$ or $\CA_{\text{child}}(v_{2,2}) \cap \tilde\CA(v_{2,2})$ may all serve  as the embedding of the path $\mathcal P_2$ and $v_3$, and so on. 

 We thus consider an auxiliary ``renormalised'' branching process. We say that $v_1$ has $2$ children (in this case $v_{2,1}, v_{2,2}$) with probability (at least) $1-\varepsilon$ in \eqref{eq:2-children} and $0$ otherwise. Observe that the path leading to any vertex in generation $j$ of this branching process serves as an $M$-embedding of $(v_1, \CP_1, v_2, \dots, \CP_{j-1}, v_j)$.
 This renormalised branching process is supercritical. Hence, it survives with positive probability, giving that the $M$-embedding of the infinite graph $H_{K, \ell(K)}$ exists in $\CT$, starting from $v_1$, with positive probability. 
Kolmogorov's 0-1 law finishes the proof that $\CT$ then has a proper $M$-embedding of $H_{K, \ell(K)}$ somewhere in $\CT$ with probability $1$.
\end{proof}
We now define star-graphs (subgraphs of $H_{K, \ell}$) and the notion of infested stars.
\begin{definition}
A \emph{star-graph} $S$ of degree $K$ is a graph which consists of one vertex $v$ of degree $\deg_S(v)=K$ (its center) and its $K$ neighbors $(w_i)_{i\le K}$, each of degree $\deg_S(w_i)=1$ that we call leaves. 
Consider the classical contact process with infection rate $r$ on $S$.
We will call such a star $r$-\emph{infested} at some time $t$ by the contact process if at least $r K/(16e^2)$ of its leaves are infected.
\end{definition}

The next claim adapts \cite[Lemma 3.1]{MVY2013} to the degree-penalized contact process on $S$. The claim shows that  starting with only the center infected, a star-graph is $\la K^{-\mu}$-\emph{infested} for a time interval of length $T_K\ge \exp(c r^2 K)= \exp(c\lambda^2 K^{1-2\mu})$ with high probability, and during this time-interval the center vertex $v$ is infected more than half of the time.
Writing $r$ for the rate of infection of the classical contact process on a star-graph, \cite[Lemma 3.1]{MVY2013} holds under the condition that $r^2 K$ is uniformly bounded away from $0$. Since in the degree-penalized CP, the rate across the edges of the star-graph is $r=\lambda K^{-\mu}$, we shall require that $\lambda^2 K^{1-2\mu}$ is uniformly bounded away from $0$.
\begin{claim}[Lemma 3.1 of \cite{MVY2013} adapted]\label{claim:star}
Assume $\mu<1/2$, $\lambda<1$. Consider a star-graph $S$ of degree $K$ with center $v$. 
Let $\xi_t$ denote the contact process $\mathrm{CP}$ on $S$ where   $r(v,u)=r(u,v)=\la /K^\mu$.
 Then there exists a constant $c_1>0$ such that
 \begin{equation}\label{eq:star-i}\P\left(|\underline\xi_1|\ge\lambda K^{1-\mu}/(4e)\mid \xi_0(v)=1\right)\ge (1-e^{-c_1\lambda K^{1-\mu}})/e.
 \end{equation}
Further, if $\lambda^2 K^{1-2\mu}>32e^2$, then \begin{equation}\label{eq:star-ii}
\P\Big(\underline\xi_{\exp\{c_1\lambda^2 K^{1-2\mu}\}}\ne\emptyset\ \Big|\  |\underline\xi_0|\ge\lambda  K^{1-\mu}/(8e)\Big)\ge 1-e^{-c_1\lambda^2 K^{1-2\mu}}=:1-\mathrm{err}_{\lambda,K}.
\end{equation}
Moreover, let $T_K:=\exp(c_1\lambda^2 K^{1-2\mu})$. Then
\begin{align}
\P\Bigg(S&\text{ is $\lambda K^{-\mu}$-infested for all } t\in\left[0,T_K\right] \mbox{and} \int_{0}^{T_K} \xi_t(v) \ge T_K/2\ \Big|\  |\xi_0|\ge\lambda  K^{1-\mu}/(8e)\Bigg)\nonumber\\
&\qquad\ge 1-\mathrm{err}_{\lambda,K}.\label{eq:star-iii}
\end{align}
\end{claim}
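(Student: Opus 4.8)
The starting point is the observation that on a star-graph $S$ with center $v$ of degree $K$ and $K$ leaves of degree $1$, both penalty functions under consideration satisfy $f(d_v,d_w) = f(K,1) = K^\mu$ for every center--leaf pair $\{v,w\}$, whether $f(x,y)=(xy)^\mu$ or $f(x,y)=\max(x,y)^\mu$. Hence $\CPf$ restricted to $S$ is exactly the \emph{classical} contact process on $S$ with infection parameter $r:=\la K^{-\mu}$, and the three displayed estimates are statements about that process, with $rK = \la K^{1-\mu}$ and $r^2K = \la^2 K^{1-2\mu}$ playing the roles of ``$\la d$'' and ``$\la^2 d$'' in the star-graph estimates of \cite{chatterjee2009contact} and \cite{MVY2013}. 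Under this dictionary, \eqref{eq:star-i}--\eqref{eq:star-iii} are precisely the content of \cite[Lemma~3.1]{MVY2013}, whose hypotheses ($\la<1$, and $\la^2 d$ bounded away from $0$, respectively above a fixed constant) translate to exactly the hypotheses $\la<1$ and, for \eqref{eq:star-ii}--\eqref{eq:star-iii}, $\la^2 K^{1-2\mu}>32e^2$, assumed in the claim. The plan is therefore to record this reduction, invoke that lemma for \eqref{eq:star-i}, \eqref{eq:star-ii}, and the infestation part of \eqref{eq:star-iii}, and then supply separately the lower bound on $\int_0^{T_K}\xi_t(v)\,\mathrm{d}t$, which is not literally part of the cited lemma.

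\textbf{Sketch of the three ingredients.} For \eqref{eq:star-i}: the center fails to heal during $[0,1]$ with probability $e^{-1}$; conditionally on this, the leaves evolve as independent two-state chains (healthy $\to$ infected at rate $r$, infected $\to$ healthy at rate $1$, started healthy), so each leaf is infected at time $1$ with probability $\tfrac{r}{1+r}(1-e^{-(1+r)})\ge c\,r$ for $r<1$; thus $|\underline\xi_1|$ stochastically dominates a $\Bin(K,cr)$ variable with mean $\ge crK > rK/(4e)$, and a Chernoff bound for the binomial gives $\P(|\underline\xi_1|\ge rK/(4e)\mid \text{center infected on }[0,1])\ge 1-e^{-c_1 rK}$; multiplying by $e^{-1}$ yields \eqref{eq:star-i}. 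For \eqref{eq:star-ii} and the infestation part of \eqref{eq:star-iii}, one uses the standard self-sustaining argument: writing $m_t$ for the number of infected leaves, as long as $m_t\ge rK/(16e^2)$ the center is re-infected at rate $rm_t\ge r^2K/(16e^2)>1$, so a center-healthy excursion long enough for $m_t$ to drop by a constant factor occurs before the center is re-infected only with probability $e^{-\Theta(r^2K)}$; meanwhile, while the center is infected, new leaves are infected at total rate $\gtrsim rK$, which dominates the leaf-healing rate $m_t\le rK/(8e)$, so $m_t$ has a strong upward drift below level $rK/(8e)$. A one-step large-deviation estimate then bounds by $e^{-\Theta(r^2K)}$ the probability that $m_t$ falls below $rK/(16e^2)$ within a unit-length window starting from $m\ge rK/(8e)$, and a union bound over $\lfloor T_K\rfloor$ such windows keeps $m_t\ge rK/(16e^2)$ --- equivalently, $S$ is $\la K^{-\mu}$-infested --- on all of $[0,T_K]$, except on an event of probability $\le \mathrm{err}_{\la,K}$; in particular the process is alive at time $T_K$, giving \eqref{eq:star-ii}.

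\textbf{The time the center is infected.} On the infestation event just constructed we have $m_t\ge rK/(16e^2)$ for all $t\in[0,T_K]$, so conditionally the indicator process $t\mapsto \xi_t(v)$ dominates a two-state chain that jumps healthy $\to$ infected at rate $\ge r^2K/(16e^2)\ge 2$ and infected $\to$ healthy at rate $1$, whose stationary probability of being infected is $\ge 2/3$. Splitting $[0,T_K]$ into unit windows and applying a standard concentration bound for additive functionals of this chain (Azuma--Hoeffding on the martingale built from the windowed occupation times, or a direct exponential-moment estimate) shows $\int_0^{T_K}\xi_t(v)\,\mathrm{d}t\ge T_K/2$ except on an event of probability $\le e^{-cT_K}\le \mathrm{err}_{\la,K}$. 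Intersecting with the infestation event and adjusting the constant $c_1$ completes \eqref{eq:star-iii}.

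\textbf{Main obstacle.} The delicate point is getting the exponent $r^2K$ (rather than $rK$) in \eqref{eq:star-ii}: the relevant death mechanism is not the leaves healing one by one but the center staying healthy long enough for the leaf count to collapse, so the leaves and the center must be tracked jointly and the large-deviation cost is that of suppressing $\sim rm_t\sim r^2K$ would-be re-infections of the center. Since after the parameter translation above this is exactly \cite[Lemma~3.1]{MVY2013}, it suffices to cite it; the only genuinely new computation is the additive-functional concentration for the center in the previous paragraph, which is routine.
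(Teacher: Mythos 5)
Your route is essentially the paper's own: the claim is precisely the classical contact process on the star with rate $r=\lambda K^{-\mu}$ (this is already built into the statement), and the paper's appendix proof is exactly the adaptation of \cite[Lemma 3.1]{MVY2013} that you describe — part \eqref{eq:star-i} by the same center-survives/binomial-Chernoff computation, and \eqref{eq:star-ii}--\eqref{eq:star-iii} by the same unit-window self-sustaining scheme (events $\CB^1_s$--$\CB^4_s$: enough leaves at the start of the window, those leaves persist, the center is reinfected fast enough to be occupied half the time, and it replenishes the leaves), with per-window failure probability $e^{-\Theta(\lambda^2K^{1-2\mu})}$ and a geometric/union bound over the $T_K$ windows. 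So the only substantive difference is organizational: the paper folds the occupation-time bound $\int_0^{T_K}\xi_t(v)\,\mathrm{d}t\ge T_K/2$ into the per-window events via $\CB^3_s$, rather than treating it as a separate step after infestation is established.

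That organizational difference matters for your last step. Conditioning globally on the infestation event over $[0,T_K]$ and then asserting that $t\mapsto\xi_t(v)$ "conditionally dominates" a two-state chain is not immediately legitimate: the infestation event is driven in part by the center's own infection activity (the leaves are reinfected by the center), so conditioning on it biases exactly the randomness you want to dominate, and the resulting conditional law is not a priori comparable to a fixed two-state chain. The paper avoids this by conditioning, within each window, only on $\CB^1_s\cap\CB^2_s$ — events determined by the leaf configuration at time $s$ and the leaves' healing clocks — under which the center's dynamics genuinely dominate a two-state chain with up-rate $\lambda^2K^{1-2\mu}/(16e^2)$, and then applying the Markov-chain LDP to get $\P(\CB^3_s\mid\CB^1_s\cap\CB^2_s)\ge 1-e^{-c\lambda^2K^{1-2\mu}}$; intersecting over windows gives both the infestation and the occupation-time statements simultaneously, at cost $T_K e^{-c\lambda^2K^{1-2\mu}}$, which is what the constant $c_1$ absorbs. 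Your claimed error $e^{-cT_K}$ for the occupation time would only follow if the whole-horizon domination were valid; with the windowed fix you instead get the (perfectly sufficient) bound $\mathrm{err}_{\lambda,K}$ after adjusting $c_1$. With that repair, and granting that the cited lemma's constants translate as you indicate, your argument coincides with the paper's.
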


The proof of Claim \ref{claim:star} is very similar to \cite[Lemma 3.1]{MVY2013}, therefore we include it in the Appendix.

\subsubsection{Contact process on an infinite line of stars}\label{sec:spread_between_stars}
We continue by studying the spread of the infection on $H_{K, \ell(K)}$. In particular, we prove that the probability that an infested star passes on the infestation to a neighboring star in $H_{K, \ell(K)}$ can be made arbitrarily close to $1$ with the right choice of the parameters.
\begin{claim}\label{claim:spread_between_stars} 
For each fixed small $\lambda>0$ and $\delta>0$ there is a $K_{\lambda, \delta}$ such  that the following holds for all $K\ge K_{\lambda, \delta}$. Consider the degree-penalized contact process $\mathrm{CP}_{f,\lambda}$ on $H_{K, \ell(K)}$ with $f=(xy)^{\mu}$ for some $\mu<1/2$.
Consider two consecutive stars $v_i, v_{i+1}$ in $H_{K, \ell(K)}$ in Definition \ref{def:M-embedding}, with $\ell(K)=o(K^{1-2\mu})$, and let $T_K:=\exp(c_1\lambda^2 K^{1-2\mu})$ from Claim \ref{claim:star}.
Suppose that $v_i$ is $\lambda K^{-\mu}$-infested at some time $t_0$. Then at time $t_0+T_K$, $v_{i+1}$ is $\lambda K^{-\mu}$-infested with probability at least $1-\delta$. 
\end{claim}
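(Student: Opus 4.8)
The plan is to show that, with probability at least $1-\delta$, during the time interval $[t_0, t_0+T_K]$ the infection manages (at least once) to travel along the connecting path $\mathcal{P}_i$ from $v_i$ to $v_{i+1}$ while $v_i$ remains $\lambda K^{-\mu}$-infested, and that once $v_{i+1}$ gets infected it becomes infested within a short further time, and stays infested up to time $t_0 + T_K$. The key quantitative input is that the path $\mathcal{P}_i$ has length $\ell(K) = o(K^{1-2\mu})$, while the survival-and-infestation window $T_K = \exp(c_1 \lambda^2 K^{1-2\mu})$ is exponentially long in $K^{1-2\mu}$; thus $\ell(K)$ is much smaller than $\log T_K$, which is exactly the regime in which a ``weakly pushed'' infection path of length $\ell(K)$ succeeds with overwhelming probability given $T_K$ independent attempts.

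First I would invoke Claim \ref{claim:star}\eqref{eq:star-iii}: conditionally on $v_i$ being $\lambda K^{-\mu}$-infested at $t_0$ (which, by definition, means at least $\lambda K^{1-\mu}/(16e^2)$ leaves infected, hence $|\underline\xi_{t_0}|\ge \lambda K^{1-\mu}/(8e)$ after adjusting constants), with probability $\ge 1 - \mathrm{err}_{\lambda,K}$ the star at $v_i$ stays $\lambda K^{-\mu}$-infested throughout $[t_0, t_0 + T_K]$ and its center $v_i$ is infected for at least half of that time. Work on this good event. Next, partition $[t_0, t_0 + T_K/2]$ into $N_K := T_K/(4\ell(K))$ disjoint blocks of length $2\ell(K)$ each. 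In each block I would look for the event that: (a) $v_i$ is infected at the start of the block (using that $v_i$ is infected at least half the time, a positive density of blocks begin with $v_i$ infected — one has to be slightly careful here and instead use a hitting/renewal argument, or simply lower bound the number of ``good-start'' blocks by $N_K/3$ via Markov); (b) starting from an infected $v_i$, the infection travels monotonically down the path $\mathcal{P}_i = (u_1^{(i)}, \ldots, u_{\ell}^{(i)})$ to $v_{i+1}$ within time $\ell(K)$; and (c) no healing event occurs along this nascent path segment before it reaches $v_{i+1}$. Each internal path vertex has $\deg = 2$ in $H_{K,\ell}$, so the penalized infection rate across a path edge is $\lambda / f(\cdot,\cdot) \ge \lambda/(2M)^{2\mu} =: \tilde\lambda$ a constant; the endpoints $v_i, v_{i+1}$ have degree $\asymp K$, so the first and last edges of $\mathcal{P}_i$ carry rate $\ge \lambda/(MK)^\mu$. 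A standard comparison shows the probability that a single block succeeds (infection crawls the length-$\ell(K)$ path before anyone on it heals) is at least $p_K := (c\, \lambda K^{-\mu})^2 \cdot q^{\ell(K)}$ for a constant $q = q(\tilde\lambda) \in (0,1)$ — this is where the two degree-$K$ endpoints each cost a polynomial factor $K^{-\mu}$ and each of the $\ell(K)$ internal steps costs a constant factor. Taking $\log$: $-\log p_K = 2\mu \log K + \ell(K)|\log q| + O(1) = o(K^{1-2\mu})$, since $\ell(K) = o(K^{1-2\mu})$ and $\log K = o(K^{1-2\mu})$.

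The blocks are not independent, but I would make them so by the standard trick of only using the infection issued from $v_i$ at the start of each block and ignoring all other infection: these ``fresh attempts'' in disjoint time-blocks are independent (they use disjoint portions of the graphical representation, except the events depend on $v_i$ being infected at block starts, which I handle by conditioning/lower-bounding the count of valid block starts as above). Hence the probability that \emph{no} block succeeds in infecting $v_{i+1}$ is at most $(1 - p_K)^{N_K/3} \le \exp(-p_K N_K/3)$, and $p_K N_K = p_K \cdot T_K/(4\ell(K)) = \exp\big(c_1\lambda^2 K^{1-2\mu} - o(K^{1-2\mu}) - \log(4\ell(K))\big) \to \infty$ super-polynomially. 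So with probability $\ge 1 - \exp(-p_K N_K/3)$ the vertex $v_{i+1}$ becomes infected at some time $t_1 \le t_0 + T_K/2$. Finally, from a single infection at the center $v_{i+1}$, Claim \ref{claim:star}\eqref{eq:star-i} gives that with probability $\ge (1 - e^{-c_1\lambda K^{1-\mu}})/e$ the star at $v_{i+1}$ has $\ge \lambda K^{1-\mu}/(4e)$ infected leaves at time $t_1 + 1$, i.e. it is $\lambda K^{-\mu}$-infested then; and applying \eqref{eq:star-iii} again at $v_{i+1}$, with probability $\ge 1 - \mathrm{err}_{\lambda,K}$ it remains $\lambda K^{-\mu}$-infested for all of $[t_1+1, t_1+1+T_K] \supseteq [t_0 + T_K/2 + 1, t_0 + T_K]$ — wait, one must check $t_1 + 1 + T_K \ge t_0 + T_K$, which holds since $t_1 \ge t_0$; so in particular $v_{i+1}$ is $\lambda K^{-\mu}$-infested at time $t_0 + T_K$. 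There is a subtlety: the ``per-attempt'' constant $1/e$ probability in \eqref{eq:star-i} is not close to $1$, so I should instead note that once $v_{i+1}$ is infected I get $\Theta(T_K)$ further independent re-attempts within the remaining window to build up an infested star, each succeeding with probability $\ge (1-e^{-c_1\lambda K^{1-\mu}})/e$, pushing the failure probability to $\exp(-\Omega(T_K))$ as well. Collecting the finitely many bad-event probabilities — $\mathrm{err}_{\lambda,K}$ (twice), $\exp(-p_K N_K/3)$, the Markov-count slack, and $\exp(-\Omega(T_K))$ — each is $o(1)$ as $K\to\infty$, so for $K \ge K_{\lambda,\delta}$ their sum is below $\delta$, completing the proof.

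\emph{Main obstacle.} The delicate point is step (b)–(c): controlling the probability that the infection actually crosses the whole path $\mathcal{P}_i$ of length $\ell(K)$ in one ``burst'' before healing, \emph{uniformly} in the configuration of the rest of the process, and then packaging the $N_K$ attempts as genuinely independent despite their shared dependence on the state of $v_i$. The clean way is a comparison with an auxiliary i.i.d.\ sequence: reserve, in each block, a short initial sub-interval in which we only track a single infection token originating at $v_i$ and propagating rightward along $\mathcal{P}_i$, discarding it (declaring the block a failure) the moment any vertex on the active segment heals; these tokens in disjoint blocks use disjoint Poisson marks and are independent, and a block's token reaching $v_{i+1}$ implies (monotonicity / the graphical representation) that the true process infects $v_{i+1}$ by the block's end. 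The bookkeeping that $v_i$ is infected at enough block-starts is then the only remaining coupling nuisance, handled by the half-time-infected guarantee from \eqref{eq:star-iii} together with a crude Markov bound on the fraction of blocks with an infected start.
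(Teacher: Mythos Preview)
Your proposal is correct and follows essentially the same approach as the paper: partition $[t_0,t_0+T_K]$ into $\Theta(T_K/\ell)$ blocks, show that in each block the infection crosses the path $\mathcal P_i$ with probability $q_K\asymp (c\lambda)^{\ell}K^{-2\mu}$, note that $q_K\cdot T_K/\ell\to\infty$ because $\ell(K)=o(K^{1-2\mu})$, and finish with Claim~\ref{claim:star} at $v_{i+1}$. The one place where the paper is tighter is exactly your ``main obstacle'': rather than a Markov-type count of good block-starts, the paper observes that on the infested event $\xi_t(v_i)$ dominates a two-state chain with fast $0\!\to\!1$ rate, so the events $\{\tau_j\le 1\}$ (first time $v_i$ is infected in block $J_j$) are independent with probability $\ge 1/2$, and Chernoff gives $\ge m_K/4$ good blocks; it also folds the ``$v_{i+1}$ becomes infested'' step directly into the definition of a successful block, which avoids the $1/e$ patching you flag.
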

\begin{proof}
In Definition \ref{def:M-embedding}, we denoted the vertices on the path $\CP_i$ connecting $v_i$ to $v_{i+1}$ by $u_1^{\sss{(i)}}, u_2^{\sss{(i)}}, \dots u_{\ell}^{\sss{(i)}}$. In this proof we will omit the superscript.  Also, $c>0$ is a constant whose value can be made specific but may vary even within line.  Further, $|\cdot|$ means the Lebesgue measure of a set in $\R$.
   We define the event and bound its probability from below using \eqref{eq:star-iii}:
  \begin{equation}\label{eq:localtime}
\P(\CA_1(v_i)) :=\{ v_i \mbox{ is }\lambda K^{-\mu}\mbox{-infested for all } t\in [t_0, t_0+T_K]\} \ge 1-\e^{c_1 \lambda^2 K^{1-2\mu}}\ge 1-\delta/8,
\end{equation}
whenever $K\ge \log(8/\delta) \lambda^{-2/(1-2\mu)}/c_1=:K_0(\delta)$.
For some $m_K$ and $t_K$ to be determined later, partition the time interval $[t_0, t_0+T_K]$ into  $m_K$ disjoint intervals of length $t_K$, denoted by $J_1, \dots J_{m_K}$, with $m_K= \lfloor T_K/t_K\rfloor$.
Since $v_i$ is infested at time $t\in [t_0, t_0+T_K]$, the proof of Claim \eqref{claim:star} reveals that $\xi_t(v_i)$ is stochastically dominating a two-state Markov chain on $\{0,1\}$ with transition rate $q_{0,1}=\lambda^2K^{1-2\mu}/(16\e^2)$ and $q_{1,0}=1$. 
For each interval $J_j=[J_j^-, J_j^+)$, let $\tau_j$ denote the first time in $J_j$ when $\xi_t(v_i)=1$. Define then the event that 
\begin{equation}\label{eq:JJ}
\CA_2(J_j):=\{\tau_j \le 1\}.
\end{equation}
Then $\P(\CA_2(J_j))\ge 1/2$ for all $J_j$, and the Markov property of the process ensures that $\CA_2(J_j)_{j\le m_k}$ are independent.
 Then Chernoff's bound yields that
\begin{equation}\label{eq:e3}
\begin{aligned}
    \P(\CA_3(v_i))&:=\P\big( \#\{j\le m_K: \CA_2(J_j) \mbox{ holds}\} \ge m_K/4 \big) \\
    &\ge \P\big(\mathrm{Bin}(m_K, 1/2)\ge m_K /4 \big) \ge 1- \e^{- c m_k },
    \end{aligned}
\end{equation}
for $c=1/48$.
Consider now $\{j: \CA_2(J_j)\}$, and for each such $j$, 
   call such $J_j$ \emph{successful} if there is some time $t\in J_j$ when at least $\lambda K^{1-\mu}/(4e)$ many leaves in the star-graph of $v_{i+1}$ are infected.
 We now lower bound the probability of the event that $J_j$ is successful conditioned on $\CA_2(J_j)$, as follows.
 Define a sequence of time-moments  $s_h:=\tau_j+h 4^\mu$ for $h\in\{1, \dots, \ell+1\}$,
and for $h=1,\dots, \ell$ we recursively check whether $u_h$ is infected at time $s_h$, given that $u_{h-1}$ is infected at $s_{h-1}$ (setting $u_0:=v_i$), and that whether $v_{i+1}=:u_{\ell+1}$ is infected at time $s_{\ell+1}$ given that $u_\ell$ is infected at time $s_\ell$. We also set $s_{\ell+2}:=s_{\ell+1}+1$ and check whether at least $\lambda K^{1-\mu}/(4e)$ many leaves in the star of $v_{i+1}$ are infected at time $s_{\ell+2}$, given that $v_{i+1}$ is infected at time $s_{\ell+1}$.  We shall thus bound, for some constant $c$, the time-interval lengths and their number as 
\begin{equation}\label{eq:tk}
t_K:=4^\mu (\ell+2)+2 \le (4^\mu\vee 2) (\ell+3), \qquad m_K=\lfloor T_K/t_K\rfloor \ge c \,T_K /\ell.
\end{equation} 
Returning to an interval $J_j$ being successful, denote the infection status of the set of leaves in the star around $v_{i+1}$ by $\underline \xi^{(i+1)}_t$. Then, using the strong Markov property, we can lower bound
\begin{align}
\P&(J_j \mbox{ successful} \mid \CA_2(J_j
))\ge \P\big(  |\underline \xi^{(i+1)}_{s_{\ell+2}}|\ge \lambda K^{1-\mu}/(4e) \mid \xi_{\tau_j}(v_i)=1 \big)\label{eq:successful-def} \\
&\ge
\P(\xi_{s_1}(u_1)=1\mid \xi_{\tau_j}(v_i)=1 )\prod_{h=2}^{\ell+1}\P\Big( \xi_{s_h}(u_h) = 1\mid \xi_{s_{h-1}}(u_{h-1})=1\Big) \label{eq:product-factors} \\
&\qquad\cdot \P( |\underline \xi^{(i+1)}_{s_{\ell+2}}|\ge \lambda K^{1-\mu}/(4e)\mid \xi_{s_{\ell+1}}(v_{i+1})=1).\label{eq:first-infest}
\end{align}
On the last factor we shall use Claim \ref{claim:star} shortly, but first we bound the probability of each other factor in \eqref{eq:product-factors} from below by requiring that the sender vertex $u_{h-1}$ infects $u_{h}$ during a time interval of length $4^\mu$ from below, and then  $u_h$ stays infected for the rest of the time-interval. More generally, along an edge $(u,v)$, for any two time-moments $t<t'$, with infection rate $r$ along the edge,  
\[
\begin{aligned}
 \P(\xi_{t'}(v) = 1 \mid \xi_{t}(u) = 1  ) \ge \int_{\tau=0}^{t'-t} (\e^{-\tau})(r \e^{- r\tau}) \e^{-((t'-t)-\tau)}\mathrm d\tau  =\e^{-(t'-t)} \Big(1-\e^{-r (t'-t)}\Big).
 \end{aligned} 
\]
On the path $(u_0, u_1, u_2, \dots, u_\ell, u_{\ell+1})$ (with $u_0:=v_i$, $u_{\ell+1}:=v_{i+1}$), we apply this lower bound with $t'-t=4^\mu$ along each edge, with rates $r(u_{h-1}, u_{h})= \lambda/4^{\mu}$ for all $h\in\{2, \dots, \ell\}$, and $r(u_0, u_1)=r(u_\ell, u_{\ell+1})=\lambda/(2K)^{\mu}$.  
For \eqref{eq:first-infest}, we recall that $s_{\ell+2}-s_{\ell+1}=1$, so here \eqref{eq:star-i} directly applies, hence
\begin{equation*}
\begin{aligned}
\P(J_j \mbox{ successful}\mid \CA_2(J_j))
&\ge \e^{-1}(1-\e^{-c_1 \la K^{1-\mu}}) \\
&\qquad \cdot \Big(\e^{-4^\mu} \big(1-\e^{-4^\mu\lambda/(2K)^{\mu}}\big)\Big)^2\prod_{h=1}^{\ell}\e^{-4^\mu} \Big(1-\e^{-4^\mu\cdot\lambda/4^{\mu}}\Big).
\end{aligned}
\end{equation*}
Then we may apply that $1-\e^{-x}\ge x/2$ for all $x<1/2$ to arrive at
\begin{equation}\label{eq:qK}
\begin{aligned}
\P(J_j \mbox{ successful}\mid \CA_2(J_j) ) &\ge (1-\e^{-c_1 \la K^{1-\mu}}) e^{-4^\mu(\ell+2)-1} (\la/2)^{\ell} (2K)^{-2\mu}\\
&\ge c(c_2 \lambda)^{\ell}K^{-2\mu}=:q_{K},
\end{aligned}
\end{equation}
for some constant $c>0$ and $c_2:=\e^{-4^\mu}/2$, as long as $(1-\e^{-c_1 \la K^{1-\mu}})\ge 1/2$ which is ensured since we already assumed $K\ge K_0(\delta)$ at \eqref{eq:localtime}.
Since the time-intervals are disjoint, on $\CA_3(v_i)$ from \eqref{eq:e3}, by the strong Markov property, the indicators of the events $\{J_j \mbox{ successful}\} $ stochastically dominate $m_K/4$ independent trials (with $m_K$ from \eqref{eq:tk}), each with success probability $q_{K}$ from \eqref{eq:qK}. Let $\CA_4(v_i)$ be the event that at least one of the intervals is successful. Then
 \begin{equation}\label{eq:E2}
\P(\CA_4(v_i)\mid \CA_1(v_i)\cap \CA_3(v_i))  \ge 1- (1-q_K)^{m_K/4} \ge 1- \e^{-m_Kq_K/4},
 \end{equation}
 where we used that $1-x\ge \e^{-x/2}$ for all $x<1/4$, which is applicable since $q_K$ in \eqref{eq:qK} tends to $0$ with $K$. 
 We now analyze the exponent $m_Kq_K$ as a function of $K$ on the rhs of \eqref{eq:E2}.The assumption in this claim is that $\ell(K)=o(K^{1-2\mu})$ (in contrast to \eqref{eq:ellK} which is more specific). So, we may assume wlog that $\ell(K)$ can be written in the form 
 \begin{equation}\label{eq:ellK-new}
 \ell(K):=\wt g(K) K^{1-2\mu} \qquad \mbox{for}  \qquad \wt g(K)\to 0 \mbox{ as } K\to \infty.
 \end{equation}  
Recalling from \eqref{eq:star-iii} that $T_K=\exp(c_1 \lambda^2 K^{1-2\mu})$, and $m_K\ge cT_K/\ell(K)$ from \eqref{eq:tk}, as well as \eqref{eq:qK}, we obtain using that $1/\ell(K)\ge K^{-(1-2\mu)}$:
 \begin{equation*} 
\begin{aligned}
 m_Kq_K&\ge  c  (T_K/\ell) \cdot  (c_2 \lambda)^{\ell}K^{-2\mu} = c\exp\bigg( c_1 \lambda^2 K^{1-2\mu} + \wt g(K) K^{1-2\mu} \log(c_2\lambda)\bigg) K^{-1}\\
 &=c \exp\bigg( \lambda^2 K^{1-2\mu} (c_1 - \wt g(K) |\log(c_2\la)|/\lambda^2 ) - \log (K)\bigg).
 \end{aligned}
 \end{equation*}
 We now argue that for any small fixed $\lambda>0$ we can choose $K$ sufficiently large so that the rhs tends to infinity. 
 First choose $K(g, \lambda)$ so large that for all $K\ge K(g,\lambda)$ the inequality
 \[\wt g(K)  |\log(c_2\lambda)|/\lambda^2 < c_1/2\]
 holds. This is doable since $\wt g(K)\to 0$. For all  $K>K(g,\lambda)$ we thus have 
 \[ m_Kq_K \ge C \exp\bigg( \lambda^2 K^{1-2\mu} c_1/2 - \log (K)\bigg). \]
 We now further increase $K(g, \lambda)$ if necessary so that 
 $ m_Kq_K\ge \log(8/\delta)/c.$ (We comment that by wlog assuming a monotonically decreasing $\wt g$, the minimal $K({g, \lambda})$ can be chosen  as a constant multiple of
 \[\wt g^{(-1)}\big( c_1 \lambda^2/ (2|\log (c_2\la)|)\big)\vee \lambda^{-(2+\varepsilon)/(1-2\mu)}\]
 for some $\eps>0$, for all $\lambda$ sufficiently small.) Returning to \eqref{eq:E2}, and using \eqref{eq:localtime}, \eqref{eq:e3} we see that 
 \[ \P(\CA_4(v_i)\mid \CA_3(v_i)\cap \CA_1(v_i))\ge 1-\delta/8 \quad \mbox{and}\quad \P(\CA_4(v_i)\cap \CA_3(v_i)\cap \CA_1(v_i)) \ge 1-\delta/2.\]
On the event $\CA_4(v_i)\cap \CA_3(v_i)\cap \CA_1(v_i)$, at least one $J_j$ is successful, and by \eqref{eq:successful-def}, that means that at least $\lambda K^{1-2\mu}/(4e)$ leaves in the star of $v_{i+1}$ are infected at some time in the interval $[t_0, t_0+T_K]$. Using the strong Markov property, and applying now \eqref{eq:star-iii}, $v_{i+1}$ stays $\lambda K^{-\mu}$ infested during the rest of the time interval $[t_0, t_0+T_K]$ with probability $1-\exp(-c_1\lambda^2K^{1-2\mu})\ge 1-\delta/4$ by our initial assumption that $K\ge K_0(\delta)$. This finishes the proof.  
\end{proof}

\subsubsection{Local survival through renormalization}\label{sec:renorm}
Having established Lemma \ref{lem:embedded_stars} and Claim \ref{claim:spread_between_stars} we are in a position to prove Theorems \ref{thm:prod_GW} and \ref{thm:max_GW}(a) by showing that the embedded structure $H_{K,\ell(K)}$ sustains the infection (locally) indefinitely with positive probability. This is formalized below in Lemma \ref{lem:survival_on_the_line}. The proof of this has two steps. The first step is a time-renormalization. Based on the results of Claim \ref{claim:spread_between_stars}, we prove that on $H_{K, \ell(K)}$ the infection moves between neighboring centers with large enough probability on a specified discrete time-scale, leading to a renormalized version of the contact process on $\N$. The second step is to establish a relationship between this renormalized contact process, and a certain oriented percolation model, which then can be analyzed by techniques from percolation theory, involving a Peierls-type argument. This connection was already used in \cite{DG83} to derive various results for the contact process on $\Z$.

\begin{lemma}\label{lem:survival_on_the_line}
For any fixed $\mu<1/2$ and $\lambda>0$, there is a $K_0(\lambda)$ such that the following holds for all $K>K_0(\lambda)$. Let $H=H_{K, \ell(K)}$ be the graph defined in Definition \ref{def:M-embedding} with $\ell(K)=o(K^{1-2\mu})$ and with $v_1$ being the center of its first star. Consider the penalty function $f(x,y)=(xy)^\mu$. Then both the contact process $\CPf(H,\ind_{v_1})$ and $\BRW(H,\ind_{v_1})$ exhibit local survival with positive probability.
\end{lemma}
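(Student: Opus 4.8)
The plan is to establish local survival on $H_{K,\ell(K)}$ by a block-construction / renormalization argument, comparing the contact process to a supercritical oriented percolation on $\mathbb Z^2_+$. Recall from Claim~\ref{claim:spread_between_stars} that, for $K$ large depending on $\lambda$ and on a target failure probability $\delta$, an $\lambda K^{-\mu}$-infested center $v_i$ passes infestation to its neighbor $v_{i+1}$ within a time window of length $T_K=\exp(c_1\lambda^2 K^{1-2\mu})$ with probability at least $1-\delta$; and by Claim~\ref{claim:star}~\eqref{eq:star-iii} an infested star stays infested (and has its center infected a positive fraction of the time) for an entire interval of length $T_K$ with probability at least $1-\mathrm{err}_{\lambda,K}$, which is also $\ge 1-\delta$ for $K$ large. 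Since $\mathrm{CP}_{f,\lambda}\ {\buildrel d\over \le}\ \mathrm{BRW}_{f,\lambda}$ would go the wrong way, I note instead that for local survival it suffices to prove it for $\mathrm{CP}_{f,\lambda}$, and then local survival of $\mathrm{BRW}_{f,\lambda}$ follows from the stochastic domination $\mathrm{CP}_{f,\lambda}(H,\ind_{v_1})\ {\buildrel d\over\le}\ \mathrm{BRW}_{f,\lambda}(H,\ind_{v_1})$ in Lemma~\ref{lem:StochDom2} (an event of the form ``$v_1$ is reinfected at arbitrarily large times'' is increasing).

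First I would set up the renormalized space-time lattice. Declare a site $(i,n)\in\mathbb Z_+\times\mathbb Z_+$ \emph{good} if the star centered at $v_i$ is $\lambda K^{-\mu}$-infested throughout the time interval $[nT_K,(n+1)T_K]$ (with $i\equiv n\pmod 2$, the usual parity restriction of oriented percolation). Using the graphical representation of $\mathrm{CP}_{f,\lambda}$ restricted to the ``relevant'' region of $H$ around the finitely many vertices $v_{i-1},\mathcal P_{i-1},v_i,\mathcal P_i,v_{i+1}$ together with their leaves, the event that $(i,n)$ good implies $(i-1,n+1)$ and $(i+1,n+1)$ good depends only on Poisson clocks in a bounded space-time box; hence these events have finite range of dependence. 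By Claims~\ref{claim:star} and~\ref{claim:spread_between_stars}, each such implication holds with conditional probability at least $1-3\delta$ (infestation of $v_i$ persists, and spreads to each neighbor). A standard comparison lemma (e.g. Liggett--Steif, or the argument in \cite{DG83} / \cite{grimmett2018probability}) then shows that the process of good sites stochastically dominates supercritical oriented percolation on $\mathbb Z^2_+$ provided $\delta$ is small enough, which we arrange by taking $K$ large. Since the seed $\ind_{v_1}$, with probability bounded below by Claim~\ref{claim:star}~\eqref{eq:star-i}, makes $(1,1)$ good, the percolation cluster from the origin is infinite with positive probability, and on that event there are infinitely many $n$ with $(1,n)$ good, hence $v_1$ is infected at arbitrarily large times: local survival.

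The technical care is concentrated in two places. The first is the \emph{finite range of dependence}: I must argue that the graphical-representation randomness used to certify goodness of $(i,n)$ and its two ``descendants'' is confined to a bounded region, which is true because $H_{K,\ell(K)}$ is locally finite with uniformly bounded path lengths $\ell(K)$ and star-degrees $K$ \emph{for fixed $K$} — the boxes depend on $K$ but not on $(i,n)$ — so after fixing $K$ the dependence range is a finite constant. The second, and I expect the main obstacle, is making the conditioning in the block events genuinely Markovian: the statements of Claim~\ref{claim:star} and Claim~\ref{claim:spread_between_stars} are phrased in terms of initial conditions like ``$|\xi_0|\ge \lambda K^{1-\mu}/(8e)$'', so I must verify that ``$(i,n)$ good'' indeed guarantees the hypothesis needed to run these claims from time $nT_K$ (namely that $v_i$ infested at time $nT_K$ gives enough infected leaves to restart), and that the strong Markov property lets me paste consecutive blocks together with independent Poisson clocks. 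This is routine but must be done carefully to get a bona fide domination by oriented percolation rather than merely a heuristic one. Once that is in place, the Peierls argument for supercriticality of the resulting $1$-dependent oriented percolation (standard once $\delta$ is small) completes the proof, and re-introducing the degree-factor $M$ from Lemma~\ref{lem:embedded_stars} only replaces $\lambda$ by $\lambda/M^{2\mu}$, which is absorbed by enlarging $K_0$.
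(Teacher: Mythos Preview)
Your proposal is correct and follows essentially the same renormalization/block-construction route as the paper: reduce to $\mathrm{CP}_{f,\lambda}$ via Lemma~\ref{lem:StochDom2}, declare space-time blocks good when the corresponding star is $\lambda K^{-\mu}$-infested over an interval of length $T_K$, use Claims~\ref{claim:star} and~\ref{claim:spread_between_stars} to make the block-to-block transmission probability at least $1-\delta$, and conclude via supercritical oriented percolation on the quadrant. The only execution difference is that the paper, rather than invoking an off-the-shelf comparison lemma for finite-range dependent percolation, carries out the Peierls contour argument by hand: it notes that the two outgoing edge-events from a common site are dependent while edges from distinct sites are independent, builds the dual boundary path of the cluster of $(1,1)$, shows at least half its edges are ``outward'' (hence closed) and that a quarter of them have distinct source vertices, and then bounds the number of eligible contours by $3^k$ to get $\mathbb P(Y_{\max}=k)\le 3^k\delta^{k/4}$. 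Your appeal to a standard $k$-dependent comparison (Liggett--Schonmann--Stacey type) is a legitimate shortcut to the same conclusion; just be aware that the dependence here is between the two outgoing edges of each site, not a generic spatial range, so the explicit argument in the paper is slightly more tailored. Your final remark about absorbing the factor $M$ belongs to the proof of Theorem~\ref{thm:prod_GW}, not to the present lemma.
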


\begin{proof} By the stochastic domination between $\CPf(H,\ind_{v_1})$ and $\BRW(H,\ind_{v_1})$ in Lemma \ref{lem:StochDom2}, it is enough to prove the statement for $\CPf(H,\ind_{v_1})$.
For fixed $\lambda>0$, we choose a small $\delta>0$ specified later. Then we choose $K$ large enough such that $K\ge K_{\lambda,\delta}$ as in Claim \ref{claim:spread_between_stars}. Finally, let $T_K=\exp(c_1\lambda^2 K^{1-2\mu})$ as in Claim \ref{claim:star}. Then, Claim \ref{claim:spread_between_stars} yields the following: for any $v_i$ in $H_{K,\ell(K})$, if $v_i$ is $\lambda K^{-\mu}$-infested at some time $t_0$, then $v_{i+1}$ is  $\lambda K^{-\mu}$-infested by $v_i$ at time $t_0+T_K$ with probability at least $1-\delta$, and the same holds for $v_{i-1}$ when $i\ge 2$. (However, these two events are not necessarily independent.) Throughout this proof, the term "infested" will refer to "$\lambda K^{-\mu}$-infested".

Now we construct an oriented percolation model, which we couple with $\CPf(H,\ind_{v_1})$ so that it dominates from below $\CPf(H,\ind_{v_1})$ restricted to the vertices $\{v_1,v_2,\ldots\}$ at times $\{T_K,2T_K,\ldots\}$. Let $\CH$ be an oriented graph on the vertex set
\begin{equation*}
V_{\CH}=\{(x,y)\in \Z^+\times\Z^+: x+y\text{ even}\}
\end{equation*}
with the oriented (equivalently, directed) edge set
\begin{equation}\label{eq:renorm_open}
E_{\CH}=\{((x_1,y_1),(x_2,y_1+1))\in V_{\CH}\times V_{\CH}: |x_2-x_1|=1\}.
\end{equation}
Observe that $\CH$ is isomorphic to a subgraph (a cone) of $\Z^+\times\Z^+$ as a graph but the edges are `diagonal' and have Euclidean length $\sqrt{2}$. In $\CH$, we will refer to the vertex sets $\{(x,1)\}_{x\in\Z^+}$ and $\{(1,y)\}_{y\in\Z^+}$ as the $x$- and $y$-axis, repectively. For every oriented edge $e=((x_1,y_1),(x_2,y_2))$ -- where $y_2=y_1+1$ and $x_2=x_1\pm 1$ by \eqref{eq:renorm_open} --  define the event $\CA_e=\CA_{(x_1,y_1),(x_2,y_2)}$ that either $v_{x_1}$ is not infested at time $y_1 T_K$, or $v_{x_1}$ is infested at time $y_1 T_K$ and it infests $v_{x_2}$ by time $y_2 T_K$ in the sense of Claim \ref{claim:spread_between_stars}. The same claim shows that
\begin{equation}\label{eq:renorm_open_prob}
\P(\CA_e)\ge 1-\delta \quad\text{for every }e\in E_{\CH}.
\end{equation}

Now let $\eta:V_{\CH}\to \{0,1\}$ be a function on the vertices of $\CH$ defined recursively as
\begin{equation}\label{eq:eta-def}
\begin{aligned}
\eta((x,1))&=\ind\{x=1\},\\
\eta((x,y+1))&=
\begin{cases}
1 & \text{ if }\eta(x-1,y)=1 \text{ and }\CA_{(x-1,y),(x,y+1)} \text{ holds, or}\\
  & \quad \eta(x+1,y)=1 \text{ and } \CA_{(x+1,y),(x,y+1)} \text{ holds},\\
0 & \text{ otherwise}.
\end{cases}
\end{aligned}
\end{equation}
Define the event
\begin{equation}\label{eq:renorm_I1}
\CI_1=\{\text{$v_1$ is infested at time $T_K$ in $\CPf(H,\ind_{v_1})$}\},
\end{equation}
which exactly corresponds to $\eta((1,1))=1$.
Then, conditioned on $\CI_1$, $\eta(x,y)=1$ exactly when there is an ``infestation'' path $\pi$ through stars $(v_1=v_{\pi_1}, v_{\pi_2}, \dots, v_{\mathfrak{l}(\pi)}=v_x)$ so that $v_{\pi_j}$ is infested by $v_{\pi_{j-1}}$ at time $jT_K$. So, on $\CI_1$, 
\begin{equation}\label{eq:renorm_stochdom}
\big(\eta(x,y)\big)_{(x,y)\in V_{\CH}}\ {\buildrel d \over \le}\  \big(\xi_{y T_K}(v_x)\big)_{(x,y)\in V_{\CH}}.
\end{equation}
We now define a subgraph of $\CH$. Let us declare each edge $e\in E_{\CH}$ open if and only if $\ind\{\CA_e\}=1$,  closed otherwise, and denote the graph of open edges by $G(\CH)$. This is a percolation model, where the outgoing edges from a vertex $(x,y)$ are dependent, however, the outgoing edges from distinct vertices are independent due to the strong Markov property and Claim \ref{claim:spread_between_stars}.  The open connected component of $(1,1)$ is
\begin{equation}\label{eq:renorm_CCdef}
\CC_{(1,1)}=\{(x,y)\in V_{\CH}: \text{there is an oriented path of open edges from $(1,1)$ to $(x,y)$}\}.
\end{equation}
Then, comparing $\CC_{(1,1)}$ to $\{(x,y):\eta(x,y)\}=1\}$ in \eqref{eq:eta-def}, which is defined recursively as precisely those vertices that are accessible from $(1,1)$ via an oriented path of open edges in $\CH$, we obtain that $\{(x,y):\eta(x,y)\}=1\}=\CC_{(1,1)}$.

Now we carry out a Peierls-type argument to prove local survival of $\CPf$. Due to the coupling and stochastic domination in \eqref{eq:renorm_stochdom}, and \eqref{eq:renorm_CCdef}, it is enough to show that with positive probability $\CC_{(1,1)}$ contains infinitely many vertices of the form $(1,y)$. This implies for $\CPf$ that $v_1$ is infested at times $yT_K$, for infinitely many $y$, which guarantees local survival.
Let
\begin{equation}\label{eq:renorm_max}
Y_{\text{max}}=\sup\{y\in\Z^+:(1,y)\in\CC_{(1,1)}\}.
\end{equation}
We will prove that for small enough $\delta>0$ in \eqref{eq:renorm_open_prob} it holds that  $\P(Y_{\text{max}}=\infty)>3/4.
$

Assume to the contrary that $\{Y_{\text{max}}=k\}$ for some $k<\infty$. We now construct a path of length $k$, which starts from the $y$-axis next to $(1,k)$, and forms a part of the boundary of $\CC(1,1)$ containing enough closed edges in $\CH$. Define for each edge $e=((x_1,y_1),(x_2,y_2))\in E_{\CH}$ its (unoriented) dual $e'=\{(x_1,y_2),(x_2,y_1)\}$. The dual edges  connect vertices on the dual lattice $\CH':=\{(x,y)\in \Z^+\times\Z^+: x+y \text{ odd}\}$. We declare the dual edge $e'$ closed if $e$ is closed, and open if $e$ is open. We then define the (outer edge-) boundary of $\CC_{(1,1)}$ as the set of dual edges
\begin{equation}\label{eq:renorm_boundary}
\partial \CC_{(1,1)}=\{e':\text{ exactly one of the two endpoints of $e$ is in }\CC_{(1,1)}\}.
\end{equation}
Since $\CH$ is a cone in $\Z^+\times\Z^+$, and $\CC_{(1,1)}$ is connected per definition, $\partial\CC_{(1,1)}$ is a union of connected contours in $\CH'$, which along with (parts of) the $x$- and $y$-axes encircle $\CC_{(1,1)}$. Assume now that the event $\{Y_{\text{max}}=k\}$ occurs. This implies that  $(1,k)\in\CC_{(1,1)}$ and $(1,k+2)\notin\CC_{(1,1)}$. So, define the edges and their duals
\begin{align*}
\hat{e}_{k,1}&=((1,k),(2,k+1)),\qquad\quad \hat e_{k,1}'=\{(1,k+1), (2, k)\},\\
\hat{e}_{k,2}&=((2,k+1),(1,k+2)), \qquad \hat e_{k,2}'=\{(1,k+1), (2,k+2)\}.
\end{align*}
 Now, if $(2,k+1)\notin\CC_{(1,1)}$, then since $(1,k)\in \CC_{(1,1)}$, the dual edge $\hat{e}_{k,1}'\in\partial\CC_{(1,1)}$ (and  $\hat{e}_{k,2}'\notin\partial\CC_{(1,1)}$). In this case, define $\hat{e}_k=\hat{e}_{k,1}$. On the other hand, if $(2,k+1)\in\CC_{(1,1)}$, then since $(1,k+2)\notin \CC_{(1,1)}$, the dual edge  $\hat{e}_{k,2}'\in\partial\CC_{(1,1)}$ (and $\hat{e}_{k,1}'\in\partial\CC_{(1,1)}$). In this case, define $\hat{e}_k=\hat{e}_{k,2}$. In both of these cases, the vertex $(1,k+1)$ is the starting point of the dual $\hat{e}_k'$, which is in $\partial\CC_{(1,1)}$, and the other dual edge with endpoint $(1,k+1)$ is not in $\partial\CC_{(1,1)}$. Then we start exploring $\partial\CC_{(1,1)}$, starting from $e'_1:=\hat{e}_k'$ by following the dual edges in this connected component of $\partial\CC_{(1,1)}$. That is, the next dual edge in the path, denoted by $e'_2$, is incident to $(2,k)$ if $e'_1=\{(1,k+1),(2,k)\}$ and to $(2,k+2)$ if $e'_1=\{(1,k+1),(2,k+2)\}$. Then we continue from the other endpoint of $e'_2$, and so on. We continue this exploration process either indefinitely (if $\CC_{(1,1)}$ is infinite), or until we reach the $x$-axis (if $\CC_{(1,1)}$ is finite). As we explain next, these are the only two possible outcomes. For an example of the second outcome, see Figure \ref{fig:oriented}.

\begin{figure}[h]
\begin{center}
\includegraphics[scale=0.5]{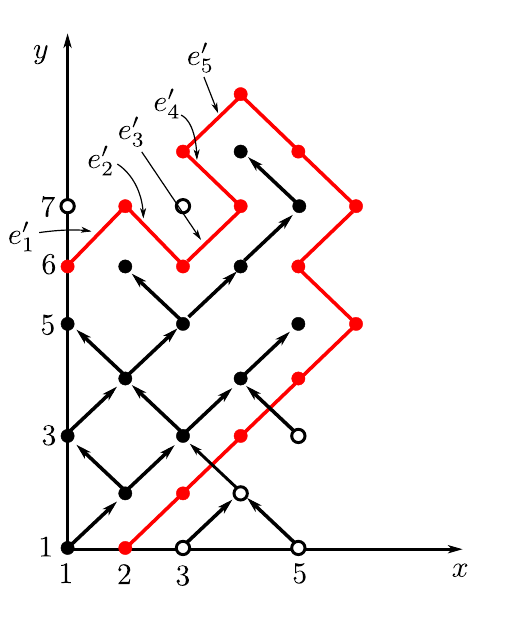}
\caption{This example shows a finite oriented cluster of the origin $\CC_{(1,1)}$: filled black circles are vertices in $\CC_{(1,1)}$ while empty black circles are vertices that do not belong to $\CC_{(1,1)}$. The oriented, black edges are open in $\CH$, while the closed edges  of $\CH$ are not drawn. The red contour and red vertices belong to the dual lattice $\CH'$.
Since $Y_{\text{max}}=5$, the dual contour $\pi_\partial$ starts from $(1,6)$, and follows the closed dual edges colored red, ending at $(2,1)$. Edges of $\CH$ pointing out of $\CC_{(1,1)}$ are all closed (not drawn), whereas edges pointing into $\CC_{(1,1)}$ may be open -- such as the edge $((5,3),(4,4))$ -- or closed.}
\label{fig:oriented}
\end{center}
\end{figure}
 
 Denote by $\pi_\partial=(e'_1,e'_2,\ldots)$ the path (as a sequence of dual edges) obtained this way. It is possible that $\pi_\partial$ visits the $y$-axis above $(1,k+1)$ (say at $(1,y')$ with $y'>k$), but since $Y_{\text{max}}=k$, this can only happen when $(2,y')\in \CC_{(1,1)}$ and $(1,y'+1)\notin \CC_{(1,1)}$, and then we can always continue the path $\pi_\partial$ by traversing the dual edge $\{(1,y'),(2,y'+1)\}$. 
 However, $\pi_\partial$ cannot visit the $y$-axis below $(1,k)$, since then we would have encircled the entire $\CC_{(1,1)}$, starting from $(1,k+1)$, without containing $(1,1)$, a contradiction. Hence, one of the two remaining cases happens. We either find an infinite path $\pi_{\partial}$ in $\partial\CC_{(1,1)}$, and then we set $\pi_\partial(k)$ to be the sequence of its first $k$ edges. Or, we find a finite path $\pi_{\partial}$ that reaches the $x$-axis, in particular, the dual vertex $(2, 1)$. This path has length at least $k$, since the path starts at $(1,k+1)$, and the $y$ coordinate only  changes by $\pm1$ between consecutive vertices on the path. In this case we again set $\pi_\partial(k)$ to be the sequence of the first $k$ edges of $\pi_\partial$.
 
We now categorize edges of $\pi_\partial(k)$ (all are in $\partial \CC_{(1,1)}$) as follows. 
Recall that edges of $\CH$ in \eqref{eq:renorm_open} are oriented (directed), and recall 
\eqref{eq:renorm_boundary}. Given $\CC_{(1,1)}$ let us call the dual edge $e'\in \partial \CC_{(1,1)}$ an \emph{outward} dual edge, if for the edge $e=((x_1,y_1),(x_2,y_2))$ it holds that $(x_1, y_1)\in \CC_{(1,1)}$ and $(x_2, y_2)\notin \CC_{(1,1)}$ and let us call $e'$ an \emph{inward} dual edge if $(x_1, y_1)\notin \CC_{(1,1)}$ and $(x_2, y_2)\in \CC_{(1,1)}$. Per definition of $\CC_{(1,1)}$ in \eqref{eq:renorm_CCdef}, the outward edges and their duals are all closed. However, $\CC_{(1,1)}$ does not determine the status of inward dual edges. 

We now prove that for any realization of $\CC_{(1,1)}$, at least half of the edges of $\pi_\partial(k)$ are outward dual edges, and hence closed.
Let us introduce the notation $\pi_0=(1,k+1), \pi_1, \pi_2, \dots, \pi_k, \dots$ for the vertices of the path $\pi_\partial$ in order, and define the directed edge $\underline{e}'_i=(\pi_{i-1}, \pi_i)$ for all $i\ge 1$ (the directed version of $e'_i$). Then for all outward dual edges $e'_i\in\pi_\partial$, $\underline{e}'_i$ is pointing to the right (in the direction of increasing $x$ coordinate), and for all inward dual edges $e'_i\in\pi_\partial$, $\underline{e}'_i$ is pointing to the left (in the direction of decreasing $x$ coordinate).
Since $\pi_\partial(k)$ starts from $(1,k+1)$, which is part of the $y$-axis, and remains in the positive quadrant, at least half of its dual edges have to be directed to the right, thus, duals of outward edges. Hence, at least $k/2$ dual edges in $\pi_\partial(k)$ are closed. Further, since every vertex in $V_{\CH}$ has at most two outgoing (non-dual) edges, in every possible realization $(e_1', e_2', \dots, e_k')$ of $\pi_{\partial}(k)$ we can find $k/4$ edges that are all closed and that their oriented non-dual edges in $\CH$ all start from different vertices.

By \eqref{eq:renorm_open_prob}, the probability that a given edge (and its dual) is closed is at most $\delta$. As mentioned before \eqref{eq:renorm_CCdef}, the status of different edges are not independent, however, $\CA_{(x_1,y_1),(x_2,y_2)}$ is independent of $\CA_{(x'_1,y'_1),(x'_2,y'_2)}$ if $(x_1,y_1)\ne(x'_1,y'_1)$. That is, two edges $e_1,e_2\in E_\CH$ are open or closed independently if their starting points are distinct. 

We call a given connected path $(e'_1,\ldots,e'_k)$ of dual edges eligible if it is a possible realization of $\pi_\partial(k)$ (of which one requirement is that one of the endpoints of $e_1'$ is $(1,(k+1))$). Then, for all such $(e_1', e_2', \dots, e_k')$, 
\begin{equation}\label{eq:renorm_path_prob}
\P\big(\pi_\partial(k)=(e'_1,\ldots,e'_k)\big)\le \delta^{k/4}.
\end{equation}
Next, we upper bound the number of eligible paths $(e'_1,\ldots,e'_k)$. Since $(e'_1,\ldots,e'_k)$ is a path starting from $(1,k+1)$ on the dual lattice $\CH'$ isomorphic to a quadrant of $\Z^2$, each of the $k$ steps in the exploration of $\pi_\partial(k)$ can be taken in one of at most three directions. This yields that the number of possible trajectories is at most $3^k$. Therefore, by a union bound, 
\begin{equation}\label{eq:renorm_Yk_prob}
\P(Y_{\text{max}}=k)\le \P\left(\bigcup_{(e_1', \dots, e_k') \text{ eligible}} \{\pi_\partial(k)=(e_1', \dots, e_k')\}\right)\le 3^k\delta^{k/4}.
\end{equation}
Then \eqref{eq:renorm_Yk_prob} implies that
\begin{equation}\label{eq:renorm_Ak_prob}
\P(Y_{\max}<\infty)=\sum_{k=1}^\infty\P(Y_{\text{max}}=k)\le \sum_{k=1}^\infty 3^k\delta^{k/4}<1/4,
\end{equation}
whenever $\delta\in(0, (1/15)^4)$.
Consequently, 
$\P(Y_{\text{max}}=\infty)>3/4.$
Finally, recalling $\CI_1$ from \eqref{eq:renorm_I1},  $\P(\CI_1)>1/3$ for large enough $K$ by Claim \ref{claim:star}.  By the stochastic dominance in \eqref{eq:renorm_stochdom}, it follows from a union bound that
\begin{equation*}
\P\big(\CPf(H,\ind_{v_1})\text{ survives locally at }v_1\big)\ge \P\big(\CI_1\cap \{Y_{\text{max}}=\infty\}\big) \ge 1-2/3-1/4>0. 
\end{equation*}
 This proves local survival of $\CPf(H,\ind_{v_1})$ with positive probability.
\end{proof}

\begin{proof}[Proof of Theorem \ref{thm:prod_GW}]
Lemma \ref{lem:embedded_stars} states that for some $M\ge 1$ there exists $K_1$ such that and for $K>K_1$ and $\ell(K)$ as in \eqref{eq:ellK} $H_{K,\ell(K)}$ can be $M$-embedded in $\CT$ almost surely. Set $\bar\lambda=\lambda/M^{2\mu}$ and let $K_0(\bar\lambda)$ be given by Lemma \ref{lem:survival_on_the_line}. Now let
$K>\max(K_0(\bar\lambda),K_1)$. Then Lemma \ref{lem:embedded_stars} yields that $H_{K,\ell(K)}$ can be $M$-embedded in $\CT$ almost surely. Let $v_1$ be the center of the first star in the embedded $H_{K,\ell(K)}$. Recalling \eqref{eq:factor-M} in Definition \ref{def:M-embedding}, we observe that the process $\CPf(\CT,\ind_{v_1})$ restricted to the vertices of the embedded $H_{K,\ell(K)}$ is stochastically dominated from below by the process $\mathrm{CP}_{f,\bar\lambda}(H_{K,\ell(K)},\ind_{v_1})$ on a standalone copy of $H_{K,\ell(K)}$. Combining this with Lemma \ref{lem:survival_on_the_line} and $K\ge K_0(\bar\lambda)$ implies that $\mathrm{CP}_{f,\bar\lambda}(H_{K,\ell(K)},\ind_{v_1})$ survives locally with positive probability. This, along with the fact that with positive probability, $\CPf(\CT,\ind_\varnothing)$ infects $v_1$ at some point in time finishes the proof.
\end{proof}

\begin{proof}[Proof of Theorem \ref{thm:max_GW}(a)]
This is an easy consequence of Theorem \ref{thm:prod_GW} by stochastic domination, noting that $\max(d_u,d_v)^\mu\le (d_u d_v)^\mu$.
\end{proof}
\section{The configuration model: k-cores sustain the infection when stars do not}\label{sec:proofsCM}

In this section we will prove part (b) of Theorem \ref{thm:max_CM_survival}. 
A crucial difference between the classical contact process and the degree-dependent version in this regime is that star-graphs do not sustain the infection, in fact they heal quickly when $\mu>1/2$, by Claim \ref{claim:star}. However, we know from Section \ref{sec:max_GW_weak} that the approximating Galton-Watson tree shows global survival (only), which suggests long survival on the configuration model. So we set out to find a new structure -- a subgraph --  embedded in the configuration model that sustains the degree-dependent contact process for a long time. 
To find such a subgraph, 
we need to take into account that vertices that have either too low or too high degree cannot sustain the infection, either because the penalty $f$ on them is too high or because $\lambda$ is assumed to be close to $0$. The subgraph we found is the $k$-core -- a maximal subgraph of the configuration model where each vertex has degree at least $k$ inside the same subgraph -- but with a twist: in the original configuration model with fat tailed degrees, the $k$-core contains vertices of very high degree (e.g. polynomials of $n$). However, the degree-dependent CP near these vertices would have too high penalty $f$, so we need to exclude them from the $k$-core. 

As a result we look at the $k$-core of \emph{not} the original configuration model, but the subgraph obtained after \emph{removing all vertices of degree above} a threshold value $M$, where now $M$ is a constant depending only on $k$ but not on the total number of vertices $n$. It is a priori unclear whether such a low truncation value even produces a connected graph, let alone contains a linear-sized $k$-core.  So our first step is to study the dependence between $M=M(k)$ and $k$ so that a linear sized $k$-core still exists in the configuration model where all vertices of degree above $M$ are removed. 

After solving this issue, and finding the linear-sized $k$-core on vertices of degree at most $M(k)$, we show that $\CPf$ survives on this $k$-core. For this step, our proof is a non-trivial adaptation of the proof of \cite[Theorem 1.2,  part (b)]{mourratvalesin2016}, which shows long survival in the original contact process model on $(d+1)$-regular random graphs, when $\lambda$ is above the lower critical $\lambda_1(\mathbb{T}^d)$ on $d$-regular trees needed for global survival \cite{Pem92}. However, in our case we have $\lambda$ arbitrarily close to $0$. Fortunately, we \emph{can} choose $k$ as a function of $\lambda$ that makes the process locally supercritical. This also makes the proof different from that in  \cite{mourratvalesin2016} even beyond finding the $k$-cores. 

First, we define the $k$-core of a graph.

\begin{definition}\label{def:kcore}
Let $G$ be any simple, finite graph. For a fixed positive integer $k$, the $k$-core of $G$ is the largest induced subgraph $\Core_k(G)$ of $G$ such that every vertex in $\Core_k(G)$ has degree at least $k$ within $\Core_k(G)$.
\end{definition}

It is not hard to see that the $k$-core $\Core_k(G)$ in Definition \ref{def:kcore} is well-defined -- but may be empty -- by the following algorithm producing it. First, delete all vertices of $G$ that have degree less than $k$ along with their incident edges. Then do the same with the resulting graph, repeatedly, until no new vertex is deleted. The output of this algorithm is the unique largest induced subgraph of $G$ with all degrees at least $k$. Note that the $k$-core of a graph might be empty or may contain more than one component.

\subsection{The subgraph spanned on low-degree vertices contains a \texorpdfstring{$k$}{k}-core}
Our first goal is to prove the existence of a $k$-core in the configuration model after we remove all vertices with too high degrees. Throughout this section, we work with the configuration model $\mathrm{CM}(\underline d_n)=:G$ in Definition \ref{def:CM} on the degree sequence 
$\underline d_n=(d_1, \dots, d_n)$ that satisfies the regularity assumptions in Assumption \ref{assu:regularity}, and the weak power-law empirical degrees of Assumption \ref{assu:empirical-power-law} with exponent $\tau>2$ and error $\varepsilon>0$.

We now set up the procedure of removing all vertices above some degree $M$ and the edges attached to those in $\mathrm{CM}(\underline d_n)$. This is often called a \emph{targeted attack} on the graph. Because the graph is formed by a random matching, and the half-edges that have one endpoint at a vertex with degree larger than $M$ and another endpoint at a vertex with degree at most $M$ are also removed,  the degrees in the remaining graph are random.
\begin{definition}[Configuration model under targeted attack]\label{def:cm-attack}
 Consider the configuration model $\mathrm{CM}(\underline d_n)$ in Definition \ref{def:CM}  on the degree sequence 
$\underline d_n=(d_1, \dots, d_n)$. Fix some value $M\ge 0$. Denote 
\begin{equation}\label{eq:M-vertices}
\begin{aligned}
\CV_{\le M}&:=\{i\le n: d_i \le M\}, \quad V_{\le M}:= |\CV_{\le M}|, \qquad H_{\le M}:=\sum_{i=1}^n d_i \ind_{\{ d_i\le M\}}, \\
\CV_{> M}&:=\{i\le n: d_i > M\}, \quad V_{> M}:= |\CV_{\ge M}|, \qquad H_{> M}:=\sum_{i=1}^n d_i \ind_{\{ d_i> M\}},
\end{aligned}
\end{equation}
Let $G_n[\CV_{\le M}]$ denote the (random) subgraph of $\mathrm{CM}(\underline d_n)$ that is spanned on the vertex set $\CV_{\le M}$. For any $v\in \CV_{\le M}$, we denote the \emph{random} degree of $v$ in $G_n[\CV_{\le M}]$ by $\widetilde d_v$, and we write $\widetilde n_i$ for the number of vertices with degree $i$ in $G_n[\CV_{\le M}]$. For any $z\ge 0$ define
\begin{equation}\label{eq:empirical-truncated}
\widetilde F_{n,M}(z):=\frac{1}{V_{\le M}}\sum_{v\in \CV_{\le M}} \ind_{\{\widetilde d_i\le z\}}= \frac{\sum_{i\le z}\widetilde n_i}{V_{\le M}},
\end{equation}
and let $\widetilde D_{n,M}$ denote a random variable with cdf $\widetilde F_{n,M}(z)$.
  \end{definition} 
  Observe that $\widetilde F_{n,M}(z)$ is the new empirical distribution of the degrees, after the targeted attack. This distribution is random, caused by the random matching that generated the graph before the attack.
  The quantities in \eqref{eq:M-vertices} all depend on $n$, which we suppress in notation.
We are ready to state the existence of the $k$-core in the configuration model under attack.

\begin{theorem}\label{prop:CMcore}
Consider the configuration model $\mathrm{CM}(\underline d_n)=:G_n$ in Definition \ref{def:CM} on the degree sequence 
$\underline d_n=(d_1, \dots, d_n)$ that satisfies the regularity assumptions in Assumption \ref{assu:regularity}, and the weak power-law empirical degrees of Assumption \ref{assu:empirical-power-law} with exponent $\tau\in(2,3)$ and error $\varepsilon>0$. 
Fix a large enough positive integer $k$ and define
\begin{equation}\label{eq:CMcore_eta}
\eta_{\min}:=\frac{(3-\tau)}{(3-\tau)-\varepsilon(\tau-1)}\cdot \frac{1+\varepsilon}{1-\varepsilon}-1.
\end{equation}
Assume $\tau, \varepsilon$ are such that $\eta_{\min}\in[0,\infty)$, and for any $\eta>\eta_{\min}$ let $M:=M_{k,\eta}=k^{(1+\eta)/(3-\tau)}$. Let $G_n[\CV_{\le M}]$ be the configuration model under attack in Definition \ref{def:cm-attack}, and denote by $\Core_{k}(G_n[\CV_{\le M}])$ its k-core. Then there exists some $\rho=\rho(k)>0$ such that
\begin{equation}\label{eq:CoreSize}
\lim_{n\to \infty} \P \Big( |\Core_{k}(G_n[\CV_{\le M}])| \ge \rho n \Big) =1, 
\end{equation}
Further, conditioned on its vertex set and degree sequence,  $\Core_{k}(G_n[\CV_{\le M}])$ is itself a configuration model.
\end{theorem}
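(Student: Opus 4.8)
\textbf{Proof strategy for Theorem \ref{prop:CMcore}.}
The plan is to use the classical result of Janson and Luczak \cite{janson2007simple} on $k$-cores in the configuration model, applied not to $\mathrm{CM}(\underline d_n)$ directly but to the attacked graph $G_n[\CV_{\le M}]$. The key point is that, conditioned on its own degree sequence, $G_n[\CV_{\le M}]$ is again a configuration model: this follows because a uniform pairing of half-edges, restricted to the pairs whose \emph{both} endpoints lie in $\CV_{\le M}$, is again a uniform pairing of those surviving half-edges. (This is a standard fact; one can justify it by revealing the matching in two stages, first exposing all pairs incident to $\CV_{>M}$, then matching the remainder uniformly.) Hence the final assertion of the theorem — that $\Core_k(G_n[\CV_{\le M}])$ is itself a configuration model conditioned on its vertex set and degree sequence — is immediate from the same two facts: the $k$-core of any configuration model is, conditioned on its vertex set and degree sequence, a uniform configuration model on that degree sequence (the peeling algorithm only uses degree information, not the identities of the partners), and we have just argued the attacked graph is a configuration model.

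The first substantive step is therefore to identify the limiting degree distribution $\widetilde D_{n,M}$ of $G_n[\CV_{\le M}]$ and show it converges (in probability, in the sense needed by \cite{janson2007simple}) to an explicit mixed-Poisson-thinned law. Concretely: each half-edge incident to a vertex in $\CV_{\le M}$ survives iff its partner also lies in $\CV_{\le M}$, which happens with asymptotic probability $p_n := H_{\le M}/h_n \to p_M$, independently across half-edges up to negligible correction; so a degree-$j$ vertex with $j \le M$ keeps a $\mathrm{Bin}(j,p_M)$ number of edges. Using Assumption \ref{assu:regularity} and the power-law Assumption \ref{assu:empirical-power-law} with $\tau\in(2,3)$, one computes $p_M = 1 - H_{>M}/h_n$ and, crucially, $H_{>M}/h_n \asymp M^{-(\tau-2)}$ up to the $\varepsilon$-errors; this is where the precise choice $M = k^{(1+\eta)/(3-\tau)}$ and the threshold $\eta_{\min}$ in \eqref{eq:CMcore_eta} enter, ensuring $p_M$ is close enough to $1$ (as a function of $k$) that a linear $k$-core persists. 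I would package this as a lemma: for $\eta > \eta_{\min}$ and $k$ large, $\widetilde F_{n,M} \to \widetilde F_M$ with $\widetilde F_M$ having a $(k-1)$-th factorial-moment / tail profile sufficient to trigger the Janson–Luczak threshold criterion.

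The second step is to verify the Janson–Luczak criterion for a nonempty (indeed linear) $k$-core for the limit law $\widetilde D_M$. Their criterion is phrased in terms of the function $\psi_k(\mu) := \mathbb{E}[\mathrm{Po}(\mu) \cdot \mathbbm{1}\{\mathrm{Po}(\mu)\ge k\}$-type quantities; one needs that there is a stable positive root of the relevant fixed-point equation, equivalently that a certain function exceeds $1$ somewhere. The heuristic in the introduction (``$\deg_{H_n}(u) r(u,v) \approx \lambda k^{1 - \mu/(3-\tau)}$'') is not what is used here — for the \emph{existence} of the core one only needs enough mass of vertices whose thinned degree stays $\ge k$, and since a vertex of degree $\Theta(M) = \Theta(k^{(1+\eta)/(3-\tau)})$ retains $\mathrm{Bin}(\Theta(M), p_M)$ edges with $p_M M \gg k$, such vertices abundantly survive the peeling. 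So the verification reduces to: (i) there is a positive-density set of vertices with post-attack mean degree $\gg k$, and (ii) the Janson–Luczak fixed point analysis then gives a stable root bounded away from the trivial one, yielding $\rho(k)>0$. I would also record the size bound $|\Core_k| \ge \rho n$ whp directly from their concentration statement.

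The main obstacle I anticipate is controlling the \emph{dependence} introduced by conditioning on $\CV_{\le M}$ — in particular showing $p_n \to p_M$ and that the surviving-half-edge indicators behave like independent $\mathrm{Bin}(j,p_M)$ with only $o(n)$-size corrections, uniformly over the (random) degree sequence — in the heavy-tailed regime $\tau\in(2,3)$, where $H_{>M}$ is dominated by a few extreme-degree vertices and has large fluctuations relative to its mean. One route is to truncate the very top degrees at some $n$-dependent level $\omega_n \to \infty$ slowly, handle their contribution to $H_{>M}$ by a second-moment or Chebyshev argument using the tail bound in Assumption \ref{assu:empirical-power-law}, and observe that the $k$-core existence criterion is monotone enough to absorb these negligible perturbations. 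A secondary technical point is that \cite{janson2007simple} requires a genuine degree \emph{sequence} (deterministic or with convergence in probability of the empirical law plus a uniform-integrability/second-moment condition); since $\widetilde F_{n,M}$ is random, I would condition on it, check the hypotheses hold on an event of probability $1 - o(1)$, and then remove the conditioning. Once these are in place, the theorem follows by assembling the lemmas.
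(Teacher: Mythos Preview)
Your overall architecture matches the paper's: show $G_n[\CV_{\le M}]$ is a configuration model with limiting degree law given by binomial thinning of $(D\mid D\le M)$ with retention probability $q_M=\E[D\ind_{\{D\le M\}}]/\E[D]$, then feed this into Janson--Luczak. The paper packages the first step exactly as you suggest (its Lemmas~\ref{lem:CM-attack-degrees} and~\ref{lem:truncated-power-law}).

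Two points where your proposal goes off track:

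\emph{The anticipated obstacle is a non-obstacle.} The truncation level $M=k^{(1+\eta)/(3-\tau)}$ is a \emph{constant} (depending on $k,\eta,\tau$ but not on $n$), and the degree sequences $\underline d_n$ are deterministic satisfying Assumption~\ref{assu:regularity}. Hence $p_n=H_{\le M}/h_n$ is deterministic and converges by Assumption~\ref{assu:regularity}(b). There is no issue with ``fluctuations of $H_{>M}$ dominated by extreme-degree vertices'', and no $n$-dependent secondary truncation $\omega_n$ is needed. The only randomness in $\widetilde F_{n,M}$ comes from the matching, and the paper handles this by a direct second-moment computation (Lemma~\ref{lem:CM-attack-degrees}), which is routine precisely because $M$ is fixed.

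\emph{You misidentify the origin of $\eta_{\min}$.} It does not come from requiring $p_M$ close to $1$; indeed $q_M\to 1$ as $M\to\infty$ along \emph{any} divergent sequence $M(k)$, so that is free. The constraint $\eta>\eta_{\min}$ arises in the verification of the Janson--Luczak criterion, which you treat too loosely. The paper shows (Lemma~\ref{lem:truncated-power-law}) that the limiting law $\widetilde D_M$ retains a power-law lower tail only on $[z_0,\widetilde z_{\max}(M)]$ with $\widetilde z_{\max}(M)\asymp M^{(1-\varepsilon)/(1+\varepsilon)}$, the exponent reflecting the $\varepsilon$-gap between the upper and lower tail bounds in Assumption~\ref{assu:empirical-power-law}. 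To find a $p$ with $\E[\widetilde D_M]p^2<h(\widetilde D_M,p)$, the paper restricts to vertices of degree $\ge \beta k^{\xi/(3-\tau)}$ and chooses $p\asymp k^{-(\xi/(3-\tau)-1)}$; comparing exponents forces $\xi\ge\xi_{\min}=(3-\tau)/((3-\tau)-\varepsilon(\tau-1))$, and the requirement $\beta k^{\xi/(3-\tau)}\le \widetilde z_{\max}(M)$ then translates into $\eta>\eta_{\min}$. Your heuristic ``vertices of degree $\Theta(M)$ keep $p_M M\gg k$ edges'' is correct but does not by itself locate the fixed point or explain why the specific formula~\eqref{eq:CMcore_eta} appears; the paper's explicit interval $I_p=[p_-,p_+]$ calculation is where the work happens.
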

\begin{remark}[Asymptotics of $\rho$]\label{rem:rho}
The proof of Theorem \ref{prop:CMcore} shows that there exists a constant $c'>0$ such that
\begin{equation}\label{eq:rho} 
\rho(k)> c' k^{-\tfrac{(\tau-1)(1+\varepsilon)}{(2-(\tau-1)(1+\varepsilon))}}.
\end{equation}
\end{remark}
Note that in this lower estimate only the lower bound exponent in Assumption \ref{assu:empirical-power-law} appears.
We comment that $\eta_{\min}\in [0, \infty)$ implies that $\varepsilon<(3-\tau)/(\tau-1)$, which is exactly the condition that the lower bound on the tail-exponent, $(\tau-1)(1+\varepsilon)$ in \eqref{eq:empirical-power-law}, stays strictly below $2$.
Hence a $k$-core exists for all $k$ when the estimates on the empirical power law are so that the tail is always heavier than a power law with infinite variance. Without the truncation at $M$, i.e., for pure power laws, such a result is already known, see \cite{janson2007simple} and \cite{fernholz2003giant}. Here we specify the truncation value $M$ for which the result stays valid. We comment that when $\varepsilon =0$ in Assumption \ref{assu:empirical-power-law}, then our proof can be strengthened so that $M=\Theta(k^{1/(3-\tau)})$ guarantees the existence of a large  $k$-core after the targeted attack. 

We will prove Theorem \ref{prop:CMcore} below using the following two lemmas and the results of Janson and Luczak \cite{janson2007simple} that we will state soon. The first lemma says that the random empirical distribution of $G_n[\CV_{\le M}]$ converges in probability, assuming the regularity assumptions on the original degrees. We use notation from Definition \ref{def:cm-attack}. Given the degree sequence $\underline d_n$, $D_n$ stands for the random variable that follows the empirical distribution $F_n$ of $\underline d_n$ in \eqref{eq:empirical-degree}, and $D$ is the random variable following the limiting distribution in Assumption \ref{assu:regularity}. Define then
\begin{equation}\label{eq:q-n-m}
q_{n,M}:=\E[D_n \ind_{\{D_n\le M\}}]/\E[D_n], \qquad q_M:=\E[D \ind_{\{D\le M\}}]/\E[D],\\
\end{equation}
and we collect the errors below $M$ between the $n$-dependent degree distribution $D_n$ and the limit $D$ as follows:
\begin{equation}\label{eq:delta-n-m}
\begin{aligned}
\delta_n:=\max\Big\{&|q_{n,M}/q_M-1|,\ |(1-q_{n,M})/(1-q_M)-1|, \\
&\ \max_{i\le M,  \P(D=0) } \P(D_n=i),  \max_{i\le M, \P(D=i)\neq 0}|\P(D_n=i)/\P(D=i)-1|\Big\},
\end{aligned}
\end{equation}
with $\delta_n\to 0$ when Assumption \ref{assu:regularity} holds. Typically, for $M$ large $q_M$ is close to $1$ so the relative error of $1-q_{M,n}$ to $1-q_M$ is driving the maximum in the first row, while the second row is only over values $i\le M$.
\begin{lemma}[Degree distribution of CM under attack]\label{lem:CM-attack-degrees}
Consider the configuration model $\mathrm{CM}(\underline d_n)$ in Definition \ref{def:CM}  on the degree sequence 
$\underline d_n=(d_1, \dots, d_n)$ that satisfies the regularity assumptions in Assumption \ref{assu:regularity}. Fix any $M>0$ constant, and let $q_{n,M}, q_M, \delta_n$ as in \eqref{eq:q-n-m}.
Define the following random variable $\widetilde D_M$: for all $i\le M$, let
\begin{equation}\label{eq:binom-thinning} 
\begin{aligned}
p_{M}(i)&:=\P(\widetilde D_M=i) =\sum_{j=i}^M \frac{\P(D=j)}{\P(D\le M)} \binom{j}{i}q_M^i(1-q_M)^{j-i}\\
&= \P( \mathrm{Bin}(D, q_M)=i \mid D\le M). 
\end{aligned}
\end{equation}
Let $X_{n,M}(i):=\widetilde n_i/n=\P(\widetilde D_{n,M}=i\mid G_n[\CV_{\le M}])$ be the random empirical degree distribution of $G_n[\CV_{\le M}]$.
Then for all $\varepsilon_n>0$ that satisfies $\varepsilon_n \gg \max\{\delta_n, 1/\sqrt{n}\}$, 
\begin{equation}\label{eq:convergence-in-P-of-mass}
\P\Big(\sup_{i\le M}\big| X_{n,M}(i)-p_M(i)\big| \ge \varepsilon_n \Big) = O\bigg(\frac{M^3}{n\varepsilon^{2}_n}\bigg)\to 0.
\end{equation}
Further, $\lim_{n\to \infty}\E[\widetilde D_{n,M}\mid G_n[\CV_{\le M}]]=\E[\widetilde D_M]$ in probability. So, the empirical degree distribution $\widetilde D_{n,M}$ of $G_n[\CV_{\le M}]$ satisfies Assumption \ref{assu:regularity} with probability tending to $1$. Furthermore, $G_n[\CV_{\le M}]$ is itself a configuration model on its vertex set, conditioned on the degrees of its vertices.
\end{lemma}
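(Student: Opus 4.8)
\textbf{Proof plan for Lemma \ref{lem:CM-attack-degrees}.}

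The strategy is to analyze the targeted attack directly on the half-edge pairing. Condition on the degree sequence $\underline d_n$; then $G_n[\CV_{\le M}]$ is obtained from $\mathrm{CM}(\underline d_n)$ by deleting all vertices in $\CV_{>M}$ together with all half-edges incident to them. From the point of view of a fixed vertex $v\in\CV_{\le M}$ with $d_v=j\le M$, each of its $j$ half-edges is matched to a uniformly chosen remaining half-edge, and it survives in $G_n[\CV_{\le M}]$ exactly when its partner half-edge belongs to a vertex in $\CV_{\le M}$. The fraction of half-edges belonging to $\CV_{\le M}$ is $H_{\le M}/\sum_i d_i = q_{n,M}$ (up to the negligible effect of excluding $v$'s own half-edges among the roughly $h_n$ many). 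So, conditionally, $\widetilde d_v$ is very close to a $\mathrm{Bin}(j,q_{n,M})$ random variable; the dependence between the $j$ half-edges of $v$ (sampling without replacement from the partner pool) is $O(M/h_n)$ in total variation and can be absorbed. First I would make this precise: show that the empirical count $\widetilde n_i$ has conditional expectation $\sum_{j\ge i} n\,\P(D_n=j)\cdot\P(\mathrm{Bin}(j,q_{n,M})=i)\cdot(1+O(M/n))$, and that by Assumption \ref{assu:regularity} (which controls $\P(D_n=i)$ for $i\le M$ and $q_{n,M}$ via $\delta_n$) this equals $n\,p_M(i)\,(1+o(1))$ up to an error of order $n\delta_n$ uniformly in $i\le M$.

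Next comes the concentration step, which I expect to be the main technical obstacle. The random variables $\widetilde n_i$ are functionals of the random pairing, and a clean way to get the stated $O(M^3/(n\eps_n^2))$ bound is to bound $\mathrm{Var}(\widetilde n_i\mid\underline d_n)$ and then apply Chebyshev together with a union bound over the at most $M+1$ values of $i$. To bound the variance, I would either (i) use the fact that a uniform pairing can be built sequentially and apply the Azuma--Hoeffding inequality to the Doob martingale obtained by revealing the partners of the half-edges one at a time — each revealed pair changes $\widetilde n_i$ by at most $2$, and there are at most $h_n\le C n$ pairs, giving a sub-Gaussian tail of width $O(\sqrt n)$, hence $\mathrm{Var}(\widetilde n_i)=O(n)$ — or (ii) compute the variance directly via the covariance of indicators over pairs of vertices, using that two distinct vertices have jointly nearly-independent survival patterns with correction $O(M^2/h_n)$. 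Either route yields $\mathrm{Var}(\widetilde n_i\mid\underline d_n)=O(n)$ (the cruder constant may carry an $M^2$ from the per-vertex degree bound), so $\P(|X_{n,M}(i)-p_M(i)|\ge\eps_n)\le \mathrm{Var}(\widetilde n_i)/(n^2\eps_n^2)+\mathbf 1\{\text{bias}\ge\eps_n/2\}$, and since $\eps_n\gg\max\{\delta_n,1/\sqrt n\}$ the bias term vanishes for large $n$; a union bound over $i\le M$ produces the factor $M$ in the numerator, and tracking the degree bound $d_v\le M$ in the martingale increments (increments of size $O(M)$ near high-degree-but-$\le M$ vertices) inflates this to the stated $M^3$. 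I would not optimize the power of $M$ beyond what the statement requires.

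Once \eqref{eq:convergence-in-P-of-mass} is established, the remaining claims follow quickly. Convergence $\E[\widetilde D_{n,M}\mid G_n[\CV_{\le M}]]\to\E[\widetilde D_M]$ in probability follows because $\widetilde D_{n,M}$ is supported on $\{0,\ldots,M\}$ with $M$ fixed, so the bounded random empirical mass function converging in probability uniformly (by \eqref{eq:convergence-in-P-of-mass}) forces convergence of its mean; similarly the limit $\widetilde D_M$ is a.s.\ finite with $\E[\widetilde D_M]\in(0,\infty)$ provided $p_M(1)+\cdots>0$ and $q_M>0$ (which hold for $M$ large since $\P(D\le M)\to 1$ and $\E[D]\in(0,\infty)$), so Assumption \ref{assu:regularity} is verified for $(\widetilde D_{n,M})$ with probability $\to 1$. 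Finally, that $G_n[\CV_{\le M}]$ is itself a configuration model conditioned on its degree sequence is the standard \emph{restriction principle} for configuration models: conditioning a uniform pairing of $h_n$ half-edges on the set of surviving (i.e.\ $\CV_{\le M}$-to-$\CV_{\le M}$) pairs and on the surviving-degree vector $(\widetilde d_v)_{v\in\CV_{\le M}}$, the induced pairing among the surviving half-edges is again uniform; I would cite this as folklore (e.g.\ in the spirit of the exploration arguments of Section \ref{sec:proofsCM-extinction}) and give the one-line symmetry argument that all matchings of the surviving half-edges consistent with the conditioning are equiprobable. This completes the proof.
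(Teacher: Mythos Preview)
Your proposal is correct, and via route~(ii) for the concentration step it is exactly the paper's argument: the paper computes~$\E[\widetilde n_i]$ by bounding~$\P(\widetilde d_v=i\mid d_v=\ell)$ above and below by~$\binom{\ell}{i}q_{n,M}^i(1-q_{n,M})^{\ell-i}(1+O(M^2/n))$ after controlling self-loops, compares to~$p_M(i)$ using~$\delta_n$, then bounds~$\mathrm{Var}(\widetilde n_i)$ directly through the pairwise covariances~$\P(\widetilde d_v=i,\widetilde d_u=i\mid d_v=\ell,d_u=\ell')-\P(\widetilde d_v=i\mid d_v=\ell)\P(\widetilde d_u=i\mid d_u=\ell')=O(M^2/n)$, and finishes with Chebyshev plus a union bound over~$i\le M$.

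One remark on your route~(i): it would also work, and is in fact sharper than you suggest. Revealing a single matched pair changes~$\widetilde n_i$ by at most~$2$, not~$O(M)$, since it affects the indicator~$\ind\{\widetilde d_v=i\}$ for at most two vertices; so the edge-exposure martingale gives~$\mathrm{Var}(\widetilde n_i)=O(n)$ with no~$M^2$ factor, hence the stronger bound~$O(M/(n\eps_n^2))$. The stated~$M^3$ in the lemma comes from the paper's direct covariance calculation, where the~$O(M^2/h_n)$ correction per pair of vertices contributes the extra~$M^2$.
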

The second lemma proves that the limiting degree distribution of $G_n[\CV_{\le M}]$ is a truncated weak power law with truncation close to $M$ if the original degree distribution satisfied the weak power law assumption. 
\begin{lemma}[Truncated power laws after targeted attack]\label{lem:truncated-power-law}
Consider the configuration model $\mathrm{CM}(\underline d_n)$ in Definition \ref{def:CM}  on the degree sequence 
$\underline d_n=(d_1, \dots, d_n)$ that satisfies the regularity assumptions in Assumption \ref{assu:regularity}, and the power-law empirical degrees of Assumption \ref{assu:empirical-power-law} with exponent $\tau$ and exponent-error $\varepsilon\ge 0$. Let $M>0$ be a constant (i.e., not depending on $n$, but it may depend on $\varepsilon$), and let  
\begin{equation}\label{eq:zmax-def}
\widetilde z_{\max}(M):=2^{-1}(c_\ell/(2c_u))^{\tfrac{1}{(\tau-1)(1+\varepsilon)}} M^{(1-\varepsilon)/(1+\varepsilon)}.
\end{equation}
Consider the limiting degree distribution $\widetilde F_M(z)=:\sum_{i\le z} p_M(i)$ in \eqref{eq:binom-thinning} of $G_n[\CV_{\le M}]$ in Lemma \ref{lem:CM-attack-degrees}. 
Then there exist constants $\widetilde c_\ell, \widetilde c_u, M_0$, such that whenever $M\ge M_0$,  for all $z\in [z_0, \widetilde z_{\max}(M)]$, it holds that
\begin{equation}\label{eq:truncated-power-law}
 \frac{\widetilde c_\ell}{z^{(\tau-1)(1+\varepsilon)}} \le 1-\widetilde F_{M}(z)  \le \frac{\widetilde c_u}{z^{(\tau-1)(1-\varepsilon)}}.
\end{equation} 
\end{lemma}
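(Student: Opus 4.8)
\textbf{Proof plan for Lemma~\ref{lem:truncated-power-law}.}
The plan is to transfer the weak power-law tail bounds from the original degree distribution~$D_n$ (equivalently, in the limit, from~$D$) through the binomial thinning in~\eqref{eq:binom-thinning}. First I would recall that, by Lemma~\ref{lem:CM-attack-degrees} and the remark that Potter-type slowly varying factors can be absorbed, the limiting distribution~$D$ itself satisfies~\eqref{eq:empirical-power-law} (with~$F_D$ in place of~$F_n$) for all fixed~$z\ge z_0$, so it suffices to work with the conditioned variable~$D \mid \{D\le M\}$ and its binomial-thinned version~$\widetilde D_M = \mathrm{Bin}(D,q_M)\mid\{D\le M\}$. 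Since~$M$ is a (large) constant and~$q_M = \E[D\ind_{\{D\le M\}}]/\E[D] \to 1$ as~$M\to\infty$, the thinning parameter is close to~$1$; the key quantitative input is that a~$\mathrm{Bin}(j,q_M)$ random variable concentrates around~$q_M j$ with Gaussian-type fluctuations of order~$\sqrt{j}$, so for~$z$ not too close to~$M$ (precisely, for~$z\le \widetilde z_{\max}(M)$ as defined in~\eqref{eq:zmax-def}) we have that~$\{D > z/q_M + \text{(buffer)}\}$ is, up to constants, comparable to~$\{\widetilde D_M > z\}$.

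The core computation has two directions. For the \textbf{lower bound} on~$1-\widetilde F_M(z)$: condition on~$D = j$ for a single well-chosen value~$j \asymp z$ (say~$j = \lceil 2z/q_M\rceil$, still below~$M$ because~$z\le\widetilde z_{\max}(M)$ forces~$j = O(M^{(1-\varepsilon)/(1+\varepsilon)}) \ll M$), use that~$\P(D=j)/\P(D\le M) \ge \tfrac12\P(D=j)$ and that~$\P(D=j) \gtrsim j^{-(\tau-1)(1+\varepsilon)-1}$ follows from the lower bound in~\eqref{eq:empirical-power-law} applied at~$j$ and~$2j$ (a one-line difference estimate: $\P(D=j)\ge \P(j\le D<2j)\ge c_\ell j^{-(\tau-1)(1+\varepsilon)} - c_u (2j)^{-(\tau-1)(1-\varepsilon)}$, and the first term dominates precisely on the range~$j \le \widetilde z_{\max}(M)$, which is why that cutoff appears), and finally use~$\P(\mathrm{Bin}(j,q_M) > z) \ge \P(\mathrm{Bin}(j,q_M) \ge j/2) \ge 1 - e^{-cj}$ by a Chernoff bound since~$q_M\ge 3/4$ for~$M$ large. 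Summing this single-$j$ contribution gives~$1-\widetilde F_M(z) \ge \widetilde c_\ell\, z^{-(\tau-1)(1+\varepsilon)}$. For the \textbf{upper bound}: write~$1-\widetilde F_M(z) = \P(\widetilde D_M > z) \le \P(D > z/2)/\P(D\le M) + \P(\mathrm{Bin}(D,q_M) > z,\ D\le z/2)$; the first term is~$\le 2c_u (z/2)^{-(\tau-1)(1-\varepsilon)} = \widetilde c_u' z^{-(\tau-1)(1-\varepsilon)}$ by the upper bound in~\eqref{eq:empirical-power-law} (valid for all~$z\ge z_0$), and the second term is super-polynomially small in~$z$ by a Chernoff bound for~$\mathrm{Bin}(j,q_M)$ with~$j\le z/2 < z/q_M$, hence negligible; choosing~$\widetilde c_u$ slightly larger absorbs it. Adjusting~$z_0$ and the constants~$\widetilde c_\ell,\widetilde c_u$ and picking~$M_0$ large enough that~$q_{M}\ge 3/4$ and~$\widetilde z_{\max}(M)\ge z_0$ completes the argument.

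The main obstacle I anticipate is bookkeeping the interplay between the three scales~$z$,~$M$, and the buffer region near~$M$: one must verify that the value~$j\asymp z$ used in the lower bound genuinely stays below~$M$ (this is exactly what the exponent~$(1-\varepsilon)/(1+\varepsilon)$ and the constant~$2^{-1}(c_\ell/(2c_u))^{1/((\tau-1)(1+\varepsilon))}$ in~\eqref{eq:zmax-def} are engineered to guarantee), and that the lower tail bound~$\P(D=j)\gtrsim j^{-(\tau-1)(1+\varepsilon)-1}$ is not killed by the subtracted term~$c_u(2j)^{-(\tau-1)(1-\varepsilon)}$ — the difference~$(\tau-1)(1+\varepsilon)$ versus~$(\tau-1)(1-\varepsilon)$ in the two exponents means the subtracted term is smaller only when~$j$ is below a threshold that is itself a power of~$M$, and one must check this threshold is~$\ge \widetilde z_{\max}(M)$. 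Everything else is routine Chernoff bounding and constant-chasing. I would also remark that when~$\varepsilon = 0$ all the~$(1\pm\varepsilon)$ factors collapse,~$\widetilde z_{\max}(M) = \Theta(M)$, and the argument simplifies to a clean statement that the thinned distribution is a power law truncated at~$\Theta(M)$ with the same exponent~$\tau-1$; this is the version invoked in the~$\varepsilon=0$ strengthening mentioned after Theorem~\ref{prop:CMcore}.
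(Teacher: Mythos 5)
Your upper bound is fine (in fact easier than you claim: on the event $\{D\le z/2\}$ one has $\mathrm{Bin}(D,q_M)\le D\le z/2<z$ deterministically, so your second term vanishes; the paper gets the same bound even more directly from the stochastic domination $\widetilde D_M\preccurlyeq (D\mid D\le M)$ and the identity $\P(D>z\mid D\le M)=\frac{(1-F_D(z))-(1-F_D(M))}{F_D(M)}$). The genuine gap is in your lower bound. First, the ``one-line difference estimate'' $\P(D=j)\ge\P(j\le D<2j)$ has the containment backwards: $\{D=j\}\subset\{j\le D<2j\}$, so the inequality goes the other way, and in fact the weak power-law Assumption \ref{assu:empirical-power-law} gives \emph{no} lower bound on any individual point mass (the limiting law could put zero mass at your chosen $j=\lceil 2z/q_M\rceil$, e.g.\ if $D$ is supported on even integers). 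Second, even after repairing this to an interval estimate, the bound $\P(j\le D<2j)\ge c_\ell j^{-(\tau-1)(1+\varepsilon)}-c_u(2j)^{-(\tau-1)(1-\varepsilon)}$ is useless for $\varepsilon>0$: the subtracted term has the \emph{smaller} exponent, so it dominates as soon as $j$ exceeds a constant depending only on $c_\ell,c_u,\tau,\varepsilon$ — not a power of $M$ — so the check you flag as the ``main obstacle'' actually fails, and the threshold is nowhere near $\widetilde z_{\max}(M)$.

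The reason $\widetilde z_{\max}(M)\asymp M^{(1-\varepsilon)/(1+\varepsilon)}$ appears is different from what you suggest: one must compare the lower tail bound at scale $z$ with the upper tail bound at scale $M$, not at scale $2z$. Concretely, the paper bounds
\begin{equation*}
\P(D\ge 2z\mid D\le M)\;\ge\;\P(D\ge 2z)-\P(D>M)\;\ge\;c_\ell(2z)^{-(\tau-1)(1+\varepsilon)}-c_uM^{-(\tau-1)(1-\varepsilon)},
\end{equation*}
and the first term dominates precisely when $z\lesssim M^{(1-\varepsilon)/(1+\varepsilon)}$, which is where \eqref{eq:zmax-def} comes from. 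It then uses that $j\mapsto\mathrm{Bin}(j,q_M)$ is stochastically increasing to write $\P(\widetilde D_M\ge z)\ge\P(\mathrm{Bin}(2z,q_M)\ge z)\,\P(D\ge 2z\mid D\le M)$, and controls the binomial factor by a one-line Markov bound on the complement, $\P(\mathrm{Bin}(2z,1-q_M)>z)\le 2(1-q_M)\le 1/2$ for $M\ge M_0$, using $1-q_M=O(M^{1-(\tau-1)(1-\varepsilon)})$. So the correct argument works with the whole conditional tail $\{D\ge 2z\}$ rather than a single value of $D$; if you replace your single-$j$ conditioning by this tail-plus-monotonicity step, the rest of your plan (constant chasing, choice of $M_0$, the $\varepsilon=0$ remark) goes through.
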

The proof shows that $\widetilde c_\ell = c_\ell 2^{-(\tau-1)(1+\varepsilon)-2}$ and $\widetilde c_u=2c_u$ are valid choices (although they may not be optimal).
Since the proofs of Lemmas \ref{lem:CM-attack-degrees} and \ref{lem:truncated-power-law} are fairly standard, we provide them in the Appendix on pages \pageref{proof:CM-attack-degrees} and \pageref{proof:truncated-power-law}.

With these lemmas at hand, the proof of Theorem \ref{prop:CMcore} relies on the result of Janson and Luczak \cite{janson2007simple}, describing the $k$-core of the configuration model. To state this result, we introduce some notation.

For a random variable $D$ and $p\in[0,1]$, we let $X_{D,p}$ denote a random variable with Binomial($D,p$) distribution. That is,
\[\P(X_{D,p}=r)=\sum_{l=r}^\infty \P(D=l)\binom{l}{r}p^r(1-p)^{l-r}.\]
We then define the following functions:
\begin{equation}\label{eq:hdp}
h(D,p):=\E[X_{D,p}\ind\{X_{D,p}\ge k\}], \qquad
h_1(D,p):=\P(X_{D,p}\ge k).
\end{equation}

Note that both $h$ and $h_1$ are increasing in $p$, and $h(D,0)=h_1(D,0)=0$. Moreover, $h(D,1)=\E[D\ind\{D\ge k\}]\le \E[D]$, and $h_1(D,1)=\P(D\ge k)\le 1$.

Then the theorem of Janson and Luczak is as follows. They use the same regularity Assumption \ref{assu:regularity} as we do.

\begin{theorem}[Theorem 2.3 in \cite{janson2007simple}]\label{thm:JansonLuczak}
Consider the configuration model $G_n:=\mathrm{CM}(\underline d_n)$ in Definition \ref{def:CM} on the degree sequence 
$\underline d_n=(d_1, \dots, d_n)$ that satisfies the regularity assumptions in Assumption \ref{assu:regularity}. 
For $k\ge 2$ be fixed, let $\Core_k:=\Core_k(G_n)$ be the $k$-core of $G_n$. Let
\begin{equation}\label{eq:p-hat} \hat p:= \max\{ p\le 1: \E[D] p^2=h(D,p) \}.\end{equation}
Then, if $\hat{p}>0$ and $\E[D] p^2<h(D,p)$ for $p$ in some interval $(\hat{p}-\epsilon,\hat{p})$, then $\Core_k(G_n)$ is non-empty whp, and 
\begin{equation}\label{eq:linear-core-Janson}
|\CV(\Core_k)|/n\ {\buildrel \mathbb P \over \longrightarrow}\  h_1(D,\hat{p}), \qquad |\CE(\Core_k)|/n {\buildrel \mathbb P \over \longrightarrow}\ h(D,\hat{p})/2=\E[D] \hat{p}^2/2.
\end{equation}
\end{theorem}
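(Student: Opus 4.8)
The plan is to follow the half-edge-removal and fluid-limit strategy of Janson and Luczak \cite{janson2007simple}, in three stages.

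\emph{Stage 1: reformulation via peeling, realised on half-edges.} The $k$-core of any finite graph is produced by the \emph{peeling algorithm}: repeatedly delete a vertex of current degree $<k$ together with its incident edges, until no such vertex remains; the output does not depend on the order of deletions. On $G_n=\mathrm{CM}(\underline d_n)$, deleting a vertex means deleting all of its half-edges and freeing their partners, which may in turn force further vertices below degree $k$, triggering a cascade. Hence peeling can be run as a sequence of half-edge removals, performed under the principle of deferred decisions: the uniform random pairing is not revealed in advance, and the partner of a half-edge is revealed only when that half-edge is removed. As a byproduct, conditionally on the set of surviving vertices and the number of surviving half-edges at each of them, the pairing restricted to the surviving half-edges is still uniform; this yields the last assertion, that $\Core_k(G_n)$ is itself a configuration model given its degree sequence.

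\emph{Stage 2: the Markov chain and its fluid limit.} Organise the removals so that at each step we pick a uniformly chosen not-yet-removed half-edge still attached to a currently alive vertex, reveal its partner and remove both; if this brings some alive vertex below $k$ half-edges, that vertex dies and all its remaining half-edges are removed, recursively. Let $t$ count removed half-edges and let $N_i(t)$ be the number of alive vertices with exactly $i\ge k$ not-yet-removed half-edges. By deferred decisions, the increments of $(N_i(t))_{i\ge k}$ given the past depend only on the current state. A concentration argument — Wormald's differential equation method, or the direct martingale estimate of \cite{janson2007simple}, using Assumption \ref{assu:regularity} to control the relevant quadratic variations — shows that, uniformly in $t$ up to the stopping time, $N_i(t)/n$ is $o_\P(1)$-close to a deterministic curve. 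Parametrising by $p\in(0,1]$ decreasing from $1$ (with $1-p$ essentially the fraction of half-edges already removed), this curve is the Binomial thinning
\begin{equation*}
  N_i(t)/n \;\approx\; \sum_{j\ge i}\P(D=j)\binom{j}{i}p^i(1-p)^{j-i} \;=\; \P(X_{D,p}=i)\qquad (i\ge k),
\end{equation*}
so the number of alive half-edges is $\approx n\,h(D,p)$ and the number of alive vertices is $\approx n\,h_1(D,p)$, with $h,h_1$ as in \eqref{eq:hdp}. (This is the untruncated analogue of Lemma \ref{lem:CM-attack-degrees}.)

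\emph{Stage 3: identifying the stopping value and reading off the answer.} The peeling terminates — and the alive graph is then the $k$-core — when the process becomes self-sustaining, i.e. no half-edge is forced to be removed. In the fluid limit, the self-consistency condition for termination is that the number of alive half-edges is compatible with a uniform pairing of the alive half-edges, which reduces to $\E[D]\,p^2=h(D,p)$. Running $p$ down from $1$, the deterministic trajectory is followed until it first reaches a solution of this equation, namely $\hat p$ (the largest root $\le 1$), which by hypothesis is positive. At termination ($p=\hat p+o_\P(1)$) the alive-vertex set equals $\Core_k(G_n)$, so Stages 2--3 give
\begin{equation*}
  |\CV(\Core_k)|/n \toinp h_1(D,\hat p), \qquad |\CE(\Core_k)|/n \toinp h(D,\hat p)/2 = \E[D]\,\hat p^{2}/2,
\end{equation*}
the last equality from the fixed-point equation, and non-emptiness follows since $\hat p>0$ forces $h_1(D,\hat p)>0$.

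The main obstacle is justifying Stage 3 rigorously: one must show the discrete peeling halts within $o_\P(n)$ of the fluid-limit prediction $p=\hat p$ — neither earlier, at a spurious fluctuation into the region where the removal drift vanishes, nor later. This is exactly where the hypotheses $\hat p>0$ and $\E[D]p^2<h(D,p)$ on some interval $(\hat p-\epsilon,\hat p)$ are used: they force $\hat p$ to be a \emph{transversal} first crossing rather than a tangency, so the strict inequality just below $\hat p$ makes the removal drift push the process away there (it cannot continue past $\hat p$), while above $\hat p$ the process cannot yet have stopped; turning this into a halting statement needs a careful stopping-time/barrier argument on the martingale increments of $(N_i(t))_{i\ge k}$. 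The other delicate point is the uniform concentration of Stage 2 over the whole, linearly long, trajectory, handled by bounding the quadratic variation of the governing martingales via Assumption \ref{assu:regularity}.
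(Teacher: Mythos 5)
First, a point of reference: the paper does not prove this statement at all — it is imported verbatim as Theorem 2.3 of Janson and Luczak \cite{janson2007simple}, and the only internal argument in its vicinity is the one-paragraph sketch of the minor extension in Claim \ref{claim:analytic-fixpoint}, which itself defers to the proof in \cite{janson2007simple}. So the relevant question is whether your sketch would reconstruct that cited proof, and as written it would not. The concrete flaw is in your Stage 2 removal rule: you pick a uniformly chosen half-edge \emph{attached to a currently alive vertex} (i.e.\ one with at least $k$ remaining half-edges), reveal its partner and delete both. The $k$-core peeling never initiates removals at heavy vertices; the only removals are of half-edges sitting at light/dead vertices (degree $<k$), with the forced deletion of their uniformly random partners driving the cascade. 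Your process as described erodes the core itself, has no intrinsic stopping rule (if $\min_i d_i\ge k$ it would still keep deleting, although the core is the whole graph), and therefore its fluid limit does not terminate at $\hat p$ — which makes Stage 3's ``terminates when no half-edge is forced to be removed'' inconsistent with Stage 2, where the primary removals are voluntary. Relatedly, the fixed-point equation $\E[D]p^2=h(D,p)$ is asserted rather than derived: the heuristic ``the alive half-edges must be compatible with a uniform pairing'' would more naturally produce $\E[D]p=h(D,p)$; the square arises because a half-edge survives only if \emph{both} it and its partner escape the thinning, which is exactly the bookkeeping of the Janson--Luczak death-process construction that your sketch skips.

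Beyond that, the two items you flag as ``the main obstacles'' — uniform concentration of the degree counts along a trajectory of linear length, and the barrier/stopping-time argument showing the peeling halts at $\hat p+o_\P(1)$, which is precisely where the hypotheses $\hat p>0$ and $\E[D]p^2<h(D,p)$ on $(\hat p-\epsilon,\hat p)$ enter — are not side issues: they \emph{are} the proof in \cite{janson2007simple}. (Incidentally, Janson and Luczak's point is that one can avoid Wormald's differential-equation method altogether, running a continuous-time death process on half-edges and using an exchangeability/law-of-large-numbers argument for the empirical degree counts.) Your Stage 1 observation that, conditionally on its degree sequence, the surviving graph is again a configuration model is correct and is the same one-line argument the paper uses at the end of the proof of Theorem \ref{prop:CMcore}. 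But with the peeling dynamics misstated and the convergence-and-halting analysis left as a remark, the proposal identifies the right framework without supplying a proof of \eqref{eq:linear-core-Janson}.
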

We first need a small extension of this theorem.
\begin{claim}\label{claim:analytic-fixpoint}
Suppose there is a value $p_-$ where $\E[D]p_-^2<h(D,p_-)$ holds (see below \eqref{eq:p-hat}). Then, there is a non-zero fixed point $p_\star>p_-$ of \eqref{eq:p-hat} so that in the interval $(p_\star, p_\star-\eps)$ the inequality $\E[D]p^2<h(D,p)$ holds. Then, $\mathrm{Core_k}(G_n)$  is non-empty and 
\begin{equation}\label{eq:k-core-minimal}
\begin{aligned}
\P&\Big(|\CV(\mathrm{Core_k})|/n \le h_1(D, p_-) (1-\eps)\Big)\\
&\le \P\Big(|\CV(\mathrm{Core_k})|/n \le h_1(D, p_\star) (1-\eps)\Big) \to 0 
\end{aligned}
\end{equation}
as $n\to \infty$.
\end{claim}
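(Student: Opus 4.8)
\textbf{Proof plan for Claim \ref{claim:analytic-fixpoint}.}

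The plan is to deduce the statement from Theorem \ref{thm:JansonLuczak} by producing a fixed point $p_\star$ of the equation $\E[D]p^2 = h(D,p)$ that lies strictly above $p_-$ and has the required local sign condition, and then using monotonicity of $h_1$ to push the lower bound all the way down to $p_-$. First I would recall the elementary properties of the two sides of the equation: the function $p\mapsto \E[D]p^2$ is continuous, strictly increasing on $[0,1]$, equals $0$ at $p=0$; the function $p \mapsto h(D,p)$ is continuous and non-decreasing on $[0,1]$, equals $0$ at $p=0$, and satisfies $h(D,1)=\E[D\ind\{D\ge k\}] \le \E[D] = \E[D]\cdot 1^2$. (Continuity of $h(D,\cdot)$ follows from dominated convergence since $X_{D,p}\ind\{X_{D,p}\ge k\}\le D$ and the Binomial probabilities depend continuously on $p$.) Write $\phi(p):= h(D,p) - \E[D]p^2$. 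Then $\phi(0)=0$, $\phi(1)\le 0$, and by hypothesis there is a point $p_-\in(0,1)$ with $\phi(p_-)>0$.

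Next I would locate $p_\star$. Since $\phi(p_-)>0$ and $\phi(1)\le 0$, by the intermediate value theorem $\phi$ has a zero in $(p_-,1]$; let $p_\star := \max\{p\le 1 : \phi(p)=0\}$, which is well-defined because the zero set of the continuous function $\phi$ is closed and non-empty in $[p_-,1]$, and moreover $p_\star \ge p_-$; in fact $p_\star>p_-$ because $\phi(p_-)>0\neq 0$, so $p_-$ is not a zero. By maximality of $p_\star$, $\phi$ has no zero in $(p_\star,1]$, hence $\phi$ has constant sign there; since $\phi(1)\le 0$ and (if $\phi(1)=0$ we would contradict maximality unless $p_\star=1$, in which case take the argument on a left neighbourhood) one checks that $\phi<0$ on $(p_\star,1]$ when $p_\star<1$. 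Consequently, immediately to the left of $p_\star$ the function $\phi$ must be positive on some interval $(p_\star-\epsilon,p_\star)$: indeed, by definition of $\hat p$ in \eqref{eq:p-hat} and the remark in Theorem \ref{thm:JansonLuczak} one needs $\E[D]p^2<h(D,p)$, i.e. $\phi(p)>0$, on such a left neighbourhood — this holds because $\phi$ changes from positive (near $p_-$, or just left of $p_\star$ by continuity since $\phi$ cannot be $\le 0$ on a whole left neighbourhood without creating another zero between $p_-$ and $p_\star$, contradicting $\phi(p_-)>0$ and the IVT placing a zero strictly between) to the value $0$ at $p_\star$. This $p_\star$ therefore satisfies exactly the hypotheses imposed on $\hat p$ in Theorem \ref{thm:JansonLuczak}.

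Applying Theorem \ref{thm:JansonLuczak} with this $p_\star$ playing the role of $\hat p$ gives that $\Core_k(G_n)$ is non-empty whp and $|\CV(\Core_k)|/n \to h_1(D,p_\star)$ in probability. In particular, for any fixed $\epsilon>0$,
\begin{equation*}
\P\Big( |\CV(\Core_k)|/n \le h_1(D,p_\star)(1-\epsilon) \Big) \to 0 \qquad (n\to\infty),
\end{equation*}
which is the second inequality in \eqref{eq:k-core-minimal}. For the first inequality, I would use that $h_1(D,\cdot)=\P(X_{D,\cdot}\ge k)$ is non-decreasing in its second argument (a larger success probability stochastically increases the Binomial, hence increases the probability of exceeding the threshold $k$); since $p_-<p_\star$ we get $h_1(D,p_-)\le h_1(D,p_\star)$, hence $h_1(D,p_-)(1-\epsilon)\le h_1(D,p_\star)(1-\epsilon)$, and therefore the event $\{|\CV(\Core_k)|/n \le h_1(D,p_-)(1-\epsilon)\}$ is contained in $\{|\CV(\Core_k)|/n \le h_1(D,p_\star)(1-\epsilon)\}$. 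Combining the two displays yields \eqref{eq:k-core-minimal}.

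The main obstacle here is the careful bookkeeping of the sign of $\phi$ near $p_\star$: one must argue that a genuine "upcrossing" configuration is present, i.e. that $\phi>0$ on a left neighbourhood of $p_\star$, rather than $\phi$ merely touching $0$ from below; this is what makes $p_\star$ a legitimate substitute for $\hat p$ in Janson–Luczak's theorem (their hypothesis requires $\E[D]p^2<h(D,p)$ on $(\hat p-\epsilon,\hat p)$, not just $\hat p$ being the largest root). Once that local picture is secured, everything else is monotonicity of $h_1$ and a direct quotation of Theorem \ref{thm:JansonLuczak}. I would also note that if $\phi(1)=0$ with no zero in $(p_-,1)$, then $p_\star=1$; but then $\phi>0$ on $(p_-,1)$ by IVT reasoning, so a left neighbourhood of $1$ still works, and the argument goes through verbatim.
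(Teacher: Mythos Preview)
Your argument has a genuine gap at the step you yourself flag as the ``main obstacle''. You set $p_\star:=\max\{p\le 1:\phi(p)=0\}$, i.e.\ $p_\star=\hat p$, and then try to show $\phi>0$ on a left neighbourhood of $p_\star$. Your justification is that ``$\phi$ cannot be $\le 0$ on a whole left neighbourhood without creating another zero between $p_-$ and $p_\star$''. But creating such a zero is not a contradiction: nothing prevents $\phi$ from being positive on $(p_-,a)$, crossing to negative at some $a<\hat p$, and then touching zero from below at $\hat p$. In that scenario $\hat p$ is still the maximal zero, yet $\phi<0$ on a left neighbourhood of $\hat p$, and Theorem~\ref{thm:JansonLuczak} as stated does \emph{not} apply. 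So your proposed direct quotation of Theorem~\ref{thm:JansonLuczak} is not justified.

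The paper's proof takes a different route precisely to avoid this issue. It does not claim that the maximal zero $\hat p$ has the crossing property. Instead, it observes that since $\phi(p_-)>0$ and $\phi(1)\le 0$, there is \emph{some} zero $p_\star>p_-$ at which $\phi$ genuinely crosses from positive to nonpositive (take, e.g., the smallest zero above $p_-$). This $p_\star$ may be strictly smaller than $\hat p$, so the \emph{statement} of Theorem~\ref{thm:JansonLuczak} does not apply. The paper therefore appeals to the \emph{proof} in \cite{janson2007simple}: the value $p_-$ with $f(p_-)=\E[D]p_-^2-h(D,p_-)<0$ gives an upper bound $t_2=-\log p_-$ on the stopping time of the continuous-time pruning algorithm, which in turn yields a \emph{lower} bound $h_1(D,p_\star)$ on the asymptotic core density (one loses the matching upper bound, which is fine here). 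The monotonicity step $h_1(D,p_-)\le h_1(D,p_\star)$ you give is correct and is also what the paper uses. To repair your argument you would either need to redefine $p_\star$ as a crossing zero (not the maximal one) and then, like the paper, go inside the Janson--Luczak proof rather than quote its statement, or supply an independent reason why for these particular functions the maximal zero always satisfies the left-neighbourhood condition.
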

\begin{proof}[Sketch of proof]
The first statement, namely that $p^\star$ exists, follows from the continuity of the function $\E[D]p^2-h(D,p)$.  The second statement, that the size of the $k$-core is at least $h_1(D, p_\star)(1-\eps)$, follows from the proof of \cite[Theorem 2.3]{janson2007simple}. Namely, the only case where the proof of \cite[Theorem 2.3]{janson2007simple} does not apply directly is where the function  $f(p)=E[D]p^2-h(D,p)$ does not cross the $0$-line at its maximal $0$ but rather, it touches it. Nevertheless, if one finds a smaller value $p_-$ where the function $f(p)$ is in the negative, it implies that there must be zero-point $p_\star$ of $f$ where the function crosses the $0$ level line. In this case the proof there yields that the density of the $k$-core is at least $h_1(D, p_\star)$, hence \eqref{eq:k-core-minimal} holds. This can be found on \cite[page 56-57]{janson2007simple}, where a value $p_-$ for which $f(p_-)<0$ implies the upper bound on the stopping time of a pruning algorithm generating the $k$-core for \eqref{eq:linear-core-Janson}. That is, the continuous time pruning algorithm of sequentially removing vertices of degrees at least $k$ and their outgoing edges is guaranteed to stop by time $t=-\log (p_-)$, i.e., one can set $t_2=-\log(p_-)$ at the bottom \cite[page 9]{janson2007simple}. An upper bound on the stopping of the pruning algorithm gives a lower bound on the number of remaining vertices forming the $k$-core. In our case, by not knowing whether $p_-$ is adjacent to the maximal fixed point and whether $f$ touches or crosses $0$ there, we lose the upper bound on the $k$-core size.
\end{proof}

With Lemmas \ref{lem:CM-attack-degrees} and \ref{lem:truncated-power-law} at hand, we are ready to prove Theorem \ref{prop:CMcore} by checking the conditions of Theorem \ref{thm:JansonLuczak} and Claim \ref{claim:analytic-fixpoint}.

\begin{proof}[Proof of Theorem \ref{prop:CMcore}]
First, we prove \eqref{eq:CoreSize} holds: we will check that the conditions of Theorem \ref{thm:JansonLuczak}  hold for $G_n[\CV_{\le M}]$ with probability tending to $1$. First, Lemma \ref{lem:CM-attack-degrees} implies that $G_n[\CV_{\le M}]$ is a configuration model (conditioned on its vertices and their degrees), and its (random) degree sequence satisfies Assumption \ref{assu:regularity} with probability tending to $1$. We will use the notations of Lemmas \ref{lem:CM-attack-degrees} -- \ref{lem:truncated-power-law}, so, $\widetilde D_M$ denotes the limiting degree distribution of $G_n[\CV_{\le M}]$.
Since $h(\widetilde D_M,p)$ in \eqref{eq:hdp} is a continuous function of $p$, it is enough for us to find a particular choice of $p$ with $\E[\widetilde D_M]p^2<h(\widetilde D_M,p)$. Based on the tail probabilities of $\widetilde D_M$ from \eqref{eq:truncated-power-law}, in particular the exponent $\tau\in(2,3)$ and the constant $\widetilde c_\ell$ in the lower bound, which holds for $z\in[z_0,\widetilde z_{\max}(M)]$ with $\widetilde z_{\max}(M)$ defined in \eqref{eq:zmax-def}, our goal is to find two positive constants $a_-< a_+$ and $\xi>3-\tau$ and an interval 
\begin{equation}\label{eq:CMcorep}
I_p:=[p_-, p_+]:=\Big[ a_- k^{-(\xi/(3-\tau)-1)}, a_+ k^{-(\xi/(3-\tau)-1)}\Big].
\end{equation}
We will show that when $p\in I_p$, then $\E[\widetilde D_M]p^2<h(\widetilde D_M,p)$. Using \eqref{eq:hdp}, and that $X_{l_1, p}$ stochastically dominates $X_{l_2, p}$ when $l_2>l_1$, we estimate, for some constant $\beta$ and exponent $\xi>3-\tau$ to be chosen later,  \begin{align}
h(\widetilde D_M,p)&=\sum_{l=k}^M\P(\widetilde D_M=l)\sum_{r=k}^l r\!\cdot\!\P(X_{l,p}=r)\nonumber\\
&\ge\sum_{l=\beta k^{\xi/(3-\tau)}}^M\P(\widetilde D_M=l)\E\big[X_{l,p}\ind\{X_{l,p}\ge k\}\big]\nonumber\\
&\ge\P\big(\widetilde D_M\ge \beta k^{\xi/(3-\tau)}\big)\E\big[X_{ \beta k^{\xi/(3-\tau)},p}\ind\{X_{\beta k^{\xi/(3-\tau)},p}\ge k\}\big].\label{eq:CMcore}
\end{align}
We bound the first factor on the rhs of \eqref{eq:CMcore} first. 
Recalling that the tail-bound on $\P(\widetilde D_M>z)$ in \eqref{eq:truncated-power-law}, we get
\begin{equation}\label{eq:CMcore1}
\P\big(\widetilde D_M\ge \beta k^{\xi/(3-\tau)}\big)\ge \widetilde c_\ell \left(\beta k^{\xi/(3-\tau)}\right)^{-(\tau-1)(1+\varepsilon)},
\end{equation}
on the condition that  $\beta k^{\xi/(3-\tau)}\le \widetilde z_{\mathrm{max}}(M)$ which we now check. (This is the place where we need that the truncation point $M$ is high enough.) We expand $\widetilde z_{\mathrm{max}}(M)$ in  \eqref{eq:zmax-def} as a function of $k$ using that $M=k^{(1+\eta)/(3-\tau)}$ below \eqref{eq:CMcore_eta}. We write $\widetilde C$ for the prefactor in \eqref{eq:zmax-def} that only depends on $c_\ell, c_u, \tau,\varepsilon$:
\begin{equation}\label{eq:zmax2}
\widetilde z_{\mathrm{max}}(M)=\widetilde C M^{(1-\varepsilon)/(1+\varepsilon)}=\widetilde C k^{\left(\frac{1+\eta}{3-\tau}\right)(1-\varepsilon)/(1+\varepsilon)}.
\end{equation}
Treating $\beta, \widetilde C$ as constants while $k$ can be chosen arbitrarily large, the rhs of \eqref{eq:zmax2} is larger than $\beta k^{\xi/(3-\tau)}$ for all sufficiently large $k$ when
\begin{equation}\label{eq:eta-xi-relation}
\xi<(1+\eta)(1-\varepsilon)/(1+\varepsilon),
\end{equation}
which shall lead to the assumption that $\eta>\eta_{\min}$ in \eqref{eq:CMcore_eta} shortly.
Next, we bound the second factor on the rhs of \eqref{eq:CMcore}. For any variable $X$ it holds that 
$\E[X \ind_{\{X\ge k\}}]= \E[X] -\E[X \ind_{\{X< k\}}]\ge  \E[X] - k \P(X<k)$.
In  \eqref{eq:CMcore} $X\sim\Bin( \beta k^{\xi/(3-\tau)},p)$, and with the choice $a_-:=2/\beta$, we can lower bound its mean using that $p>p_-$ in \eqref{eq:CMcorep} as $\beta k^{\xi/(3-\tau)}p>2k^{\xi/(3-\tau)}k^{-(\xi/(3-\tau)-1)}=2k$. Hence, a Chernoff bound applies and we obtain that
\begin{equation}\label{eq:CMcoreChernoff}
k\P\big(X_{ \beta k^{\xi/(3-\tau)},p}<k\big)\le k\exp\left(-\beta k^{\xi/(3-\tau)}p/8\right)\le k\exp\left(-k/4\right),
\end{equation}
for all $p>p_-$ in \eqref{eq:CMcorep}. Using again that $p>p_-$ implies $\beta k^{\xi/(3-\tau)}p\ge 2k$, the second factor in \eqref{eq:CMcore} can be bounded from below for all sufficiently large $k$ as
\begin{align}
\E[X_{ \beta k^{\xi/(3-\tau)},p}\ind\{X_{\beta k^{\xi/(3-\tau),p}
}\ge k\}]&\ge \beta k^{\xi/(3-\tau)}p-k\exp\left(-k/4\right)\ge\beta k^{\xi/(3-\tau)}p/2.
\label{eq:CMcore4}
\end{align}
Substituting \eqref{eq:CMcore1} and \eqref{eq:CMcore4} into \eqref{eq:CMcore} gives, for all $\beta$, $p>p_-$ in \eqref{eq:CMcorep} and all $\xi>3-\tau>0$ that
\begin{equation}\label{eq:CMcore5}
h(\widetilde D_M,p)\ge (\widetilde c_\ell/2)\cdot  \beta^{1-(\tau-1)(1+\varepsilon)}k^{(1-(\tau-1)(1+\varepsilon))\xi/(3-\tau)}p.
\end{equation}
Thus, $h(\widetilde D_M,p)>\E[\widetilde D_M]p^2$ holds when
\begin{equation}\label{eq:CMcore6}
(\widetilde c_\ell/2)  \beta^{1-(\tau-1)(1+\varepsilon)}k^{(1-(\tau-1)(1+\varepsilon))\xi/(3-\tau)}>\E[\widetilde D_M]p.
\end{equation}
At this point we still have the freedom of choosing $\beta$ and $\xi>3-\tau$ provided that the relation between $\eta,\xi$ in \eqref{eq:eta-xi-relation} holds.
Since $p<a_+k^{-(\xi/(3-\tau)-1)}$ in \eqref{eq:CMcorep}, first we compare the powers of $k$ on both sides. The inequality \eqref{eq:CMcore6} holds for all sufficiently large $k$ if
\[
(1-(\tau-1)(1+\varepsilon))\xi/(3-\tau)\ge-(\xi/(3-\tau)-1).
\]
After elementary computations, the smallest $\xi$ that satisfies this inequality, and hence the threshold $\eta$ for \eqref{eq:eta-xi-relation} is
\begin{equation}\label{eq:xi-min}
\xi\ge \xi_{\min}:=\frac{3-\tau}{3-\tau-\varepsilon(\tau-1)}, \qquad \eta>\eta_{\min}= \frac{\xi_{\min}(1+\varepsilon)}{1-\varepsilon} -1,
\end{equation}
which equals $\eta_{\min}$ in \eqref{eq:CMcore_eta}.
 Comparing now constants on the two sides of \eqref{eq:CMcore6} yields that
 \[a_+:=(\widetilde c_\ell/2\E[\widetilde D_M])  \beta^{1-(\tau-1)(1+\varepsilon)}.\]
 Solving the inequality $a_-=2/\beta<a_+$ gives that the interval $I_p$ is non-empty whenever
\begin{equation*}
\beta>\left(4\E[\widetilde D_M]/\widetilde c_\ell\right)^{1/[2-(\tau-1)(1+\varepsilon)]}.
\end{equation*}
Summarizing, we have found that whenever $\beta$ satisfies this inequality, and $p$ is in the interval
\begin{equation*}
I_p=\Big[(2/\beta)\cdot k^{-(1/(3-\tau-\varepsilon(\tau-1)) -1)}, (\widetilde c_\ell/2\E[\widetilde D_M])  \beta^{1-(\tau-1)(1+\varepsilon)} k^{-(1/(3-\tau-\varepsilon(\tau-1)) -1)}\Big],
\end{equation*}
then the required inequality for the existence of the $k$-core holds. This implies that $\hat{p}>p_+$, and we can estimate the asymptotic proportion of the $k$-core \eqref{eq:linear-core-Janson}, $h_1(\widetilde D_M, \hat p)\ge h_1(\widetilde D_M, p_+)$ following similar steps as in \eqref{eq:CMcore}:
\begin{align*}
h_1(\widetilde D_M, \hat p) &\ge \P(\widetilde D_M\ge k^{\xi/(3-\tau)}) \P\big( X_{k^{\xi/(3-\tau)}, p_+}\ge k\big)\\
&\ge \widetilde c_\ell k^{-\xi(\tau-1)(1+\varepsilon)/(3-\tau)} (1-\exp(-k/4)),
\end{align*}
using the same $\xi=\xi_{\min}$ and Chernoff bound as in \eqref{eq:xi-min} and in \eqref{eq:CMcoreChernoff}, yielding \eqref{eq:rho} in Remark \ref{rem:rho}.

Finally, we need to check that conditioned on its vertex set and degree sequence, $\Core_{k}(G_n[\CV_{\le M}])$ is itself a configuration model. This follows from the fact that every matching of half-edges within $\Core_{k}(G_n[\CV_{\le M}])$, given its degree sequence, has equal probability by the construction of the configuration model.
\end{proof}

This finishes the first combinatorial part, i.e., the existence of a large $k$-core. We now (slowly) transition to studying the contact process on the $k$-core.  
The proof of Theorem \ref{thm:max_CM_survival}, part (b), is based on a structural property of $\Core_{k}(G_n[\CV_{\le M}])$, that we define next. This structural property guarantees that an infected set of vertices can pass the infection to many other vertices in a unit time step.  
\begin{definition}[$(\delta,k)$-expansion]\label{def:DeltaGood}
Fix any $\delta\in(0,1)$ and an even positive integer $k$. We say that a (multi)graph $G$ on $n$ vertices is $(\delta,k)$-good if 
for every set $\{v_1,\ldots,v_{\lfloor\delta n\rfloor}\}$ of $\lfloor\delta n\rfloor$ vertices in $G$, we can choose a subset $\CI_g$ of the indices of size  $|\CI_g|\ge \lfloor \delta n\rfloor /8$ such that each $v_i: i\in \CI_g$ has $k/2$ neighbors $w_{i,1},\ldots,w_{i,k/2}$ in $G$ such that the vertices $v_i, i\in \CI_g$ and $w_{i,j}, i\in \CI_g,  j\le k/2$ are all distinct.
\end{definition}
A graph being $(\delta,k)$-good is somewhat stronger than requiring that the $1$-neighborhood of any $\lfloor\delta n\rfloor$ many vertices expands by a factor $k/16$, since we need enough individual vertices that expand to $k/2$ different vertices.
 The following lemma proves that $\Core_{k}(G_n[\CV_{\le M}])$ has the $(\delta,k)$-good property for small enough $\delta>0$.

\begin{lemma}\label{lem:DeltaGood}
Consider the configuration model $\mathrm{CM}(\underline d_n)=:G_n$ in Definition \ref{def:CM} on the degree sequence $\underline d_n=(d_1, \dots, d_n)$ so that for an even integer $k>1$ and constant $\zeta>1$, $d_i\in[k, k^{\zeta}]$ holds for all $i\in [n]$.
Then there exists some $\delta_0=\delta_0(k,\zeta)>0$ independent of $n$, such that for all $\delta<\delta_0$,
\begin{equation}\label{eq:DeltaGood}
\P(G_n\text{ is }(\delta,k)\text{-good})>1-e^{-n\delta \log (1/\delta) /8}.
\end{equation}
\end{lemma}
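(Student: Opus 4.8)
The plan is to prove a statement of the form "$G_n$ is $(\delta,k)$-good whp" by a first-moment/union-bound argument over all "bad" sets of $\lfloor \delta n\rfloor$ vertices, in the spirit of standard expander arguments for the configuration model. Fix a small $\delta>0$ to be chosen at the end. Call a set $S$ of $m:=\lfloor \delta n\rfloor$ vertices \emph{bad} if \emph{every} subset $\CI_g\subseteq S$ with $|\CI_g|\ge m/8$ fails the requirement of Definition~\ref{def:DeltaGood}, i.e. one cannot find, simultaneously for all $i\in\CI_g$, $k/2$ neighbours $w_{i,1},\dots,w_{i,k/2}$ of $v_i$ with all the $v_i$'s ($i\in\CI_g$) and all the $w_{i,j}$'s distinct. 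The first step is to reformulate "$S$ is bad" into a statement about how few half-edges of $S$ land outside a small set. Concretely, I would argue: if $S$ is bad, then there is a set $T\supseteq S$ with $|T|\le (1+\tfrac{k}{2})m$ (the union of $S$ with the at most $\tfrac{k}{2}m$ vertices one would want to use as the $w_{i,j}$) such that the number of half-edges incident to $S$ that are paired to a half-edge incident to $V\setminus T$ is at most, say, $\tfrac{k}{16}m$ — otherwise a greedy/Hall-type selection would produce the desired $\CI_g$ of size $\ge m/8$ with distinct witnesses. This reduction (turning the "good subset of indices" formulation into an edge-boundary lower bound, up to constants) is the one genuinely fiddly combinatorial point, and I will state it as an auxiliary claim: \emph{if $S$ is bad then $S$ has "small outer boundary," meaning at most $\tfrac{k}{16}m$ of the $\ge km$ half-edges rooted in $S$ are matched outside a set $T$ of size $\le(1+\tfrac k2)m$.}

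Given that reduction, the second step is a routine union bound over the choice of $S$ and $T$ and over which half-edges are "internal". Using $d_i\in[k,k^\zeta]$, $S$ has between $km$ and $k^\zeta m$ half-edges; $T$ has between $|T|k$ and $|T|k^\zeta\le (1+\tfrac k2)k^\zeta m$ half-edges. In the configuration-model pairing, the probability that a fixed set of $\ge km - \tfrac{k}{16}m \ge \tfrac{k}{2}m$ designated half-edges of $S$ are all matched within $T$ is at most $\bigl(\tfrac{|T|k^\zeta}{h_n - O(k^\zeta m)}\bigr)^{(k/2)m/2}$ (pairing them off one at a time, each step having conditional probability $\le$ available-half-edges-in-$T$ over total-available, roughly $\tfrac{(1+k/2)k^\zeta m}{h_n/2}= \Theta(\delta)$ since $h_n=\Theta(n)$ by Assumption~\ref{assu:regularity}). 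Hence this probability is at most $(C_{k,\zeta}\,\delta)^{c_k m}$ for suitable constants $C_{k,\zeta}, c_k>0$ depending only on $k,\zeta$. The number of ways to choose $S$, then $T\setminus S$, then the $\le\tfrac{k}{16}m$ half-edges of $S$ allowed to leave, is at most $\binom{n}{m}\binom{n}{(k/2)m}\binom{k^\zeta m}{(k/16)m}\le \bigl(\tfrac{en}{m}\bigr)^{m}\bigl(\tfrac{en}{(k/2)m}\bigr)^{(k/2)m}\,2^{k^\zeta m} = \bigl(e/\delta\bigr)^{m}\cdot (\text{stuff}\le A_{k,\zeta}^m)$ with $A_{k,\zeta}$ depending only on $k,\zeta$. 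Multiplying the enumeration by the per-configuration probability gives a bound of the form $\bigl(\tfrac{1}{\delta}\bigr)^{m}\cdot \bigl(B_{k,\zeta}\,\delta^{c_k}\bigr)^{m} = \bigl(B_{k,\zeta}\,\delta^{c_k - 1}\bigr)^{m}$.

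The third step is to choose $\delta$. Since $c_k = k/4 > 1$ for $k>4$ (and $k$ is assumed large, in particular even and $>1$; for the lemma's range $k$ large suffices, and one can absorb small cases since the paper only invokes it for large $k$), the exponent $c_k-1$ is positive, so $B_{k,\zeta}\,\delta^{c_k-1}\to 0$ as $\delta\downarrow0$. Pick $\delta_0=\delta_0(k,\zeta)$ so small that for all $\delta<\delta_0$ we have $B_{k,\zeta}\,\delta^{c_k-1}\le \delta^{1/2}$, say, equivalently that the whole bound is at most $\delta^{m/2}= e^{-(m/2)\log(1/\delta)}\le e^{-n\delta\log(1/\delta)/8}$ once $m=\lfloor\delta n\rfloor\ge \delta n/2$ for $n$ large (and the finitely many small $n$ are handled trivially by enlarging $\delta_0$ or noting the bound $e^{-n\delta\log(1/\delta)/8}$ exceeds $1$ there, or rather by taking $n$ large — the paper's applications only need the asymptotic statement). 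This yields \eqref{eq:DeltaGood}.

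The main obstacle, as flagged, is the combinatorial reduction in Step 1: carefully showing that failing the "$(\delta,k)$-good" condition \emph{for every} large sub-index-set $\CI_g$ forces a genuine edge-expansion deficit, rather than just failure for one particular greedy choice. The clean way to do this is a deficiency/Hall-type argument: build an auxiliary bipartite "demand graph" with $S$ on one side and, on the other side, all vertices together with $k/2$ "copies" of capacity, connect $v_i$ to each of its neighbours (with multiplicity), remove $S$-internal edges, and observe that a defect version of Hall's theorem gives either the desired $\CI_g$ of size $\ge m/8$ with distinct witnesses, or a "blocking set" of size $\le(1+k/2)m$ absorbing almost all of $S$'s half-edges — which is exactly the small-outer-boundary conclusion. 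Everything after that is the standard configuration-model union bound, and the constants only depend on $k$ and $\zeta$, never on $n$, which is what makes $\delta_0$ independent of $n$.
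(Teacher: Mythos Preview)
Your approach is genuinely different from the paper's, but Step~2 contains a fatal arithmetic error. You write
\[
\binom{n}{m}\binom{n}{(k/2)m}\binom{k^\zeta m}{(k/16)m}\le \Bigl(\tfrac{e}{\delta}\Bigr)^{m}\cdot(\text{stuff}\le A_{k,\zeta}^m),
\]
but $\binom{n}{(k/2)m}\le\bigl(\tfrac{2e}{k\delta}\bigr)^{(k/2)m}$ carries a factor $(1/\delta)^{(k/2)m}$, not a $\delta$-free constant. So the enumeration contributes $(1/\delta)^{(1+k/2)m}$ in total. Your probability bound is $(C_{k,\zeta}\delta)^{c_k m}$ with $c_k=k/4$; even if you use all $(15k/16)m$ constrained half-edges rather than just $(k/2)m$, the exponent only rises to $15k/32<k/2$. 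The net power of $\delta$ in the union bound is therefore $c_k-(1+k/2)<0$, and the bound diverges as $\delta\to 0$. Enumerating the absorbing set $T$ of size $\Theta(km)$ is simply too expensive here; your Hall reduction would need to produce $|T|=O(m)$ with an $m$-independent constant for Step~2 to close, and nothing in Step~1 delivers that.

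The paper sidesteps this by never naming $T$. It reveals the first $k$ half-edges of $v_1,\dots,v_{\lfloor\delta n\rfloor}$ sequentially and colours $v_i$ \emph{blue} if it already has $\ge k/4$ internal revealed edges (deterministically $\le 16m/k$ such vertices by pigeonhole), \emph{red} if at least two collisions occur while revealing at $v_i$, and \emph{green} otherwise. Each collision has conditional probability $\le 2\delta k^\zeta$, so $\P(v_i\text{ red})\le 4\delta^2 k^{2+2\zeta}$, and the red indicators are dominated by independent Bernoullis. The green vertices directly supply the distinct witnesses $w_{i,j}$, so $\CI_g=\{\text{green indices}\}$ works. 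The only union bound is over $S$, contributing $(e/\delta)^m$, and the $\delta^2$ in the red probability gives $\P(\ge 3m/4\text{ red})\le 2^m(4\delta^2k^{2+2\zeta})^{3m/4}$, hence $\delta^{3m/2}$ against $(1/\delta)^m$. The net $\delta^{m/2}$ yields the stated exponential bound.

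So the obstacle you flagged (making the Hall reduction precise) is not the main issue; even granting it, the union bound over $T$ kills the argument. The exploration/colouring device is what makes the paper's proof work.
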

\begin{proof}

Let $v_1,\ldots,v_{\lfloor\delta n\rfloor}$ be distinct fixed vertices in $G_n$. We will explore, i.e., gradually reveal the neighbors of these vertices, as follows. In the first exploration step, we reveal the first $k$ edges adjacent to $v_1$ (according to an arbitrary ordering), one by one. When revealing an edge, we say that a \emph{collision} happens at $v_1$ if the revealed edge either leads to one of $v_1,v_2,\ldots,v_{\lfloor\delta n\rfloor}$, or is parallel to an edge revealed earlier (note that we allow self-loops and multiple edges in $G_n$). During this first step, as soon as the number of collisions at $v_1$ reaches two, we stop revealing the connections of $v_1$ and color $v_1$ red. If the number of collisions does not reach two by the end of step $1$, we color $v_1$ green, and we assign the revealed distinct neighbors of $v_1$, outside the set $\{v_1,\ldots,v_{\lfloor\delta n\rfloor}\}$, the labels $w_{1,1},w_{1,2},\ldots,w_{1,n_1}$. Here, $n_1\in[k-1,k]$, since there was at most one collision.

In the second step we reveal the first $k$ edges adjacent to $v_2$, one by one, \emph{including} the potential edges (at most two) that lead to $v_1$ and have already been revealed. Now we say that a collision happens at $v_2$ if a revealed edge either leads to one of $v_1,v_2,\ldots,v_{\lfloor\delta n\rfloor}$, (except when it was already revealed starting from $v_1$, and thus the collision happened at $v_1$ in which case we do not count it as a new collision), or it leads to one of $w_{1,1},w_{1,2},\ldots,w_{1,n_1}$ (in case $v_1$ was colored green), or is parallel to an edge already revealed at $v_2$. Again, as soon as the number of collisions at $v_2$ reaches two during this step, we stop revealing the edges of $v_2$ and color $v_2$ red. If the number of collisions at $v_2$ does not reach two by the end of the step, we color $v_2$ green, and assign the revealed distinct neighbors of $v_2$, outside the set $\{v_1,\ldots,v_{\lfloor\delta n\rfloor}, w_{1,1},w_{1,2},\ldots,w_{1,n_1}\}$ the labels $w_{2,1},w_{2,2},\ldots,w_{2,n_2}$. Here, $n_2\in[k-3,k]$, since at most three edges caused collisions at either $v_1$ (these can connect to $v_2$) or $v_2$.

We then continue this procedure, in each step revealing the first $k$ connections of $v_3,\ldots,v_{\lfloor\delta n\rfloor}$ analogously to the above, with one modification: if at the beginning of step $i$, when starting to reveal the neighbors of vertex $v_i$ ($i\ge 2$), $v_i$ already has at least $k/4$ adjacent revealed edges coming from the already processed vertex set $\{v_1,v_2,\ldots,v_{i-1}\}$, then we do not reveal any new connections at $v_i$, but color it blue, and continue to the next step $i+1$, with $v_{i+1}$.

After all the $\lfloor \delta n\rfloor$ steps are done, let $\CI_g:=\{i_1, \dots, i_g\}$ denote the indices and $\{v_{i_1},\ldots,v_{i_g}\}$ be the set of green vertices (subset of $\{v_1,\ldots,v_{\lfloor\delta n\rfloor}\}$). We will prove that with probability at least $1-\exp(-Cn)$,
$|\CI_g|\ge  \lfloor \delta n\rfloor/8 $ and $n_i\ge k/2$ for all $i\in \CI_g$. So, the green vertices along with their revealed neighbors $\{w_{i,j}: i\in\CI_g, j\le n_i\}$ demonstrate the $(\delta,k)$-good property of $G_n$ in Definition~\ref{def:DeltaGood}.  

Later, we take a union bound over all subsets of size $\lfloor \delta n \rfloor$, but for now we fix a choice of  $\{v_1,\ldots,v_{\lfloor \delta n\rfloor}\}$. First, we bound the number of blue vertices. When at step $j$, we reveal at most two edges that connect $v_j$ to some $v_{j'}$, for $j'>j$. Hence, we reveal at most $2\lfloor\delta n\rfloor$ edges with both endpoints in the set $\{v_1,\ldots,v_{\lfloor\delta n\rfloor}\}$, which we call \emph{internal} edges. These involve at most $4\lfloor\delta n\rfloor$ half-edges at $\{v_1,\ldots,v_{\lfloor\delta n\rfloor}\}$. Since more than $16\lfloor\delta n\rfloor/k$ vertices adjacent to at least $k/4$ internal edges would involve more than $4\lfloor\delta n\rfloor$ half-edges, by the pigeonhole principle, for all $k\ge 2$:
\begin{equation}\label{eq:discard0}
\begin{aligned}
|\text{Blue vertices}| &= |\{i\in[ \lfloor\delta n\rfloor]:\text{ $v_i$ is adjacent to at least $k/4$ internal edges}\}|\\
&\le 4\lfloor\delta n\rfloor / (k/4)=16\lfloor\delta n\rfloor/k\le \lfloor\delta n\rfloor/8.
\end{aligned}
\end{equation}
Hence, the exploration reveals the neighborhood of at least $7\lfloor\delta n\rfloor/8$ and at most $\lfloor\delta n\rfloor$ vertices that can be either red or green.
Next, we bound the number of red vertices. Here we use that  $G_n$ is a configuration model, with all degrees in the interval $[k, k^{\zeta}]$. Thus we can carry out the exploration process above by matching the first (at most) $k$ half-edges of each vertex under consideration. After revealing the $j$th edge, for $j\le k\lfloor\delta n\rfloor -1$, we have discovered at most $j$ new vertices and so half-edges attached to at most $\lfloor\delta n\rfloor+j$ vertices can cause a collision when matching the $j+1$th half-edge. And, there are at least $n k-2j-1$ remaining unmatched half-edges to choose from. Let us denote by $\CF_{j}$ the $\sigma$-algebra generated by the outcome of the matching of the first $j$ half-edges.  Then, for all $k\ge 2$ and sufficiently small $\delta=\delta(k)>0$, and for any realization in $\CF_j$
\[
\P(\text{collision at $j+1^{\text{st}}$ edge} \mid \CF_{j})\le\frac{(\lfloor\delta n\rfloor+j)k^{\zeta}}{nk-2j-1}\le \frac{(\delta n+\delta n k)k^{\zeta}}{(1-2\delta)nk}\le 2 \delta k^{\zeta}.
\]
Let $Y_j=1$ if revealing the $j^{\text{th}}$ edge causes a collision and $Y_j=0$ otherwise. Then $(Y_1,Y_2,\ldots)$ is dominated by a sequence of i.i.d. Bernoulli variables with parameter $2\delta k^{\zeta}$. 
We color $v_i$ red if at least two collisions happen at step $i$, i.e, if at least $2$ of the $Y_j$ variables corresponding to the at most $k$ revealed edges at $v_i$ are $1$. So, with $X_{n,p}$ a binomial variable as before, independently across different $v_i$,
\begin{equation}\label{eq:red}
\P(v_i \text{ red})\le \P(X_{k, 2 \delta k^{\zeta}}\ge 2) \le k^2 4 \delta^2 k^{2\zeta}  = 4 \delta^2 k^{2+2\zeta}.
\end{equation}
Combining \eqref{eq:discard0} and \eqref{eq:red} yields that the number of red vertices is stochastically dominated by a Binomial random variable with parameters $\lfloor\delta n\rfloor$ and $4 \delta^2 k^{2+2\zeta}=:q$. 
Hence, by a crude upper bound on the binomial coefficients,
\begin{align}
\P( |i: v_i \text{ red}| \ge 3 \lfloor\delta n\rfloor/4) &\le \P(X_{ \lfloor\delta n\rfloor, q}  > 3\lfloor\delta n\rfloor/4)=\sum_{r>3\lfloor \delta n\rfloor/4}\binom{\lfloor\delta n\rfloor}{r}q^{r}(1-q)^{\lfloor\delta n\rfloor-r}\nonumber\\
&
\le \lfloor\delta n\rfloor 2^{\lfloor\delta n\rfloor}q^{3\lfloor\delta n\rfloor/4}=\lfloor\delta n\rfloor 2^{\lfloor\delta n\rfloor}\big(4\delta^2  k^{2+2\zeta}\big)^{3\lfloor\delta n\rfloor/4}.\label{eq:discard}
\end{align}
after substituting the value of $q$. After elementary rewrite we obtain for small enough $\delta=\delta(k)>0$,
\begin{equation}\label{eq:discard2}
\begin{aligned}
\P&( |i: v_i \text{ red}| \ge 3\lfloor\delta n\rfloor/4)\\
&\le  \lfloor\delta n\rfloor\exp\big((3/2)\log(\delta)\lfloor\delta n\rfloor+(5/2)\log(2)\lfloor\delta n\rfloor+(3/4)\log(k^{2+2\zeta})\lfloor\delta n\rfloor\big)\\
&\le C\exp\big(-(5/4)\log(1/\delta)\delta n\big).
\end{aligned}
\end{equation}
We bound the number of ways to choose the $\lfloor\delta n\rfloor$ vertices $S=\{v_1,\ldots,v_{\lfloor\delta n\rfloor}\}$:
\begin{equation}\label{eq:binomUB}
\binom{n}{\lfloor\delta n\rfloor}\le\frac{n^{\lfloor\delta n\rfloor}}{(\lfloor\delta n\rfloor)!}\le \frac{n^{\delta n}}{\exp\big(\delta n \log(\delta n)-\delta n\big)} = \exp\big(\delta n (1+\log(1/\delta))\big).
\end{equation}
Combining \eqref{eq:discard2} and \eqref{eq:binomUB}, we obtain for some positive constant $C$ that for all small enough $\delta=\delta(k)>0$, 
\begin{align*}
\P&(\exists S\subset G: |S|=\lfloor \delta n\rfloor, \text{ at least } 3\lfloor\delta n\rfloor/4 \text{ red vertices in } S)\\
&\le\exp\big(\delta n (1+\log(1/\delta))-(5/4)\log(1/\delta)\big)<\exp\big(-n \delta\log (1/\delta)/8\big).
\end{align*} 
 Combining this with \eqref{eq:discard0}, we obtain that with probability at least $1-\exp(-Cn)$, for any choice of $v_1,\ldots,v_{\lfloor\delta n\rfloor}$, there are at least $\lfloor\delta n\rfloor/8$ green vertices among $v_1,\ldots,v_{\lfloor\delta n\rfloor}$. The green vertices, per design, have at most one collision, among of their at least $3k/4$ revealed edges.
Hence, each green vertex has at least $k/2$ neighbors in $G_n$, that are all distinct from each other and from $v_1,\ldots,v_{\lfloor\delta n\rfloor}$, demonstrating the $(\delta,k)$-good property. This finishes the proof.
\end{proof}

The next lemma studies a contact process with lower infection rate than $\CPf$ with $f=\max(x,y)^\mu$ on a $(\delta,k)$-good graph and shows that when $\lfloor\delta n\rfloor$ vertices are infected, their neighborhood sustains the infection for a unit of time:

\begin{lemma}\label{lem:SurvivalStepOnCore}
Fix some $\lambda>0$, $\mu\in(1/2,1)$ and $\zeta>1$ satisfying $\mu\zeta<1$. Then there exists constants $C'>0$ and $k_0=k_0(\lambda,\mu,\zeta)$ so that for all $k>k_0$ even, the following holds. Let $G_n$ be any multi-graph with degree sequence $\underline d_n=(d_1, \dots, d_n)$ satisfying $d_i\in[k, k^{\zeta}]$ for all $i\in [n]$, so that $G_n$ is $(\delta,k)$-good for some fixed $\delta>0$. Let  $(\underline{\tilde\xi}_t)_{t\ge 0}$ be a contact process $\mathrm{CP}_{f_+,\lambda}$ with $f_+(x,y)\equiv k^{\zeta\mu}$ on $G_n$. Then, for all sufficiently large $n$, and any $t\ge 0$,
\begin{equation}\label{eq:SurvivalStepOnCore-0}
\P\left(|\underline{\tilde\xi}_{t+1}|\ge\lfloor \delta n\rfloor\  \right|\left.\ |\underline{\tilde\xi}_t|\ge\lfloor \delta n\rfloor\right)\ge 1-\exp(-n\delta/(193e)).
\end{equation}
\end{lemma}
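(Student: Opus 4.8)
The plan is to prove this by a two-stage argument. The first stage is purely deterministic given the $(\delta,k)$-good structure of $G_n$, which by Lemma~\ref{lem:DeltaGood} holds with probability at least $1-\mathrm{e}^{-n\delta\log(1/\delta)/8}$ (a quantity we can absorb into the final error bound, since for a \emph{fixed} graph $G_n$ we only need the conditional probability statement). Assume $|\underline{\tilde\xi}_t|\ge\lfloor\delta n\rfloor$; fix $\lfloor\delta n\rfloor$ of the infected vertices and call them $v_1,\dots,v_{\lfloor\delta n\rfloor}$. The $(\delta,k)$-good property hands us a subset $\CI_g$ of size at least $\lfloor\delta n\rfloor/8$ and, for each $i\in\CI_g$, a set of $k/2$ distinct neighbors $w_{i,1},\dots,w_{i,k/2}$, with all the $v_i$ ($i\in\CI_g$) and all the $w_{i,j}$ pairwise distinct. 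The idea is to look only at the ``spreading'' events within $[t,t+1]$ along these chosen edges and ignore everything else (which can only help, by monotonicity / the graphical representation).

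The second stage is the probabilistic estimate. For each $i\in\CI_g$, I want to lower bound the probability that at least one of the $w_{i,j}$ is infected at time $t+1$. Since every edge rate in $\mathrm{CP}_{f_+,\lambda}$ equals $\lambda k^{-\zeta\mu}$, I can bound the probability that $v_i$ stays infected on some initial subinterval $[t,t+s_0]$ of fixed length $s_0$ (probability $\mathrm{e}^{-s_0}$, independent of everything), and during that window infects at least one of its $k/2$ designated neighbors and then that neighbor remains infected until time $t+1$. A clean way: for a single designated edge, the probability that the neighbor is infected at time $t+1$ is at least $\int_0^{1}\mathrm{e}^{-u}(\lambda k^{-\zeta\mu}\mathrm{e}^{-\lambda k^{-\zeta\mu}u})\mathrm{e}^{-(1-u)}\,\mathrm du$; but it is easier to argue that conditioned on $v_i$ being infected throughout $[t,t+1/2]$ (probability $\mathrm{e}^{-1/2}$), the number of its $k/2$ designated neighbors that get infected at some point in $[t,t+1/2]$ stochastically dominates $\mathrm{Bin}(k/2,\,p_0)$ with $p_0 = 1-\mathrm{e}^{-\lambda k^{-\zeta\mu}/2}\ge \lambda k^{-\zeta\mu}/4$ for large $k$; hence the expected number of such infections is at least $(k/2)\cdot\lambda k^{-\zeta\mu}/4 = (\lambda/8)k^{1-\zeta\mu}\to\infty$, since $\zeta\mu<1$. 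A Chernoff bound then gives that with probability at least $1-\mathrm{e}^{-ck^{1-\zeta\mu}}$ at least $(\lambda/16)k^{1-\zeta\mu}\ge 1$ of them get infected during $[t,t+1/2]$, and each such neighbor independently survives to time $t+1$ with probability $\ge\mathrm{e}^{-1}$; so with probability at least $q_k := \mathrm{e}^{-1/2}(1-\mathrm{e}^{-ck^{1-\zeta\mu}})\mathrm{e}^{-1}\ge 1/(4\mathrm e)$ for $k$ large, vertex $v_i$ produces at least one neighbor infected at time $t+1$. Crucially, because the $w_{i,j}$ are \emph{all distinct across $i\in\CI_g$}, these ``at least one new infection from $v_i$'' events, restricted to the chosen edges, are independent across $i\in\CI_g$ (one uses the graphical representation: the Poisson processes on the designated edges and the healing processes on the designated vertices are mutually independent, and each index $i$ uses a disjoint collection of them).

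Putting this together: the number of indices $i\in\CI_g$ that successfully infect a fresh neighbor by time $t+1$ dominates $\mathrm{Bin}(\lfloor\delta n\rfloor/8,\,q_k)$ with $q_k\ge 1/(4\mathrm e)$, so a Chernoff bound gives that with probability at least $1-\mathrm{e}^{-c'\delta n}$ at least $\lfloor\delta n\rfloor/(8\cdot 8\mathrm e) \ge \lfloor\delta n\rfloor/(193\mathrm e)$ — wait, I'll tune the constant so it matches; the target $\mathrm{e}^{-n\delta/(193e)}$ suggests one shows at least $\lfloor\delta n\rfloor$ total infected at time $t+1$ by instead noting the surviving-and-fresh neighbors \emph{plus} the indices whose own $v_i$ remained infected already give $\ge\lfloor\delta n\rfloor$ distinct infected vertices. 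The cleanest route is: each successful $i$ contributes a distinct vertex infected at $t+1$ (the fresh neighbor $w_{i,j}$), and these are $\ge(1-o(1))\,q_k\,|\CI_g|$ many; choosing $k_0$ large enough that $q_k|\CI_g|/2 \ge\lfloor\delta n\rfloor$ is \emph{false} — since $|\CI_g|\le\lfloor\delta n\rfloor$, this can't by itself reach $\lfloor\delta n\rfloor$. So instead I should count: infected vertices at $t+1$ include (a) those of the original $v_i$ that are still infected, and (b) fresh neighbors; but (a) could be small. The actual mechanism in the paper's regime must be that $k$ is large, so each infected vertex infects \emph{many} neighbors — indeed $(\lambda/16)k^{1-\zeta\mu}$ of them — and $\lfloor\delta n\rfloor/8$ vertices each producing $\ge\lfloor 16/(\lambda)\cdot\text{(something)}\rfloor\cdots$ hmm. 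The right statement: the total number of fresh infections is $\ge |\CI_g|\cdot(\lambda/16)k^{1-\zeta\mu}\cdot\mathrm{e}^{-3/2}$ in expectation, and for $k$ large this exceeds $8\lfloor\delta n\rfloor$, hence after a Chernoff bound (using independence across $i$) exceeds $\lfloor\delta n\rfloor$ with probability $\ge 1-\mathrm{e}^{-n\delta/(193\mathrm e)}$; the constant $193\mathrm e$ is then just bookkeeping from the Chernoff exponent and the factor $1/8$. \textbf{The main obstacle} I anticipate is organizing the independence cleanly — making sure that the ``fresh neighbor'' counting genuinely uses disjoint edges and disjoint vertex-healing clocks so that the per-$i$ events are independent, and simultaneously that fresh neighbors counted for different $i$ are distinct so the counts add — and then getting the Chernoff constants to land exactly on $1/(193\mathrm e)$; both are routine but fiddly. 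Everything else (the single-edge transmission estimate, the $\zeta\mu<1$ growth of $k^{1-\zeta\mu}$, the choice of $k_0(\lambda,\mu,\zeta)$) is elementary.
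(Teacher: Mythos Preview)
Your approach is the paper's: use the $(\delta,k)$-good structure to extract $|\CI_g|\ge\lfloor\delta n\rfloor/8$ infected centers with pairwise-disjoint designated neighbor sets, show each center pushes $\Theta(\lambda k^{1-\zeta\mu})\to\infty$ fresh infections into those neighbors by time $t+1$, and exploit independence across $i\in\CI_g$ via disjointness in the graphical representation. You also correctly diagnose the trap that ``at least one fresh neighbor per $v_i$'' cannot by itself recover $\lfloor\delta n\rfloor$ infections, since $|\CI_g|\le\lfloor\delta n\rfloor$.

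The one place where your write-up is loose and the paper is sharp is the final concentration step, and it matters for the stated constant. You propose to apply ``a Chernoff bound (using independence across $i$)'' directly to $\sum_{i\in\CI_g}N_i$, where $N_i$ is the number of fresh neighbors of $v_i$ infected at $t+1$. But each $N_i$ is bounded by $k/2$, not by $1$, and for fixed $i$ the per-edge successes are \emph{not} independent Bernoullis (they all feel the same healing clock of $v_i$). Hoeffding on $\sum N_i$ with range $[0,k/2]$ therefore gives an exponent of order $\delta n/k^2$, which is $k$-dependent and does not deliver the absolute constant $1/(193e)$. The paper fixes this by first compressing each $N_i$ into a single Bernoulli: set $\CA(v_i)=\{N_i\ge 87\}$, show $\P(\CA(v_i))\ge 1/(2e)$ for $k$ large (via $\P(v_i\text{ survives }[t,t+1])=e^{-1}$ and a Chernoff bound on $\mathrm{Bin}(k/2,\,p)$ with $p\ge \lambda k^{-\zeta\mu}/(2e)$, whose mean $\lambda k^{1-\zeta\mu}/(4e)\to\infty$), and then apply the Bernoulli Chernoff bound to $\sum_{i\in\CI_g}\ind_{\CA(v_i)}$. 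The choice $87$ is made precisely because $87>32e\approx 86.98$, so that at least $\lfloor\delta n\rfloor/(32e)$ successes yield $87\cdot\lfloor\delta n\rfloor/(32e)\ge\lfloor\delta n\rfloor$ distinct infected vertices at $t+1$; the Bernoulli Chernoff exponent then lands on $\lfloor\delta n\rfloor/(16\cdot 12\,e)$, rounded to $n\delta/(193e)$. Your argument becomes complete (and matches the paper) once you insert this truncation-to-Bernoulli step.
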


By \eqref{eq:domination-between-two-f} in Corollary \ref{cor:StochDom1}, the process $\mathrm{CP}_{f_+,\lambda}$ on $G_n$ dominates from below the contact process $\CPf$ with $f(x,y)=\max(x,y)^\mu$, since $f(d_u, d_v)=\max(d_u, d_v)^\mu\le k^{\zeta\mu}= (\max_{i\le n} d_i)^{\mu}$.

\begin{proof}
We shall fix $k>400$. 
Since $|\underline{\tilde\xi}_t|\ge\lfloor \delta n\rfloor$ in the conditioning in \eqref{eq:SurvivalStepOnCore-0}, denote the first $\lfloor\delta n\rfloor$ infected vertices by $S_t:=\{v_1,\ldots,v_{\lfloor\delta n\rfloor}\}$. Since $G_n$ is $(\delta,k)$-good, choose the index set $\CI_g$ with size $|\CI_g|\ge \lfloor \delta n\rfloor/8$ guaranteed by the $(\delta, k)$-good property in Definition \ref{def:DeltaGood} and write $w_{i,1},\ldots,w_{i,k/2}$ for the distinct neighbors of each $v_i, i\in \CI_g$. For each $i\in \CI_g$ define the event $\CA(v_i)$ as
\begin{equation}\label{eq:SurvivalOnCore_Ai}
\begin{aligned}
\CA(v_i):=\{&\text{$v_i$ infects at least 87 vertices among }w_{i,1},\ldots,w_{i,k/2}\\
&\text{ that stay infected by time }t+1\}.
\end{aligned}
\end{equation}
We will prove that
\begin{equation}\label{eq:SurvivalOnCore}
\P(\CB):=\P\Big(\sum_{i\in \CI_g} \ind_{\CA(v_i)} \ge \lfloor\delta n\rfloor/(32e)\Big)\ge 1-\exp(-\delta n/(193 e)).
\end{equation}
Then, on the event $\CB$, at least $87\lfloor \delta n \rfloor/(32e)$ vertices among $\{w_{i,j}\}_{1\le i\le \lfloor \delta n \rfloor, 1\le j\le k/2}$ are infected at time $t+1$, and since $32e\approx 86.98$, this implies that $|\underline{\tilde\xi}_{t+1}|\ge\lfloor \delta n\rfloor$ holds in \eqref{eq:SurvivalStepOnCore-0}, proving the lemma.

For \eqref{eq:SurvivalOnCore}, we first give a lower bound on $\P(\CA(v_i))$ in \eqref{eq:SurvivalOnCore_Ai}. The probability that $v_i$ does not heal in the time interval $[t,t+1]$ is $1/e$. Given that $v_i$ does not heal, it infects each of $w_{i,1},\ldots,w_{i,k/2}$, in the time interval $[t,t+1]$, with probability at least $1-\exp(-\lambda k^{-\mu\zeta})$, as the infection rate $r(v_i,w_{i,j})$ from $v_i$ to $w_{i,j}$ is $\lambda k^{-\mu\zeta}$. A given $w_{i,j}$ infected in the time interval $[t,t\!+\!1]$ stays infected until $t+1$ with conditional probability at least $1/e$. So, given that $v_i$ does not heal until time $t\!+\!1$, the number of infected vertices among $w_{i,1},\ldots,w_{i,k/2}$ at time $t+1$ is stochastically dominated from below by a Binomial random variable with parameters $k/2$ and $(1/e)(1-\exp(-\lambda k^{-\mu\zeta}))\ge (1/e)(\lambda k^{-\mu\zeta}/2):=p$. This lower bound holds whenever $k\ge \lambda^{-1/\mu\zeta}$, which holds for all $k\ge 2$ when $\lambda<1$ and for all sufficiently large $k$ when $\lambda>1$.
Hence, 
\begin{align}
\P(\CA(v_i))&\ge \P(\tilde \xi_t(v_i)=1\  \forall s\in[t,t+1] )\cdot\P(X_{k/2,p}\ge 87)
\ge \e^{-1}\cdot\P(X_{k/2,p})\ge 87)\nonumber.
\end{align}
The mean $\E[X_{k,p}]= \lambda k^{1-\mu\zeta}/(4e)$ and since $\mu\zeta<1$, this quantity grows with $k$, and we can choose $k$ large enough so that $\E[X_{k,p}]\ge 2\cdot87$. Then, by a Chernoff bound,
\begin{align*}
\P(\CA(v_i))&\ge e^{-1}\cdot\P(X_{k/2,p}\ge 87) \le \e^{-1} (1-\e^{-2\cdot 87/12})\ge 1/(2e).
\end{align*}
Now we use Corollary \ref{cor:StochDom1} to obtain $\ind_{\CA_i}, i\in \CI_g$ is stochastically dominated from below by independent events with success probability $1/(2e)$. Thus, another Chernoff bound finishes the proof of \eqref{eq:SurvivalOnCore}:
\begin{align*}
\P(\CB)\ge\P(X_{\lceil \lfloor\delta n\rfloor/8\rceil,1/(2e)}\ge \lfloor\delta n\rfloor/(32e))
\ge 1-\exp\big(-\lfloor\delta n\rfloor/(16\cdot 12e)\big),
\end{align*}
completing the proof of the lemma with $C':=1/ (193 e)$ where we increased $16\cdot12=192$ by one to compensate for dropping the integer part.
\end{proof}

With Theorem \ref{prop:CMcore}, and Lemmas \ref{lem:DeltaGood} and \ref{lem:SurvivalStepOnCore} at hand, we are ready to prove Theorem \ref{thm:max_CM_survival}, part (b).

\begin{proof}[Proof of Theorem \ref{thm:max_CM_survival}, part (b)]
Observe that in \eqref{eq:CMcore_eta} in Theorem \ref{prop:CMcore},
\begin{equation}\label{eq:zeta-min}
\zeta_{\min}:=\frac{\eta_{\min}+1}{3-\tau} = \frac{1}{3-\tau-\varepsilon(\tau-1)}\cdot\frac{1+\varepsilon}{1-\varepsilon}.
\end{equation}
The inequality \eqref{eq:max_CM_survival_error}, i.e., that $\mu<(3-\tau-\varepsilon(\tau-1))(1+\varepsilon)/(1-\varepsilon)$ and \eqref{eq:zeta-min} together imply that for all $\mu$ satisfying \eqref{eq:max_CM_survival_error} one can choose $\zeta>\zeta_{\min}$ so that $\zeta\mu<1$ also holds. Fix such a $\zeta$. Then, Theorem \ref{prop:CMcore} states that for all sufficiently large but fixed $k$ even,
a linear sized $k$-core of $\mathrm{CM}(\underline d_n)$ exists after removing all vertices of degree larger than $M=k^{(1+\eta)/(3-\tau)}=:k^\zeta$, i.e., for all $\epsilon'>0$, for all sufficiently large $n$,
\begin{equation}\label{eq:SurvivalOnCore1}
\P(\CA_n):=\lim_{n\to \infty} \P \Big( |\Core_{k}(G_n[\CV_{\le k^\zeta}])| \ge \rho n \Big) =1-\epsilon'/3, 
\end{equation}
and conditioned on its vertex set and degree sequence, $\Core_{k}(G_n[\CV_{\le k^\zeta}])$ is itself a configuration model.
Applying Lemma \ref{lem:DeltaGood} on $\Core_{k}(G_n[\CV_{\le k^\zeta}])$ then yields that for all small enough $\delta>0$
\begin{equation}\label{eq:SurvivalOnCore2}
\begin{aligned}
\P(\CB_n\ |\ \CA_n)&:=\P(\Core_{k}(G_n[\CV_{\le k^\zeta}])\text{ is $(\delta,k)$-good}\ |\ \CA_n)\\
&>1-e^{-n\rho(k) \delta/8}>1-\epsilon'/4.
\end{aligned}
\end{equation}
Consider the process $(\underline{\xi}_t)_{t\ge 0}\sim\CPf$ with $f(x,y)=\max(x,y)^\mu$ on $G_n$. For any $t\ge 0$ define the event
\begin{equation*}
\CI_t:=\{\text{at least $\delta\rho n$ vertices of $\Core_{k}(G_n[\CV_{\le k^\zeta}])$ are infected at time $t$}\}.
\end{equation*}
On the event $\CA_n\cap\CB_n$, all vertices in $H_n:=\Core_{k}(G_n[\CV_{\le k^\zeta}])$ have original degrees in the interval $[k,k^\zeta]$ within $G_n$, hence $\CPf$ restricted to $H_n$ is dominated from below by a contact process on $H_n$ with $f(x,y)=k^{\zeta\mu}$, exactly as in Lemma \ref{lem:SurvivalStepOnCore}. Hence, Lemma \ref{lem:SurvivalStepOnCore} applies for $H_n:=\Core_{k}(G_n[\CV_{\le k^\zeta}])$, 
\begin{equation}\label{eq:SurvivalStepOnCore}
\P(\CI_{t+1} | \ \CA_n\cap\CB_n\cap\CI_t)>1-e^{-n\rho(k)\delta/(193e)}.
\end{equation}
whenever $k$ is larger then $k_0=k_0(\lambda,\mu,\eta)$. This latter condition dictates our choice of $k$. Starting from the all-infected initial condition, \eqref{eq:SurvivalStepOnCore} implies that on the event $\CA_n\cap\CB_n$ the extinction time of $\CPf$ is dominated from below by a geometric random variable with success probability $\exp(-C' n)$. Hence, the process survives until time $\exp(nC'/2)$ with probability at least $1-\epsilon'/3$. Combining this with \eqref{eq:SurvivalOnCore1} and \eqref{eq:SurvivalOnCore2} yields that the process $\CPf(G_n,\underline 1_{G_n})$ exhibits long survival, finishing the proof.
\end{proof}

\section{The configuration model: survival through a network of stars}\label{sec:embedded_stars_GW} 

The proof of Theorem \ref{thm:prod_CM} (a) (which then implies Theorem \ref{thm:max_CM_survival}(a)) follows the proof of Theorem 4 in \cite{Sly19}, i.e., the proof of the exponentially long survival of the classical contact process on the configuration model with subexponentially tailed degree distributions. We need some modifications to adapt the proof there to the degree-penalized model. Since the proof in \cite{Sly19} is rather lengthy, we only provide an outline of the main steps, and we focus on explaining the necessary modifications for the degree-penalized version. We direct the interested reader to \cite[Section 6, 7]{Sly19} for a full proof.

A common way to show exponentially long survival for the classical contyact process is to find $\Theta(n)$ many embedded star-graphs in the configuration model with paths of bounded degree vertices connecting them, (similarly as we proved local survival on Galton Watson trees in Section \ref{sec:survival_GW} here for the penalized version). The exact structure in this case, corresponding to  \cite[Definition 5.1]{Sly19}, is an embedded expander-graph.

For a graph $H$, and a subset of vertices $A\subset V(H)$ we denote by $\CN_H(A, r)$ the set  of vertices at most distance $r$ from $A$. For some $m\ge 1$, let also $\deg_{G, \le m}(u)$ denote the number of neighbors of vertex $u$ in $G$ that have degree at most $m$.

\begin{definition}\label{def:embdedded-expander}
 We say that a graph $H_{W_0}=(W_0, E(H_{W_0}))$ on a subset of vertices $W_0$ is embedded in $G$ if for each edge  $\{u,v\}\in E(H_{W_0})$, there is an associated path $\pi_{u,v}^{G}$ in $G$ between $u$ and $v$. For $\alpha,R,m,j>0$,
we say that $H_{W_0}$ is an embedded $(\alpha,R,m,j)$-expander in $G$ if for every subset $A\subset W_0$ with $|A|\le\alpha|W_0|$, we have
\begin{align}
&|\CN_H(A, 1)| \ge 2 |A|, \label{eq:expander-1}\\
&|\pi_{u,v}^{G}|\le R \mbox{ for all } u,v \in E(H_{W_0}),  \label{eq:NAR}\\
&\deg_{G}(w)\in[2,m] \quad \mbox{for all  } w\in \pi^G_{u,v}\setminus\{u,v\}, \ \ u,v \in W_0, \label{eq:low-deg-path} \\
&\deg_G(u) \in [j,2j], \ \mbox{and}\  \deg_{G, \le m}(u)\ge j/2 \quad \mbox{for all  } u \in W_0. \label{eq:high-deg-w}
\end{align}
\end{definition}
Observe that \eqref{eq:expander-1} is the expansion property of $H_0$, while \eqref{eq:high-deg-w} ensures that the embedded vertices of $W_0$   serve as star-graphs in $G$, i.e., they have sufficiently high degree. Meanwhile, \eqref{eq:NAR} and \eqref{eq:low-deg-path} ensure that the paths corresponding to each edge of $H_0$ are fairly short and occur on low-degree vertices, so that even the degree-penalized contact process can pass through them with good probability. 
Next, we prove the following structural lemma, corresponding to \cite[Lemma 6.1]{Sly19}.

\begin{lemma}\label{lem:double-truncated-expander}
Consider the configuration model $G_n:=\mathrm{CM}(\underline d_n)$ in Definition \ref{def:CM} on the degree sequence 
$\underline d_n=(d_1, \dots, d_n)$ that satisfies the regularity assumptions in Assumption \ref{assu:regularity}. Further suppose that its limiting degree distribution $D$ has heavier tails than stretched-exponential with stretch-exponent $\zeta$, for some $\zeta>0$, in the sense of Definition \ref{def:stretched-heavy}. Then, for any sufficiently large $m>0$ there exists a $j_0>m$ such that whenever $j>j_0$ then there exists $\alpha, \beta, R>0$ with $R\le o (j^{\zeta})$ such that the following holds whp. The graph $G_n$ contains an $(\alpha, R, m, j)$-embedded expander $H_{W_0}$ on the vertex set $W_0$ with $|W_0|\ge \beta n$.
\end{lemma}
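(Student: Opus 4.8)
\textbf{Proof plan for Lemma~\ref{lem:double-truncated-expander}.}
The plan is to follow the structure of \cite[Lemma~6.1]{Sly19} while inserting at each stage the extra degree constraints forced by the degree-penalization, namely the upper bound $\deg_G(u)\le 2j$ in \eqref{eq:high-deg-w}, the requirement that each high-degree vertex retains $\deg_{G,\le m}(u)\ge j/2$ low-degree neighbors, and the strengthened length bound $R = o(j^\zeta)$ in place of a generic $R=o(j)$ bound. First I would fix the set of \emph{hubs}. Using Assumption~\ref{assu:regularity}(a)--(b) and the stretched-exponential lower bound from Definition~\ref{def:stretched-heavy}, I would argue that whp there are at least $c\,n\,\P(D\in[j,2j])$ vertices with degree in $[j,2j]$; by the heavier-than-stretched-exponential assumption, along the subsequence $z_i$ this count is at least $n\exp(-g(2j)(2j)^\zeta)$, which is $n^{1-o(1)}$ and in particular $\ge \beta' n$ only after a renormalization argument, so more precisely I would (as in \cite{Sly19}) not take all of $W_0$ to be genuine hubs but rather build $W_0$ as the vertex set of an auxiliary expander whose edges are realized by short paths. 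The point is that I only need $|W_0|\ge \beta n$ hubs of degree \emph{at least} $j$ and \emph{at most} $2j$; the upper truncation is harmless because removing the (sublinearly many, by Assumption~\ref{assu:regularity}(b)) vertices of degree $>2j$ changes $o(n)$ vertices. For the low-degree-neighbor condition $\deg_{G,\le m}(u)\ge j/2$: conditionally on the degree of a hub $u$, each of its half-edges pairs to a half-edge belonging to a vertex of degree $\le m$ with probability $\ge H_{\le m}/h_n - o(1) = \P(D\le m)\,\E[D\mid D\le m]/\E[D] - o(1)$, which is a positive constant, so by a Chernoff bound over the $\ge j$ half-edges of $u$ and a union bound over the $\le 2j\cdot\#\text{hubs}$ relevant events, whp every hub has $\ge j/2$ neighbors of degree $\le m$ once $j$ is large (this is exactly where $j_0>m$ and $m$ large enter).

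Second, I would build the connecting paths. This is the step that requires the most care and is where the exponent $\zeta$ enters. Contract each hub to a single point and expose, in a breadth-first manner on the remaining low-degree ($\le m$) vertices, the neighborhoods of the hubs. Following \cite[Section~6]{Sly19}, the branching process of low-degree vertices is supercritical once $m$ is large enough (this uses $\E[D]\in(0,\infty)$ and that a positive fraction of mass sits at bounded degrees after conditioning), so from each hub the set of low-degree vertices reachable within $r$ steps grows like $\rho^r$ for some $\rho>1$ depending only on $m$. Choosing $R = C(m)\log(\#\text{hubs}) \asymp \log n$ and noting that, because of the stretched-exponential tail, $\#\text{hubs}$ can be taken $\ge n^{1-o(1)}$, we get $R = o(\log n)$ is \emph{not} quite what we want; rather we want $R=o(j^\zeta)$, and here the key is the standard \cite{Sly19} trick of \emph{first choosing $j$ large, then $R$} so that $\rho^R$ exceeds $n^{1/2}$ (say), which forces $R\asymp \log n$, and then observing that the relevant regime is $j$ comparable to $(\log n)^{1/\zeta}$ (the largest hub degree present whp via the $z_i$-subsequence of Definition~\ref{def:stretched-heavy}), for which $R = o(j^\zeta)=o(\log n)$ fails \emph{unless} we are more careful: in fact in \cite{Sly19} one does not use all $n^{1-o(1)}$ hubs but a carefully chosen sublinear-in-exponent-but-linear-in-count subset, and the correct statement is that $R$ can be taken of order $\log(1/\alpha) \cdot (\text{const}(m))$ times the relevant scale. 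The clean way to get $R=o(j^\zeta)$: restrict attention to hubs of degree in $[j,2j]$ with $2j\in\{z_i\}$, of which there are $\ge n\exp(-g(2j)(2j)^\zeta) =: N_j$, and take $R$ large enough that $\rho^R \ge N_j^2$, i.e. $R \asymp \log N_j / \log\rho \le (g(2j)(2j)^\zeta + O(1))/\log\rho$. Since $g(2j)\to 0$, this is $o(j^\zeta)$, as required. A sprinkling/second-moment argument (exactly as in \cite{Sly19}, using that the contracted low-degree graph is essentially a configuration model on $n^{1-o(1)}$ many degree-$\le m$ vertices plus the hub super-vertices) then shows that for a random auxiliary expander $H_{W_0}$ on $W_0$ with bounded degree, whp each of its $O(|W_0|)$ edges can be realized by a vertex-disjoint path of length $\le R$ through degree-$\le m$ vertices; disjointness across edges costs only a constant factor in $\rho^R$ and is absorbed by the $N_j^2$ choice.

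Third, the expansion property \eqref{eq:expander-1}: I would take $H_{W_0}$ to be (a subgraph of) a realization of a configuration-model-type graph of constant degree $d\ge 3$ on $W_0$, or equivalently embed a known bounded-degree expander; such graphs satisfy $|\mathcal{N}_H(A,1)|\ge 2|A|$ for all $|A|\le \alpha|W_0|$ for a suitable small constant $\alpha=\alpha(d)>0$ whp, by a standard first-moment (Peierls) count over small sets — identical to \cite[Lemma~6.1]{Sly19} and needing no modification. Assembling the three pieces and recording the constants $\alpha,\beta,R,m,j$ finishes the proof. The main obstacle, as indicated, is bookkeeping the \emph{interplay between $j$ and $R$}: one must choose $m$ large (to make the low-degree branching process supercritical with growth rate $\rho$), then $j$ large along the $z_i$-subsequence, then $R \asymp \log N_j$, and verify $R = o(j^\zeta)$ using $g(2j)\to 0$; simultaneously the path-construction must stay within the low-degree vertices so the penalization never kicks in, while the hubs must keep $\ge j/2$ low-degree neighbors available after the paths are carved out — which is why the low-degree-neighbor count is $j/2$ rather than $j$, leaving a linear-in-$j$ surplus to absorb the $O(\deg_H)=O(1)$ half-edges consumed per hub by the expander edges. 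Everything else is a routine adaptation of \cite[Sections~6--7]{Sly19}.
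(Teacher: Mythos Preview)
Your proposal has a genuine gap in Step~2. You write $N_j \ge n\exp(-g(2j)(2j)^\zeta)$, require $\rho^R \ge N_j^2$, and conclude ``$R \asymp \log N_j/\log\rho \le (g(2j)(2j)^\zeta + O(1))/\log\rho$''. But $\log N_j = \log n - g(2j)(2j)^\zeta$, which is $\Theta(\log n)$, not $O(g(2j)(2j)^\zeta)$; your $R$ is therefore of order $\log n$ and cannot be $o(j^\zeta)$ for fixed $j$. The lemma demands constants $\alpha,\beta,R$ independent of $n$ (the ``whp'' is as $n\to\infty$ with $j$ fixed; indeed $|W_0|\ge\beta n$ already forces $\P(D\in[j,2j])$ to stay bounded away from zero). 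Any scheme that pre-specifies a bounded-degree expander on $W_0$ and then tries to realize each of its $\Theta(n)$ edges by a path between two \emph{prescribed} hubs will inherently need $R$ of order $\log n$; this is why your Steps~2 and~3, taken together, cannot deliver the stated conclusion.

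The paper, following \cite{Sly19}, uses a different mechanism. One explores the $R$-neighborhood (through degree-$\le m$ vertices only) of each $w\in W$, then contracts each such neighborhood to a super-vertex; the contracted graph is again a configuration model whose super-vertices have degree $M\asymp \bar{b}^{R}j$. The expander $H_{W_0}$ is not pre-chosen but is extracted as the high-degree core of this \emph{contracted} graph, so which pairs of hubs become adjacent in $H_{W_0}$ is determined by the random matching of the contracted model rather than prescribed in advance. Consequently $R$ is governed not by a pairwise-collision requirement but by the local parameter balance \eqref{eq:stars_CM_def1}--\eqref{eq:stars_CM_def3} from \cite[Lemma~7.5]{Sly19}, which resolves via \eqref{eq:stars_CM6} to $R\asymp \log(1/u_j)/\log\bar{b}$ with $u_j=\P(D\in[j,2j])$. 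Under the heavier-than-stretched-exponential hypothesis $u_j=e^{-o(j^\zeta)}$, hence $R=o(j^\zeta)$, independent of $n$. Your Step~1 is essentially correct, but Steps~2--3 must be replaced by this contraction-then-core argument.
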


\begin{proof}
We choose $m$ so high that 
\begin{equation}\label{eq:choice-m}
\begin{aligned}
\bar b:=\frac{\E[D (D-1) \ind_{\{D\le m\}}]}{\E[D\ind_{\{D\le m\}}] }&\ge \frac{\E[D(D-1)]}{\E[D]} (1-\eps)\quad\text{and}\\
\E[D\ind_{\{D\le m\}}] &\ge (1-\eps)\E[D].
\end{aligned}
\end{equation}
The proof is similar to the proof of \cite[Lemma 6.1]{Sly19}, and consists of the following steps.

\emph{Step 1. Targeted attack.} Recall the configuration model under targeted attack from Definition \ref{def:cm-attack}. Here we carry out the attack above degree $2j$ (considering \eqref{eq:high-deg-w} and \eqref{eq:low-deg-path}), and we denote the remaining graph by $G_n[\CV_{\le 2j}]$, and the degree of a vertex $v$ in $G_n[\CV_{\le 2j}]$ by $\tilde d_v$. This ensures that all remaining degrees are at most $2j$.

The second criterion in \eqref{eq:choice-m} ensures that each half-edge of a vertex with degree in the interval $[j, 2j]$ is matched to a vertex with degree below $m$ with probability at least $1-\eps$. Hence, denoting by $u_j:=\P(D \in [j, 2j])$, a 
 Chernoff bound similar as in \cite[Lemma 7.1 part (4) and Claim 7.2]{Sly19} ensures that there are at least $\varepsilon n u_j$ many vertices  have $\deg_{G}(u)\in [j, 2j]$ and $\deg_{G, \le m}(u) \ge j/2$, as required in \eqref{eq:high-deg-w}.

\emph{Step 2. Exploration.}
Let $W:=\{v\in G_n: \deg_{G_n}(v)\in [j, 2j], \deg_{G_n,\le m}(v)\in [j/2, 2j]\}$. We find the vertex set $W_0$ of $H_{W_0}$ as a subset of $W$. 
We explore the $R$-neighborhood in $G_n[\CV_{\le 2j}]$ of each vertex $w\in W$ simultaneously, always discarding vertices that have degree (within $G_n$) higher than $m$. The criteria in \eqref{eq:choice-m} implies that the exploration can be approximated by a supercritical branching process with mean offspring $\bar b$ from the first generation on, and also that the total number of half-edges in $G_n[\CV_{\le 2j}]\ge (1-\eps)n\E[D]$ (since $j>m$). Here we introduce a new parameter $r$ that (contrary to usual notation for radius) is controlling the number of allowed overlaps between neighborhoods of vertices in $W_0$. Given $j, R$, we choose the value of the integer $r$ so that for a $w\in W$ the expected number of vertices of $W$ that lie in the neighborhood $\CN_{G_n[\CV_{\le 2j}]}(w,2R)$ is small compared to $r$.  Then it will be unlikely that different neighborhoods $\CN_{G_n[\CV_{\le 2j}]}(w,R)$ intersect in more than $r$ vertices. To determine $r$, we estimate the size of the $(2R-1)$-st generation of the branching process and then we sample the degrees in generation $2R$ according to size-biased distribution $D_j^\star$ of $D\ind_{D\le 2j}$:
\begin{equation}\label{eq:stars_CM00}
\E[|\CN(v, 2R)\cap W_0|]\approx \E[\partial \CN(v, 2R-1)]\P(D^\star_j\in [j, 2j])\approx j\bar{b}^{2R-1} \cdot \frac{u_j j}{d}
\end{equation}
where  $u_j=\P(j\le D\le 2j)$ and $d=\E[D]$. Hence we set the requirement that
\begin{equation}\label{eq:stars_CM1}
\frac{\bar{b}^{2R-1}j^2 u_j}{d}\le\frac{r}{10}.
\end{equation}

\emph{Step 3. Graph contraction.}
We carry out a graph contraction on $G_n[\CV_{\le 2j}]$ as follows: We associate a vertex $v_w$ to each of the neighborhoods $\CN_{G_n[\CV_{\le 2j}]}(w,R)$, $w\in W$, forming the (contracted) vertex set $V'$. We associate to $v_w\in V'$ as many half-edges as there are unmatched half-edges adjacent to any vertex in $\CN_{G_n[\CV_{\le 2j}]}(w,R)$ after the exploration process in step 2 ends. Furthermore, let $V''$ be the set of vertices of $G_n[\CV_{\le 2j}]$ that have not been touched in the exploration process, i.e., vertices that belong to none of the neighborhoods $\cup_{w\in W}\CN_{G_n[\CV_{\le 2j}]}(w,R)$. Then the graph $G'_n$ is obtained by matching the half-edges of the vertex set $V'\cup V''$ uniformly at random. About the degrees of vertices in $V'$ in $G'_n$, i.e., the number of unmatched half-edges in each $\CN_{G_n[\CV_{\le 2j}]}(w,R)$, \cite[Lemma 7.5]{Sly19} proves the following:

There exists positive constant $\varepsilon', \varepsilon'', R_0$, depending only on the degree sequences $(\underline{d}_n)_{n\ge 1}$, such that for all bounded positive numbers $R_1, R, r$ satisfying
\begin{align}
R_0&\le \min\{R_1,R-R_1\}, & 800r&\le \varepsilon'^2(\bar{b}(1-\varepsilon''))^{R_1-1}j,\label{eq:stars_CM2}\\
\frac{\bar{b}^{2R_1-1}j^2 u_j}{d}&\le\frac{1}{10^4}, &\frac{\bar{b}^{2R-1}j^2 u_j}{d}&\le\frac{r}{10},\label{eq:stars_CM3}
\end{align}
the number of vertices in $V'$ with degree at least $M$ is at least $(\varepsilon'/2)|V'|$ whp, where
\begin{equation*}
M=\frac{\varepsilon'^3(\bar{b}(1-\varepsilon''))^{R-1}j}{8}.
\end{equation*}
Note that $M$ grows with $j$, i.e., most contracted vertices have high degree.
In fact all $R, R_1, r$ are dependent on and growing with $j$, while $R_0, \varepsilon, \varepsilon''$ are not.

\emph{Step 4.}
Given that the conditions \eqref{eq:stars_CM2}-\eqref{eq:stars_CM3} are satisfied, \cite{Sly19} proves the existence of a high degree core in $G'_n$, which is an $(\alpha,R,m,j)$-embedded expander in $G_n$. Here we mean core in the sense of Definition \ref{def:kcore}. \cite{Sly19} chooses $r, R, R_1$  as the solution to the following equations:
\begin{align}
\frac{\bar{b}^{2R-1}j^2u_j}{d}&=\frac{r}{10}\label{eq:stars_CM_def1},\\
\epsilon'^2(\bar{b}(1-\epsilon''))^{R_1-1}j&=800r,\label{eq:stars_CM_def2}\\
\bar{b}^{2R_1-1}&=\frac{d}{10^4 j^2 u_j}.\label{eq:stars_CM_def3}
\end{align}
It is relatively easy to check that for large $j$ this set of choices satisfies then  \eqref{eq:stars_CM2}-\eqref{eq:stars_CM3}.
We also set $r, R, R_1$ given by \eqref{eq:stars_CM_def1}--\eqref{eq:stars_CM_def3}, and now compute the value of $R$:  Combining \eqref{eq:stars_CM_def1} and \eqref{eq:stars_CM_def2} gives the relation between $R_1$ and $R$:
\begin{align}
2R-1&=(R_1-1)\cdot\frac{\log(\bar{b}(1-\epsilon''))}{\log(\bar{b})}+\frac{\log\left(\frac{d\epsilon'^2}{8000ju_j}\right)}{\log(\bar{b})}.\label{eq:stars_CM4}
\end{align}
Next, we note that \eqref{eq:stars_CM_def3} yields
\begin{align}\label{eq:stars_CM5}
2R_1-1&=\frac{\log\left(\frac{d}{10^4 j^2 u_j}\right)}{\log(\bar{b})}.
\end{align}
Since we assume that $\E[D^2]<\infty$, it holds that $\lim_{j\to \infty} j^2 u_j=\lim_{j\to \infty} j^2 \P(D\in [j,2j])=0$. So $R_1$ can be chosen arbitrarily large by increasing $j$.
Using this in \eqref{eq:stars_CM4} yields
\begin{align}
2R-1&\approx\frac{\log\left(\frac{d}{10^4 j^2 u_j}\right)}{2\log(\bar{b})}\cdot\frac{\log(\bar{b}(1-\epsilon''))}{\log(\bar{b})}+\frac{\log\left(\frac{d\epsilon'^2}{8000ju_j}\right)}{\log(\bar{b})},\nonumber\\
R&\approx \frac{\log\left(\frac{d}{10^4 j^2 u_j}\right)}{4\log(\bar{b})}+\frac{\log\left(\frac{d\epsilon'^2}{8000ju_j}\right)}{2\log(\bar{b})}.\label{eq:stars_CM6}
\end{align}
In \cite[Theorem 4]{Sly19}, the degree distribution of $G_n$ is subexponential, that is, $u_j=e^{-o(j)}$. Then, the rhs of \eqref{eq:stars_CM6} is $o(j)$. In our case, the degree distribution has heavier tails than stretched-exponential with stretch-exponent $\zeta$, that is, $u_j=e^{-o(j^{\zeta})}$. Therefore, the rhs of \eqref{eq:stars_CM5} is $o(j^{\zeta})$, finishing the proof.
\end{proof}

\begin{proof}[Proof of Theorem \ref{thm:prod_CM} (a), outline]
With Lemma \ref{lem:double-truncated-expander} at hand, the proof can be word-by-word adapted from the proof of \cite[Theorem 4]{Sly19} with the difference that we use Claim \ref{claim:spread_between_stars} for the degree-penalized process, in place of \cite[Lemma 6.2]{Sly19}. Both \cite[Lemma 6.2]{Sly19} and our Claim \ref{claim:spread_between_stars} ensure that given that a star is infested, the infection reaches the next star at most $2R$ away with probability close to $1$. For us, $2R=o(j^{1-2\mu})$ is necessary for Claim \ref{claim:spread_between_stars}, hence the assumption of heavier than stretched exponential decay with exponent $1-2\mu$ for the degree-penalized process. In comparison, in \cite{Sly19}, $R=o(j)$ is necessary for \cite[Lemma 6.2]{Sly19}, which leads to the assumption of subexponential tails there. We note that $j$ depends on the infection rate $\lambda$. 
\end{proof}

\begin{proof}[Proof of Theorem \ref{thm:max_CM_survival}(a)]
This is an easy consequence of Theorem \ref{thm:prod_CM}(a) by stochastic domination, noting that $\max(d_u,d_v)^\mu\le (d_ud_v)^\mu$.
\end{proof}

\begin{remark}\normalfont
Here we highlight the difference between the expander that \cite[Theorem 4]{Sly19} uses vs. what we describe in Lemma \ref{lem:double-truncated-expander} and the reason for the choice of difference. 
In Section \ref{sec:prod_GW_strong}, we have seen the following: a star-graph of degree $j=j(\lambda)$ that survives until $\exp(c j^{1-2\mu})$ long time can transfer the infection along a path of length $o(j^{1-2\mu})$ \emph{if} the path contains only constant degree vertices (say, at most degree $m$, neither depending on $j$ nor on $\lambda$). If we would allow the path to contain vertices of any degree up to $j$, the penalty along the path increases and along such a path whp transmission within $\exp(c j^{1-2\mu})$ long time only happens up to distance $o(j^{1-2\mu}/\log j)$, which can be seen by adapting the proof of Claim \ref{claim:spread_between_stars}. Thus, to obtain a sharp result, in Definition \ref{def:embdedded-expander}, in addition to the constraints \eqref{eq:expander-1}, \eqref{eq:NAR}, \eqref{eq:high-deg-w} that are all already present in \cite{Sly19}, we have added \eqref{eq:low-deg-path}, that restricts the embedded paths connecting the stars of degree $j$ to contain only low-degree vertices of degree at most $m$. 
Without the restriction in \eqref{eq:low-deg-path}, the proof in \cite{Sly19} word-by-word carries through for the degree-penalized CP as well, but gives a weaker result: $R$ can only be set in the proof to $R=o(j^{1-2\mu}/\log j)$, which then, by \eqref{eq:stars_CM6}, would result
in the slightly stronger assumption on the degree distribution
\begin{equation}\label{eq:req-stronger}
\P(D= K)\ge\exp\{-g(K)K^{1-2\mu}/\log(K)\}
\end{equation}
along an infinite subsequence $(K_i)_{i\ge 1}$ and
with some function $g$ such that $g(x)\to 0$ as $x\to\infty$. For limiting degree distributions satisfying \eqref{eq:req-stronger}, the proof of  \cite[Theorem 4]{Sly19} goes through for the degree-penalized version without any modifications. The modification \eqref{eq:low-deg-path} thus eliminates the extra $1/\log(K)$ factor in the tail-requirement on $D$ in \eqref{eq:req-stronger} so that the same assumption as for GW trees, Definition \ref{def:stretched-heavy} with $\zeta=1-2\mu$ is enough.
\end{remark}

{\appendix
\section{Proofs of technical lemmas}

\subsection{Proof of the statement in Example \ref{ex:iid-degrees}}\label{s:proof-ex-iid-degrees}

Assumption \ref{assu:regularity} is a consequence of the law of large numbers.
To prove Assumptions \ref{assu:empirical-power-law}, and \ref{assu:empirical-power-law-2}, we also need to consider $n$-dependent values for $\nu_n(z)$ and $1-F_n(z)$ which makes the statement non-trivial. 
We bound the maximum degree first, this immediately gives \eqref{eq:max-degree} in Assumption \ref{assu:empirical-power-law-2}. Here we use that $ \P(D \ge z ) \le 1/z^{\alpha-\varepsilon'}\le 1/z^{\alpha(1-\eps')}$ holds for all $\eps'$ to estimate that the probability that \eqref{eq:max-degree} fails to hold for given $n,C_u,\varepsilon_1>0$ is
\begin{align}
\P\left(\max_{i\le n}D_{n,i}>n^{1/(\alpha(1-\eps_1))}\right)
&\le n \P\left(D> n^{1/(\alpha(1-\eps_1))}\right)\nonumber\\
&\le  n n^{-(\alpha(1-\varepsilon')/((\alpha(1-\varepsilon_1))}=n^{1-(1-\eps')/(1-
\eps_1)}.\label{eq:max-deg-bound}
\end{align}
For any fixed $\eps_1>0$, choose $\eps':=\eps_1/2$ and then the exponent of $n$ is negative. Hence, with probability tending to $1$, we have $\max d_i\le n^{1/(\alpha(1-\eps_1))}=:z_{\max}(\eps_1)$ for \emph{any} fixed $\eps_1$. We can rewrite the exponent to obtain that $\alpha=\tau-1$.
This means that  $\nu_n(z)$  has discrete support on $[0, n^{1/\alpha(1-\eps_1)}]$ with probability tending to $1$, hence it is enough to consider $z\in \N$ in this range. 
We now recall that for any Binomial random variable with parameters $n$ and $p$, and any $c> 1$,
\begin{equation} \label{eq:bin-rate}
\P(\mathrm{Bin}(n,p)\ge c np ) \le \exp( - np (c \log c +1-c) ) = \exp( - npc ( \log c +1/c-1)).
\end{equation}
Clearly the right-hand side is tending to $0$ as long as $npc\to\infty$ and $\log c\to \infty$ both hold.
We start by estimating the upper tail for Assumption \ref{assu:empirical-power-law}, so that we prove \eqref{eq:assu-11-holds}.
Our goal is to show that for all $z\le n^{1/(\alpha(1-\eps_1))}=z_{\max}$,  for some $\eps_2>0$ that is still arbitrarily small, whp
\begin{equation} \label{eq:goal-empirical-tail-1}
\P\big(\forall z \in [z_0(\eps_2/2), z_{\max}(\eps_1)]: 1-F_n(z)  \le  z^{-\alpha(1-\eps_2)}\big) \to 1.
\end{equation}
Note that $n(1-F_n(z))$ is the number of vertices with degree above $z$, which has $\mathrm{Bin}(n, \P(D> z))$ distribution. For all $z\ge z_0(\eps')$ this is stochastically dominated from above by a $\mathrm{Bin}(n, z^{-\alpha(1-\eps')})$ distribution.
Hence,
\[ \P\Big(1-F_n(z)  \ge   z^{-\alpha(1-\eps_2)}\Big) \le \P\Big( \mathrm{Bin}(n, z^{-\alpha(1-\eps')}) \ge n z^{-\alpha(1-\eps_2)}\Big).\]
Now we apply \eqref{eq:bin-rate} with $p=z^{-\alpha(1-\eps')}$ and $c= z^{-\alpha(1-\eps_2)+\alpha(1-\eps')}=z^{\alpha(\eps_2-\eps')}$. 
The exponent of $z$ is positive whenever $\eps_2> \eps'$ which we already we may safely assume since $\eps'$ can be chosen arbitrarily, hence $\log c\to \infty$. Further,  $npc=n z^{-\alpha(1-\eps_2)}$ tends to $\infty$ exactly when $z=o( n^{1/(\alpha(1-\eps_2))})$
which can be made always true in the range $[1, n^{1/(\alpha(1-\eps_1))}]$ of the empirical distribution by choosing $\eps_2\ge \eps_1 \ge \eps':=\eps_2/2$, but all of them arbitrarily small. At $z_{\max}(\eps_1)$ the exponent in \eqref{eq:bin-rate} becomes minimal and is at least constant times $npc=n z_{\max}^{-\alpha(1-\eps_2)}=n n^{-(1-\eps_2)/(1-\eps_1)}=n^{+\delta}$.
Taking a union bound over all $z\in [1, n^{1/(\alpha(1-\eps_1))}]$ and the bound in \eqref{eq:max-deg-bound} we obtain that 
\[
\begin{aligned}
1-\P(\forall z\ge 1: 1-F_n(z)  \le z^{-\alpha(1-\eps_2)})&= \P(\exists i\le n: D_i > z_{\max}(\eps_1)) \\
&\qquad+ \P\big(\exists z \le z_{\max}(\eps_1): 1-F_n(z)  \le z^{-\alpha(1-\eps_2)}\big) \\
&\le  n^{1-(1-\eps')/(1-
\eps_1)}+n^{1/(\alpha(1-\eps_1))} \exp(- n^{\delta}) \to 0.
\end{aligned}
 \]
This finishes the proof of \eqref{eq:goal-empirical-tail-1} and the upper bound in Assumption \ref{assu:empirical-power-law}.
To prove the lower bound we need the opposite direction, i.e., for all $c\le 1/2$,
\begin{equation}\label{eq:bin-rate-lower}
\P(\mathrm{Bin}(n,p)\le c np ) \le \exp( - np/ 8),
\end{equation}
as long as $np\to \infty$. We now estimate $n(1-F_n(z))=\mathrm{Bin}(n, \P(D>z))$ stochastically from below by $\mathrm{Bin}(n, z^{-(\alpha+\eps')})\ge \mathrm{Bin}(n, z^{-\alpha(1+\eps')})$ which is true for all fixed $\eps'$ and all $z>z_0(\eps')$, since the lower bound here is coming from \eqref{eq:heavier-power-law}. So let us set $z_{\max}^{(\ell)}(\eps, n)$ in Assumption \ref{assu:empirical-power-law} to be $n^{1/(\alpha(1+\eps))}$, 
and then the mean $n(z_{\max}^{(\ell)}(\eps,n))^{-(\alpha(1+\eps'))}=n^{1-(1+\eps')/(1+\eps)}$ tends to infinity whenever $\eps'< \eps$. 
Further, if $\eps'<\eps$ then also $n z^{-\alpha(1+\eps)} \le n z^{-\alpha(1+\eps')}/2$ for all $z\le z_{\max}^{(\ell)}(\eps,n)$, and so \eqref{eq:bin-rate-lower} applies with $p=z^{-\alpha(1+\eps')}$. 
By a union bound then
\[ \P\Big(\exists z\in[z_0,z_{\max}^{(\ell)}(\eps,n)]: 1-F_n(z) \le z^{-\alpha(1+\eps)} \Big) \le n^{1/(\alpha(1+\eps))} \exp(- n^{1-(1+\eps')/(1+\eps)}/8 ),\] 
which tends to $0$. It remains to prove \eqref{eq:point-mass} in Assumption \ref{assu:empirical-power-law-2}, and here we can use the extra assumption \eqref{eq:powerlaw-mass}. Namely, following the bound on the maximum in \eqref{eq:max-deg-bound}. We want to prove that
\begin{equation*}
\P\Big(\exists z \in[z_0, z_{\max}(\eps_1)]: \nu_n(z)\ge z^{-\tau(1-\eps)}\Big) \to 0.
\end{equation*}
In this case $n\nu_n(z)=n_z=\Bin(n, \P(D=z))$ which is stochastically dominated by $\Bin(n, z^{-\tau(1-\eps')})$, and returning to \eqref{eq:bin-rate}, now $c=z^{\tau(\eps-\eps')}$ tends to infinity whenever $\eps>\eps'$, and now $nz^{-\tau(1-\eps)}$ takes the role of $npc$. This tends to infinity whenever $z=o(n^{1/(\tau(1-\eps))})$, which is for \emph{small} $\eps>0$ much less than the maximum degree $z_{\max}(\eps_1)=n^{1/(\tau-1)(1-\eps_1)}$.
Nevertheless, we can set a reasonable $\eps$, namely, whenever
we set $\eps>1/\tau$, e.g. set $\eps:=1/\tau+\delta$,  then $nz^{-\tau(1-\eps)}= n  z^{-\tau (1-1/\tau -\delta)}= nz^{-(\tau-1-\delta)}$, and so for $z_{\max}=n^{1/(\tau-1)(1-\eps_1)}$ this is $n n^{-(\tau-1-\delta)/(\tau-1)(1-\eps_1)}$, which has a positive exponent whenever $\delta>\eps_1(\tau-1)$. Since $\eps_1$ was arbitrarily small, $\delta$ is thus also arbitrarily small.
This, together with a union bound with \eqref{eq:max-deg-bound} finishes the proof of \eqref{eq:iid-with-1/tau}:
\[
\begin{aligned}
1-\P(\forall z\ge 1: \nu_n(z)  \le z^{-\tau(1-1/\tau+\delta)})&= \P(\exists i\le n: D_i > z_{\max}(\eps_1)) \\
&\qquad+ \P\big(\exists z \le z_{\max}(\eps_1): \nu_n(z)  \le z^{-\tau(1-1/\tau+\delta)}\big) \\
&\le  n^{1-(1-\eps')/(1-
\eps_1)}+n^{1/(\tau-1)(1-\eps_1))} \exp(- n^{\delta}) \to 0.
\end{aligned}
 \]
 If one considers truncated power-law distributions with maximal degree $z_{\max,\mathrm{tr}}=o(n^{1/\tau})$, then $n z_{\max, \mathrm{tr}}^{-\tau(1-\eps)}\to \infty$ for all possible values $z$, hence the proof above works with $\eps>0$ arbitrary.

\subsection{Proof of long survival on stars}

\begin{proof}[Proof of Claim \ref{claim:star}]
Denote the neighbors of $v$ by $w_1,\ldots,w_K$. Define
\begin{align*}
    \CA_1&=\{\xi^v_t(v)=1\text{ for all }t\in[0,1]\},\\
    \CA_2&=\left\{\left|\left\{i:\xi^v_1(w_i)=1\right\}\right|\ge\lambda K^{1-\mu}/(4e)\right\}.
\end{align*}
Since $v$ recovers at rate $1$, $\P(\CA_1\mid  \xi_0(v)=1)=1/e$. Conditioning on $\CA_1$, $v$ infects each of $w_i$ during $[0,1]$ with rate $\lambda K^{-\mu}$, independently of each other. Hence, for $i=1,\ldots,K$,
\begin{equation*}
\P(v\text{ infects $w_i$ at some }t\in[0,1])=1-e^{-\lambda K^{-\mu}}\ge\lambda K^{-\mu}/2,
\end{equation*}
using that $\lambda K^{-\mu}<1$. Each $u_i$ that becomes infected during $[0,1]$ is still infected at time $1$ with conditional probability at least $1/e$. Hence
\begin{equation*}
    \left|\left\{i:\xi^v_1(w_i)=1\right\}\right| \mid \CA_1 \succcurlyeq X\sim \mathrm{Bin}\left(K, \lambda K^{-\mu}/(2e)\right),
\end{equation*}
where $\succcurlyeq$ stands for stochastic domination. By a standard Chernoff bound, this yields
\begin{align}
\P(\CA_2\mid \CA_1)&\ge\P\left(X\ge \lambda K^{1-\mu}/(4e)\right)\ge 1-e^{-\lambda K^{1-\mu}/(16e)}.\label{eq:Chernoff}
\end{align}
Therefore,
\begin{equation*}
    \P(\CA_2)\ge \P(\CA_1)\cdot\P(\CA_2\mid \CA_1)\ge \left(1- e^{-\lambda K^{1-\mu}/(16e)}\right)/e,
\end{equation*}
finishing the proof of \eqref{eq:star-i} in Claim \ref{claim:star}.

We now turn to proving \eqref{eq:star-ii} and \eqref{eq:star-iii}. Starting from time 0, we declare each unit time-interval $[s,s+1]$ for $s\in\N$ \emph{successful} if the following events jointly occur:
\begin{equation}
\begin{aligned}
    \CB^1_s&=\left\{\left|\left\{i:\xi_s(w_i)=1\right\}\right|\ge\lambda K^{1-\mu}/(8e)\right\},\\
    \CB^2_s&=\left\{\left|\left\{i:\xi_t(w_i)=1\text{ for all }t\in[s,s+1]\right\}\right|\ge \lambda K^{1-\mu}/(16e^2)\right\},\\
    \CB^3_s&=\left\{\int_s^{s+1}\xi_t(v)\,\mathrm{d}t\ge1/2\right\},\\
    \CB^4_s&=\left\{\left|\left\{i:\xi_{s+1}(w_i)=1\right\}\right|\ge \lambda K^{1-\mu}/(8e)\right\}.\label{eq:Fs}
\end{aligned}
\end{equation}
Here $\CB^1_s$ is the event that a large enough number of leaves of the star are infected at the beginning of the time interval $[s,s+1]$, which will be enough to sustain the infestation during the whole period, while $\CB^4_s$ is the corresponding event for the end of the time interval. $\CB^2_s$ is the event that the star is infested during $[s,s+1]$, while $\CB^3_s$ is the event that the center is infected during at least half of the time interval $[s,s+1]$. Note that $\CB^4_{s}=\CB^1_{s+1}$ for all $s$ and that $\CB_0^1$ holds by the condition of the Lemma. We now fix some $s\in\mathbb{N}$ and bound the conditional probabilities of each of these events given the previous ones. First, any leaf of the star that is infected at time $s$ will stay infected during the whole interval $[s,s+1]$ with conditional probability at least $1/e$, conditioned on any trajectory of the process on the other vertices. Formally, for any $i=1,\ldots,K$,
\begin{equation*}
\inf_{\eta}\P\left(\begin{array}{l}\xi_{t}(u_i)=1\text{ for all }t\in[s,s+1] \mid \xi_s(w_i)=1,\\ \xi\equiv\eta \text{ on $[s,s+1]$ on all vertices apart from $u_i$}\end{array}\right)\ge 1/e.
\end{equation*}
Hence, by a Chernoff bound similar to \eqref{eq:Chernoff},
\begin{equation}\label{eq:F2}
    \P(\CB^2_s \mid \CB^1_s)\ge 1-e^{-\lambda K^{1-\mu}/(64e^2)}.
\end{equation}
We will now give a bound on $\P(\CB_s^3\mid \CB_2^1\cap \CB_s^2)$, using that an infested star has enough leaves infected at every time to send back the infection to the center frequently enough to keep it infected for at least half of the time. Formally, given $\CB^1_s\cap \CB^2_s$, $(\xi_t(v))_{t\in[s,s+1]}$ is a Markov process on the state space $\{0,1\}$ with transition rates
\[Q_{01}\ge\lambda K^{1-\mu}\cdot\lambda K^{-\mu}/(16e^2)=\lambda^2 K^{1-2\mu}/(16e^2),\quad\quad Q_{10}=1,\]
and some starting state $\xi_s(v)$. Let us introduce auxiliary Markov processes $(Y_t)_{t\ge0}$, $(Y'_t)_{t\ge0}$ and $(Y''_t)_{t\ge0}$ on $\{0,1\}$, all starting from the same initial state $\xi_s(v)$, with transition rates
\begin{alignat*}{3}
    q_{01}&=\lambda^2 K^{1-2\mu}/(16e^2),\quad\quad & q_{10}&=1\quad\quad&\text{of $Y$},\\
    q'_{01}&=1,\quad\quad& q'_{10}&=16e^2/(\lambda^2 K^{1-2\mu})\quad\quad&\text{of $Y'$},\\
    q''_{01}&=1,\quad\quad& q''_{10}&=1/2\quad\quad&\text{of $Y''$},
\end{alignat*}
respectively. Note that $Y'$ is a time-changed (slowed-down) version of $Y$, and $Y''$ is stochastically dominated by $Y'$ when $(16e^2)/(\lambda^2 K^{1-2\mu})<1/2$. Then, recalling \eqref{eq:Fs}, we have
\begin{align}\nonumber
    \P(\CB_s^3\mid \CB_s^2)&\ge\P\left(\int_0^1 Y_t\,\mathrm{d}t\ge\frac{1}{2}\right)=\P\left(\frac{16e^2}{\lambda^2 K^{1-2\mu}}\int_0^{\frac{\lambda^2 K^{1-2\mu}}{16e^2}}Y'_t\,\mathrm{d}t\ge\frac{1}{2}\right)\\\label{eq:local_time}
    &\ge\P\left(\frac{16e^2}{\lambda^2 K^{1-2\mu}}\int_0^{\frac{\lambda^2 K^{1-2\mu}}{16e^2}}Y''_t\,\mathrm{d}t\ge\frac{1}{2}\right).
\end{align}
Note that the stationary distribution of $Y''$ is $(\pi_0,\pi_1)=(1/3,2/3)$. The large deviation principle for Markov chains (see for example \cite{DemZei}) yields that the time average of $Y''_t$ on the right-hand side of \eqref{eq:local_time} is close to $\pi_1$ with large probability, as $K\to\infty$:
\begin{equation}\label{eq:LDPMarkov}
\P\left(\frac{16e^2}{\lambda^2 K^{1-2\mu}}\int_0^{\frac{\lambda^2 K^{1-2\mu}}{16e^2}}Y''_t\,\mathrm{d}t\ge\frac{1}{2}\right)\ge 1-\exp\{-c\lambda^2 K^{1-2\mu}\}.
\end{equation}
Combining \eqref{eq:local_time} and \eqref{eq:LDPMarkov} gives
\begin{equation}\label{eq:F3}
    \P(\CB^3_s \mid \CB^1_s\cap \CB^2_s)\ge 1-\exp\{-c\lambda^2 K^{1-2\mu}\}
\end{equation}
for some $c>0$.

Given $\CB^3_s$, during $[s,s+1]$, $v$ spends at least $1/2$ time in total in state $1$, during which it infects all the leaves with rate $\lambda K^{1-\mu}$. Each leaf infected this way is still infected at $s+1$ with conditional probability at least $1/e$. Hence, for $\CB_s^4$ given by \eqref{eq:Fs}, another Chernoff bound, similar to \eqref{eq:Chernoff}, yields
\begin{equation}\label{eq:F4}
    \P(\CB^4_s \mid \CB^1_s\cap \CB^2_s\cap \CB^3_s)\ge 1-e^{-\lambda K^{1-\mu}/(32e)}.
\end{equation}
Combining \eqref{eq:F2}, \eqref{eq:F3} and \eqref{eq:F4} yields
\begin{equation}\label{eq:success_prob}
    \P(\CB^1_s\cap \CB^2_s\cap \CB^3_s\cap \CB^4_s \mid \CB^1_s)\ge 1-\exp\{-c'\lambda^2K^{1-2\mu}\}
\end{equation}
for some $c'>0$. Hence, the number of consecutive successful time-intervals stochastically dominates a Geometric random variable with parameter $\exp\{-c'\lambda^2 K^{1-2\mu}\}$. This implies both \eqref{eq:star-ii} and \eqref{eq:star-iii} in the claim.
\end{proof}

\subsection{Proofs about the degrees in the configuration model}
\begin{proof}[Proof of Lemma \ref{lem:CM-attack-degrees}]\label{proof:CM-attack-degrees}
We will consider 
$\widetilde n_i:=\sum_{j=1}^n \ind_{\{\widetilde d_j=i, d_j\le M\}}$ which gives the number of vertices with degree $i$ in $G_n[\CV_{\le M}]$. Then the (random) empirical mass function of $\widetilde F_{n,M}$ can be written as
\begin{equation}\label{eq:empirical-mass-1}
\P(\widetilde D_{n,M}=i\mid G_n[\CV_{\le M}]) = \frac{\widetilde n_i}{V_{\le M}} = \Big(\frac{V_{\le M}}{n}\Big)^{-1} \cdot \frac{\widetilde n_i}{n}.
\end{equation}
We can now analyze both factors on the rhs separately. The first factor is already given by \eqref{eq:M-vertices}, and can be exactly described using $D_n$ with cdf in \eqref{eq:empirical-degree}
\begin{equation}\label{eq:empirical-mass-2}
\frac{V_{\le M}}{n} = \frac{n F_n(M)}{n}=F_n(M)=\P(D_n\le M). 
\end{equation}
By the definition of $\delta_n$ in \eqref{eq:delta-n-m}, this falls in the range  $\P(D\le M)\pm M\delta_n$. Turning to the second factor $\widetilde n_i/n$ in \eqref{eq:empirical-mass-1}, we introduce $n_\ell:=|\CV_{\ell}|=\sum_{j=1}^n\ind_{\{d_j=\ell\}}$ the number of degree-$\ell$ vertices in the original graph.
Then we can carry out a first and second moment method, i.e., we take expectation over the realization of the matching and hence the graph $G_n[\CV_{\le M}]$. We start with the first moment:
\begin{equation}\label{eq:expect-ni}
\frac{1}{n}\E[\widetilde n_i]=\frac{1}{n}\sum_{\ell=i}^M \sum_{v\in \CV_{\ell}} \E[\ind_{\{\widetilde d_v=i\}}\mid d_v=\ell] =  \frac{1}{n}\sum_{\ell=i}^M \sum_{v\in \CV_{\ell}} \P(\widetilde d_v=i \mid d_v=\ell).
\end{equation}
To analyze $\P(\widetilde d_v=i \mid d_v=\ell)$, we first deal with self-loops at $v\in\CV_{\ell}$. Labeling the half-edges of $v$ as $h_1, h_2, \dots, h_\ell$, the number of self-loops at $v$ is $S_v:=\sum_{1\le s,t\le \ell} \ind_{\{h_s \leftrightarrow h_t\}}$, with $\leftrightarrow$ standing for the event that the two half-edges are matched to each other. We denote the total number of half-edges in the graph by $H_n=\E[D_n]n$, and then a first moment method yields
\begin{equation}\label{eq:loops-at-dv}
\P( S_v\ge 1 ) \le \E[S_v] = \binom{\ell}{2} \frac{1}{H_n-1}\le \frac{M^2}{\E[D_n] n}.
\end{equation}
Recall from  \eqref{eq:M-vertices} in Definition \ref{def:cm-attack} that we denote by $H_{\le M}$ and $H_{>M}$ the number of half-edges attached to vertices of degree at most $M$ and larger than $M$, respectively.
Partition now the $\ell$ half-edges of $v$ into (arbitrary) two groups of size $i$ and $\ell-i$, respectively:  $h_{s_1}, \dots, h_{s_i}$ and $h_{s_{i+1}}, \dots, h_{s_\ell}$, and let us write informally 
\begin{equation*}
\CA_{\{s_1, \dots, s_i\}}:=\Big\{ \{h_{s_1}, \dots, h_{s_i}\} \leftrightarrow \CV_{\le M}, \{h_{s_{i+1}}, \dots, h_{s_\ell}\}\leftrightarrow \CV_{>M}, S_v=0 \Big\}
\end{equation*}
for the event that the half-edges $h_{s_1}, \dots, h_{s_i}$ are all matched to half-edges belonging to vertices in $\CV_{\le M}$, the half-edges $h_{s_{i+1}}, \dots, h_{s_\ell}$ are all matched to half-edges belonging to vertices in $\CV_{> M}$, and there is no self-loop created among $h_{s_1}, \dots, h_{s_i}$.
Then, matching half-edges one by one, we come to
\[ 
\begin{aligned}
\P(\CA_{\{s_1, \dots, s_i\}}) 
&=\prod_{a=0}^{i-1} \frac{H_{\le M}-\ell-a}{H_n-2a-1} \cdot \prod_{b=0}^{\ell-i-1} \frac{H_{> M}-b}{H_n-2(i+b)-1}.
\end{aligned}
\]
Observe that per definition $H_{\le M}=n\E[D_n\ind_{\{D_n\le M\}}]$ and $H_n=n\E[D_n]$ so one can compute, using also that $\ell\le M$, that each factor in the first product is $q_{n,M}(1+O(M/n))$ and each factor in the second product is $(1-q_{n,M})(1+O(M/n))$. Considering all the possible partitions of the half-edges into two groups of $i$ and $\ell-i$ half-edges, and using that there are $\ell\le M$ factors in the two products together, we arrive at
\begin{equation}\label{eq:prob-dv-i-lower}
\begin{aligned}
\P(\widetilde d_v=i\mid d_v=\ell)  &\ge \sum_{\{s_1, \dots, s_i\}\subset [\ell] } \P\big(\CA_{\{s_1, \dots, s_i\}}\big)\\
&=  \binom{\ell}{i} q_{n,M}^i (1-q_{n,M})^{\ell-i} \big(1+O\big(\tfrac{M^2}{n}\big)\big).
\end{aligned}
\end{equation}
A similar upper bound holds: we account for the error caused by the event that there might be self-loops at $v$ in \eqref{eq:loops-at-dv},
\begin{equation}\label{eq:prob-dv-i-upper}
\begin{aligned}
\P(\widetilde d_v=i\mid d_v=\ell) & \le \Pv(S_v\ge 1) + \!\!\!\!\!\sum_{\{s_1, \dots, s_i\}\subset [\ell] } \P\big(\CA_{\{s_1, \dots, s_i\}}\big)\\
&= O\big(\tfrac{M^2}{n}\big) + \binom{\ell}{i} q_{n,M}^i (1-q_{n,M})^{\ell-i} \big(1+O\big(\tfrac{M^2}{n}\big)\big).
\end{aligned}
\end{equation}
Using these bounds in \eqref{eq:expect-ni}, and that $|\CV_\ell|/n=\P(D_n=\ell)$, we arrive at
\begin{equation*}
\begin{aligned}
\frac{1}{n}\E[\widetilde n_i]&=  \sum_{\ell=i}^M\P(D_n=\ell)\bigg(O\big(\tfrac{M^2}{n}\big) + \binom{\ell}{i} q_{n,M}^i (1-q_{n,M})^{\ell-i} \big(1+O\big(\tfrac{M^2}{n}\big)\big)\bigg)\\
&=O\big(\tfrac{M^2}{n}\big) \P(D_n\le M)+ \big(1+O\big(\tfrac{M^2}{n}\big)\big) \sum_{\ell=i}^M\P(D_n=\ell) \binom{\ell}{i} q_{n,M}^i (1-q_{n,M})^{\ell-i}.
\end{aligned}
\end{equation*}
Combining this with \eqref{eq:empirical-mass-1} and \eqref{eq:empirical-mass-2}, we obtain that 
\begin{equation}\label{eq:empirical-mean-last}
\begin{aligned}
\P(\widetilde D_{n,M}=i)&=\frac{1}{V_{\le M}}\E[\widetilde n_i]\\
&=O\big(\tfrac{M^2}{n}\big) + \big(1+O\big(\tfrac{M^2}{n}\big)\big) \sum_{\ell=i}^M\frac{\P(D_n=\ell)}{\P(D_n\le M)} \binom{\ell}{i} q_{n,M}^i (1-q_{n,M})^{\ell-i}.
\end{aligned}
\end{equation}
We can here observe that the rhs gives the probability $\P(\mathrm{Bin}(D_n, q_{n,M})=i \mid D_n\le M)$.
Since $\P(D_n\le M)\to \P(D\le M)$ and $q_{n,M}\to q_{M}$ by Assumption \ref{assu:regularity}, the rhs of \eqref{eq:empirical-mean-last} tends to 
\begin{equation}\label{eq:limit-mass-i}
\begin{aligned}
 \sum_{\ell=i}^M&\frac{\P(D=\ell)}{\P(D\le M)} \binom{\ell}{i} q_{M}^i (1-q_{M})^{\ell-i}\\
 &\qquad= \P(\mathrm{Bin}(D, q_M)=i \mid D\le M)=:\P(\widetilde D_M =i), 
 \end{aligned}
 \end{equation}
and we have just proved that the mean of the (random) empirical distribution $\widetilde F_{n,M}$ converges pointwise for each $i\le M$, and identified the limit random variable in \eqref{eq:binom-thinning}. Further, recalling the definition of $\delta_n$ from \eqref{eq:delta-n-m} we may write $q_{n,M}=q_M(1\pm\delta_n)$, $1-q_{n,M}=(1-q_M)(1\pm\delta_n)$ and similarly we can use that $\P(D_n=\ell)/\P(D=\ell)\in (1-\delta_n,1+\delta_n)$ when the limit $\P(D=i)$ is non-zero and otherwise $\P(D_n=i)\le \delta_n$. So, we subtract \eqref{eq:empirical-mean-last} from \eqref{eq:limit-mass-i} to obtain after elementary error-estimates that
\begin{equation}\label{eq:limit-difference}
\begin{aligned}
\big|\P(\widetilde D_{n,M}=i) - \P(\widetilde D_M=i)\big| &\le O\big(\tfrac{M^2}{n}\big) + O\big(\tfrac{M^2}{n}+ M \delta_n\big) \P(\widetilde D_M=i)\\
&=O\big(\tfrac{M^2}{n}+\delta_n M\big).
\end{aligned}
\end{equation}
This finishes comparing the first moments.  We now turn to the variance in  \eqref{eq:empirical-mass-1}. Clearly
\begin{equation}\label{eq:var-dnM}
\mathrm{Var}\Big(\P(\widetilde D_{n,M}=i\mid G_n[\CV_{\le M}])\Big) = \mathrm{Var}\Big(\frac{\widetilde n_i}{V_{\le M}}\Big) = \Big(\frac{V_{\le M}}{n}\Big)^{-2} \cdot \frac{\mathrm{Var}(\widetilde n_i)}{n^2}.
\end{equation}
The first factor on the rhs is $\P(D_n\le M)^{-2}$. Using the indicator representation of $\widetilde n_i$, we compute using the covariance formula that
\begin{equation}\label{eq:variance-ni}
\begin{aligned}
\frac{\mathrm{Var}(\widetilde n_i)}{n^2}=\frac{1}{n^2}\sum_{\ell, \ell'=i}^M  \sum_{\substack{v\in \CV_{\ell}, \\u\in \CV_{\ell'}}}&\Big(\P\big(\widetilde d_v=i, \widetilde d_u=i \mid d_v=\ell, d_u=\ell'\big)\\
&\qquad - \P\big(\widetilde d_v=i, \mid d_v=\ell\big) \P\big(\widetilde d_u=i \mid  d_u=\ell'\big)\Big).
\end{aligned}
\end{equation}
When $u=v$, the two vertices are the same,  the (co)variance is at most $1$, and the summation contains only at most $n$ terms, hence the error coming from coinciding $u,v$ is $O(1/n)\P(D_n\le M)$ when summed also over $\ell=\ell'$. Now we treat the case when $u\neq v$.
For $\P\big(\widetilde d_v=i \mid  d_v=\ell\big)$ and $\P\big(\widetilde d_u=i \mid  d_u=\ell'\big)$ we can use the bounds in \eqref{eq:prob-dv-i-lower} and \eqref{eq:prob-dv-i-upper}. Similarly to there, we compute the first term $\P\big(\widetilde d_v=i, \widetilde d_u=i \mid d_v=\ell, d_u=\ell'\big)$ as well. Let $S_{u,v}$ denote the number of self-loops at the two vertices $u, v$ together plus the number of edges between $u$ and $v$.
Then a first moment method yields 
\begin{equation}\label{eq:loops-at-duv}
\P( S_{u,v}\ge 1 ) \le \E[S_{u,v}] = \frac{1}{H_n-1}\bigg(\binom{\ell}{2}  + \binom{\ell'}{2}  + \ell \ell'\bigg)\le \frac{2M^2}{\E[D_n] n}.
\end{equation}
Now we label the half-edges $h_1^{\sss{(v)}}, \dots, h_\ell^{\sss{(v)}}$ and $h_1^{\sss{(u)}}, \dots, h_{\ell'}^{\sss{(u)}}$ of $v$ and $u$, respectively, and partition them into  two subsets each, defined by the index sets $\{s_1, \dots, s_i\}, \{ s_{i+1}, \dots, s_\ell\}\subset[\ell]$ and $\{t_1, \dots, t_i\}, \{t_{i+1}, \dots, t_{\ell'}\} \subset [\ell']$.
We introduce the event 
\begin{equation*}
\begin{aligned}
\CA_{\{s_1, \dots, s_i, t_1, \dots, t_i\}}:=\Big\{& \{h^{\sss{(v)}}_{s_1}, \dots, h_{s_i}^{\sss{(v)}}, h^{\sss{(u)}}_{t_1}, \dots, h_{t_i}^{\sss{(u)}}\} \leftrightarrow \CV_{\le M}, \\
&\{h_{s_{i+1}}^{\sss{(v)}}, \dots, h_{s_\ell}^{\sss{(v)}}, h_{t_{i+1}}^{\sss{(u)}}, \dots, h_{t_{\ell'}}^{\sss{(u)}} \}\leftrightarrow \CV_{>M}, S_{u,v}=0 \Big\},
\end{aligned}
\end{equation*}
 the event that the half-edges $h^{\sss{(v)}}_{s_1}, \dots, h_{s_i}^{\sss{(v)}}, h^{\sss{(u)}}_{t_1}, \dots, h_{t_i}^{\sss{(u)}}$ are all matched to half-edges belonging to vertices in $\CV_{\le M}$, the half-edges $h_{s_{i+1}}^{\sss{(v)}}, \dots, h_{s_\ell}^{\sss{(v)}}, h_{t_{i+1}}^{\sss{(u)}}, \dots, h_{t_{\ell'}}^{\sss{(u)}} $ are all matched to half-edges belonging to vertices in $\CV_{> M}$, and there is no self-loop and edge created at and between $u$ and $v$.
 Then
 \begin{equation*} 
\begin{aligned}
\P(\CA_{\{s_1, \dots, s_i, t_1, \dots, t_i\}}) 
&=\prod_{a=0}^{2i-1} \frac{H_{\le M}-\ell-\ell'-a}{H_n-2a-1} \cdot \prod_{b=0}^{\ell+\ell'-2i-1} \frac{H_{> M}-b}{H_n-2(2i+b)-1}.
\end{aligned}
\end{equation*}
Using that $\ell, \ell'\le M$, one can compute that each factor in the first product is $q_{n,M}(1+O(M/n))$ and each factor in the second product is $(1-q_{n,M})(1+O(M/n))$. Hence, similarly to \eqref{eq:prob-dv-i-lower} and   \eqref{eq:prob-dv-i-upper}, summing over all possible partitions, we obtain the lower bound
\begin{equation*}
\begin{aligned}
\P\big(\widetilde d_v=i, &\widetilde d_u=i\mid d_v=\ell, d_u=\ell'\big)\\
&\ge  \binom{\ell}{i} q_{n,M}^i (1-q_{n,M})^{\ell-i} \binom{\ell'}{i} q_{n,M}^i (1-q_{n,M})^{\ell'-i}\big(1+O(\tfrac{M^2}{n})\big).
\end{aligned}
\end{equation*}
and the upper bound is the same as the rhs with an additive $O\big(\tfrac{M^2}{n}\big)$ coming from \eqref{eq:loops-at-duv}. We see that this is the same bound as the one in \eqref{eq:prob-dv-i-upper}, multiplied together for $u$ and $v$. Hence, returning to \eqref{eq:variance-ni}, when we take the difference of the two terms, the summand $1$ in the $(1+O\big(\tfrac{M^2}{n}\big))$ factor cancels, and each summand can be upper bounded as
\begin{equation*}
\begin{aligned}
\big|\P\big(\widetilde d_v=i, &\ \widetilde d_u=i\mid d_v=\ell, d_u=\ell'\big) - \P\big(\widetilde d_v=i, \mid d_v=\ell\big)\P\big(\widetilde d_u=i \mid  d_u=\ell'\big)\big| \\
&\le O\big(\tfrac{M^2}{n}\big) + O\big(\tfrac{M^2}{n}\big)\binom{\ell}{i} q_{n,M}^i (1-q_{n,M})^{\ell-i} \binom{\ell'}{i} q_{n,M}^i (1-q_{n,M})^{\ell'-i}.
\end{aligned}
\end{equation*}
We account the $O(1/n)\P(D_n\le M)$ error coming from $u=v$, and use $|\CV_{\ell}|/n=\P(D_n=\ell)$ and $|\CV_{\ell'}|/n=\P(D_n=\ell')$, then we obtain in \eqref{eq:variance-ni} that 
\begin{equation*}
\begin{aligned}
\frac{\mathrm{Var}(\widetilde n_i)}{n^2}&\le O(\tfrac{1}{n})\P(D_n\le M) + O\big(\tfrac{M^2}{n}\big)\sum_{\ell, \ell'=i}^M  \P(D_n=\ell)\P(D_n=\ell')\\ 
&\quad\cdot\left(1+ \binom{\ell}{i} q_{n,M}^i (1-q_{n,M})^{\ell-i} \binom{\ell'}{i} q_{n,M}^i (1-q_{n,M})^{\ell'-i}\right)\\
&\le O(\tfrac{M^2}{n}) \Big( \P(D_n\le M) +  2\P(D_n\le M)^2 \Big),
\end{aligned}
\end{equation*}
where the last row is a far from sharp upper bound.

Wlog we may assume $M$ is large enough for $\P(D_n\le M)\ge 1/2$ to hold. Using the previous inequality in \eqref{eq:var-dnM}, and that the first factor there is $1/\P(D_n\le M)^2$, we come to
\begin{equation}\label{eq:variance-ni3}
\mathrm{Var}\Big(\P(\widetilde D_{n,M}=i\mid G_n[\CV_{\le M}])\Big)\le O(\tfrac{M^2}{n}) \Big( 2+ 1/\P(D_n\le M)\Big) = O(\tfrac{M^2}{n}),
\end{equation} 
which is true \emph{uniformly} in $i$, i.e., the factor $O(M^2/n)$ is not depending on $i$.
We now study $X_{n,M}(i):=\P(\widetilde D_{n,M}=i\mid G_n[\CV_{\le M}])$ and then $\P(\widetilde D_{n,M}=i)=\E[X_{n,M}(i)]$ computed in \eqref{eq:empirical-mean-last}, and $\P(\widetilde D_M=i):=p_M(i)$ in \eqref{eq:limit-mass-i} is $\lim_{n\to \infty}\E[X_{n,M}(i)]$. By \eqref{eq:limit-difference}, $|\E[X_{n,M}(i)]-p_M(i)|\le O(M^2/n+M\delta_n)$. 
Using the triangle inequality, for any $\varepsilon_n>0$ it is true that
\begin{equation*}
\begin{aligned}
\Big\{\big| X_{n,M}(i)&-p_M(i)\big| \ge \varepsilon_n \Big\}\\
& \subseteq \Big\{\big| X_{n,M}(i)-\E[X_{n,M}(i)]\big| \ge \varepsilon_n/2 \Big\} \cup  \Big\{\big| \E[X_{n,M}(i)]-p_M(i)\Big| \ge \varepsilon_n/2 \Big\}. 
\end{aligned}
\end{equation*}
The second event in the rhs deterministically does not hold whenever $\epsilon_n\gg O(M^2/n+M\delta_n)$, and simultaneously for all $i\le M$, whenever $n$ is sufficiently large, by taking the maximum error over $i\le M$.
So we may compute using a union bound followed by Chebyshev's inequality for all $i$ that
\begin{equation}\label{eq:chebyshev-final}
\begin{aligned}
\P\Big(\sup_{i\le M}\big| X_{n,M}(i)-p_M(i)\big| \ge \varepsilon_n \Big) &\le  \sum_{i\le M}\P\Big(\big| X_{n,M}(i)-\E[X_{n,M}(i)]\big| \ge \varepsilon_n/2 \Big)  \\
&\le \sum_{i\le M} 4\varepsilon_n^{-2} \mathrm{Var}(X_{n,M}(i)) = O\Big(\tfrac{M^3}{n\varepsilon^{2}_n}\Big),
\end{aligned}
\end{equation}
where we used \eqref{eq:variance-ni3} and summed over $i$ to obtain the last bound.  The rhs tends to zero as $n\to \infty$ by the assumption that $\varepsilon \gg 1/\sqrt{n}$ implying $n\varepsilon^2_n\to \infty$. This finishes the proof of \eqref{eq:convergence-in-P-of-mass}.
We compute the (random) mean of the empirical distribution $\widetilde F_{n,M}$ of $G_{n}[\CV_{\le M}]$ as 
\begin{equation*}
\E\big[ \widetilde D_{n,M} \mid G_{n}[\CV_{\le M}]\big] = \sum_{i=1}^M i X_{n,M}(i)\  {\buildrel \P \over \longrightarrow }\ \sum_{i=1}^M ip_M(i) =\E\big[\widetilde D_M\big]
\end{equation*}
by \eqref{eq:chebyshev-final}. Finally, the fact that $\widetilde D_{n,M}$ is a configuration model, conditioned on its vertices and their degrees, follows from the fact that every matching of its half-edges have equal probability under the law of the configuration model $G_n$. This finishes the proof of Lemma \ref{lem:CM-attack-degrees}.
\end{proof}

\begin{proof}[Proof of Lemma \ref{lem:truncated-power-law}]\label{proof:truncated-power-law}
We analyze now the limiting distribution in \eqref{eq:binom-thinning} under the assumption that the original empirical distribution sequence $(F_n)_{n\ge 1}$ satisfies both Assumptions \ref{assu:regularity} and \ref{assu:empirical-power-law}. In particular, Assumption \ref{assu:empirical-power-law} implies that the cdf of the limiting distribution $F_D$ of $D_n$ satisfies \eqref{eq:empirical-power-law}
for all $\varepsilon>0$  such that  for all $n\ge n_0(\varepsilon)$, and for all $z\ge z_0$ that
\begin{equation}\label{eq:empirical-power-law-limit}
\frac{c_\ell}{z^{(\tau-1)(1+\varepsilon)}}\le 1-F_D(z) \le \frac{c_u}{z^{(\tau-1)(1-\varepsilon)}}.
\end{equation}
and $\E[D]<\infty$ by assumption. We observe first that $\widetilde D_M$ in \eqref{eq:binom-thinning} is a binomial thinning of $(D| D\le M)$, hence
$\widetilde D_M$ is stochastically dominated from above by $(D| D\le M)$. So, by the definition of stochastic domination,
\begin{equation}\label{eq:pm-bounds}
 1-\widetilde F_{M}(z)=\P(\widetilde D_M>z) \le \P(D > z \mid D\le M) = \frac{(1-F_D(z))-(1-F_D(M))}{F_D(M)}. 
 \end{equation}
Using now \eqref{eq:empirical-power-law-limit}, estimating the numerator from above and the denominator from below, assuming that $M$ is such that $c_uM^{-(\tau-1)/2}\le 1/2$, for all $\varepsilon\in (0, (\tau-1)/2]$ it holds  for all $z\in[z_0, M]$ that
\begin{equation}\label{eq:limit-upper-bound}
1-\widetilde F_{M}(z)\le \frac{c_u z^{-(\tau-1)(1-\varepsilon)} }{1-c_uM^{-(\tau-1)(1-\varepsilon)}}\le  \frac{c_u z^{-(\tau-1)(1-\varepsilon)} }{1-c_uM^{-(\tau-1)/2}}\le 2c_u z^{-(\tau-1)(1-\varepsilon)}
\end{equation}
which finishes the proof of the upper bound in \eqref{eq:truncated-power-law} with $\widetilde c_u=2c_u$. 
For the lower bound in \eqref{eq:truncated-power-law} we will also need a lower bound on $\P(D > z \mid D\le M)$. Using the rhs of \eqref{eq:pm-bounds}, estimating the denominator by at most $1$, and the numerator using \eqref{eq:empirical-power-law-limit},  we obtain
\begin{align}
\P(D > z \mid D\le M)&\ge c_\ell z^{(\tau-1)(1+\varepsilon)} - c_u M^{-(\tau-1)(1-\varepsilon)} \nonumber\\
&=  c_\ell z^{-(\tau-1)(1+\varepsilon)} \big(1- (z^{(\tau-1)(1+\varepsilon)}/M^{(\tau-1)(1-\varepsilon)}) \cdot (c_u/c_\ell)\big).\label{eq:limit-bound}
\end{align}
Here we require that the second factor is at least, say, $1/2$, which leads to
\begin{equation}\label{eq:lower-without-binom}
\begin{aligned}
\P(D > z \mid D\le M)&\ge (c_\ell/2) z^{-(\tau-1)(1+\varepsilon)}\\
\mbox{for all} \quad  z &\le (c_\ell/(2c_u))^{\tfrac{1}{(\tau-1)(1+\varepsilon)}} M^{1- \tfrac{2\varepsilon}{1+\varepsilon}}=: \widetilde z'_{\max}(M).
\end{aligned}
\end{equation}
Observe that even without considering the binomial thinning in \eqref{eq:binom-thinning}, one cannot hope to prove a lower bound for  $z$ too close to  $M$. Nevertheless, $\widetilde z'_{\max}(M)$ is growing with $M$ for all $\varepsilon< 1$, and it gets closer to $\Theta(M)$ as $\varepsilon$ is smaller, which intuitively means that the sharper bound one has on the tail of $D$, the sharper bound we can also get on probabilities of $D$ falling in given intervals. Nevertheless, even for $\varepsilon=0$, we must require $z\le c_2 M$ for some constant $c_2\le 1$.

Now we  compute the thinning probability in \eqref{eq:binom-thinning}:
\begin{equation} \label{eq:bounds-qm}
\begin{aligned}
1-q_M &= \E[D]^{-1}\E[D\ind_{\{D>M\}}] =   \E[D]^{-1} \Big( M \P(D> M) + \sum_{j=M}^\infty \P(D> j)\Big) \\
&\le \E[D]^{-1} (c_{u} M^{1-(\tau-1)(1-\varepsilon)} + \sum_{j=M}^{\infty} c_u j^{-(\tau-1)(1-\varepsilon)})  \le c_{u,1} M^{1-(\tau-1)(1-\varepsilon)},
\end{aligned} 
\end{equation}
for some constant  $c_{u,1}>0$ (that does not depend on $M$) and a similar lower bound holds $1-q_M\ge c_{\ell,1} M^{1-(\tau-1)(1+\varepsilon)}$.
Then, using the Binomial representation in \eqref{eq:binom-thinning}, and then stochastic domination of $\mathrm{Bin}(j, q)$ by $\mathrm{Bin}(j^\star, q)$ when $j\le j^\star$, we obtain that for all $j^\star\ge z$,
\begin{equation}\label{eq:dm-larger-z}
\begin{aligned}
\P(\widetilde D_M\ge z) &=\sum_{j=z}^M \frac{\P(D=j)}{\P(D\le M)} \P\big(\mathrm{Bin}(j,q_M) \ge z\big)\\
&\ge \P\big(\mathrm{Bin}(j^\star, q_M)\ge z \big) \P\big(D\ge j^\star \mid D\le M\big),
\end{aligned}
\end{equation}
and we can optimise the value $j^\star=j^\star(z)\ge z$ to obtain a sharp enough bound. For the second factor on the rhs we may use \eqref{eq:lower-without-binom}. Moving to the `complement' binomial, we estimate the first factor in \eqref{eq:dm-larger-z} as
\begin{equation}\label{eq:complement-binom}
\begin{aligned}
\P(\mathrm{Bin}(j^\star, q_M)\ge z) &= \P(\mathrm{Bin}(j^\star, 1-q_M) \le j^\star-z) \\
&= 1- \P(\mathrm{Bin}(j^\star, 1-q_M) > j^\star-z).
\end{aligned}
\end{equation}
We observe that the thinning probability $1-q_M$ in \eqref{eq:bounds-qm} tends to zero with $M$. 
So, when $z\le \widetilde z'_{\max}/2$, we may take $j^\star(z):=2z$, and use Markov's inequality on the rhs in \eqref{eq:complement-binom} to obtain
\begin{equation*}
 1-\P\big(\mathrm{Bin}(2z, 1-q_M)> z \big) \ge 1-\frac{2z(1-q_M)}{z}=1- 2(1-q_M) \ge 1/2
 \end{equation*}
for all $M$ large enough so that $c_{u,1}M^{1-(\tau-1)(1-\varepsilon)}<1/4$. Using this bound in \eqref{eq:dm-larger-z}, along with \eqref{eq:lower-without-binom}, we obtain for all $z<\widetilde z_{\max}/2$ that
\[ 
\P(\widetilde D_M\ge z)\ge \P\big(D\ge 2z \mid D\le M\big)/2 \ge (c_\ell/4) (2z)^{-(\tau-1)(1+\varepsilon)},
\]
which finishes the proof by choosing
\[\widetilde c_\ell:=2^{-(\tau-1)(1+\varepsilon)-2}c_\ell\quad\text{ and }\quad\widetilde z_{\max}(M):=2^{-1}(c_\ell/(2c_u))^{\tfrac{1}{(\tau-1)(1+\varepsilon)}} M^{1- \tfrac{2\varepsilon}{1+\varepsilon}}.\]
\end{proof}
}

\bibliographystyle{IEEEtranS}
\bibliography{literature}

\end{document}